\numberwithin{equation}{section}
\definecolor{darkgreen}{cmyk}{1,0,1,.2}
\definecolor{m}{rgb}{1,0.1,1}
\newdimen\theight
\def\TeXref#1{%
             \leavevmode\vadjust{\setbox0=\hbox{{\tt
                     \quad\quad  {\small \textrm #1}}}%
             \theight=\ht0
             \advance\theight by \lineskip
             \kern -\theight \vbox to
             \theight{\rightline{\rlap{\box0}}%
             \vss}%
           }}%
\def\moverlay{\mathpalette\mov@rlay}
\def\mov@rlay#1#2{\leavevmode\vtop{%
   \baselineskip\z@skip \lineskiplimit-\maxdimen
   \ialign{\hfil$\m@th#1##$\hfil\cr#2\crcr}}}
\newcommand{\charfusion}[3][\mathord]{
    #1{\ifx#1\mathop\vphantom{#2}\fi
        \mathpalette\mov@rlay{#2\cr#3}
    }
    \ifx#1\mathop\expandafter\displaylimits\fi}
\newcommand{\cupdot}{\charfusion[\mathbin]{\cup}{\cdot}}
\newcommand{\bigcupdot}{\charfusion[\mathop]{\bigcup}{\cdot}}
\newcounter{nameOfYourChoice}
\newcounter{nameOfYourChoice2}
\renewcommand{\epsilon}{\varepsilon}
\DeclareMathOperator{\dom}{dom}
\DeclareMathOperator{\im}{im}
\DeclareMathOperator{\id}{id}
\DeclareMathOperator{\Pen}{Pen}
\DeclareMathOperator{\CPen}{CPen}
\DeclareMathOperator{\Int}{Int}
\DeclareMathOperator{\ev}{ev}
\DeclareMathOperator{\Aut}{Aut}
\DeclareMathOperator{\rep}{rep}
\DeclareMathOperator{\Ch}{\operatorname{Ch}}
\DeclareMathOperator{\Pa}{\operatorname{Pa}}
\newcommand{\FF}{\mathcal{F}}
\newcommand{\GG}{\mathcal{G}}
\newcommand{\MM}{\mathcal{M}}
\newcommand{\OO}{\mathcal{O}}
\newcommand{\TT}{\mathcal{T}}
\newcommand{\Z}{\mathbb{Z}}
\newcommand{\R}{\mathbb{R}}
\newcommand{\N}{\mathbb{N}}
\newcommand{\bfH}{\mathbf{H}}
\newcommand{\ffmnz}{\mathfrak{h}^m_{n,z}}
\newcommand{\ffmlz}{\mathfrak{h}^m_{l,z}}
\newcommand{\zznn}{\mathfrak{X}^n_n}
\newcommand{\zzmn}{\mathfrak{X}^m_n}
\newcommand{\zzlk}{\mathfrak{X}^l_k}
\newcommand{\zzml}{\mathfrak{X}^m_l}
\newcommand{\zzln}{\mathfrak{X}^l_n}
\newcommand{\zzlpin}{\mathfrak{X}_n^{l'}}
\newcommand{\zzsnn}{\mathfrak{X}^{n+1}_n}
\newcommand{\ppmn}{\mathfrak{P}^m_n}
\newcommand{\ppmyn}{\mathfrak{P}^m_{n-1}}
\newcommand{\ppmone}{\mathfrak{P}^{m}_{-1}}
\newcommand{\olppmn}{\overline{\mathfrak{P}}^m_n}
\newcommand{\olppmyn}{\overline{\mathfrak{P}}^m_{n-1}}
\newcommand{\xn}{X_n}
\newcommand{\xyn}{X_{n-1}}
\newcommand{\xm}{X_m}
\newcommand{\xset}{X}
\newcommand{\fset}{F}
\newcommand{\Zmiyn}{Z_{n-1}^-}
\newcommand{\Zpmyn}{Z_{n-1}^\pm}
\newcommand{\cnyn}{C_{n,n-1}}
\newcommand{\csmm}{C_{m+1,m}}
\newcommand{\cnm}{C_{n,m}}
\newcommand{\cnsm}{C_{n,m+1}}
\newcommand{\czeroone}{C_{0,-1}}
\newcommand{\cnone}{C_{n,-1}}
\newcommand{\olcnyn}{\overline{C}_{n,n-1}}
\newcommand{\olcmym}{\overline{C}_{m,m-1}}
\newcommand{\sphere}{S}
\newcommand{\corona}{C}
\newcommand{\aset}{A}
\newcommand{\yset}{Y}
\newcommand{\xxn}{\mathfrak{X}_n}
\newcommand{\xxyn}{\mathfrak{X}_{n-1}}
\newcommand{\xxm}{\mathfrak{X}_m}
\newcommand{\xxl}{\mathfrak{X}_l}
\newcommand{\Rn}{\mathfrak{P}_n}
\newcommand{\Ryn}{\mathfrak{P}_{n-1}}
\newcommand{\Rone}{\mathfrak{P}_{-1}}
\newcommand{\rrn}{\mathfrak{r}_n}
\newcommand{\rrm}{\mathfrak{r}_m}
\newcommand{\rrl}{\mathfrak{r}_l}
\newcommand{\ssn}{\mathfrak{s}_n}
\newcommand{\ttn}{\mathfrak{t}_n}
\newcommand{\Yn}{Y_n}
\newcommand{\Ynn}{Y^n_n}
\newcommand{\Ymn}{Y^m_n}
\newcommand{\Yln}{Y^l_n}
\newcommand{\Yymn}{Y^{m-1}_n}
\newcommand{\wtYmn}{\widetilde{Y}^m_n}
\newcommand{\hhmx}{\mathfrak{h}_{m,x}}
\theoremstyle{plain}
\newtheorem{thm}{Theorem}[section]
\newtheorem{lem}[thm]{Lemma}
\newtheorem{cor}[thm]{Corollary}
\newtheorem{prop}[thm]{Proposition}
\theoremstyle{definition}
\newtheorem{defn}[thm]{Definition}
\theoremstyle{remark}
\newtheorem{rem}{Remark}
\crefname{thm}{theorem}{theorems}
\crefname{lem}{lemma}{lemmas}
\crefname{cor}{corollary}{corollaries}
\crefname{prop}{proposition}{propositions}
\crefname{defn}{definition}{definitions}
\crefname{conj}{conjecture}{conjectures}
\crefname{ex}{example}{examples}
\crefname{exs}{examples}{examples}
\crefname{prob}{problem}{problems}
\crefname{quest}{question}{questions}
\crefname{rem}{remark}{remarks}
\crefname{rems}{remarks}{remarks}
\crefname{claim}{claim}{claims}
\crefname{case}{case}{cases}
\crefname{hyp}{hypothesis}{hypotheses}
\crefname{notation}{notation}{notations}
\title{Limit aperiodic and repetitive colorings of graphs}
\author[J.A. \'Alvarez L\'opez]{Jes\'us A. \'Alvarez L\'opez}
\address{Department/Institute of Mathematics\\
         University of Santiago de Compostela\\
         15782 Santiago de Compostela\\
         Spain}
\email{jesus.alvarez@usc.es}
\author[R. Barral Lij\'o]{Ram\'on Barral Lij\'o}
\address{Research Organization of Science and Technology\\
         Ritsumeikan University\\
         Nojihigashi 1-1-1\\
         Kusatsu\\
         Shiga\\
         525-8577\\
         Japan}
\email{ramonbarrallijo@gmail.com}
\subjclass[2010]{Primary: 05C15. Secondary: 37B50, 52C23}
\thanks{The authors were partially supported by the grants FEDER/Ministerio de Ciencia, Innovaci\'on y Universidades/AEI/MTM2017-89686-P, and Xunta de Galicia/ED431C 2019/10. The second author was also supported by a Canon Foundation in Europe Research Grant.}
\keywords{graph, coloring, limit aperiodic, repetitive, tiling}
\date{}
\begin{document}

\maketitle

\begin{abstract}
Let $X$ be a (repetitive) infinite connected simple graph with a finite upper bound $\Delta$ on the vertex degrees. The main theorem states that $X$ admits a (repetitive) limit aperiodic vertex coloring by $\Delta$ colors. This refines a theorem for finite graphs proved by Collins and Trenk, and by Klav\v{z}ar, Wong and Zhu, independently. It is also related to a theorem of Gao, Jackson and Seward stating that any countable group has a strongly aperiodic coloring by two colors, and to recent research on distinguishing number of graphs by Lehner, Pil\'{s}niak and Stawiski, and by H\"{u}ning et al. In our theorem, the number of colors is optimal for general graphs of bounded degree. We derive similar results for edge colorings, and for more general graphs, as well as a construction of limit aperiodic and repetitive tilings by finitely many prototiles. In a subsequent paper, this result is also used to improve the construction of compact foliated spaces with a prescribed leaf.
\end{abstract} 

\tableofcontents

\section{Introduction}\label{s: intro}

\subsection{Estimates of the distinguishing number}\label{ss: distinguishing number}

Let $\xset\equiv(\xset,E)$ be a simple (undirected countable) graph (with finite vertex degrees). Assume that $X$ is connected and consider its natural distance. The \emph{degree} of $X$, denoted by $\deg X$, is the supremum of its vertex degrees. 

Consider a (vertex) coloring $\phi\colon \xset \to \fset$ (the set of colors $\fset$ is usually assumed to be a subset of $\N$). It is said that $\phi$ (or $(X,\phi)$) is \emph{aperiodic} or \emph{distinguishing} if there is no nontrivial automorphism of $(\xset,\phi)$. The \emph{distinguishing number} of $X$ is
\[
D(\xset)=\min\{\,n\in\Z^+\mid\text{$\xset$ has some aperiodic coloring by $n$ colors}\,\}\;. 
\]
This concept was introduced by Albertson and Collins \cite{AlbertsonCollins1996}, and the calculation of $D(\xset)$ (or bounds thereof) for many families of graphs has been the subject of much research in recent years (see e.g.\ \cite{ImrichJerebicKlavzar2008,ImmelWenger2017}). This resulted in the following sharp estimate for finite graphs, where $K_n$, $K_{n,n}$ and $C_n$ denote the complete graph on $n$ vertices, the $(n,n)$-bipartite graph and the cyclic graph with $n$ vertices, respectively.

\begin{thm}[Collins-Trenk \cite{CollinsTrenk2006}, Klav\v zar-Wong-Zhu \cite{KlavzarWongZhu2006}]\label{t: finite case}
If $X$ is a finite connected simple graph different from $K_n$, $K_{n,n}$, and $C_5$ {\rm(}$n\ge2$\/{\rm)}, then $D(\xset)\le\deg X$. If $X$ is $K_n$, $K_{n,n}$ or $C_5$ {\rm(}$n\ge2$\/{\rm)}, then $D(\xset)=\deg X+1$.
\end{thm}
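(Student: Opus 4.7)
The theorem splits into two pieces: the exact determinations $D(X) = \deg X + 1$ for $X \in \{K_n, K_{n,n}, C_5\}$, and the upper bound $D(X) \leq \deg X$ for all other finite connected simple graphs. I would first dispose of the exceptional cases by direct symmetry arguments. For $K_n$, transitivity of $\Aut(K_n) = S_n$ on ordered vertex pairs forces every two vertices to receive distinct colors in an aperiodic coloring, so $D(K_n) = n = \deg K_n + 1$. For $K_{n,n}$, the action of $S_n \times S_n$ on each part forces $n$ distinct colors per part, and the cross-side involution then forces the two palettes to differ, giving $D(K_{n,n}) = n + 1 = \deg K_{n,n} + 1$. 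For $C_5$ one checks via a short case analysis that every 2-coloring carries a reflective symmetry, and exhibits the aperiodic 3-coloring $1,2,3,1,2$.

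For the main inequality, my plan is to construct an aperiodic coloring by $\Delta = \deg X$ colors using a breadth-first search from a chosen root $r$. I would enumerate the BFS levels $L_0 = \{r\}, L_1, L_2, \ldots$ and color vertices level by level. For each $v \in L_{k+1}$, its BFS parents lie in $L_k$ and are already colored; I would choose the color of $v$ so that, within $L_{k+1}$, each vertex is uniquely determined by the pair consisting of its own color together with the multiset of colors of its parents. A color-preserving automorphism $\sigma$ fixing $r$ must preserve each $L_k$ setwise, and this separation rule then propagates level-by-level to force $\sigma$ to be the identity on the whole graph.

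The feasibility of the above coloring rests on the numerical fact that a non-root $v \in L_{k+1}$ has at most $\Delta - 1$ ``siblings'' in $L_{k+1}$ that could conflict with it, so the $\Delta$-palette always leaves at least one admissible color; for the root, all $\Delta$ colors are available for its at-most-$\Delta$ neighbors in $L_1$. The heart of the proof, and the step I expect to be the main obstacle, is forcing the root to be fixed by every color-preserving automorphism. In a vertex-transitive graph one cannot simply give $r$ a unique color, so one must instead argue that for each ordered pair $(r,r')$ of distinct vertices there exists some admissible coloring whose automorphism group contains no map sending $r \mapsto r'$; the exceptional graphs $K_n$, $K_{n,n}$, $C_5$ arise as precisely the obstructions to this local-to-global argument. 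The endgame is thus a careful case analysis, splitting on $\Delta = 2$ (paths and cycles, where $C_n$ must be checked to admit an aperiodic 2-coloring for $n \ne 5$) versus $\Delta \geq 3$, and in the latter exploiting asymmetries in the 1- or 2-step neighborhood of a candidate root to rule out every vertex-transitive $X$ other than $K_n$ and $K_{n,n}$.
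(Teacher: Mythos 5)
Note first that the paper does not prove this statement: it is quoted as a known result of Collins--Trenk and Klav\v{z}ar--Wong--Zhu, so there is no internal proof to compare with; your sketch has to stand on its own. Your treatment of the exceptional graphs $K_n$, $K_{n,n}$, $C_5$ is fine, and the BFS-level skeleton is indeed the spirit of the classical argument (the paper itself borrows the BFS-tree idea from \cite{CollinsTrenk2006} for its own constructions). But as written there are two problems. First, your separation rule is infeasible: requiring that inside each level $L_{k+1}$ every vertex be determined by (own color, \emph{multiset of parent colors}) can force arbitrarily many vertices to receive pairwise distinct colors --- e.g.\ in a tree of degree $3$, all level-$(k{+}1)$ vertices whose parents happen to share a color have the same parent-color multiset, and there can be far more than $\Delta$ of them --- so the count ``at most $\Delta-1$ conflicting siblings'' does not apply to the rule you stated. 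The repair is standard: an automorphism fixing $L_k$ \emph{pointwise} forces $v$ and $\sigma(v)$ to have the \emph{same set} of neighbors in $L_k$, so it suffices to give distinct colors to vertices of $L_{k+1}$ with identical down-neighborhoods (equivalently, to children of a common BFS-parent); those classes do have size at most $\Delta-1$ (at most $\Delta$ at the root), and then the greedy coloring and the level-by-level propagation both go through.

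The second and more serious gap is the step you yourself flag: forcing every color-preserving automorphism to fix the root while using only $\Delta$ colors. This is precisely where all the content of the theorem lies --- with $\Delta+1$ colors one simply gives the root a unique color, and the whole point of the Collins--Trenk/Klav\v{z}ar--Wong--Zhu result is to eliminate that extra color except for $K_n$, $K_{n,n}$, $C_5$. Your plan to handle it by ``ruling out every vertex-transitive $X$ other than $K_n$ and $K_{n,n}$'' misidentifies the obstruction: the difficulty is not confined to vertex-transitive graphs, since for any graph the particular $\Delta$-coloring you build may still admit an automorphism moving the chosen root unless the coloring is engineered (typically by a delicate analysis of the root's one- and two-step neighborhood, and separate arguments for $\Delta=2$) to make the root recognizable from the colored structure. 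No such construction or case analysis is given, so the inequality $D(X)\le\deg X$ for non-exceptional graphs is not established by the proposal; only the easy bound $D(X)\le\deg X+1$ and the exceptional equalities are actually within reach of what you wrote.
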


For infinite graphs, the following result has been recently proved. It is easy to check that the bound it provides is sharp.

\begin{thm}[Lehner-Pil\'{s}niak-Stawiski \cite{LehnerPilsniakStawiski}]\label{t: infinite case}
Let $X$ be an infinite connected simple graph with $\deg X\geq 3$. Then $D(X)\leq \deg X -1$.
\end{thm}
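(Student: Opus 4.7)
The plan is to construct a coloring $\phi\colon X\to\{1,\ldots,\Delta-1\}$, with $\Delta=\deg X$, sphere by sphere from a basepoint, and to verify that only the identity automorphism of $X$ preserves $\phi$. First I would pick a root $v_0\in X$ and stratify $X$ by the BFS spheres $S_n=\{v\in X:d(v,v_0)=n\}$, and choose a BFS spanning tree $T$ assigning a parent $p(v)\in S_{n-1}$ to each non-root $v\in S_n$. The key combinatorial observation is that every non-root vertex $v\in S_n$ has at most $\Delta-1$ neighbors in $S_{n+1}$, because one of its at most $\Delta$ edges is used by its parent; hence the palette $\{1,\ldots,\Delta-1\}$ is just wide enough to color these ``upward'' neighbors in an injective manner.

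The coloring is designed so that any color-preserving automorphism $\sigma$ of $(X,\phi)$ that fixes $v_0$ is forced to be the identity. Once $\sigma(v_0)=v_0$, automatically $\sigma(S_n)=S_n$ for every $n$. An induction on $n$ then works: assuming $\sigma$ fixes $S_0,\dots,S_n$ pointwise, $\sigma$ permutes, for each $v\in S_n$, the upward neighbors $N_{n+1}(v)\subset S_{n+1}$; and by making the local color-pattern on $N_{n+1}(v)$ rigid (e.g.\ injective, using the bound $|N_{n+1}(v)|\leq\Delta-1$ together with a level-by-level greedy assignment that resolves overlaps coming from distinct $v$), one forces $\sigma$ to fix $N_{n+1}(v)$ pointwise, which advances the induction.

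The main obstacle, and where both $\Delta\geq 3$ and the infinitude of $X$ enter, is ensuring that any color-preserving automorphism must fix $v_0$ in the first place. The root can itself have up to $\Delta$ neighbors in $S_1$, one more than the palette, so the naive argument fails at the very bottom of the induction. I would handle this in two cases. If $X$ has some vertex of degree $<\Delta$, take it as $v_0$; then $|N_1(v_0)|\leq\Delta-1$, and one can assign a characteristic pattern to the ball of a fixed radius around $v_0$ that is not realized elsewhere in $X$, marking $v_0$ uniquely. Otherwise $X$ is $\Delta$-regular, and I would exploit infinitude: by K\"onig's lemma there is a one-way infinite ray $v_0,w_1,w_2,\dots$ starting at $v_0$; paint it with a rigid aperiodic binary pattern (for instance a Sturmian or Thue--Morse sequence over two of the $\Delta-1\geq 2$ colors) whose shifts cannot occur anywhere else in $(X,\phi)$, thereby identifying $v_0$ as the unique origin of such a pattern.

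The hardest part is making the ray-marking and the inductive sphere-by-sphere coloring coexist. The rigid ray cuts through every sphere $S_n$ and prescribes a specific color at one of its vertices, whereas the induction wants free choice. I would fix the ray-pattern first and then perform the level-by-level coloring on the complement, proving by a diagonal argument that among the countably many ``competing'' rays in $X$ the generic colors can always be chosen so as not to reproduce the distinguished pattern. Verifying that the combined coloring really does force $\sigma(v_0)=v_0$ and that the subsequent induction closes with only $\Delta-1$ colors is the central technical point of the proof.
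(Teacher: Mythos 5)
The paper does not prove this theorem: it merely cites it as a result of Lehner, Pil\'sniak, and Stawiski, and remarks only that their proof ``make[s] essential use of an infinite geodesic ray.'' So there is no paper-internal proof to compare you against. That said, your use of an infinite ray does match what the paper reports about the original argument, so you are pointing in the right direction at the highest level.

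There are, however, genuine gaps in your sketch that cannot be dismissed as routine. The most serious is the overlap problem you name but do not resolve. You want the colors on the forward neighbors $N_{n+1}(v)$ to be injective for every $v\in S_n$, simultaneously. A vertex $w\in S_{n+1}$ can have several neighbors in $S_n$, say $k$ of them; each such $v$ has up to $\Delta-1$ forward neighbors, so $w$ can be in conflict (i.e., required to differ in color) with up to $k(\Delta-2)$ other vertices of $S_{n+1}$. When $k\ge 2$ this exceeds $\Delta-2$, and a greedy level-by-level assignment with only $\Delta-1$ colors can get stuck. ``Resolving overlaps'' is therefore not a routine step; it is exactly the place where the $\Delta-1$ bound is delicate, and it needs a concrete combinatorial argument (e.g.\ exploiting the tree structure of the BFS levels or a more careful choice of which injectivity constraints to impose).

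The second gap is in forcing $\sigma(v_0)=v_0$. You stipulate painting an infinite ray with a ``rigid'' (Sturmian, Thue--Morse) word and then appeal to a ``diagonal argument'' to ensure no competing ray reproduces the pattern. But a color-preserving automorphism need not preserve your distinguished ray setwise, so even if the ray's word is globally unique as a word, that by itself does not force $\sigma$ to fix its origin; you must also argue that $\sigma$ carries the ray onto itself, or that the pattern marks $v_0$ by a local feature visible on balls of bounded radius. The diagonal argument is also in tension with the constraint budget: every color you fix along the ray eats into the $\Delta-1$ palette that the sphere-by-sphere induction needs, and you have not shown these two sets of constraints are jointly satisfiable. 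Both gaps would have to be closed before the sketch could be called a proof.
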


It is clear that $D(X)=2$ if $X$ is an infinite graph of degree two. Also very recently, H\"{u}ning et al.\ have provided a complete classification of all connected graphs $X$ with $\deg X =D(X)=3$ \cite{Huningetal}.

\subsection{Space of colored graphs}\label{ss: widehat GG_*}

Consider pointed connected colored simple graphs, $(\xset,x,\phi)$, with colors in $\N$. Their isomorphism classes, $[\xset,x,\phi]$, form a Polish space $\widehat\GG_*$ with a canonical topology (\Cref{ss: colorings}). For any such graph $(\xset,\phi)$, there is a canonical map $\hat\iota_{\xset,\phi}:\xset\to\widehat\GG_*$ defined by $\hat\iota_{\xset,\phi}(x)=[\xset,x,\phi]$. The images of the maps $\hat\iota_{\xset,\phi}$, denoted by $[X,\phi]$, form a canonical partition of $\widehat\GG_*$. We have $[X,\phi]\equiv\Aut(X,\phi)\backslash X$, where $\Aut(X,\phi)$ is the group of color-preserving automorphisms of $(X,\phi)$. Every closure $\overline{[X,\phi]}$ is saturated. It is said that $(X,\phi)$ is:
\begin{description}

\item[aperiodic] when $\Aut(X,\phi)=\{\id_X\}$ ($\hat\iota_{X,\phi}$ is injective);

\item[limit aperiodic] when $(Y,\psi)$ is aperiodic  for all $[Y,\psi,y]\in\overline{[X,\phi]}$; and,

\item[repetitive] if, roughly speaking, every colored disk of $(X,\phi)$ is repeated uniformly in $X$  (\Cref{ss: colorings}).

\end{description}
The closure $\overline{[X,\phi]}$ is compact if and only if $\deg X,|\im\phi|<\infty$ (\Cref{p: compact}). Moreover $\overline{[X,\phi]}$ is minimal if $(X,\phi)$ is repetitive, and the reciprocal holds when $\overline{[X,\phi]}$ is compact.

By forgetting the colorings $\phi$, we get a Polish space $\GG_*$, with a partition defined by the images of maps $\iota_X:X\to\GG_*$, obtaining obvious versions without colorings of the above properties.

\subsection{Strongly aperiodic colorings of groups}\label{ss: colorings of groups}

Let $G$ be a countable group, and $F$ a finite set equipped with the discrete topology. Then the $F$-valued colors on $G$ form the compact second countable space $F^G$, which has a canonical left action of $G$ defined by $(g\cdot\phi)(h)=\phi(g^{-1}h)$. This $G$-space is called a \emph{shift}, and any non-empty $G$-invariant closed subset of $F^G$ is called a \emph{subshift}. In particular, the orbit closure $\overline{G\cdot\phi}$ of any $\phi\in F^G$ is a subshift. If the action of $G$ on $\overline{G\cdot\phi}$ is free (respectively, minimal), then $\phi$ is said to be \emph{strongly aperiodic} (respectively, \emph{strongly repetitive}). The existence of such colorings is guaranteed by the following sharp result.

\begin{thm}[Gao-Jackson-Seward \cite{GaoJacksonSeward2009}; see also \cite{AubrunBarbieriThomasse2019}]\label{t: group colorings}
Every countable group admits a strongly aperiodic and strongly repetitive coloring by $2$ colors.
\end{thm}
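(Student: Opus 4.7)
The plan is to build the coloring $\phi\in\{0,1\}^G$ by an infinite inductive construction based on a nested sequence of group-theoretic tilings. First I would fix an exhaustion $\{e\}=K_0\subset K_1\subset K_2\subset\cdots$ of $G$ by finite symmetric sets, and enumerate $G\smallsetminus\{e\}=\{g_1,g_2,\dots\}$. The key auxiliary structures are finite ``tile shapes'' $T_n\subset G$ with $T_n\supset K_n$ together with center sets $C_n\subset G$ such that $G=\bigsqcup_{c\in C_n}cT_n$ and such that every $T_{n+1}$-tile is partitioned into translates of $T_n$-tiles. Such congruent, hierarchically coherent tilings exist for any countable group and can be produced by a greedy Rokhlin-type marker argument, exploiting that the enumeration of $G$ makes it locally finite.

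Next I would define $\phi$ by specifying its values on a small \emph{coding skeleton} $S_n\subset T_n$ inside each tile, leaving all other positions to be decided at higher stages (with arbitrary default values used in the intermediate picture so that a well-defined limit coloring $\phi$ is obtained). The skeleton inside a tile $cT_n$ has two jobs. First, it carries a self-synchronizing marker pattern that lets one recover the center $c$ from the restriction of $\phi$ to $cT_n$; this guarantees that for any $\psi\in\overline{G\cdot\phi}$ the nested tile decomposition is still ``visible'' in $\psi$, so any would-be automorphism of $\psi$ must permute the $T_n$-tiles in a way compatible with the $G$-action. Second, at stage $n$ I would use part of $S_n$ to defeat the particular element $g_n$ by placing distinct $0/1$ patterns at two positions related by $g_n$ inside a single tile; combined with the self-synchronization this forces $g_n\cdot\psi\neq\psi$ for every $\psi\in\overline{G\cdot\phi}$.

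Together these two features yield strong aperiodicity: any $\psi\in\overline{G\cdot\phi}$ inherits both the full hierarchical tiling structure and every stage-$n$ symmetry breaker, so no nontrivial $g\in G$ can stabilize $\psi$. For strong repetitivity I would make the stage-$n$ decisions $C_n$-equivariant, i.e.\ the restriction of $\phi$ to a $T_n$-tile depends only on the stage-$n$ data and is therefore the same in every $T_n$-tile up to translation. Consequently every finite window of $\phi$ is contained in some $T_n$-tile for large $n$ and recurs inside every $T_m$-tile for $m\ge n$; since $C_m$ is syndetic in $G$, these recurrences are syndetic, giving minimality of $\overline{G\cdot\phi}$.

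The main obstacle is the delicate design of the coding skeletons $S_n$: one must simultaneously ensure that (i) the tile decomposition at every scale is uniquely recoverable from $\phi$, even in limit colorings where the decoder has no preferred basepoint, and (ii) the symmetry-breaking marks placed at stage $n$ do not inadvertently create a new $g$-period at some larger scale or in some limit point. Making both work with only two colors, rather than with a more generous alphabet, is the technical heart of the Gao--Jackson--Seward argument; it requires a careful density estimate showing that each $S_n$ occupies a small enough fraction of $T_n$ to leave room for the inductive freedom needed at all later stages.
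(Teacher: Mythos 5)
Your proposal is not a proof of the statement as the paper proves it, and as written it contains a genuine gap at its foundation. The decisive unjustified step is the assertion that for an arbitrary countable group one can find finite tile shapes $T_n\supset K_n$ and center sets $C_n$ with $G=\bigcup_{c\in C_n}cT_n$ a \emph{disjoint} union and with every $T_{n+1}$-tile partitioned into translates of $T_n$-tiles, ``produced by a greedy Rokhlin-type marker argument.'' A greedy maximality argument only yields syndetic marker sets whose associated regions cover $G$ with gaps or overlaps; it does not give an exact partition into translates of a single tile shape, let alone coherence of the partitions across scales. Whether every countable group admits such congruent, coherent, cofinal sequences of exact tilings is precisely the difficult issue that Gao--Jackson--Seward's ``blueprint'' machinery is designed to avoid: their marker structures are emphatically not exact hierarchical tilings, and the technical heart of their proof is making the coding work without them. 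A secondary problem is your minimality argument: it is not true that every finite window of $\phi$ is eventually contained in a single $T_n$-tile, since a window can straddle tile boundaries at every scale; to get syndetic recurrence you would also have to control the patterns across boundaries (equivalently, the recurrence of the relative positions of adjacent tiles), which your equivariance assumption alone does not provide.

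It is also worth pointing out that the paper does not reprove the Gao--Jackson--Seward construction at all. It quotes their theorem only for strong aperiodicity, i.e.\ for the existence of a subshift $X\subset\{0,1\}^G$ on which $G$ acts freely, and then obtains strong repetitivity by a soft argument: by Zorn's lemma $X$ contains a minimal subshift $Y$, freeness passes to $Y$, and any coloring in $Y$ is therefore both strongly aperiodic and strongly repetitive. So even granting a strongly aperiodic coloring as a black box, there is no need to build repetitivity into the construction by equivariance, as you attempt; the two-line minimal-subsystem reduction already gives the full statement.
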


Indeed, the original statement in \cite{GaoJacksonSeward2009} only gives strong aperiodicity, but then strong repetitivity follows immediately with the following short argument. The existence of a strongly aperiodic coloring on $G$ means that $G$ acts freely on some subshift $X\subset\{0,1\}^G$. Then there is a minimal subset $Y\subset X$, and any coloring in $Y$ is strongly aperiodic and strongly repetitive. 

Suppose from now on that $G$ is finitely generated, and let $S$ be a minimal set of generators such that all elements of $S\cap S^{-1}$ are of order two. Consider the (left-invariant) Cayley graph defined by $S$, also denoted by $G$, where the degree of every vertex is $|S|$. Up to isomorphisms, the only possible limit of the graph $G$ is $G$. Thus $F^G$ is closed by taking limits of colors in the sense of \Cref{ss: widehat GG_*}.  But, in this setting, it is natural to modify the definition of a limit of a coloring $\phi\in F^G$ by using only graph isomorphisms between disks given by left translations of $G$. The ``limits by left translations'' obtained in this way are just the elements of $\overline{G\cdot\phi}$, and the corresponding notion of ``limit aperiodicity by left translations" means strong aperiodicity. Similarly, we can also define ``repetitivity by left translations,'' which turns out to be strong repetitivity. By definition, limit aperiodicity is stronger than ``limit aperiodicity by left translations'' (strong aperiodicity), whereas repetitivity is weaker than ``repetitivity by left translations'' (strong repetitivity).

The Cayley graph of $G$ induced by $S$ is also equipped with a $G$-invariant edge coloring $\psi_0$ by colors in $S$, assigning to an edge between vertices $a,b\in G$ the unique element $s\in S$ satisfying $as^{\pm1}=b$. Moreover, if the order of $s$ is not $2$, then the choice of $\pm1$ in the above exponent defines an orientation of the edge. This defines a canonical partial $G$-invariant direction $\OO_0$ of $G$. The left translations are just the graph isomorphisms of $G$ that preserve $\psi_0$ and $\OO_0$. Consider the obvious extensions of the concepts of limit aperiodicity and repetitivity to triples $(\phi,\psi,\OO)$, where $\phi$ is a vertex coloring, $\psi$ an edge coloring and $\OO$ a partial direction. Then  a coloring $\phi\in\{0,1\}^G$ is strongly aperiodic (respectively, strongly repetitive) if and only if $(\phi,\psi_0,\OO_0)$ is limit aperiodic (respectively, repetitive). Thus, in this case, \Cref{t: group colorings} can be restated by saying that $G$ admits a coloring $\phi\in\{0,1\}^G$ such that $(\phi,\psi_0,\OO_0)$ is limit aperiodic and repetitive.

\subsection{Main theorem}\label{ss: main thm}

The distinguishing number can be refined as follows. The \emph{limit distinguishing number} of $X$ is
\[
D_L(\xset)=\inf\{\,n\in \Z^+\mid\text{$X$ has a limit aperiodic coloring by $n$ colors}\,\}\;.
\]
When $X$ is repetitive, its \emph{repetitive limit distinguishing number} is
\[
D_{RL}(\xset)=\inf\{\,n\in \Z^+\mid\text{$X$ has a repetitive limit aperiodic coloring by $n$ colors}\,\}\;.
\]
It only makes sense to consider these concepts when $X$ is infinite because, if $X$ is finite, then limit aperiodicity means aperiodicity, and repetitivity always holds, obtaining $D_{RL}(\xset)=D_L(\xset)=D(\xset)$. Our main result is the following estimate of $D_L(\xset)$ and $D_{RL}(\xset)$, which can be considered as a refined version of \Cref{t: finite case}.

\begin{thm}\label{t: main}
If $X$ is an infinite connected simple graph, then $D_L(\xset)\le\deg X$. If moreover $\xset$ is repetitive, then $D_{RL}(\xset)\le\deg X$. 
\end{thm}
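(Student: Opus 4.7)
The plan is to construct $\phi\colon X\to\{1,\dots,\Delta\}$ by a hierarchical marker construction, in the spirit of the Gao--Jackson--Seward proof of \Cref{t: group colorings}, adapted from the homogeneous Cayley-graph setting to an arbitrary connected graph of bounded degree $\Delta=\deg X$. The key idea is to imprint in $\phi$, at a tower of scales $r_1<r_2<\cdots\to\infty$, combinatorial addresses that any color-preserving automorphism of a limit $(Y,\psi)\in\overline{[X,\phi]}$ must preserve, thereby forcing the automorphism to be trivial.

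First, I would fix a rapidly growing sequence $(r_n)$ and choose, for each $n$, a maximal $r_n$-separated subset $P_n\subset X$. By maximality, the Voronoi partition $\{V_n(p):p\in P_n\}$ has cells of diameter at most $2r_n$; when $r_{n+1}\gg r_n$, each coarse cell $V_{n+1}(q)$ contains many entire fine cells $V_n(p)$, giving a nested tower of partitions. Starting from a fixed proper $\Delta$-coloring (available for infinite connected $X$ by Brooks' theorem; the case $\Delta=2$ reduces to $X=\Z$ or an infinite ray and is handled directly), I would perturb the base coloring around each $p\in P_n$ so as to write in two local gadgets: (i) a \emph{beacon}, a rigid local pattern on a ball of fixed small radius $s_n$ around $p$, chosen so that it cannot occur anywhere else in the base coloring and therefore marks $p$ unambiguously at scale $n$; (ii) an \emph{address}, a finite binary word imprinted along a geodesic issuing from $p$ inside $V_n(p)$, recording the position of $V_n(p)$ inside the enclosing coarse cell $V_{n+1}(q)$. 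The rapid growth of $r_n$ ensures that gadgets from different scales do not interfere, and only two colors are needed to write a binary string, so $\Delta$ colors suffice.

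Limit aperiodicity is then a recognition argument. For any $(Y,y,\psi)\in\overline{[X,\phi]}$ and any automorphism $\sigma$ of $(Y,\psi)$, the beacon patterns at scale $n$ persist in $Y$ and single out a $\sigma$-invariant discrete set $\widetilde{P}_n\subset Y$, together with the associated Voronoi tower. Reading off the address words inside each cell identifies the embedding of every fine cell into its coarse neighbor; by induction on $n$, $\sigma$ must preserve the whole tower of cells with their addresses, pinning down first each beacon center and then every vertex, so $\sigma=\id_Y$.

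The main obstacle is the repetitive case: the naive hierarchy assigns pairwise distinct address words and destroys repetitivity. When $X$ is repetitive, I would replace the construction by a Kakutani--Rokhlin-style one in which each $P_n$ is chosen to be \emph{relatively repetitive} in $X$, so that the scale-$n$ cells fall into finitely many congruence classes appearing syndetically; the addresses are then drawn from a finite alphabet and arranged to repeat at each scale. The delicate point is making all such choices simultaneously and compatibly across the entire tower while staying within $\Delta$ colors; this is exactly the aspect where the group-theoretic shortcuts available in \Cref{t: group colorings} must be replaced by purely graph-theoretic arguments using only the degree bound. Once it is carried out, $(X,\phi)$ has finite local complexity and its orbit closure in $\widehat{\GG}_*$ is minimal, giving $D_{RL}(X)\leq\Delta$.
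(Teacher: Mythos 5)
Your plan is the right general shape (a hierarchy of marker sets at scales $r_n$ with symmetry-breaking decorations, which is also the skeleton of the paper's proof via \Cref{t: finitary}), but the quantitative core is missing and, as stated, false. You encode the identity of a fine cell $V_n(p)$ inside its coarse cell $V_{n+1}(q)$ as a binary word along a geodesic in $V_n(p)$, so the address capacity is $2^{O(r_n)}$ (and at best $2^{O(|V_n(p)|)}$ even if you decorate the whole cell). For the recognition argument to pin down each cell, distinct cells lying in the same coarse cell --- or even just within distance comparable to $r_n$ of each other --- must carry distinct decorations; in a graph with $\deg X\ge 3$ the number of such competitor cells can be of order $(\Delta-1)^{c\,r_{n+1}}$, and because the growth need not be uniform, a cell can be path-like with only $O(r_n)$ vertices while exponentially many isometric cells sit within distance $O(r_n)$ of it (e.g.\ a tree with long pendant paths). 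So the needed inequality ``number of available decorations $\ge$ number of nearby cells'' fails. This is exactly the obstruction discussed in \Cref{s. idea}: the supply of symmetry-breaking colorings on a cluster is exponential in its \emph{cardinality}, not its radius, and the paper's construction has to compare $\eta_n(|D_{n-1}(x,r_n^\pm)|)$ with $|D_{n-1}(x,r_n^\pm s_n)|$ and split each $X_n$ into $X_n^\pm$, enlarging the cluster radius from $r_n$ to $r_ns_n$ precisely where the local growth is large, so that the capacity inequality (property (iii) of \Cref{p: xn}) holds everywhere. Your proposal has no mechanism playing this role, and without it the induction ``$\sigma$ preserves the tower, hence $\sigma=\id$'' cannot be closed.

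The repetitive case is not proved but deferred: you state that making the choices ``simultaneously and compatibly across the entire tower'' is the delicate point, which is a description of the problem rather than an argument. In the paper this is the bulk of the work: one fixes in advance the sets $\mathfrak X_n$ of centers of repeated patterns and the isometries $\mathfrak h_{n,x}$ (\Cref{p: xxn}), and then every object of the construction --- the clusters, the orderings, the colorings $\chi_n$, the equivalences $h_{n,x}$ and the families $\psi_{n,x}^i$ --- is built equivariantly with respect to these maps (\Cref{p: xn}, \Cref{p: chin}, \Cref{p: hnx}, \Cref{p: phi_n x^i}), which is what makes the limit coloring repetitive. Two smaller gaps: the beacons must be protected from being simulated by address bits of other scales (the paper reserves the colors $0,\dots,5$ and the notion of adapted coloring exactly for this), and minimality of $\overline{[X,\phi]}$ does not follow from limit aperiodicity alone, since the closure may contain graphs not isomorphic to $X$; repetitivity has to be built in by hand, as above.
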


With this generality, the estimates of \Cref{t: main} are sharp, as shown by the Cayley graph of $\Z$ (defined with the generating set $\{1\}$). For $\deg X\ge3$, the estimates of \Cref{t: main} might not be optimal, according to \Cref{t: infinite case}. In this case, an obvious approach to get the  optimal estimate would be to try to somehow incorporate the idea of the proof of \Cref{t: infinite case} in \cite{LehnerPilsniakStawiski} into our techniques. However this may be difficult because we divide $X$ into finite pieces and work ``locally", whereas they make essential use of an infinite geodesic ray. In any case, like in \Cref{t: infinite case}, it is obvious that the optimal estimates in \Cref{t: main} are at least $\deg X-1$ if $\deg X\ge3$.

\Cref{t: main} will be derived from \Cref{t: finitary}, which is actually stronger in the following sense. The conditions of being limit aperiodic and repetitive can be restated quantitatively, involving some choice of constants. We prove that these constants can be taken to depend only  on $\deg X$ and not on the particular choice of $X$, which does not follow from \Cref{t: main}. The precise statement of this dependence can be found in \Cref{t: finitary}. The same can be said for finite graphs, where the analogue of \Cref{t: finitary} would give a quantitative result stronger than \Cref{t: finite case}.

In \Cref{t: main}, the minimality does not follow directly from the limit aperiodicity, like in \Cref{t: group colorings}, because $\overline{[X]}$ may contain elements $[Y,y]$ with $Y\not\cong X$.

In the case of a group $G$ finitely generated by $S$ (\Cref{ss: colorings of groups}), \Cref{t: main} states that $G$ has a repetitive limit aperiodic vertex coloring by $|S|$ colors. Since the total number of colors of $(\phi,\psi_0,\OO_0)$ is $2+|S|$, without taking into account the additional values of $\OO_0$, it can be said that \Cref{t: main} somehow improves \Cref{t: group colorings} in this case.

\subsection{An idea of the proof}\label{s. idea}

We have to prove that, if $\deg X<\infty$, then $X$ has a limit aperiodic coloring $\phi$ by $\deg X$ colors, which is repetitive if $X$ is repetitive. 

First, we divide the graph $X=X_{-1}$ into finite connected clusters of uniformly bounded size, such that their centers form a Delone set $X_0\subset X_{-1}$. 
Moreover $X_0$ can be endowed with a connected graph structure with $\deg X_0<\infty$.
On every cluster with center $x\in X_0$, the method of the proof of \Cref{t: finite case} is used to construct a large enough amount of different colorings $\psi_{0,x}^i$ by $\deg X$ colors breaking its symmetry.
Any assignment of such colorings, $x\mapsto\psi_{0,x}^i$, is considered as a coloring, $x\mapsto i$, of $X_0$.
For these colorings of $X_0$, we have enough avaliable colors to be able to proceed in the same way. 
Thus $X_0$ is divided into clusters, defining a graph $X_1\subset X_0$. The above type of colorings of $X_0$ are considered in the new clusters. Again, for every $x\in X_1$, we can break the symmetry of the corresponding cluster with a large enough amount of different colors $\psi_{1,x}^i$ of the above kind. 
Any assignment of such colorings, $x\mapsto\psi_{1,x}^i$, is considered a coloring, $x\mapsto i$, of $X_1$.
This process is continued indefinitely, producing a sequence of graphs $X_n$, divided into clusters whose centers form $X_{n+1}$, and colorings $\psi_{n+1,x}^i$ breaking the symmetry in the cluster of $X_n$ with center $x\in X_{n+1}$. We use these data for $0\leq n \leq N$ to define a coloring $\phi^N$ preventing isomorphisms between disks centered at points within a certain distance; namely, given any $\epsilon\in\Z^+$, there is some $N,\delta\in\Z^+$ such that
\begin{equation}\label{epsilon, delta}
0<d(x,y)<\epsilon\Longrightarrow[D (x,\delta),x,\phi^N]\neq[D (y,\delta),y,\phi^N]
\end{equation}
for all $x,y\in X$. By taking a subsequence if necessary, we can assume that the sequence $\phi^N$ is eventually constant on finite sets, converging in this sense to a coloring $\phi$. This coloring $\phi$ is limit aperiodic because it satisfies~\eqref{epsilon, delta}. Indeed $\delta$ depends only on $\deg X$ and $\epsilon$ in~\eqref{epsilon, delta}, as stated in \Cref{t: finitary}, the indicated refinement of \Cref{t: main}. 

The definition of every $X_n$ resembles very much the notion of a \emph{shallow minor} of $X_{n-1}$ at certain \emph{depth} (see \cite{NesetrilOssona2008} and other references therein).

In the above process, there is a sequence of integers $r_n$ that provides a lower bound for the ``radii" of the clusters in $X_{n-1}$. Two crucial quantities that one needs to control are the number of suitable aperiodic colorings on each cluster, which depends exponentially on the cardinality of the cluster, and the number of clusters that are close to each other (depending on $\epsilon_n$), which is always lower than the maximum cardinality of a disk of radius $O(r_n)$. If our graph has a uniform growth function, then we can choose $r_n$ large enough so that there are enough different colorings on each cluster compared to the number of neighbouring clusters. At first glance, a similar argument could not work if the growth of the graph is not uniform, since for any choice of $r_n$ there could be points $x \in X_n$ such that there are not enough colorings compared to the number of nearby clusters. However, the crucial observation is that, if there are many neighbouring clusters, then the disk of radius $O(r_n)$ has large enough cardinality, and we can construct sufficiently many aperiodic colorings on a cluster containing the disk. This observation makes the argument more involved, since we need to divide every $X_n$ into two subsets, $X_n^\pm$, and different definitions and estimates are used in each of them. Besides this difficulty, the proof becomes quite complex with the arguments about repetitivity. It may be interesting to focus in the limit aperiodicity at first reading, omitting the arguments about repetitivity (\Cref{s. repetitivity} and its further use). 

For the sake of brevity, a preliminary part of the construction of $X_n$, concerning repetitivity, is shown in the companion paper \cite{AlvarezBarral-realization}; actually, a version for Riemannian manifolds is proved there, and the case of graphs involves simpler arguments. 

Despite its complexity, the proof only uses elementary tools, and it would be much simpler without achieving the optimal number of colors.

\subsection{Applications}

As first straightforward applications, we derive some versions of \Cref{t: main} for edge colorings and for more general graphs, and the existence of limit aperiodic and repetitive tilings. In the subsequent paper \cite{AlvarezBarral-realization}, we will give a more involved application of \Cref{t: main} concerning the realization of manifolds as leaves of compact foliated spaces.

\subsubsection{Limit aperiodic and repetitive edge colorings}\label{sss: edge colorings}

The notions of \emph{aperiodicity}, \emph{limit aperiodicity} and \emph{repetitivity} have obvious analogues for edge colorings of a connected simple graph $X$. The analogue of $D(X)$ for edge colorings is called the \emph{distinguishing index} \cite{BroerePilsniak2015}, and denoted by $DI(X)$. When $X$ is infinite, it makes sense to consider the obvious versions of $D_L(X)$ and $D_{RL}(X)$ for edge colorings, denoted by $DI_L(X)$ and $DI_{RL}(X)$, and called (\emph{repetitive}) \emph{limit distinguishing index}. 

Recall that the \emph{line graph} $X'$ of $X$ is defined as follows: the vertices of $X'$ are the edges of $X$, and two vertices of $X'$ are joined by an edge if they are edges of $X$ meeting at some vertex; thus the edges of $X'$ can be also identified to the vertices of $X$. Note that $X'$ is connected and simple, $\deg X'\le2(\deg X-1)$, and
\[
DI(X)=D(X')\;,\quad DI_L(X)=D_L(X')\;,\quad DI_{RL}(X)=D_{RL}(X')\;.
\]
Then the following is a direct consequence of \Cref{t: main}.

\begin{cor}\label{c: edge colorings}
If $X$ is an infinite connected simple graph, then $DI_L(X),DI_{RL}(X)\le2(\deg X-1)$.
\end{cor}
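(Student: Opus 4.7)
The plan is to apply Theorem~\ref{t: main} directly to the line graph $X'$, exploiting the three facts noted in the excerpt just before the statement: $X'$ is connected and simple, $\deg X'\le 2(\deg X-1)$, and $DI_L(X)=D_L(X')$, $DI_{RL}(X)=D_{RL}(X')$. First I would observe that $X'$ is also infinite, because an infinite connected simple graph $X$ necessarily has infinitely many edges. Theorem~\ref{t: main} then gives $D_L(X')\le\deg X'\le 2(\deg X-1)$ and, in case $X'$ is repetitive, $D_{RL}(X')\le 2(\deg X-1)$. Substituting via the two identities yields the bounds claimed for $DI_L(X)$ and $DI_{RL}(X)$.

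The only real content, which the paper presents as immediate, lies in justifying the identities $DI_L(X)=D_L(X')$ and $DI_{RL}(X)=D_{RL}(X')$. These follow from the Whitney-type observation that the (edge-color-preserving) automorphism group of $X$ coincides with the (vertex-color-preserving) automorphism group of $X'$ for connected simple graphs other than the exceptional finite case $K_3$ versus $K_{1,3}$ (which does not arise here, since $X$ is infinite), together with continuity of the line-graph correspondence with respect to the topology on colored pointed graphs constructed in Section~\ref{ss: widehat GG_*}: a disk of radius $r$ in $X'$ around an edge $e$ reflects precisely the edges of $X$ at edge-distance at most $r$ from $e$, so limits of edge-colorings of $X$ correspond to limits of vertex-colorings of $X'$, and uniformly repeated edge-patterns in $X$ correspond to uniformly repeated vertex-patterns in $X'$.

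The main obstacle, if any, is thus the careful verification of these identities rather than any fresh combinatorial argument. Since the degree bound $\deg X'\le 2(\deg X-1)$ is sharp only in the extremal case, one does not expect the resulting estimate $2(\deg X-1)$ to be optimal for $DI_L$ and $DI_{RL}$; however, obtaining the sharp bound on the distinguishing index lies outside the scope of a direct corollary and would require adapting the techniques of the proof of Theorem~\ref{t: main} to edge-colorings intrinsically rather than passing through $X'$.
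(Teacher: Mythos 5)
Your argument is exactly the paper's: apply \Cref{t: main} to the line graph $X'$ and invoke the identities $DI_L(X)=D_L(X')$, $DI_{RL}(X)=D_{RL}(X')$ together with $\deg X'\le 2(\deg X-1)$, noting that $X'$ is infinite, connected and simple. The paper likewise treats those identities as immediate, so your extra Whitney-type and locality discussion is supplementary justification rather than a different route.
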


However, \Cref{c: edge colorings} is not very satisfactory. Its estimate can be surely improved by adapting the proof of \Cref{t: main}, probably obtaining $DI_L(X),DI_{RL}(X)\le\deg X$. We hope to prove this in another publication.

\subsubsection{Extension to general graphs}\label{sss: general graphs}

Now let $Y$ be a (countable) \emph{general graph} (with finite vertex degrees); namely, $Y$ may have a partial direction, multiple edges, and loops. Assuming that $Y$ is connected, there are obvious extensions of the concepts of \Cref{ss: main thm,sss: edge colorings} to this general setting. There is an induced undirected simple graph $\overline Y$ with the same vertex set, where the partial orientation and loops are forgotten, and with a single edge between every pair of adjacent vertices in $Y$.  Clearly, $D(Y)\le D(\overline Y)$ and $D_L(Y)\le D_L(\overline Y)$.

\begin{cor}\label{c: general graph}
If $Y$ is an infinite connected general graph, then $D_L(Y),D_{RL}(Y)\le\deg\overline Y$.
\end{cor}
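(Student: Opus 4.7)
The plan is to apply \Cref{t: main} to the simple graph $\overline Y$ and transport the resulting coloring to $Y$ along their shared vertex set. The key structural observation is that every color-preserving automorphism of $Y$ is automatically a color-preserving automorphism of $\overline Y$, yielding
\[
\Aut(Y,\phi)\subseteq\Aut(\overline Y,\phi)
\]
for any vertex coloring $\phi$. Correspondingly, the assignment $[Z,z,\psi]\mapsto[\overline Z,z,\psi]$ defines a continuous map $\Phi\colon\widehat\GG_*^{\mathrm{gen}}\to\widehat\GG_*$ from the space of pointed colored general graphs to the space of pointed colored simple graphs, since an isomorphism of $r$-balls in a general graph induces an isomorphism of $r$-balls in the underlying simple graph.

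For the bound $D_L(Y)\le\deg\overline Y$, let $\phi$ be the limit aperiodic coloring of $\overline Y$ by $\deg\overline Y$ colors provided by \Cref{t: main}, and view $\phi$ as a coloring of $Y$. Given any $[Z,z,\psi]\in\overline{[Y,\phi]}$, continuity of $\Phi$ places $[\overline Z,z,\psi]$ in $\overline{[\overline Y,\phi]}$, where limit aperiodicity of $(\overline Y,\phi)$ forces $\Aut(\overline Z,\psi)=\{\id\}$. The inclusion of automorphism groups then gives $\Aut(Z,\psi)=\{\id\}$, so $(Y,\phi)$ is limit aperiodic as a colored general graph.

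For the bound $D_{RL}(Y)\le\deg\overline Y$, assume $Y$ is repetitive as a general graph. Since every pointed ball type in $\overline Y$ is the $\Phi$-image of some pointed ball type in $Y$, uniform repetition in $Y$ pushes down to uniform repetition in $\overline Y$, so $\overline Y$ is repetitive; the repetitive half of \Cref{t: main} then produces a $\phi$ that is simultaneously repetitive and limit aperiodic on $\overline Y$.

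The main obstacle lies in upgrading repetitivity of $(\overline Y,\phi)$ to repetitivity of $(Y,\phi)$ as a colored general graph, because a colored general-graph ball carries strictly more data—direction, multiplicity, loops—than its simple-graph projection. My plan is to exploit the minimality characterization from \Cref{ss: widehat GG_*} via the continuous injection
\[
\overline{[Y,\phi]}\hookrightarrow\overline{[Y]}\times\overline{[\overline Y,\phi]},\qquad[Z,z,\psi]\longmapsto\bigl([Z,z],[\overline Z,z,\psi]\bigr).
\]
Repetitivity of $Y$ gives minimality of $\overline{[Y]}$, and repetitivity of $(\overline Y,\phi)$ gives minimality of $\overline{[\overline Y,\phi]}$. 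Combined with the limit aperiodicity of $(Y,\phi)$ already established—which, at every limit $(Z,z,\psi)$, rigidly determines the class $[Z,z]$ from $[\overline Z,z,\psi]$ because each vertex of the aperiodic $(\overline Z,\psi)$ is distinguished by its colored surroundings—I expect to conclude that $\overline{[Y,\phi]}$ itself is minimal, which by the criterion of \Cref{ss: widehat GG_*} is equivalent to repetitivity of $(Y,\phi)$.
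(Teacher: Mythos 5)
Your argument for $D_L(Y)\le\deg\overline Y$ is essentially the paper's: both reduce to $D_L(Y)\le D_L(\overline Y)\le\deg\overline Y$, transporting a limit aperiodic coloring of $\overline Y$ from \Cref{t: main} to $Y$ via the inclusion $\Aut(Z,\psi)\subseteq\Aut(\overline Z,\psi)$ at every limit.

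For $D_{RL}(Y)$ there is a genuine gap. You invoke \Cref{t: main} on $\overline Y$ as a black box and then try to upgrade repetitivity of $(\overline Y,\phi)$ to repetitivity of $(Y,\phi)$, relying on the claim that, within $\overline{[Y,\phi]}$, the colored simple-graph class $[\overline Z,z,\psi]$ determines the general-graph class $[Z,z]$. That claim is unjustified and is false in general: $\phi$ is built purely from the simple graph $\overline Y$ and carries no information about directions, multiplicities, or loops of $Y$. Two limits $[Z_1,z_1,\psi_1],[Z_2,z_2,\psi_2]\in\overline{[Y,\phi]}$ may have isomorphic colored simple-graph structures while the unique colored simple-graph isomorphism $\overline Z_1\to\overline Z_2$ (uniqueness coming from aperiodicity) fails to be a general-graph isomorphism; for instance, take $Y=\mathbb Z$ with a periodic pattern of edge directions whose period is invisible to the coloring of $\overline Y=\mathbb Z$. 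Aperiodicity pins each vertex down by its colored simple-graph surroundings, which gives uniqueness of the isomorphism but says nothing about whether it preserves the extra combinatorial data, so minimality of $\overline{[\overline Y,\phi]}$ and $\overline{[Y]}$ does not combine to give minimality of $\overline{[Y,\phi]}$. The paper instead goes into the finitary construction (\Cref{t: finitary}): in \Cref{s. repetitivity}, the sets $\Omega_n$ are defined using only those isometries of $\overline Y$-disks induced by general-graph isomorphisms of $Y$ (still relatively dense because $Y$ is repetitive as a general graph), so the maps $\mathfrak h_{n,x}$ --- and hence the coloring $\phi$ --- respect the general-graph structure from the outset, and repetitivity of $(Y,\phi)$ then holds by construction rather than being retrofitted.
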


The  inequality $D_L(Y)\le\deg\overline Y$ is a direct consequence of \Cref{t: main} since $D_L(Y)\le D_L(\overline Y)$. 

The inequality $D_{RL}(Y)\le\deg\overline Y$ follows with a small modification of the proof of \Cref{t: finitary}.
Namely, in \Cref{s. repetitivity}, the sets $\Omega_n$ must be defined using isometries between disks of $\overline{Y}$ induced by isomorphisms between subgraphs of $Y$. Then the isometries $\mathfrak h_{n,x}$ between disks of $\overline Y$, constructed in \Cref{s. repetitivity}, can be assumed to be induced by isomorphisms between subgraphs of $Y$. The rest of the proof can be obviously adapted. 

For example, with the notation of \Cref{ss: colorings of groups}, we can consider the \emph{Schreier graph} $Y$ defined by $G$, $S$ and any subgroup $H<G$. It is a general graph whose vertex set is $H\backslash G$, where the edges between vertices $Ha$ and $Hb$ are given by the elements $s\in S$ with $Has^{\pm1}=Hb$. By \Cref{c: general graph}, $Y$ has some limit aperiodic vertex coloring by $\deg\overline Y$ colors. Note that $\deg\overline Y\le|S|$.

\subsubsection{Limit aperiodic and repetitive tilings} 

Let us recall the general definition of tiling given in \cite{BlockWeinberger1992} (see also \cite{DranishnikovSchroeder2007}). We use the term \emph{$n$-complex} for a connected topological space with a simplicial complex structure of dimension $n$. A \emph{set of prototiles} $\TT\equiv(\TT,\FF)$ consists of a finite collection $\TT$ of compact metric $n$-complexes, called \emph{prototiles}, and a collection $\FF$ of subcomplexes of dimension $<n$, called \emph{faces}, together with an \emph{opposition} involution $o:\FF\to\FF$. A \emph{tiling} or \emph{tessellation} $\alpha$ of a metric space $X$ \emph{by $\TT$} is a collection of isometries $a_\lambda:t_\lambda\subset X\to t'_\lambda\in\TT$, where every $t_\lambda$ is called a \emph{tile} with \emph{faces} defined via $a_\lambda$, such that:
\begin{itemize}

\item $X=\bigcup_\lambda t_\lambda$;

\item the complement in $t_\lambda$ of its faces is $\Int(t_\lambda)$ in $X$;

\item if $\Int(t_\lambda\cup t_{\lambda'})\ne\Int(t_\lambda)\cup\Int(t_{\lambda'})$, then $t_\lambda$ and $t_{\lambda'}$ intersect along a face, $f$ in $t_\lambda$ and $o(f)$ in $t_{\lambda'}$; and

\item there are no free faces of $t_\lambda$. 

\end{itemize}
Similarly, we can define a \emph{set of colored prototiles} by endowing $\TT$ with a coloring $\phi$, and a \emph{set of prototiles with colored faces} by endowing $\FF$ with a coloring $\psi$ preserved by the opposition map. Then we get the corresponding definitions of (\emph{tile-}) \emph{colored tiling by $(\TT,\phi)\equiv(\TT,\FF,\phi)$} and \emph{face-colored tiling by $(\TT,\psi)\equiv(\TT,\FF,\psi)$}. These concepts can be also described by colorings of $\{t_\lambda\}$, and colorings of the set of intersections $t_\lambda\cap t_{\lambda'}$ along faces. Like $\mathcal G_*$ and $\widehat{\mathcal G}_*$ (\Cref{ss: widehat GG_*}), the sets of tilings of $X$ by $\TT$, colored tilings of $X$ by $(\TT,\phi)$ and face-colored tilings of $X$ by $(\TT,\psi)$ can be endowed with topologies after choosing a distinguished point of $X$, and there are obvious versions of \emph{aperiodicity}, \emph{limit aperiodicity} and \emph{repetitivity} for tilings, colored tilings and face-colored tilings, using isometries of the ambient metric spaces \cite{Sadun2003,BellissardBenedettiGambaudo2006,DranishnikovSchroeder2007}. Like in the case of groups (\Cref{ss: colorings of groups}), refined versions of these concepts can be given by using some subgroup of isometries, obtaining a weaker version of (limit) aperiodicity and a stronger version of repetitivity; for instance, if $X$ is a Lie group, it is natural to use its left translations.

Every tiling $\alpha$ of $X$ by $\TT$ defines a connected undirected simple graph $G$ whose vertices are the tiles of $\alpha$, with an edge between two tiles if they meet along a face. Thus $G$ is infinite just when $X$ is not compact, and $\deg G$ is bounded by the maximum number of faces of the prototiles in $\TT$, which is bounded by $|\FF|$. Therefore the following is a direct consequence of \Cref{t: main} and \Cref{c: edge colorings}.

\begin{cor}\label{c: colorings of tilings}
Suppose that $X$ is not compact, and let $\Delta$ denote the maximum number of faces of the prototiles in $\TT$. Then any {\rm(}repetitive\/{\rm)} tiling of $X$ by $\TT$ has a {\rm(}repetitive\/{\rm)} limit aperiodic tile-coloring by $\Delta$ colors, and a {\rm(}repetitive\/{\rm)} limit aperiodic face-coloring by $2(\Delta-1)$ colors.
\end{cor}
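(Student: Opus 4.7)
The plan is to reduce the tiling statement to the already established graph-theoretic results Theorem~\ref{t: main} and Corollary~\ref{c: edge colorings}, via the adjacency graph $G$ constructed in the paragraph immediately preceding the corollary. First I would verify that $G$ has the required properties: it is connected since $X$ is connected and tiles meeting along faces produce a connected adjacency structure; it is infinite because each tile is compact while $X$ is not; and $\deg G \le \Delta$ since every face of a given tile is shared with at most one other tile.

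Next I would check that if the tiling $\alpha$ is repetitive (in the tiling sense of the paper), then $G$ is repetitive (in the graph sense of \Cref{ss: widehat GG_*}). Because the prototiles form a fixed finite collection of compact metric complexes, an isometry of the ambient space sending a bounded patch of the tiling to another patch of the tiling induces an isomorphism of the corresponding finite subgraphs of $G$; uniform recurrence of patches of $\alpha$ therefore yields uniform recurrence of pointed disks of $G$.

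Then I would invoke \Cref{t: main} to produce a (repetitive) limit aperiodic vertex coloring of $G$ by $\Delta$ colors, and transfer it to a tile-coloring of $\alpha$ by assigning each tile the color of its vertex; for the face-coloring claim, I would invoke \Cref{c: edge colorings} analogously, transferring the edge coloring of $G$ (using $2(\Delta-1)$ colors) to the faces shared between adjacent tiles.

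The last step, which is the only substantive verification, is to confirm that limit aperiodicity (and repetitivity) passes back from the colored graph $(G,\phi)$ to the colored tiling $(\alpha,\phi)$ in the isometry-based topology used in the tiling setting. The key observations are: (i) every isometry of $X$ that maps $(\alpha,\phi)$ to another colored tiling induces a color-preserving automorphism of the associated colored graph, so automorphism-freeness is inherited; and (ii) convergence of pointed colored tilings implies convergence of the associated pointed colored graphs in $\widehat{\GG}_*$, because the uniformly bounded geometry of $\TT$ forces the graph structure within a given vertex-radius ball to be determined by the tiling on a corresponding bounded region of $X$. I expect this final compatibility between the tiling and graph topologies to be the main (still largely routine) obstacle; the rest of the argument is a direct application of the main theorem and its edge-coloring corollary.
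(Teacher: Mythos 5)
Your proposal follows exactly the same route as the paper, which constructs the adjacency graph $G$ in the preceding paragraph and then declares the corollary a direct consequence of Theorem~\ref{t: main} and Corollary~\ref{c: edge colorings}; you have simply spelled out the connectivity, infiniteness, degree bound, and the transfer of limit aperiodicity and repetitivity between tilings and graphs that the paper leaves implicit. The argument is correct and matches the intended proof.
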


Since the face-colorings can be geometrically realized by dovetailing the faces, we get the following.

\begin{cor}\label{c: (repetitive) limit aperiodic tilings}
With the notation and conditions of \Cref{c: colorings of tilings}, if $X$ has a {\rm(}repetitive\/{\rm)} tiling by $\TT$, then it has a {\rm(}repetitive\/{\rm)} limit aperiodic tiling by at most $2|\TT|\Delta(\Delta-1)$ prototiles.
\end{cor}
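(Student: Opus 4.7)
The strategy is to promote the face-coloring given by \Cref{c: colorings of tilings} to a purely geometric (uncolored) tiling by enlarging the prototile set. By \Cref{c: colorings of tilings}, every (repetitive) tiling $\alpha$ of $X$ by $\TT$ admits a (repetitive) limit aperiodic face-coloring $\psi$ by $k:=2(\Delta-1)$ colors, compatible with the opposition involution $o$. The plan is to absorb the information of $\psi$ into the shapes of the tiles themselves via a dovetail construction: for each color $c\in\{1,\dots,k\}$ fix, once and for all, a small pair of model bump/notch shapes compatible with $o$, so that two faces can be isometrically glued if and only if they carry the same color.

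Next, I would replace each prototile $t\in\TT$ by its decorated versions. To keep the count linear in $\Delta$ rather than exponential in $\Delta$, I would first subdivide each $t\in\TT$ into ``face wedges'', one wedge $w_{t,f}$ per face $f$ of $t$, using interior cuts that are themselves marked with a $t$-specific dovetail profile so that wedges can only be glued back together in the unique way coming from their parent prototile $t$. Each wedge $w_{t,f}$ then admits $k$ decorated variants, one per color of $f$. The resulting collection of prototiles is indexed by triples $(t,f,c)$ with $t\in\TT$, $f$ a face of $t$, and $c\in\{1,\dots,k\}$, yielding at most $|\TT|\cdot\Delta\cdot k=2|\TT|\Delta(\Delta-1)$ prototiles. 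Applying this substitution tile-by-tile to $\alpha$ produces a new tiling $\widetilde\alpha$ of $X$ by this finite set of prototiles.

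It remains to show that $\widetilde\alpha$ inherits (repetitive) limit aperiodicity from $(\alpha,\psi)$. By construction, the dovetail rules ensure that every isometry of $\widetilde\alpha$ preserving its tile decomposition descends to an isometry of $\alpha$ preserving the face-coloring $\psi$, and conversely. Hence $\Aut(\widetilde\alpha)$ is canonically identified with $\Aut(\alpha,\psi)$, and this identification is continuous in the topologies on the corresponding spaces of pointed tilings, so $\overline{[\widetilde\alpha,x]}$ and $\overline{[\alpha,x,\psi]}$ are canonically homeomorphic. Limit aperiodicity and repetitivity of $\widetilde\alpha$ then follow from those of $(\alpha,\psi)$.

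The main (minor) obstacle is the dovetail construction itself: the model shapes must be chosen rigidly enough that (i) a face with color $c$ cannot be isometrically glued to any face with color $c'\ne c$, (ii) the internal wedge cuts can only be matched in the original way, and (iii) no spurious isometries of $\widetilde\alpha$ arise that do not correspond to color-preserving isometries of $(\alpha,\psi)$. These are standard topological considerations and introduce no new ideas beyond \Cref{c: colorings of tilings}.
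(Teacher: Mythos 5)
Your proposal is correct and follows essentially the same route as the paper, whose entire argument is the one-line observation that the face-coloring by $2(\Delta-1)$ colors from \Cref{c: colorings of tilings} can be geometrically realized by dovetailing the faces. Your wedge subdivision (one decorated piece per triple of prototile, face, color) is just an explicit way of doing that bookkeeping so that the prototile count comes out linear, namely at most $|\TT|\cdot\Delta\cdot 2(\Delta-1)$, which the paper leaves implicit.
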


For example, let $\widetilde M$ be any regular covering of a compact Riemannian $n$-manifold $M$, let $\Gamma$ denote its group of deck transformations, and let $t$ be a fundamental domain. Then the $\Gamma$-translates of $t$ form a repetitive periodic tiling of $\widetilde M$ by the prototile $t$. Here, every face $f$ of $t$ corresponds to an element $\gamma_f\in\Gamma$ such that $t\cap\gamma_ft=f$. These elements $\gamma_f$ form a generating set $S$ of $\Gamma$. By \Cref{c: (repetitive) limit aperiodic tilings}, it follows that $\widetilde M$ has a repetitive limit aperiodic tiling by at most $2|S|(|S|-1)$ prototiles; in particular, every hyperbolic space $\bfH^n$ has a repetitive limit aperiodic tiling by finitely many prototiles (cf.\ \cite{BlockWeinberger1992,DranishnikovSchroeder2007}).

With more generality, let $\Gamma$ be a discrete group acting by isometries properly and cocompactly on a metric space $X$. For any fixed $x\in X$, the orbit $\Gamma x$ is a Delone set in $X$, and the corresponding \emph{Voronoi cells},
\[
V_{\gamma x}=\{\,y\in X\mid d(y,\gamma x)\le d(y,\Gamma x)\,\}\quad(\gamma\in\Gamma)\;,
\]
form a repetitive periodic tiling of $X$ by one prototile (all tiles are isometric). Let $\Delta$ denote the number of faces of these tiles. Then, by \Cref{c: colorings of tilings}, $X$ has a repetitive limit aperiodic tiling by at most $2\Delta(\Delta-1)$ prototiles (cf.\ \cite{DranishnikovSchroeder2007}).

In \Cref{c: colorings of tilings,c: (repetitive) limit aperiodic tilings}, and in the previous examples, the number of colors or prototiles would be improved by the expected improvement of \Cref{c: edge colorings}.

\section{Preliminaries on graphs and colorings}\label{s: prelims on graphs & colorings}

Let us recall some basic definitions and elementary results about graphs and its metric properties. Short proofs are indicated for completeness.

\subsection{Graphs}\label{ss: graphs} 

 Let $\xset\equiv(\xset,E)$ be an (undirected) {simple graph} ($\xset$ and $E$ are the sets of vertices and edges, respectively). The term ``simple'' refers to the existence of no loops and of at most one edge joining any pair of vertices. Thus we may also consider $E$ as a symmetric relation on $X$ where no point is related to itself. Recall that the {\em degree\/} (or {\em valency\/}) $\deg x$ of a vertex $x$ is the number of edges connecting to $x$. The \emph{degree} of $X$ is $\deg X=\sup_{x\in X}\deg x$. For\footnote{We assume that $0\in\N$.} $n\in\N$, a {\em path\/} of {\em length\/} $n$ from $x$ to $y$ in $\xset$ is a sequence of $n$ consecutive\footnote{With a common vertex.} edges joining $x$ to $y$; in terms of their vertices, it can be considered as a sequence $(z_0,\dots,z_n)$, where $z_0=x$, $z_n=y$, and $z_{i-1}Ez_i$ for all $i=1,\dots,n$. If any two vertices of $\xset$ can be joined by a path, then $\xset$ is called {\em connected\/}. The topological and geometric properties of $X$ indeed refer to its geometric realization.

On any $\yset \subset \xset$, we get the subgraph $\yset \equiv(\yset,E|_{\yset})$. By Zorn's lemma, there are maximal connected subgraphs of $\xset$, called {\em connected components\/}, which form a partition of $\xset$. Any connected subgraph of $\xset$ is contained in some connected component of $\xset$.

Let $\xset'\equiv(\xset',E')$ be another graph. Recall that any relation preserving bijection $\xset\to\xset'$ is called an {\em isomorphism\/} ({\em of graphs\/}). Given distinguished points, $x_0\in \xset$ and $x'_0\in \xset'$, a ({\em pointed\/}) {\em isomorphism\/} $f:(\xset,x_0)\to(\xset',x'_0)$ is an isomorphism $f:\xset\to \xset'$ satisfying $f(x_0)=x'_0$. The notation $\xset\cong \xset'$ or $(\xset,x_0)\cong(\xset',x'_0)$) may be used in these cases. The term ({\em pointed\/}) {\em automorphism\/} is used for a (pointed) isomorphism of a (pointed) graphs to itself. The group of automorphisms of $\xset$ (respectively, $(\xset,x_0)$) is denoted by $\Aut(\xset)$ (respectively, $\Aut(\xset,x_0)$).

Assume from now on that $\xset$ is connected. Then the natural $\N$-valued metric $d$ on $X$ is defined by declaring $d(x,y)$ to be the minimum length of paths in $\xset$ from $x$ to $y$. The following property is easily verified:
\begin{equation}\label{exists z}
\forall x,y\in \xset,\ \forall m,n\in\N,\ d(x,y)=m+n\Longrightarrow\exists z\in \xset\mid d(x,z)=m,\ d(y,z)=n\;.
\end{equation}
$E$ and $d$ are equivalent structures on $X$. Thus we may consider the connected graph $X$ as the metric space $(\xset,d)$, and an isomorphism between connected graphs as an isometry. A path $(u_0,\dots,u_n)$ in $X$ is called a {\em minimizing geodesic segment\/} if $d(u_0,u_n)=n$. By~\eqref{exists z}, there exists a minimizing geodesic segment joining any pair of vertices.

Let us recall some basic metric concepts and properties for the particular case of the connected graph $X$. For $x\in \xset$ and $r\in\N$, let $\sphere(x,r)=\{\,y\in X\mid d(x,y)=r\,\}$ and $D (x,r)=\{\,y\in X\mid d(x,y)\le r\,\}$ (the sphere and disk of center $x$ and radius $r$). For another integer $s\geq r\geq 0$, the set $\corona(x,r,s)=D (x,s)\setminus D (x,r)$ is called a \emph{corona}. For $Q\subset X$, its \emph{closed penumbra}\footnote{The penumbra $\Pen(Q,r)$ usually has a similar definition with a strict inequality. On graphs it is more practical to use non-strict inequalities.}  of \emph{radius} $r$ is $\CPen(Q,r)=\{\,y\in X\mid d(Q,y)\le r\,\}$; in particular, $\CPen(D(x,r),t)=D(x,r+t)$ for $r,t\in\N$ by~\eqref{exists z}. We may add $\xset$ as a subindex to all of this notation if necessary. Observe that $D (x,r)$ is connected. More generally, $\CPen(Q,r)$ is connected if $Q$ is connected. Note also that $|\sphere(x,0)|=1$ and $|\sphere(x,1)|=\deg x$. It is said that $Q$ is ($K$-) {\em separated\/} if there is some $K\in\Z^+$ such that $d(x,y)\ge K$ for all $x\ne y$ in $Q$. On the other hand, $Q$ is said to be ($C$-) {\em relatively dense\/}\footnote{A {\em $C$-net\/} is similarly defined with the penumbra. If reference to $C$ is omitted, both concepts are equivalent.} in $X$ if there is some $C>0$ such that $\CPen(Q,C)=X$. A separated relatively dense subset is called a \emph{Delone} subset.

\begin{lem}[\'Alvarez-Candel {\cite[Proof of Lemma~2.1]{AlvarezCandel2011}}]\label{l: maximal}
A maximal $K$-separated subset $Q$ is $(K-1)$-relatively dense in $X$.
\end{lem}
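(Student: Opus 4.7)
The plan is a short argument by contradiction, exploiting the fact that the metric $d$ on a simple graph takes values in $\N$, so that the strict inequality $d(y,Q)>K-1$ is equivalent to $d(y,Q)\ge K$.

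Assume toward a contradiction that $Q$ is not $(K-1)$-relatively dense. Then $\CPen(Q,K-1)\ne X$, so there exists $y\in X$ with $d(y,Q)>K-1$. Since $d$ is $\N$-valued, this forces $d(y,q)\ge K$ for every $q\in Q$. In particular $d(y,q)\ge K\ge 1$ for every $q\in Q$, hence $y\notin Q$.

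I would then verify that $Q\cup\{y\}$ remains $K$-separated: for $q,q'\in Q$ with $q\ne q'$ the inequality $d(q,q')\ge K$ holds because $Q$ is $K$-separated, and for each $q\in Q$ we have $d(y,q)\ge K$ by the choice of $y$. Thus $Q\cup\{y\}$ is a $K$-separated subset strictly containing $Q$, contradicting the maximality of $Q$. Therefore $\CPen(Q,K-1)=X$, i.e.\ $Q$ is $(K-1)$-relatively dense.

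There is no real obstacle here; the only subtlety is recognizing that the integrality of the graph metric is what turns $d(y,Q)>K-1$ into $d(y,Q)\ge K$, which is precisely what is needed for the enlargement $Q\cup\{y\}$ to remain $K$-separated.
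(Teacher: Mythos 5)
Your argument is correct and is exactly the standard proof of this fact (the paper itself only cites \cite[Proof of Lemma~2.1]{AlvarezCandel2011} rather than reproducing it): maximality of $Q$ as a $K$-separated set means no point at distance $\ge K$ from $Q$ can exist, and integrality of the graph metric turns $d(y,Q)>K-1$ into $d(y,Q)\ge K$. The only minor remark is that the contradiction can be phrased directly without the detour through $\CPen$: for any $y\in X\setminus Q$, maximality gives some $q\in Q$ with $d(y,q)<K$, hence $d(y,q)\le K-1$, which is the same content.
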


\Cref{l: maximal} has the following easy consequence using Zorn's lemma.

\begin{cor}[{Cf.\ \cite[Lemma~2.3 and Remark~2.4]{AlvarezCandel2018}}]\label{c: existence of a separated net}
Any $K$-separated subset of $\xset$ is contained in some maximal $K$-separated $(K-1)$-relatively dense subset.
\end{cor}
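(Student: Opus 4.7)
The plan is a direct application of Zorn's lemma followed by the preceding lemma. Let $P\subset\xset$ be a given $K$-separated subset. I would consider the collection
\[
\mathcal{P}=\{\,Q\subset\xset\mid P\subset Q\text{ and }Q\text{ is $K$-separated}\,\}\;,
\]
partially ordered by inclusion. This collection is nonempty since $P\in\mathcal{P}$.

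Next I would verify the hypothesis of Zorn's lemma: every chain $\{Q_\alpha\}_\alpha$ in $\mathcal{P}$ has an upper bound in $\mathcal{P}$. The natural candidate is $Q=\bigcup_\alpha Q_\alpha$. Clearly $P\subset Q$, so it only remains to check that $Q$ is $K$-separated. Given $x\ne y$ in $Q$, there exist indices $\alpha,\beta$ with $x\in Q_\alpha$ and $y\in Q_\beta$; since the family is a chain, one of these is contained in the other, so both $x,y$ belong to a common $Q_\gamma$, which is $K$-separated, giving $d(x,y)\ge K$.

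By Zorn's lemma, $\mathcal{P}$ admits a maximal element $Q$. By construction $P\subset Q$ and $Q$ is $K$-separated; moreover $Q$ is a maximal $K$-separated subset of $\xset$, because any strictly larger $K$-separated subset would lie in $\mathcal{P}$ and contradict maximality. Finally, applying \Cref{l: maximal} to $Q$ yields that $Q$ is $(K-1)$-relatively dense in $X$, completing the proof.

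There is no real obstacle here; the only subtlety worth spelling out is the chain argument above, to make sure the union of $K$-separated sets is $K$-separated when indexed by a totally ordered family, which is standard.
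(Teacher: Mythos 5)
Your proof is correct and follows exactly the route the paper indicates: Zorn's lemma applied to the inclusion-ordered family of $K$-separated supersets (with the standard chain-union check), followed by \Cref{l: maximal} to obtain $(K-1)$-relative density. Nothing further is needed.
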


On any connected $\yset\subset\xset$, two canonical metrics can be considered, $d_{\yset}$ (defined by $E|_Y$) and the restriction of $d_{\xset}$. Clearly, $d_X\le d_Y$ on $Y$.

\begin{lem}\label{l: d_Y = d_X}
Let $Y=\CPen(Y_0,r)$ for a connected $Y_0\subset X$ and $r\in\N$. Then $d_Y(x,y)=d_X(x,y)$ for all $x,y\in Y_0$ with $d_X(x,y)\le2r$.
\end{lem}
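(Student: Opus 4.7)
The plan is to show directly that every minimizing geodesic segment of $X$ joining $x$ and $y$ already lies inside $Y$, which yields $d_Y(x,y)\le d_X(x,y)$; combined with the obvious inequality $d_X\le d_Y$ on $Y$ noted just before the lemma, this gives equality.

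More precisely, set $\ell=d_X(x,y)\le 2r$ and pick a minimizing geodesic segment $(z_0,z_1,\dots,z_\ell)$ in $X$ from $x=z_0$ to $y=z_\ell$, whose existence follows from \eqref{exists z}. For every index $i\in\{0,1,\dots,\ell\}$ we have $d_X(z_i,x)=i$ and $d_X(z_i,y)=\ell-i$ since the segment is minimizing. Because $x,y\in Y_0$, this gives
\[
d_X(z_i,Y_0)\le\min\{d_X(z_i,x),d_X(z_i,y)\}=\min\{i,\ell-i\}\le\tfrac{\ell}{2}\le r,
\]
so $z_i\in\CPen(Y_0,r)=Y$ for every $i$. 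Hence $(z_0,\dots,z_\ell)$ is a path in the subgraph $Y$, showing $d_Y(x,y)\le\ell=d_X(x,y)$.

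Since the reverse inequality $d_X(x,y)\le d_Y(x,y)$ is immediate (any path inside $Y$ is a path inside $X$), we obtain the desired equality. The only real content is the bound $\min\{i,\ell-i\}\le r$, which is exactly where the hypothesis $d_X(x,y)\le 2r$ is used; there is no genuine obstacle, so I do not expect any step to be harder than this straightforward midpoint estimate.
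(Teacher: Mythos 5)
Your proof is correct and follows essentially the same route as the paper: take a minimizing $X$-geodesic from $x$ to $y$, observe that each intermediate vertex lies within distance $r$ of $\{x,y\}\subset Y_0$ and hence in $Y=\CPen(Y_0,r)$, and conclude $d_Y(x,y)\le d_X(x,y)$, with the reverse inequality being automatic. The midpoint estimate $\min\{i,\ell-i\}\le\ell/2\le r$ is exactly the paper's ``$d_X(x,u_i),d_X(y,u_j)\le r$ if $i,n-j\le r$,'' phrased slightly more explicitly.
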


\begin{proof}
Let $(u_0,\dots,u_n)$ be a minimizing geodesic segment of $X$ between $x,y\in Y_0$ of length $n\le2r$. Then $d_X(x,u_i),d_X(y,u_j)\le r$ if $i,n-j\le r$, yielding $u_0,\dots,u_n\in Y$. So $(u_0,\dots,u_n)$ is a path in $Y$, and therefore $d_Y(x,y)\le n=d_X(x,y)$. 
\end{proof}

\begin{cor}\label{l: K-sep w.r.t. d_Y <=> K-sep w.r.t. d_X}
With the notation of \Cref{l: d_Y = d_X}, let $A\subset Y_0$ and $2r\ge K\in\Z^+$. Then $A\subset Y_0$ is $K$-separated with respect to $d_Y$ if and only if it is $K$-separated in $d_X$.
\end{cor}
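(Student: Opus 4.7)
The plan is to verify both implications by splitting into two cases according to whether $d_X(x,y) \le 2r$ or $d_X(x,y) > 2r$, for a pair $x\ne y$ in $A$. The key observations are that $d_X \le d_Y$ holds unconditionally on $Y$ (any $Y$-path is an $X$-path), while the previous lemma gives equality $d_Y(x,y) = d_X(x,y)$ precisely in the ``small distance'' regime $d_X(x,y) \le 2r$.

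For the forward direction, assume $A$ is $K$-separated with respect to $d_Y$ and take distinct $x,y \in A$. If $d_X(x,y) \le 2r$, then \Cref{l: d_Y = d_X} gives $d_X(x,y) = d_Y(x,y) \ge K$. Otherwise $d_X(x,y) > 2r \ge K$. Either way $d_X(x,y) \ge K$, so $A$ is $K$-separated in $d_X$. For the converse, assume $A$ is $K$-separated in $d_X$ and take distinct $x,y \in A$. If $d_X(x,y) \le 2r$, then again $d_Y(x,y) = d_X(x,y) \ge K$. Otherwise $d_Y(x,y) \ge d_X(x,y) > 2r \ge K$. In either case $d_Y(x,y) \ge K$.

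There is essentially no obstacle here: the hypothesis $2r \ge K$ is precisely what is needed to rule out the problematic scenario in which two points might be far apart in $d_X$ while the lemma's equality does not apply, since in that scenario the distance already exceeds $K$. The argument is a two-line case split invoking \Cref{l: d_Y = d_X} and the trivial inequality $d_X \le d_Y$.
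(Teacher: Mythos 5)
Your argument is correct and is exactly the reasoning the paper leaves implicit for this corollary: on pairs with $d_X\le 2r$ the two metrics agree by \Cref{l: d_Y = d_X}, and on all other pairs both distances already exceed $2r\ge K$ (using $d_X\le d_Y$ for the converse). Nothing further is needed.
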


\begin{defn}
For connected $Y,Z\subset X$ and $m\in\N$, a map $f:X\to Y$ is called an \emph{$m$-short scale isometry} if $d_Z(f(x),f(y))=d_Y(x,y)$ for all $x,y\in Y$ with $d_Y(x,y)\le m$.
\end{defn}

The above definition is also valid for maps between arbitrary metric spaces.

\begin{cor}\label{c: graph partial}
Let $Y=\CPen(Y_0,r)$ and $Z=\CPen(Z_0,r)$ for connected $Y_0,Z_0\subset X$ and $r\in\N$, and let $2r\ge m\in\N$. If $f:Y\to Z$ is a graph isomorphism with $f(Y_0)=Z_0$, then $f:Y_0\to Z_0$ is an $m$-short scale isometry with respect to the restrictions of $d_X$.
\end{cor}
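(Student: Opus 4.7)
The plan is to chain together three identifications of distances, bouncing between the ambient distance $d_X$ and the induced subgraph distances $d_Y$ and $d_Z$, using \Cref{l: d_Y = d_X} on both ends and the fact that a graph isomorphism is an isometry with respect to the induced path metrics.

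First, I would fix $x,y\in Y_0$ with $d_X(x,y)\le m\le 2r$. Since $Y=\CPen(Y_0,r)$ and $Y_0$ is connected, \Cref{l: d_Y = d_X} gives immediately
\[
d_Y(x,y)=d_X(x,y).
\]
Next, because $f\colon Y\to Z$ is a graph isomorphism between the subgraphs with edge sets $E|_Y$ and $E|_Z$, it is by definition an isometry of the metric spaces $(Y,d_Y)$ and $(Z,d_Z)$, so
\[
d_Z(f(x),f(y))=d_Y(x,y)=d_X(x,y)\le 2r.
\]

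The one tiny subtlety—which I would flag as the only thing to check carefully—is that in order to invoke \Cref{l: d_Y = d_X} on the $Z$-side, I need to know that $d_X(f(x),f(y))\le 2r$, not just $d_Z(f(x),f(y))\le 2r$. This is immediate from the general inequality $d_X\le d_Z$ on $Z$ (noted just before \Cref{l: d_Y = d_X}), combined with the display above. Since $f(x),f(y)\in Z_0$ and $Z=\CPen(Z_0,r)$, \Cref{l: d_Y = d_X} applied on $Z$ then yields
\[
d_X(f(x),f(y))=d_Z(f(x),f(y)).
\]

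Chaining the three equalities gives $d_X(f(x),f(y))=d_X(x,y)$, which is exactly the $m$-short scale isometry condition. No serious obstacle arises; the whole argument is just a careful bookkeeping of which metric is being used on which side, with the chain $d_X\stackrel{L}{=}d_Y\stackrel{f}{=}d_Z\stackrel{L}{=}d_X$ held together by the hypothesis $m\le 2r$.
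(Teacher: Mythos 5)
Your proof is correct and is essentially the paper's own argument: apply \Cref{l: d_Y = d_X} in $Y$, use that the graph isomorphism $f$ is an isometry for the induced path metrics $d_Y$, $d_Z$, and then apply \Cref{l: d_Y = d_X} again in $Z$. Your explicit remark that $d_X\le d_Z$ is what licenses the second application of the lemma is a point the paper passes over silently, so it is a welcome clarification rather than a deviation.
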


\begin{proof}
For $x,y\in Y_0$ with $d_X(x,y)\le m\le2r$, we have $d_Y(x,y)=d_X(x,y)$ by \Cref{l: d_Y = d_X}. So $d_Y(x,y)=d_Z(f(x),f(y))$ since $f:Y\to Z$ is an isomorphism. Thus $d_Z(f(x),f(y))\le2r$, and therefore $d_Z(f(x),f(y))=d_X(f(x),f(y))$ by \Cref{l: d_Y = d_X} because $f(x),f(y)\in Z_0$. Finally, we get $d_X(x,y)=d_X(f(x),f(y))$.
\end{proof}

\begin{cor}\label{c: preserves metric}
For $x,y\in X$ and $r\in\N$, if $h\colon (D(x,2r),x)\to (D(y,2r),y)$ is a pointed isomorphism, then $h\colon D(x,r)\to D(y,r)$ is an isometry with respect to the restrictions of $d_X$.
\end{cor}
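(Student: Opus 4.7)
The plan is to deduce this from \Cref{c: graph partial} by setting $Y_0 = D(x,r)$ and $Z_0 = D(y,r)$, so that $Y = \CPen(D(x,r),r) = D(x,2r)$ and $Z = \CPen(D(y,r),r) = D(y,2r)$ (using the identity $\CPen(D(z,r),t) = D(z,r+t)$ noted in the excerpt). Both $Y_0$ and $Z_0$ are connected since disks in $X$ are connected. To apply the corollary with these choices and with $m = 2r$, we need to verify the hypothesis $h(Y_0) = Z_0$.

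First I would check that $h(D(x,r)) = D(y,r)$. Take $z \in D(x,r)$ with respect to $d_X$. By \Cref{l: d_Y = d_X} applied to $Y_0 = D(x,r)$, we have $d_{D(x,2r)}(x,z) = d_X(x,z) \le r$. Since $h\colon D(x,2r) \to D(y,2r)$ is a pointed graph isomorphism, it is an isometry in the intrinsic metric, so $d_{D(y,2r)}(y, h(z)) \le r$. Using $d_X \le d_{D(y,2r)}$ (or again \Cref{l: d_Y = d_X}), this gives $d_X(y, h(z)) \le r$, hence $h(z) \in D(y,r)$. Applying the same argument to $h^{-1}$ yields the reverse inclusion.

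Now \Cref{c: graph partial} with $m = 2r$ tells us that $h\colon D(x,r) \to D(y,r)$ is a $2r$-short scale isometry with respect to the restrictions of $d_X$; that is, $d_X(h(a), h(b)) = d_X(a,b)$ whenever $a, b \in D(x,r)$ and $d_X(a,b) \le 2r$. But any two points $a,b \in D(x,r)$ automatically satisfy $d_X(a,b) \le 2r$ by the triangle inequality, so $h$ is an isometry on all of $D(x,r)$.

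I do not expect any serious obstacle here: the main point is just recognizing that the statement is a special case of \Cref{c: graph partial} once one notes that $D(x,2r)$ is the closed $r$-penumbra of $D(x,r)$ and that the natural diameter of $D(x,r)$ is at most $2r$, which is exactly the short-scale threshold afforded by the corollary.
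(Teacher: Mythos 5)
Your proposal is correct and follows exactly the route the paper intends: the corollary is stated as an immediate consequence of \Cref{c: graph partial} (with $Y_0=D(x,r)$, $Z_0=D(y,r)$, $m=2r$), and your verification that $h(D(x,r))=D(y,r)$ via \Cref{l: d_Y = d_X} together with the observation that any two points of $D(x,r)$ are within distance $2r$ supplies precisely the details the paper leaves implicit.
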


\begin{lem}\label{l: X is countable}
If every vertex of $\xset$ is adjacent to a countable set of vertices, then $\xset$ is countable.
\end{lem}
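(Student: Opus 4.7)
The plan is to fix a base vertex $x_0\in\xset$ and exhaust $X$ by the disks $D(x_0,n)$ for $n\in\N$. Because $X$ is connected, every vertex lies at some finite distance from $x_0$, so $X=\bigcup_{n\in\N}D(x_0,n)$. It therefore suffices to prove that each disk $D(x_0,n)$ is countable, and I would do this by induction on $n$.

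For the base case $n=0$, the set $D(x_0,0)=\{x_0\}$ is a singleton. For the inductive step, observe that
\[
D(x_0,n+1)=D(x_0,n)\cup\bigcup_{y\in D(x_0,n)}N(y),
\]
where $N(y)$ denotes the set of vertices adjacent to $y$. By hypothesis each $N(y)$ is countable, and by the inductive hypothesis $D(x_0,n)$ is countable, so the right-hand side is a countable union of countable sets, hence countable.

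Finally, $X$ is a countable union of the countable sets $D(x_0,n)$, which is countable. The only ingredient beyond standard set theory is that connectedness of $X$ forces every vertex to sit in some $D(x_0,n)$, which follows directly from the definition of the path metric. There is no serious obstacle here; the argument is purely bookkeeping with countable unions.
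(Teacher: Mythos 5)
Your proof is correct and follows essentially the same route as the paper: the paper exhausts $X$ by the spheres $S(x,r)$ and inducts on $r$, using $S(x,r+1)\subset\bigcup_{y\in S(x,r)}S(y,1)$, which is the same radius-induction with countable unions of neighborhoods that you carry out with disks. No substantive difference.
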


\begin{proof}
Given any $x\in \xset$, since $\xset=\bigcup_{r=0}^\infty \sphere(x,r)$, it is enough to prove that $\sphere(x,r)$ is countable for all $r\in\N$. This is done by induction on $r$. We have $\sphere(x,0)=\{x\}$, and $\sphere(x,1)$ is countable by hypothesis. If $\sphere(x,r)$ is countable for some $r\in\N$, then $\sphere(s,r+1)$ is also countable because it is contained in $\bigcup_{y\in \sphere(x,r)}\sphere(y,1)$.
\end{proof}

\begin{lem}\label{l: proper}
The vertices of $\xset$ have finite degree if and only if its disks are finite.
\end{lem}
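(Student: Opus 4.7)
The plan is to prove both implications by directly unpacking the relationship $|\sphere(x,1)|=\deg x$, as noted in the preliminaries.

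For the backward direction, if every disk is finite, then in particular $D(x,1)$ is finite for every $x\in \xset$, so $\sphere(x,1)\subset D(x,1)$ is finite, which gives $\deg x=|\sphere(x,1)|<\infty$.

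For the forward direction, I would proceed by induction on $r\in\N$ to show that $D(x,r)$ is finite for every $x\in\xset$. The base case $r=0$ is trivial since $D(x,0)=\{x\}$. For the inductive step, assume $D(x,r)$ is finite. Using~\eqref{exists z} (or the definition of the path metric), any $y\in D(x,r+1)$ either lies in $D(x,r)$ or is adjacent to some vertex in $\sphere(x,r)$; equivalently,
\[
D(x,r+1)\subset D(x,r)\cup \bigcup_{z\in\sphere(x,r)}\sphere(z,1)\;.
\]
Since $\sphere(x,r)\subset D(x,r)$ is finite by the induction hypothesis, and each $\sphere(z,1)$ is finite with $|\sphere(z,1)|=\deg z<\infty$ by the finite-degree assumption, the right-hand side is a finite union of finite sets and hence finite. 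This yields $|D(x,r+1)|<\infty$, completing the induction.

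Neither direction should present any real obstacle; the only mild subtlety is recognizing that the inductive step only requires finite degree at the vertices of the already-finite sphere $\sphere(x,r)$, which is automatic from the global hypothesis.
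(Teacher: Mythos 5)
Your proof is correct and takes essentially the same route as the paper: the backward direction reads off $\deg x$ from the finiteness of $D(x,1)$, and the forward direction is the same induction on $r$ using the fact that $D(x,r+1)$ is the $1$-penumbra of $D(x,r)$ (your containment in the union over spheres is just a spelled-out version of $D(x,r+1)=\CPen(D(x,r),1)$).
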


\begin{proof}
The ``if'' part is true because $|D (x,1)|=1+\deg x$ for all $x\in X$. Now assume that the vertices have finite degree, and let us show that $|D (x,r)|<\infty$ for all $x\in X$ and $r\in\Z^+$. This follows by induction on $r$ using that $D (x,r+1)=\CPen(D (x,r),1)$ by~\eqref{exists z}.
\end{proof}

The disks of $X$ are finite just when $X$ is a \emph{proper} metric space, in the sense that its  disks are compact.

\begin{lem}\label{l: |S(x r)| ge 1}
  If $X$ is unbounded, then $|\sphere(x,r)|\ge1$ for all $x\in X$ and $r\in\N$.
\end{lem}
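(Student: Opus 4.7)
The plan is to use property~\eqref{exists z} together with the observation that, in a connected metric space, unboundedness of the global diameter implies unboundedness of the distance function from any fixed basepoint.

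First I would dispose of the trivial case $r=0$, where $S(x,0)=\{x\}$ and the inequality is automatic. For $r\ge 1$, fix $x\in X$ and note that, since $X$ is unbounded, there exist $u,v\in X$ with $d(u,v)$ arbitrarily large. The triangle inequality gives $d(u,v)\le d(x,u)+d(x,v)$, hence $\max\{d(x,u),d(x,v)\}\ge d(u,v)/2$. Consequently, for any $r\in\N$ there exists $y\in X$ with $n:=d(x,y)\ge r$.

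Then I would write $n=r+(n-r)$ and invoke property~\eqref{exists z}: there exists $z\in X$ such that $d(x,z)=r$ and $d(y,z)=n-r$. In particular $z\in S(x,r)$, so $|S(x,r)|\ge 1$, completing the proof.

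There is no real obstacle; the argument is a direct application of~\eqref{exists z} plus the elementary reduction from global unboundedness to unboundedness at a basepoint.
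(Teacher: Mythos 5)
Your argument is correct and is exactly the paper's proof, just written out in more detail: the paper's one-line proof also combines unboundedness of $X$ with property~\eqref{exists z} to produce a point at distance exactly $r$ from $x$, leaving the basepoint reduction via the triangle inequality implicit. Nothing further is needed.
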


\begin{proof}
By~\eqref{exists z} and since $X$ is unbounded, we have $\sphere(x,r)\ne\emptyset$ for all $r\in\N$.
\end{proof}

\begin{cor}\label{c: |D(x,r)| ge r+1}
  If $X$ is unbounded, then $|D (x,r)|\ge r+1$ and $|C(x,r,s)|\ge s-r$ for all $x\in X$ and $r<s$ in $\N$.
\end{cor}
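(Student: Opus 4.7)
The plan is to derive both bounds as direct sums of the sphere cardinalities, using Lemma~\ref{l: |S(x r)| ge 1} on each sphere. The key observation is that spheres $\sphere(x,i)$ of distinct radii $i$ are pairwise disjoint (since the metric $d$ is single-valued) and partition the disk $D(x,r)$ and the corona $C(x,r,s)$ respectively.

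First, I would write
\[
D(x,r)=\bigcupdot_{i=0}^{r}\sphere(x,i)\;,\qquad C(x,r,s)=\bigcupdot_{i=r+1}^{s}\sphere(x,i)\;,
\]
both as disjoint unions. The first identity is immediate from the definition $D(x,r)=\{y\mid d(x,y)\le r\}$ together with the fact that $d(x,y)$ takes a unique value in $\{0,1,\dots,r\}$ for each $y\in D(x,r)$. The second follows from $C(x,r,s)=D(x,s)\sm D(x,r)$ and the analogous decomposition of $D(x,s)$.

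Then, since $X$ is unbounded, \Cref{l: |S(x r)| ge 1} gives $|\sphere(x,i)|\ge 1$ for every $i\in\N$. Summing cardinalities over the disjoint decompositions yields
\[
|D(x,r)|=\sum_{i=0}^{r}|\sphere(x,i)|\ge r+1\;,\qquad |C(x,r,s)|=\sum_{i=r+1}^{s}|\sphere(x,i)|\ge s-r\;,
\]
which is exactly the claim. There is no real obstacle here; the statement is essentially a bookkeeping corollary of the preceding lemma, and the only thing to be careful about is writing the disjoint union decomposition cleanly and invoking \Cref{l: |S(x r)| ge 1} on each sphere.
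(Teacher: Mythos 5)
Your proof is correct and matches the paper's own argument: the paper also decomposes $D(x,r)=\bigcupdot_{i=0}^r\sphere(x,i)$ and $\corona(x,r,s)=\bigcupdot_{i=r+1}^s\sphere(x,i)$ and applies \Cref{l: |S(x r)| ge 1} to each sphere. Nothing to add.
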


\begin{proof}
Apply \Cref{l: |S(x r)| ge 1} to the expressions\footnote{A dotted union symbol is used for unions of disjoint sets.} $D (x,r)=\bigcupdot_{i=0}^r\sphere(x,i)$ and $\corona(x,r,s)=\bigcupdot_{i=r+1}^s\sphere(x,i)$.
\end{proof}

Now suppose also that $\Delta:=\deg X<\infty$. Since $\xset$ is connected, it is a singleton if $\Delta=0$, and it has two vertices if $\Delta=1$. Thus assume $\Delta\ge2$.

\begin{lem}\label{l: |S(x r)| le k(k-1)^r-1}
  $|\sphere(x,r)|\le \Delta(\Delta-1)^{r-1}$ for all $x\in \xset$ and $r\in\Z^+$.
\end{lem}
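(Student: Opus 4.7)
The plan is to argue by induction on $r \in \Z^+$, using only the triangle-inequality consequence~\eqref{exists z} (which guarantees the existence of intermediate vertices on geodesics) and the definition of $\Delta = \deg X$.

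For the base case $r=1$, one simply notes that $\sphere(x,1)$ consists of the neighbors of $x$, so $|\sphere(x,1)| = \deg x \le \Delta = \Delta(\Delta-1)^0$.

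For the inductive step, suppose $|\sphere(x,r)| \le \Delta(\Delta-1)^{r-1}$ for some $r \ge 1$. Every $y \in \sphere(x,r+1)$ has $d(x,y)=r+1=r+1$, so by~\eqref{exists z} applied with $m=r$ and $n=1$ there exists $z \in X$ with $d(x,z)=r$ and $d(y,z)=1$; that is, every vertex in $\sphere(x,r+1)$ is adjacent to some vertex in $\sphere(x,r)$. Conversely, fix $z \in \sphere(x,r)$. Since $r \ge 1$, the same application of~\eqref{exists z} produces a vertex $z' \in \sphere(x,r-1)$ adjacent to $z$, so at least one of the $\deg z \le \Delta$ neighbors of $z$ lies in $\sphere(x,r-1)$, hence cannot lie in $\sphere(x,r+1)$ (distance constraints). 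Therefore $z$ contributes at most $\Delta-1$ vertices to $\sphere(x,r+1)$, yielding
\[
|\sphere(x,r+1)| \le (\Delta-1)\,|\sphere(x,r)| \le (\Delta-1)\cdot\Delta(\Delta-1)^{r-1} = \Delta(\Delta-1)^r,
\]
which closes the induction.

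There is no real obstacle here; the only small care needed is to handle the base case separately from the inductive step, because when $r=0$ the vertex $x$ has no predecessor on a geodesic from itself, so the ``at least one neighbor is closer to $x$'' argument fails. Starting the induction at $r=1$ avoids this issue, and the hypothesis $\Delta \ge 2$ ensures the factor $\Delta-1 \ge 1$ makes the bound meaningful.
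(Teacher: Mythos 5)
Your proof is correct and follows essentially the same route as the paper: each vertex of $\sphere(x,r)$ has at least one neighbor in $\sphere(x,r-1)$ by~\eqref{exists z}, hence at most $\Delta-1$ neighbors in $\sphere(x,r+1)$, and every vertex of $\sphere(x,r+1)$ is adjacent to some vertex of $\sphere(x,r)$, giving the bound by induction on $r$. Your explicit treatment of the base case $r=1$ matches the paper's implicit handling, so there is nothing to add.
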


\begin{proof}
The vertex $x$ is adjacent with  at most $\Delta$ vertices, which form $\sphere(x,1)$. For all $r\in\Z^+$, any $y\in \sphere(x,r)$ is adjacent with at least one vertex in $\sphere(x,r-1)$ by~\eqref{exists z}, and therefore $y$ is adjacent to at most $\Delta-1$ vertices in $\sphere(x,r+1)$. Then the inequality $|\sphere(x,r)|\le \Delta(\Delta-1)^{r-1}$ follows easily by induction on $r$.
\end{proof}

\begin{cor}\label{c: |D(x r)| le ...}
  Let $x\in \xset$ and $r \in \Z^+$. Then
\[
|D (x,r)|\le 
\begin{cases}
1+2r & \text{if $\Delta=2$}\\
3\cdot2^r & \text{if $\Delta=3$}\\
4(\Delta-1)^r & \text{if $\Delta>3$}\;.
\end{cases}
\]
\end{cor}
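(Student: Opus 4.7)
The plan is a direct geometric-series summation on top of \Cref{l: |S(x r)| le k(k-1)^r-1}. I would start from the disjoint decomposition $D(x,r)=\bigcupdot_{i=0}^{r}\sphere(x,i)$ (already used in the proof of \Cref{c: |D(x,r)| ge r+1}) and the trivial equality $|\sphere(x,0)|=1$, which together with \Cref{l: |S(x r)| le k(k-1)^r-1} yield
\[
|D(x,r)|\le 1+\sum_{i=1}^{r}\Delta(\Delta-1)^{i-1}\;.
\]
From here the proof splits into the three cases corresponding to the three branches of the displayed formula.

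If $\Delta=2$, every summand equals $2$, so the right-hand side is exactly $1+2r$. If $\Delta\ge 3$, summing the geometric series gives
\[
|D(x,r)|\le 1+\frac{\Delta}{\Delta-2}\bigl((\Delta-1)^{r}-1\bigr)\;.
\]
For $\Delta=3$ this is $1+3(2^{r}-1)=3\cdot 2^{r}-2\le 3\cdot 2^{r}$, which is the second case. For $\Delta>3$ I would observe that $\Delta/(\Delta-2)$ is a decreasing function of $\Delta$ attaining the value $2$ at $\Delta=4$, so $\Delta/(\Delta-2)\le 2$ throughout this range; hence
\[
|D(x,r)|\le 1+2(\Delta-1)^{r}\le 4(\Delta-1)^{r}\;,
\]
where the last inequality uses $(\Delta-1)^{r}\ge 1$ (in fact $\ge 3$).

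There is essentially no obstacle: the only point that requires a moment of care is checking that the constants $3$ and $4$ in the second and third cases are large enough to absorb both the additive ``$+1$'' coming from $|\sphere(x,0)|$ and the factor $\Delta/(\Delta-2)$, which is what motivates the specific choice of these constants in the statement.
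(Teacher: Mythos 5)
Your proof is correct and follows essentially the same route as the paper: decompose $D(x,r)$ into spheres, apply \Cref{l: |S(x r)| le k(k-1)^r-1}, sum the geometric series, and split into the three cases. The only cosmetic difference is in the $\Delta>3$ case, where you bound $\Delta/(\Delta-2)\le 2$ directly by monotonicity, while the paper reaches the same conclusion via a slightly more roundabout two-step inequality; your version is, if anything, a bit cleaner.
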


\begin{proof}
Applying \Cref{l: |S(x r)| le k(k-1)^r-1} to the disjoint union $D (x,r)=\bigcupdot_{i=0}^{r}\sphere(x,i)$, we get $|D (x,r)|\le 1+2r$ if $\Delta=2$, and
\[
|D (x,r)|\le  1 +\frac{\Delta((\Delta-1)^{r}-1)}{\Delta-2}=\frac{\Delta(\Delta-1)^r-2}{\Delta-2}
\]
if $\Delta\ge3$. But
\[
\frac{\Delta(\Delta-1)^r-2}{\Delta-2}=3\cdot2^r-2<3\cdot2^r
\]
if $\Delta=3$, and
\[
\frac{\Delta(\Delta-1)^r-2}{\Delta-2}<2\frac{\Delta(\Delta-1)^r-1}{\Delta-1}<4\frac{\Delta(\Delta-1)^r}{\Delta}=4(\Delta-1)^r
\]
if $\Delta>3$ because
\[
u\ge v\ge1\Longrightarrow2\frac{u+1}{v+1}>\frac{u}{v}\;.\qedhere
\]
\end{proof}

\begin{lem}\label{l: cardinality separated}
If $A$ is a $K$-separated $(K-1)$-relatively dense subset of $X$ for some $K\in\Z^+$, then $|A|> |X|/\Delta^K$.
\end{lem}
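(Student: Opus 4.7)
The plan is to use the $(K-1)$-relative density of $A$ to cover $X$ by a family of disks of radius $K-1$ centered at the points of $A$, and then control the size of each such disk uniformly in terms of $\Delta$ and $K$. By definition of relative density, $\CPen(A,K-1)=X$, so
\[
X=\bigcup_{a\in A}D(a,K-1)\;,
\]
which gives the cardinality estimate $|X|\le\sum_{a\in A}|D(a,K-1)|$.

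Next, I would control the right-hand side by invoking \Cref{l: |S(x r)| le k(k-1)^r-1}: for each $a\in A$ and each $i\ge 1$ we have $|\sphere(a,i)|\le\Delta(\Delta-1)^{i-1}\le\Delta^i$, and of course $|\sphere(a,0)|=1$. Summing the geometric series over $i=0,\dots,K-1$ yields
\[
|D(a,K-1)|\le\sum_{i=0}^{K-1}\Delta^i=\frac{\Delta^K-1}{\Delta-1}\le\Delta^K-1\;,
\]
where the final inequality holds because $\Delta\ge 2$. Plugging this uniform bound into the covering estimate gives $|X|\le|A|(\Delta^K-1)<|A|\cdot\Delta^K$, and dividing by $\Delta^K$ produces $|A|>|X|/\Delta^K$ in the relevant (finite) case; if $X$ is infinite, finiteness of $\Delta$ forces $|A|$ to be infinite as well (since the finite disks $D(a,K-1)$ must cover $X$), and the inequality is vacuous.

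There is no real obstacle here beyond the bookkeeping needed to secure a strict inequality: the slightly loose estimate $\Delta(\Delta-1)^{i-1}\le\Delta^i$ and the fact that $\Delta-1\ge 1$ conspire to give enough slack so that $|D(a,K-1)|$ is strictly smaller than $\Delta^K$, which is exactly what is needed to turn the cover inequality into the desired strict bound on $|A|$.
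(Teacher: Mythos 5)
Your argument is correct. The covering step and the resulting inequality $|X|\le\sum_{a\in A}|D(a,K-1)|$ are the same as in the paper; the divergence is in how the disk size is bounded. The paper invokes its sharper estimate \Cref{c: |D(x r)| le ...} and splits into the three cases $\Delta=2$, $\Delta=3$, $\Delta>3$ to conclude $|D(a,K-1)|<\Delta^K$, whereas you work directly from the sphere bound of \Cref{l: |S(x r)| le k(k-1)^r-1}, relax $\Delta(\Delta-1)^{i-1}\le\Delta^i$, and sum the geometric series to get the uniform estimate
\[
|D(a,K-1)|\le\sum_{i=0}^{K-1}\Delta^i=\frac{\Delta^K-1}{\Delta-1}\le\Delta^K-1<\Delta^K\;,
\]
the middle inequality holding since $\Delta\ge2$. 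Your route avoids the case analysis at the cost of a looser intermediate constant, but both methods deliver the strict bound $|D(a,K-1)|<\Delta^K$, which is all the covering argument needs to yield $|X|<|A|\,\Delta^K$.
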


\begin{proof}
We have $X\subset \bigcup_{a \in A} D(a,K-1)$, yielding $|X| \leq \sum_{a\in A} |D(a,K-1)|$. By \Cref{c: |D(x r)| le ...}, for $a\in A$,
\begin{alignat*}{2}
|D(a,K-1)|&\leq1+2(K-1)<2^K&\qquad\text{if}\ \Delta&=2\;,\\
|D(a,K-1)|&\leq 3\cdot2^{K-1}<3^K&\qquad\text{if}\ \Delta&=3\;,\\
|D(a,K-1)|&\leq4(\Delta-1)^{K-1}\le\Delta(\Delta-1)^{K-1}<\Delta^K&\qquad\text{if}\ \Delta&>3\;.\qedhere
\end{alignat*}
\end{proof}

\subsection{Colorings}\label{ss: colorings}

A {\em coloring\/} of a set $\xset$ (by a set $F$ ``of {\em colors\/}'') is a map $\phi:\xset\to F$. The pair $(\xset,\phi)$ is called a {\em colored set\/}.  The sets of colors $F$ will usually be a finite initial segmen\footnote{Recall that a subset $S$ of an ordered set $(Z,\leq)$ is called an \emph{initial segment} if, for all $s\in S$ and $z\in Z$, $z\leq s$ implies $z\in S$.} of $\N$, denoted by $[M ]=\{0,\dots, M-1\}$ for some $M\in \N$.

Let $\xset$ be a simple graph. A coloring of its vertex set, $\phi:\xset\to F$, is called a ({\em vertex\/}) {\em coloring\/} of $\xset$, and $(\xset,\phi)$ is called a {\em colored graph\/}. If $x_0\in \yset\subset \xset$, then the simplified notation $(\yset,\phi)=(\yset,\phi|_{\yset})$ will be used. The following concepts for colored graphs are the obvious extensions of their graph versions: ({\em pointed\/}) {\em isomorphisms\/}, denoted by $f:(\xset,\phi)\to(\xset',\phi')$ and $f:(\xset,x_0,\phi)\to(\xset',x'_0,\phi')$, {\em isomorphic\/} (pointed) colored graphs, denoted by $(\xset,\phi)\cong(\xset',\phi')$ and $(\xset,x_0,\phi)\cong(\xset',x'_0,\phi')$, and {\em automorphism\/} groups of (pointed) colored graphs, denoted by $\Aut(\xset,\phi)$ and $\Aut(\xset,x_0,\phi)$.

Consider only colorings by $F$. Let $\widehat\GG_*$ be the set\footnote{The graphs $X$ are countable (\Cref{l: X is countable}), and therefore we can assume that their underlying sets are contained in $\N$. In this way, $\widehat\GG_*$ becomes a well defined set.} of isomorphism classes, $[\xset,x,\phi]$, of pointed connected colored graphs, $(\xset,x,\phi)$, whose vertices have finite degree. For each $R\in\Z^+$, let 
\[
\widehat U_R=\{\,([\xset,x,\phi],[\yset,y,\psi])\in\widehat\GG_*^2\mid(D_{\yset}(y,R),y,\psi)\cong(D_{\xset}(x,R),x,\phi)\,\}\;.
\]
These sets form a base of entourages of a uniformity on $\widehat\GG_*$, which is easily seen to be complete. Moreover this uniformity is metrizable because this base is countable. 

Note that the {\em degree map\/} $\deg:\widehat\GG_*\to\Z^+$, $[\xset,x,\phi]\mapsto\deg x$, and the {\em evaluation map\/} $\ev:\widehat\GG_*\to F$, $[\xset,x,\phi]\mapsto\phi(x)$, are continuous (the target spaces being discrete). Suppose that $F$ is countable. Then $\widehat\GG_*$ is separable because the elements $[\xset,x,\phi]$, where $\xset$ is finite, form a countable dense subset. Thus $\widehat\GG_*$ becomes a Polish space. 

For any connected simple colored graph $(\xset,\phi)$, there is a canonical map $\hat\iota_{\xset,\phi}:\xset\to\widehat\GG_*$ defined by $\hat\iota_{\xset,\phi}(x)=[\xset,x,\phi]$. Its image, denoted by $[X,\phi]$, has an induced connected colored graph structure, and all of these images form a canonical partition of $\widehat\GG_*$. The saturation of any open subset of $\widehat{\GG}_*$ is open, and therefore the closure operation preserves saturated subsets of $\widehat{\GG}_*$ \cite[Section~2.6]{AlvarezBarral-realization}; in particular, $\overline{[X,\phi]}$ is saturated. The following result indicates the role played by graphs with finite degrees, colored by finitely many colors.

\begin{prop}\label{p: compact}
The closure $\overline{[\xset,\phi]}$ is compact if and only if $\deg X,|\im\phi|<\infty$.
\end{prop}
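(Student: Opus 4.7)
The plan is to prove the two implications separately, using standard facts about compactness in complete metric spaces.

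For the ``only if'' direction, I would start from compactness of $\overline{[\xset,\phi]}$ and exploit continuity of the degree and evaluation maps $\deg\colon\widehat{\GG}_*\to\Z^+$ and $\ev\colon\widehat{\GG}_*\to F$, whose targets are discrete. Their restrictions to the compact set $\overline{[\xset,\phi]}$ have image that is simultaneously compact and discrete, hence finite. Since $\deg[\xset,x,\phi]=\deg x$ for every $x\in\xset$, this immediately gives $\deg\xset<\infty$; and since $\ev([\xset,\phi])=\im\phi$, it forces $|\im\phi|<\infty$.

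For the ``if'' direction, the strategy is to assume $\Delta:=\deg\xset<\infty$ and $F':=\im\phi$ finite, and to show total boundedness of $\overline{[\xset,\phi]}$; combined with completeness inherited as a closed subset of the complete space $\widehat{\GG}_*$, this will yield compactness. Concretely, I would show that for each $R\in\Z^+$ the entourage $\widehat{U}_R$ induces only finitely many equivalence classes on $\overline{[\xset,\phi]}$. Since every point of the closure is $\widehat{U}_R$-related to some point of $[\xset,\phi]$, and $\widehat{U}_R$ is an equivalence relation (``same isomorphism type of pointed colored $R$-disk''), this reduces to counting the isomorphism types of the pointed colored disks $(D_\xset(x,R),x,\phi)$ as $x$ ranges over $\xset$.

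The key quantitative input is \Cref{c: |D(x r)| le ...}, which bounds $|D_\xset(x,R)|$ by a constant depending only on $R$ and $\Delta$. Since $\phi$ is valued in the finite set $F'$, only finitely many isomorphism types of pointed colored graphs on vertex sets of that bounded cardinality and with colors in $F'$ exist, giving the required bound. I do not foresee a substantive obstacle: the proof rests on the trio (compact discrete $=$ finite), (closed in complete $=$ complete), (complete and totally bounded $=$ compact), combined with the disk-size bound \Cref{c: |D(x r)| le ...} that is already available in the text.
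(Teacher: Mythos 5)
Your proposal is correct and follows essentially the same route as the paper: the ``only if'' part via continuity of $\deg$ and $\ev$ into discrete targets, and the ``if'' part via the observation that $\deg X,|\im\phi|<\infty$ forces only finitely many pointed isomorphism classes of colored $R$-disks, so that $\overline{[\xset,\phi]}$ is totally bounded in the complete space $\widehat\GG_*$. Your write-up merely makes explicit the quantitative bound from \Cref{c: |D(x r)| le ...} and the completeness argument that the paper leaves implicit.
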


\begin{proof}
The ``if'' part follows using that, if $\deg X,|\im\phi|<\infty$, then, for each $R\in\Z^+$, the pointed colored disks $(D_{\xset}(x,R),x,\phi)$ ($x\in \xset$) represent finitely many pointed isomorphism classes $[D_{\xset}(x,R),x,\phi]$. The ``only if'' part follows using the continuity of $\deg:\widehat\GG_*\to\Z^+$ and $\ev:\widehat\GG_*\to F$.
\end{proof}

It is said that $(\xset,\phi)$ (or $\phi$) is {\em aperiodic\/} (or \emph{non-periodic}) if $\Aut(\xset,\phi)=\{\id_{\xset}\}$, which means that $\hat\iota_{\xset,\phi}$ is injective; otherwise, it is said that $(\xset,\phi)$ (or $\phi$) is {\em periodic\/}. More strongly, $(\xset,\phi)$ (or $\phi$) is called {\em limit aperiodic\/} if $(\yset,\psi)$ is aperiodic for all $[\yset,y,\psi]\in\overline{[\xset,\phi]}$. If $\xset$ is finite, aperiodicity is equivalent to its limit aperiodicity, and an aperiodic coloring of $\xset$ by finitely many colors can be easily given. If $\xset$ is infinite, limit aperiodic colorings by finite finitely many colors are much more difficult to construct. The following lemma will be useful for that purpose.

\begin{lem}\label{l: limit aperiodic}
$(\xset,\phi)$ is limit aperiodic if and only if, for all sequences, $x_i,y_i$ in $\xset$ and $R_i,S_i\uparrow\infty$ in $\Z^+$, and pointed isomorphisms,
\[
f_i:(D(x_i,R_i),x_i,\phi)\to(D(x_{i+1},R_i),x_{i+1},\phi)\;,\quad h_i:(D(x_i,S_i),x_i,\phi)\to(D(y_i,S_i),y_i,\phi)\;,
\]
such that $d(x_i,y_i)+S_i\le R_i$, $f_i(y_i)=y_{i+1}$, and the diagram
\begin{equation}
\begin{CD}
D(x_{i+1},S_{i+1}) @>{h_{i+1}}>> D(y_{i+1},S_{i+1}) \\
@A{f_i}AA @AA{f_i}A \\
D(x_i,S_i) @>{h_i}>> D(y_i,S_i)
\end{CD}
\end{equation}
is commutative, we have that, either $x_i=y_i$ for $i$ large enough, or $\limsup_id(x_i,y_i)=\infty$. 
\end{lem}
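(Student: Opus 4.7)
The plan is to recognize each direction as a statement about the limit space $\widehat\GG_*$.

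For the forward implication, composing the $f_j$ for $j\ge i$ (after passing to a subsequence with $R_{i+1}\ge 2R_i$ so that \Cref{c: preserves metric} keeps the restrictions pointed isomorphisms) shows that the disks $(D(x_i,R),x_i,\phi)$ stabilize up to pointed isomorphism as $i\to\infty$ for each fixed $R$. Hence $[X,x_i,\phi]$ converges in $\widehat\GG_*$ to some $[Y,y,\psi]\in\overline{[X,\phi]}$; concretely, $(Y,y,\psi)$ may be realized as the inductive limit of $(D(x_i,R_i),x_i,\phi)$ along the $f_i$, so that each $D(x_i,R_i)\subset X$ is canonically identified with $D(y,R_i)\subset Y$. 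The inequality $d(x_i,y_i)+S_i\le R_i$ places $y_i$ inside $D(x_i,R_i)$, and the relation $f_i(y_i)=y_{i+1}$ forces all $y_i$ to correspond to a single point $\tilde y\in Y$ under these identifications. The maps $h_i$ transport to pointed isomorphisms $D(y,S_i)\to D(\tilde y,S_i)$ in $Y$, and the commutative diagram is exactly the compatibility needed for $h_{i+1}$ to restrict to $h_i$. Since $S_i\uparrow\infty$, these assemble into an automorphism $h\in\Aut(Y,\psi)$ with $h(y)=\tilde y$. If the conclusion of the lemma fails, then on a subsequence $d(x_i,y_i)=D$ for some fixed $D>0$, so $d_Y(y,\tilde y)=D>0$, whence $h\ne\id_Y$ and $(Y,\psi)$ fails to be aperiodic, contradicting the limit aperiodicity of $(X,\phi)$.

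For the converse, suppose $(X,\phi)$ is not limit aperiodic, so there is $[Y,y,\psi]\in\overline{[X,\phi]}$ admitting a nontrivial $\sigma\in\Aut(Y,\psi)$. Since $\overline{[X,\phi]}$ is saturated, we may shift the basepoint so that $\tilde y:=\sigma(y)\ne y$; set $D=d_Y(y,\tilde y)>0$. The convergence $[X,x_i,\phi]\to[Y,y,\psi]$ yields $x_i\in X$, $R_i\uparrow\infty$, and pointed isomorphisms $\theta_i\colon(D(x_i,R_i),x_i,\phi)\to(D(y,R_i),y,\psi)$, and by thinning we may assume $R_{i+1}\ge 2R_i\ge 2D$, so that \Cref{c: preserves metric} lets $\theta_{i+1}$ restrict to a pointed isomorphism $D(x_{i+1},R_i)\to D(y,R_i)$. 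Define $y_i=\theta_i^{-1}(\tilde y)$, $S_i=R_i-D$, $f_i=\theta_{i+1}^{-1}\circ\theta_i$ on $D(x_i,R_i)$, and $h_i=\theta_i^{-1}\circ\sigma\circ\theta_i$ on $D(x_i,S_i)$. Then $f_i(y_i)=\theta_{i+1}^{-1}(\tilde y)=y_{i+1}$ and $h_{i+1}\circ f_i=\theta_{i+1}^{-1}\sigma\theta_i=f_i\circ h_i$. By \Cref{c: preserves metric}, $d(x_i,y_i)=d_Y(y,\tilde y)=D$, so $x_i\ne y_i$ and $\limsup_i d(x_i,y_i)=D<\infty$, violating the stated conclusion.

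The chief obstacle is the gluing step in the forward direction: one must verify that the locally defined $h_i$ transport cleanly to isomorphisms of sub-disks of the limit $Y$, and that the commutative diagram in the hypothesis becomes precisely the compatibility $h_{i+1}|_{D(y,S_i)}=h_i$ required to pass to the colimit and produce a genuine automorphism of $(Y,\psi)$. This is essentially bookkeeping, but it relies on \Cref{l: d_Y = d_X} and \Cref{c: preserves metric} to identify subgraph disks with ambient disks and to track how the $f_i$ and $h_i$ interact with these identifications. The converse, by contrast, is essentially formal once the subsequence is chosen so that each restriction of $\theta_{i+1}$ is itself a pointed isomorphism onto the appropriate disk.
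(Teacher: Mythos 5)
Your proof is correct and is essentially the argument the paper has in mind: the paper's own proof is just the remark that the lemma ``follows easily from the definition of the topology of $\widehat\GG_*$,'' and your construction — realizing a point $[Y,y,\psi]\in\overline{[X,\phi]}$ as the inductive limit of the disks $(D(x_i,R_i),x_i,\phi)$ along the $f_i$, transporting the $h_i$ and gluing them via the commutative diagrams into an automorphism in one direction, and conversely pulling a nontrivial automorphism $\sigma$ of a limit graph back through the approximating isomorphisms $\theta_i$ — is the natural unwinding of that definition. The only point you gloss, surjectivity of the glued map $h$, is automatic here: since each $f_i$ is a pointed isomorphism of disks it preserves distance to the base vertex, so $d(x_i,y_i)$ is constant in $i$, and hence the transported codomains $D(y_i,S_i)$ exhaust $Y$, making $h$ a genuine automorphism.
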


\begin{proof}
This follows easily from the definition of the topology of $\widehat\GG_*$.
\end{proof}

\begin{rem}\label{r: limit aperiodic}
In \Cref{l: limit aperiodic}, the case of bounded sequences $x_i,y_i$ characterizes the aperiodicity of $\xset$. Thus the case of unbounded sequences $x_i,y_i$ describes when $(\yset,\psi)$ is aperiodic for all $[\yset,y,\psi]\in\overline{[\xset,\phi]}\setminus[\xset,\phi]$.
\end{rem}

On the other hand, $(\xset,\phi)$ (or $\phi$) is called {\em repetitive\/} if there is some point $p\in \xset$ and a sequence $R_i\uparrow \infty$ in $\Z^+$ such that the sets 
\[
\{\, x\in \xset  \mid [D(p,R_i), p,\phi]=[D(x,R_i),x,\phi] \, \}
\] 
are relatively dense in $\xset$. This property is clearly independent of the choice of $p$. If $(X,\phi)$ is repetitive, then $\overline{[X,\phi]}$ is minimal, and the reciprocal also holds when $\overline{[X,\phi]}$ is compact \cite[Section~2.6]{AlvarezBarral-realization}.

Removing the colorings from the notation, we get the Polish space $\GG_*$ of isomorphism classes of pointed connected graphs. In this way, we get canonical maps $\iota_{\xset}:\xset\to\GG_*$ for connected graphs $\xset$, defining a canonical partition of $\GG_*$. Then it is said that $\xset$ is {\em aperiodic\/} if $\iota_{\xset}$ is injective, $\xset$ is {\em limit aperiodic\/} if $Y$ is aperiodic for all $[Y,y]\in\overline{[\xset]}$, and $\xset$ is {\em repetitive\/} if $\overline{[\xset]}$ is a minimal set of the canonical partition. Observe also that the forgetful map $\widehat\GG_*\to\GG_*$ is continuous. By \Cref{l: proper}, the space $\GG_*$ is a subspace of the Gromov space $\MM_*$ of isometry  classes of pointed proper metric spaces \cite{Gromov1981}, \cite[Chapter~3]{Gromov1999}. The obvious versions of \Cref{l: limit aperiodic} and \Cref{p: compact} in this setting follow by considering a constant coloring.

\subsection{A refinement of the main theorem}\label{ss: refinement}

We will derive \Cref{t: main} from the following finitary version.

\begin{thm}\label{t: finitary}
Let $\xset$ be a connected infinite simple graph with  $\Delta:=\deg X<\infty$. Then the following properties hold for any sequence $\epsilon_n\uparrow\infty$ in $\Z^+$:
\begin{enumerate}[(i)]

\item\label{i: finitary limit aperiodic} There are:
\begin{itemize}

\item a sequence $\delta_n$ in $\Z^+$, with every $\delta_n$ depending only on $\Delta$, $\epsilon_m$ for $m\leq n$, and $\delta_m$ for $m<n$; and

\item a sequence of colorings $\phi^N$ of $\xset$ by $\Delta$ colors, with every $\phi^N$ depending on $\epsilon_m$ for $m\leq N$;

\end{itemize}
such that, for all $x,y\in\xset$, $N\in\N$ and $0\le n\le N$,
\[
0<d(x,y)<\epsilon_n\Longrightarrow[D (x,\delta_n),x,\phi^N]\neq[D (y,\delta_n),y,\phi^N]\;.
\]

\item\label{i: finitary repetitive} Suppose that, for some $p\in \xset$ and some sequence $\mathfrak{r}_n\uparrow\infty$ and $\omega_i$ in $\Z^+$, with every $\mathfrak{r}_n$ large enough depending on $\Delta$ and $\epsilon_m$ for $m\leq n$, the sets
\[
\Omega_n = \{\,x\in \xset \mid [D (x, \mathfrak{r}_n),x,d_X]= [D (p,\mathfrak{r}_n),p,d_X] \, \}
\]
are $\omega_n$-relatively dense in $\xset$. Then there are:
\begin{itemize}

\item a sequence $r_n\uparrow\infty$ in $\Z^+$, with every $r_n$ depending on $\Delta$, $\epsilon_m$ and $\omega_m$ for $m\leq n$, and $r_m$ for $m<n$;

\item a sequence $\alpha_n$ in $\Z^+$, with every $\alpha_n$ depending on $\Delta$, $\epsilon_m$ and $\omega_m$ for $m\leq n$, and $r_m$ and $\alpha_m$ for $m<n$; and

\item a sequence of colorings $\phi^N$ by $\Delta$ colors, with every $\phi^N$ depending on $\epsilon_m$ and $\mathfrak{r}_m$ for $m\leq N$;

\end{itemize}
such that $\phi^N$ satisfies~\ref{i: finitary limit aperiodic} with some sequence $\delta_n$, and the sets  
\[
\textstyle{\widehat{\Omega}_n = \{\, x\in \xset \mid [D (x,\sum_{i=0}^n r_i),x,\phi^N]= [D (p,\sum_{i=0}^n r_i),p,\phi^N] \,\}}
\]
are $\alpha_n$-relatively dense in $\xset$ for $0\leq n \leq N$.

\end{enumerate}
\end{thm}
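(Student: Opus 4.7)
The plan is to carry out the hierarchical construction sketched in \Cref{s. idea}, turning its heuristic parameters into explicit recursions that make the dependencies required by the statement transparent. First I would build, inductively on $n\ge 0$, a decreasing chain of Delone subsets $X_n\subset X_{n-1}$ (with $X_{-1}=X$) together with a partition of $X_{n-1}$ into finite connected \emph{clusters} $C_{n,n-1}(x)$ indexed by their centers $x\in X_n$. Each $X_n$ would be obtained as a maximal $K_n$-separated subset of $X_{n-1}$ by invoking \Cref{c: existence of a separated net}, for a suitable $K_n$ depending only on $\Delta$, on $\epsilon_m$ for $m\le n$, and on the previous separation constants. This makes the clusters have uniformly bounded diameter and endows $X_n$ with a connected graph structure of degree bounded in terms of $\Delta$, via \Cref{l: d_Y = d_X} and \Cref{c: |D(x r)| le ...}. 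The relative density of $X_n$ inside $X_{n-1}$ is controlled by \Cref{l: cardinality separated}, while \Cref{c: graph partial} guarantees that cluster-intrinsic and ambient metrics are compatible at all relevant scales.

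Next, on each cluster $C_{n,n-1}(x)$ I would apply the finite distinguishing theorem of Collins--Trenk and Klav\v{z}ar--Wong--Zhu (\Cref{t: finite case}) to produce a large family of aperiodic colorings $\psi_{n,x}^i$ using at most $\Delta$ colors, taking care to avoid the exceptional configurations $K_n$, $K_{n,n}$, $C_5$ at the cluster boundary. The essential balance is that the number of such colorings grows exponentially in $|C_{n,n-1}(x)|$, whereas the number of competing clusters within distance $O(\epsilon_n)$ of $x$, which must be distinguished from it, is bounded by the size of a disk of comparable radius and therefore grows only exponentially in the radius itself (\Cref{c: |D(x r)| le ...}). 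As explained in \Cref{s. idea}, this balance is uniform only when the cluster is large enough, which forces the splitting $X_n=X_n^+\cupdot X_n^-$: for $x\in X_n^+$ I enlarge the cluster until it absorbs enough disk mass to dominate its neighborhood, while for $x\in X_n^-$ the plain growth bound already suffices.

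I would then define $\phi^N$ on $X$ by aggregating colorings across levels: a choice $x\mapsto\psi_{n,x}^{i_n(x)}$ at each level $0\le n\le N$ is encoded as an auxiliary coloring $\phi_n\colon X_n\to[M_n]$, which at the next level plays the role of the label that the cluster centered at $x$ must display. Setting $\delta_n$ to be a fixed linear combination of the cluster radii at levels $\le n$, a downward induction shows that any pointed isomorphism of disks of radius $\delta_n$ around points $x\neq y$ with $d(x,y)<\epsilon_n$ would preserve the cluster structure at every level, propagate through the labels $\phi_m$, and ultimately force equality between two symmetry-breaking colorings $\psi_{m,\cdot}^i$ on isomorphic clusters, contradicting their aperiodicity (\Cref{c: preserves metric} is the geometric input that powers this step). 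A diagonal extraction stabilizing on finite sets produces the sequence $\phi^N$ satisfying~\ref{i: finitary limit aperiodic}, with $\delta_n$ having exactly the dependencies stated.

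For part~\ref{i: finitary repetitive}, I would use the isometries $\mathfrak{h}_{n,x}\colon D(p,\mathfrak{r}_n)\to D(x,\mathfrak{r}_n)$ supplied by the companion paper~\cite{AlvarezBarral-realization} for $x\in\Omega_n$, and make every choice above \emph{equivariant} under them: the cluster decompositions and the assignments $x\mapsto\psi_{n,x}^{i_n(x)}$ at $x$ are transported, via $\mathfrak{h}_{n,x}$, from the corresponding data at $p$. This forces $\widehat\Omega_n$ to contain a controlled expansion of $\Omega_n$, so that $\omega_n$-relative density of $\Omega_n$ promotes to $\alpha_n$-relative density of $\widehat\Omega_n$, with $\alpha_n$ depending only on the quantities listed. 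The main obstacle I anticipate is reconciling the equivariance requirements at all levels with the aperiodicity requirements of~\ref{i: finitary limit aperiodic}: a symmetry-breaking choice made on a cluster at a fine scale must not be overridden by an equivariance demanded at a coarser scale. I plan to resolve this by choosing $\mathfrak{r}_n$ large compared to the cumulative radii $\sum_{m<n} r_m$, so that the data of all previous levels lie safely inside the injectivity domain of $\mathfrak{h}_{n,x}$ and can be transported without collision, and by enforcing the equivariance inductively so that each new level only refines the constraints inherited from below.
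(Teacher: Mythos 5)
Your outline reproduces the architecture of the paper's argument (nested Delone sets $X_n$, clusters $C_{n,n-1}(x)$, choices of cluster colorings re-encoded as colorings of $X_n$, equivariant transport for repetitivity), but it has a genuine gap at its quantitative core: you propose to obtain the families $\psi^i_{n,x}$ by applying \Cref{t: finite case} to each cluster. That theorem only asserts the \emph{existence} of one distinguishing coloring by $\deg$ colors; it gives no lower bound on how many there are, and permuting the $\Delta$ colors yields at most $\Delta!$ of them, a constant. What the balance you describe actually requires — and what the constants of \Cref{s. constants} are tuned to — is at least $\eta_n(|D_{n-1}(x,r_n^\pm)|)$ colorings per cluster, i.e.\ exponentially many in the cardinality of an inner disk, which moreover must be pairwise inequivalent and \emph{rigid} in the strong sense of \Cref{p: phi_{0,x}^i}~\ref{i: phi0xi i j} and \Cref{p: equivalence identity}: any color-preserving isomorphism between (restrictions of) two of them is forced to be the canonical transport map, uniformly over all clusters and compatibly with the maps $h_{n,x}$, $\hat h_{n,x}$. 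The paper gets this not from the statement of \Cref{t: finite case} but from its method: a strongly adapted base coloring whose $0$-colored geodesic pattern marks the center and which is injective on the children sets of a BFS-ordering (\Cref{d. adapted 0}, \Cref{l: existence phix}, \Cref{p: defn dfs}), on top of which one toggles a marker color on arbitrary subsets of a separated net in the corona $C_{-1}(x,10,r_0^\pm)$, producing $2^{|N_{0,x}|}\ge\eta_0(|D_{-1}(x,r_0^\pm)|)$ rigid colorings with only $\Delta$ colors (\Cref{p: phi_{0,x}^i}); at levels $n\ge1$ the analogue uses alphabet $\mathcal{I}_{n-1}$ and markers separated at scale $r_{n-1}^2s_{n-1}$ (\Cref{p: phi_n x^i}). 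Without such a counting-plus-rigidity device the comparison between the supply of colorings and the number of centers within distance controlled by $|D_{n-1}(x,r_n^\pm s_n)|$ cannot even be set up, and the claimed dependence of $\delta_n$ only on $\Delta$ and the $\epsilon_m$ is not established. Relatedly, the step you summarize as ``a downward induction shows that any pointed isomorphism\dots would preserve the cluster structure at every level'' is precisely the content of \Cref{l: h preserves}, a long induction in which that rigidity, together with the fact that membership in $X_m^\pm$, $\mathfrak X_m$ and the labels $\phi^N_m$ are recoverable from the coloring itself (adapted patterns, the component $\chi_m$), is consumed; it does not come for free from aperiodicity of the individual cluster colorings.

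Two further points. \Cref{t: finitary} is finitary: each $\phi^N$ is defined outright from the data of levels $\le N$ (in the paper, $\phi^N=\phi^N_{-1}$ from \Cref{d. psin}); the diagonal extraction you invoke is how \Cref{t: main} is deduced from \Cref{t: finitary} and has no role here — as written it would produce a single limit coloring, not the required sequence with the stated dependencies. For part~(ii), transporting data by the isometries attached to \emph{all} points of $\Omega_n$ is not a consistent requirement, since those isometries have overlapping domains and no compatibility under composition; the paper first extracts the hierarchical, $\mathfrak s_n$-separated subsets $\mathfrak X_n\subset\Omega_n$ with coherent maps $\mathfrak h_{n,x}$ (\Cref{p: zmn}, \Cref{p: xxn}) and makes every later choice (orders, representatives, the $\psi^i_{n,x}$) equivariant only with respect to these; the $\alpha_n$-relative density of $\widehat\Omega_n$ then comes from the relative density of $\mathfrak X_n$ (\Cref{p: mathfrak X_n is a net}, \Cref{p: definitive rep}), not of $\Omega_n$. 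Your closing remark about taking $\mathfrak r_n$ large points in the right direction, but the actual resolution is this coherent subsystem and the compatibility of the whole construction with it, which is what occupies \Cref{s. repetitivity}, \Cref{s. xn} and \Cref{s. colorings}.
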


As indicated in \Cref{ss: main thm}, \Cref{t: finitary} is stronger than \Cref{t: main} because $\delta_n$, $r_n$ and $\alpha_n$ are independent of the choice of $X$ satisfying the hypothesis. 

In \Cref{t: finitary}, the assumption that $X$ is infinite can be disposed of. The same ideas work with minor tweaks when $X$ is a finite graph large enough depending on $\deg X$, refining also \Cref{t: finite case}. Since the proof is already quite involved, we leave the details to the interested reader.

Let us derive \Cref{t: main}  from \Cref{t: finitary}.
Let $X$ be a graph and $\epsilon_n$ be an increasing sequence of positive integers satisfying the conditions of \Cref{t: finitary}.
Then this result gives a sequence of colorings $\phi^N$.
The set of colorings of $X$ by $\Delta$ colors is endowed with the topology of convergence over finite subsets of $X$.
Since the set $[\Delta]$ of colors is finite,  we can suppose that the sequence of colorings $\phi^N$ converges to some coloring $\phi$, possibly after passing to a subsequence. 
This means that, on any finite $A\subset X$, the colorings $\phi$ and $\phi^N$ coincide for $N$ large enough.
Let us prove that $\phi$ is a limit aperiodic coloring.

Assume by absurdity that there are some $n\in \N$ and $x,y\in X$ so that $0<d(x,y)<\epsilon_n$ and $[D (x,\delta_n),x,\phi]\neq[D (y,\delta_n),y,\phi]$.
By the convergence of $\phi^N$, there is some $N\ge n$ such that $ [D (x,\delta_n),x,\phi] = [D (x,\delta_n),x,\phi^N]$ and $[D (y,\delta_n),y,\phi]=[D (y,\delta_n),y,\phi^N]$, contradicting \Cref{t: finitary}~\ref{i: finitary limit aperiodic}.
Therefore $\phi$ satisfies \Cref{t: finitary}~\ref{i: finitary limit aperiodic}, with the same choice of sequence $\delta_n$. The fact that $\phi$ is limit aperiodic follows now from \Cref{l: limit aperiodic}.	

Suppose that, additionally, the family $\phi^N$ satisfies the conditions of \Cref{t: finitary}~\ref{i: finitary repetitive}, with a distinguished point $p$.
Let us show that $\phi$ is repetitive.
For any $n\leq N$ and $x\in X$, there is some $y\in X$ such that $d(x,y)\leq \alpha_n$ and $[D(y,\sum_{i=0}^n r_i),y , \phi^N]=[D(p,\sum_{i=0}^n r_i),p,\phi^N ]$.
Assume by absurdity that there are some $n\in \N$ and $x\in X$ such that $[D(y,\sum_{i=0}^n r_i),y , \phi]\neq[D(p,\sum_{i=0}^n r_i),p,\phi]$ for all $y\in D(x,\alpha_n)$.
By the convergence of $\phi^N$, we have that $\phi$ and $\phi^N$ coincide over $D(p,\sum_{i=0}^n r_i)$ and $D(y,\sum_{i=0}^n r_i)$ for every $y\in D(x,\alpha_n)$ and $N$ large enough, contradicting \Cref{t: finitary}~\ref{i: finitary repetitive}.
Therefore the sets
\[
\textstyle{\{\, x\in \xset \mid [D (x, \sum_{i=0}^n r_j),x,\phi]= [D (p,\sum_{i=0}^n r_j),p,\phi] \,\}}
\]
are $\alpha_n$-relatively dense in $X$.
So $\phi$ satisfies \Cref{t: finitary}~\ref{i: finitary repetitive}, with the same choice of sequence $\alpha_n$, and the result follows by the definition of repetitiveness.

The rest of the paper is devoted to prove \Cref{t: finitary}.

\section{Constants}\label{s. constants}

In order to prove our result, we need to define quantities depending on the sequences appearing in the statement of \Cref{t: finitary} that will function as a priori upper bounds for parameters that arise in the definition of $\phi$.
They depend on each other in non-trivial ways, so their definitions are quite involved, which makes this section rather technical.

Let $X$ be a graph satisfying the conditions of \Cref{t: finitary}, and let $\epsilon_n$ be an increasing sequence of positive integers.
By induction on $n\in \N$, we are going to define sequences of positive integers, $s_n$, $\hat{r}_n$, $\hat{r}_n^\pm$, $\bar{r}_n$ and $\bar{r}_n^\pm$, and sequences of functions, $\bar{\eta}_n,\bm{R}_n^\pm, \bm{l}_n,\bm{K}_n, \overline{\bm{K}}_n \colon \mathbb{N}\to \N$ and $\bm{L}_n, \bm{\Gamma}_n^\pm, \bm{\Delta}_n\colon\N^{n+1} \to \mathbb{N}$. 
First, set 
\begin{equation}\label{defn s0}
s_0=27+\epsilon_0\;, \quad \Delta_{-1}= \deg X=\Delta\;.
\end{equation}
The notation $\deg X$, $\Delta$ and $\Delta_{-1}$ will be used indistinctly, depending on  convenience. Define $\bar{\eta}_0\colon \N\to \mathbb{Q}$ by
\begin{equation}\label{defn bareta0}
\bar{\eta}_0(a)= \exp_2\big(\big\lfloor(a-\Delta^{11}-1)/\Delta^3\big\rfloor\big)\;,
\end{equation}
where we use the notation $\exp_2(r)=2^r$ for $r\in\R$. The number $\bar{\eta}_0(a)$ will represent an \emph{a priori} lower bound on the number of different (up to pointed graph isomorphism) rigid colorings that we can use on every cluster of ``radius" $a$ in $X$  (see \Cref{s. idea}). Let $\hat{r}_0$ be the smallest positive integer such that \begin{equation}\label{tilder0 defn}
\bar{\eta}_0\left(  \sqrt{\bar{\eta}_0(\hat{r}_0)}-6 \right)> \left(4(\Delta-1)^{\hat{r}_0s^2_0(3s_0+1)}+6\right)^2\;.
\end{equation}
Note that this is well-defined since there is a double exponential in the left-hand side of the inequality, whereas there is a single exponential on the right-hand side.
Observe also that~\eqref{defn bareta0} and~\eqref{tilder0 defn} yield
\begin{equation}\label{hat r_0 > 2^11}
\hat{r}_0>2^{11}
\end{equation}
because $\Delta\ge2$ since $X$ is infinite.
Let 
\begin{equation}\label{defn barr0}
\bar{r}_0=\hat{r}_0(3s_0+1)\;.
\end{equation}
From \eqref{tilder0 defn} and the fact that $\bar{\eta}_0$ is an increasing function we get 
\begin{align}
\bar{\eta}_0\left(  \sqrt{\bar{\eta}_0(\bar{r}_0)}-6\right)&> 
\bar{\eta}_0\left(  \sqrt{\bar{\eta}_0(\hat{r}_0)}-6 \right)\notag\\
&>\left(4(\Delta-1)^{\hat{r}_0s^2_0(3s_0+1)}+6\right)^2=
\left(4(\Delta-1)^{\bar{r}_0s_0^2}+6\right)^2\;.\label{barr0 defn}
\end{align}
Define the remaining functions for $n=0$ as follows: 
\begin{equation}\label{defn functions 0}
\left.\begin{alignedat}{3}
\bm{R}_0^{-}(a)&=4a-1\;,&\quad \bm{R}_0^{+}(a) &= a(2s_0+3)\;, &\quad \bm{l}_0(a)&=2\bm{R}_0^{+}(a)+1\;, \\
\bm{\Delta}_0(a)&= 4(\Delta-1)^{2\bm{R}_0^{+}(a)}\;, &\quad \bm{L}_0(a)&=\bm{l}_0(a)\;, &\quad \bm{\Gamma}_0^\pm(a) &= \bm{R}_0^\pm(a)\;.  
\end{alignedat}\;\right\}
\end{equation}

Now, given $n>0$, suppose that we have defined the desired constants and functions for integers $0\leq m<n$.
Using the notation $\bar{\bm{r}}_{n-1}=(\bar{r}_0,\dots, \bar{r}_{n-1})$, define 
\begin{equation}\label{defn sn}
s_n= 27+10 \bm{L}_{n-1}(\bar{\bm{r}}_{n-1})+2\bm{\Gamma}_{n-1}^+(\bar{\bm{r}}_{n-1}) + \epsilon_n\;.
\end{equation}
Let $\bar{\eta}_n\colon \N \to \mathbb{Q}$ be defined by
\begin{equation}\label{dfn overlineeta}
\bar{\eta}_n(a)= \exp_2\Big(\Big\lfloor\big(a-\bm{\Delta}_{n-1}^{11}(\bar{\bm{r}}_{n-1})-1\big)/\bm{\Delta}_{n-2}^{\bar{r}_{n-1}^2s_{n-1}}(\bar{\bm{r}}_{n-2})\Big\rfloor\Big)\;.
\end{equation}
Then let $\hat{r}_n$  be the smallest positive integer so that
\begin{equation}\label{hatrn}
\bar{\eta}_n\left(  \sqrt{\bar{\eta}_n(\hat{r}_n)}-6 \right)>
\left(4(\bm{\Delta}_{n-1}(\bar{\bm{r}}_{n-1})-1)^{\hat{r}_ns^2_n(3s_n+1)}+6\right)^2\;.
\end{equation}
This is well-defined like in the case of $\hat{r}_0$.
Let 
\begin{equation}\label{defn barrn}
\bar{r}_n=\hat{r}_n(3s_n+1)\;.
\end{equation}
From~\eqref{tilder0 defn},~\eqref{hatrn} and the fact that $\bar{\eta}_n$ is an increasing function, we get 
\begin{align}
\bar{\eta}_n\left(  \sqrt{\bar{\eta}_n(\bar{r}_n)}-6 \right)&>\bar{\eta}_n\left(  \sqrt{\bar{\eta}_n(\hat{r}_n)}-6 \right)\notag\\
&>\left(4(\bm{\Delta}_{n-1}(\bar{\bm{r}}_{n-1})-1)^{\hat{r}_ns_n^2(3s_n+1)}+6\right)^2\notag\\
&=\left(4(\bm{\Delta}_{n-1}(\bar{\bm{r}}_{n-1})-1)^{\bar{r}_ns_n^2}+6\right)^2\;.\label{barrn defn}
\end{align}
For $n\in \N$, using the notation $\bm{a}_n=(a_0,\dots, a_n)$ and $\bm{a}_{n-1}=(a_0,\dots, a_{n-1})$, let 
\begin{equation}\label{defn functions n}
\left.\begin{gathered}
\bm{R}_n^{-}(a)=4a-1\;,\quad   
\bm{R}_n^{+}(a) = a(2s_n+3)\;, \quad 
\bm{l}_n(a)=2\bm{R}_n^{+}(a)+1\;,
\\
\bm{\Delta}_n(\bm{a}_n)=
4\left(\bm{\Delta}_{n-1}(\bm{a}_{n-1})-1\right)^{2\bm{R}_n^{+}(a_n)}\;, 
\\
\bm{L}_n(\bm{a}_N)=
\prod_{i=0}^{n} \bm{l}_i(a_i)\;, \quad 
\bm{\Gamma}_n^\pm(\bm{a}_N)= 
\bm{R}_n^\pm(a_n) \cdot \bm{L}_{n-1}(\bm{a}_{N-1}) + \bm{\Gamma}_{n-1}^+(\bm{a}_{N-1})\;. 
\end{gathered} \;\right\}
\end{equation}
Note that $\bm{R}_n^{-}$ is independent of $n$, it is only included for the sake of notational consistency.
Also, by a simple induction argument, we get, for $l=0,\dots ,N$,
\begin{equation}\label{gamma r bm}
\bm{\Gamma}_n^\pm(\bm{a}_N)\geq \bm{R}_l^\pm(a_l)\;.
\end{equation}

\begin{lem}\label{l: rnpm gamma}
Let $n\in \N$, and let $\bm{a}=(a_0,\dots, a_n)$ be an $(n+1)$-tuple such that, for $0\leq m\leq n$, we have $a_m\leq \bar{r}_m$. Then  
\[
a_n s_n \geq 2\bm{\Gamma}^-_n(\bm{a}_n)+\epsilon_n\;, \quad
a_n s^2_n \geq 2\bm{\Gamma}^+_n(\bm{a}_n)+\epsilon_n\;.
\]
\end{lem}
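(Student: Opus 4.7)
The plan is to reduce both inequalities to simple arithmetic by using monotonicity of the auxiliary functions and the generous numerical slack built into the definition of $s_n$.

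First I would exploit the monotonicity of $\bm{L}_{n-1}$ and $\bm{\Gamma}_{n-1}^+$ in each argument. Since $\bm{l}_i$ and $\bm{R}_i^+$ are clearly non-decreasing, an immediate induction on the recursions in \eqref{defn functions n} shows that both $\bm{L}_{n-1}(\bm{a}_{n-1}) \le \bm{L}_{n-1}(\bar{\bm{r}}_{n-1})$ and $\bm{\Gamma}_{n-1}^+(\bm{a}_{n-1}) \le \bm{\Gamma}_{n-1}^+(\bar{\bm{r}}_{n-1})$ whenever $a_m \le \bar{r}_m$ for all $m \le n-1$. Writing $L := \bm{L}_{n-1}(\bar{\bm{r}}_{n-1})$ and $\Gamma := \bm{\Gamma}_{n-1}^+(\bar{\bm{r}}_{n-1})$ for brevity, the definition \eqref{defn sn} becomes $s_n = 27 + 10L + 2\Gamma + \epsilon_n$, and by \eqref{defn functions n}
\[
2\bm{\Gamma}^-_n(\bm{a}_n) \le 2(4a_n - 1)L + 2\Gamma, \qquad 2\bm{\Gamma}^+_n(\bm{a}_n) \le 2a_n(2s_n+3)L + 2\Gamma.
\]
Thus it suffices to verify the two inequalities with these upper bounds replacing $2\bm{\Gamma}^\pm_n$.

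For the first inequality, I would simply expand
\[
a_n s_n = 27 a_n + 10 a_n L + 2 a_n \Gamma + a_n \epsilon_n
\]
and compare term-by-term with $8 a_n L + 2\Gamma + \epsilon_n$. Using $a_n \ge 1$, the term $10 a_n L$ dominates $8 a_n L$, the term $2 a_n \Gamma$ dominates $2\Gamma$, and $a_n \epsilon_n$ dominates $\epsilon_n$, with $27 a_n$ left over as slack. This handles the $\bm{\Gamma}^-_n$ bound.

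For the second inequality, the subtle point is that $s_n$ now appears on both sides, but the left-hand side is quadratic in $s_n$ while the right-hand side is only linear. I would rewrite the desired inequality as
\[
a_n\bigl(s_n(s_n - 4L) - 6L\bigr) \ge 2\Gamma + \epsilon_n,
\]
and then use $s_n - 4L = 27 + 6L + 2\Gamma + \epsilon_n \ge 27$ together with $s_n \ge 27$ to bound $s_n(s_n - 4L) - 6L$ from below by something linear in $L$, $\Gamma$, and $\epsilon_n$ with large positive coefficients (the constant $27^2 = 729$ provides comfortable headroom). A final application of $a_n \ge 1$ closes the estimate. The whole step is a routine arithmetic chase; I do not anticipate any real obstacle beyond bookkeeping the definitions carefully, since the constants in \eqref{defn sn} were evidently chosen precisely to make these inequalities hold with substantial slack.
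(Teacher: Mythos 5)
Your proof is correct and follows essentially the same route as the paper: bound $\bm{\Gamma}_n^\pm(\bm{a}_n)$ by $\bm{R}_n^\pm(a_n)\,\bm{L}_{n-1}(\bar{\bm{r}}_{n-1})+\bm{\Gamma}_{n-1}^+(\bar{\bm{r}}_{n-1})$ via coordinatewise monotonicity, then substitute the definition \eqref{defn sn} of $s_n$ and finish by elementary arithmetic using $a_n\ge1$. The paper phrases the final step as checking $10a_n>2\bm{R}_n^-(a_n)$ and $10a_ns_n>2\bm{R}_n^+(a_n)$ rather than your rearrangement $a_n\bigl(s_n(s_n-4L)-6L\bigr)\ge 2\Gamma+\epsilon_n$, but this is only a cosmetic difference.
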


\begin{proof}
By definition of $s_n$, we have 
\begin{align*}
a_n s_n &= a_n (10\bm{L}_{n-1}(\bar{\bm{r}}_{n-1})+2\bm{\Gamma}^+_{n-1}(\bar{\bm{r}}_{n-1})+\epsilon_n) \\
&> 10 a_n \bm{L}_{n-1}(\bar{\bm{r}}_{n-1})+2\bm{\Gamma}^+_{n-1}(\bar{\bm{r}}_{n-1})+\epsilon_n\;.
\end{align*}
On the other hand, using~\eqref{defn functions n} and the fact that $\bm{L}_{n-1}$ and $\bm{\Gamma}_{n-1}^\pm$ are monotone increasing functions on every coordinate, we have 
\[
\bm{\Gamma}_n^\pm(\bm{a}_n)\leq  
\bm{R}_n^\pm(a_n) \cdot \bm{L}_{n-1}(\bar{\bm{r}}_{n-1}) + \bm{\Gamma}_{n-1}^+(\bar{\bm{r}}_{n-1})\;. 
\]
Then the proof follows by showing that $10a_n >2\bm{R}_n^-(a_n)$ and $10a_ns_n >2\bm{R}_n^+(a_n)$, which is an easy consequence of the definitions.
\end{proof}

Let $\overline{\mathbf K}_{-1}={\mathbf K}_{-1}\equiv\overline{K}_{-1}=K_{-1}= 0$, and continue defining $\overline{\mathbf K}_n$ and ${\mathbf K}_n$ by induction on $n\in\N$ as follows:
\begin{align}
\label{defn bmkn} \overline{\bm{K}}_n(\bm{a}_n)&=\bm{K}_{n-1}(\bm{a}_{n-1}) + \bm{L}_n(\bm{a}_{n}) ( a_ns_n^2 + a_n(2s_n+1))\;,\\  \label{defn kn} \bm{K}_n(\bm{a}_n)&= \overline{\bm{K}}_{n}(\bm{a}_n) + 
\bm{L}_n (\bm{a}_n)(s_{n+1}\bm{R}_{n+1}^+(\bar{r}_{n+1}) + \bm{\Gamma}_n^+(\bar{\bm{r}}_n) + 2\bm{R}_n^+(\bar{r}_{n}))\;.
\end{align}
Finally, for all $n\in \N$, let 
\[
\bar{r}_n^-=\bar{r}_n\;, \quad \bar{r}_n^+=s_n\bar{r}_n\;, \quad \hat{r}_n^-=\hat{r}_n\;, \quad \hat{r}_n^+=s_n\hat{r}_n\;.
\]

\section{Construction of $\xxn$}\label{s. repetitivity}

This section is devoted to the construction of subsets $\xxn\subset X$, which will be used later to achieve the repetitiveness of $\phi$ under the assumptions of \Cref{t: finitary}~\ref{i: finitary repetitive}. Most of the steps will be direct applications of the results from~\cite[Sections~3 and~4]{AlvarezBarral-realization}. 

Suppose that $X$ satisfies the hypothesis of \Cref{t: finitary}~\ref{i: finitary repetitive} throughout this section. 
Then we fix a distinguished point $p\in \xset$, and there are sequences $\mathfrak{r}_n\uparrow\infty$ and $\omega_n$ in $\Z^+$, with every $\mathfrak{r}_n$ large enough depending on $\Delta$ and $\epsilon_m$ for $m\leq n$, so that every set
\[
\Omega_n = \{\, x\in \xset\mid [D (p,\mathfrak{r}_n),p,d_{\xset}]=[ D (x, \mathfrak{r}_n),x,d_{\xset}] \,\}
\] 
is $\omega_n$-relatively dense in $\xset$. Thus there is a pointed isometry $\mathfrak{f}_{n,x}:(D(p,\mathfrak{r}_n),p)\to(D(x,\mathfrak{r}_n),x)$ for every $x\in\Omega_n$. 

By taking a subsequence of $\rrn$ if needed, we can assume that there are other sequences $\ssn,\ttn\uparrow \infty$ in $Z^+$ such that, taking $\mathfrak{r}_{-1}=\mathfrak{s}_{-1} =\mathfrak{t}_{-1}= \omega_{-1}=0$,  
\begin{align}
\label{rn}\mathfrak{r}_n &> \bm{K}_n(\bar{\bm{r}}_{n})+  
s_n^2 4\bm{L}_n(\bar{\bm{r}}_n) (\bm{\Gamma}_n^+(\bar{\bm{r}}_{n})+n)\;,\\
\label{2rl + sn < sl}  \mathfrak{s}_n &>   \bm{L}_{n-1}(\bar{\bm{r}}_{n-1})(2\mathfrak{r}_n
+\bm{K}_{n-1}(\bar{\bm{r}}_{n-1})), 3\bm{L}_n(\bar{\bm{r}}_n)\bm{\Gamma}_{n+1}^+(\bar{\bm{r}}_{n+1})\;,\\
\label{tn} \ttn &> \bm{K}_n(\bar{\bm{r}}_{n})\;,
\end{align}
in addition to the following conditions for some sequence $\lambda_n\downarrow1$ in $\R$ with $\prod_{n=0}^\infty\lambda_n<2$ \cite[Eqs.~(3.1)--(3.6)]{AlvarezBarral-realization}: 
\begin{align*}
\rrn &> \frac{\lambda_0^5}{\lambda_0-1}(\mathfrak{r}_{n-1} +\mathfrak{s}_{n-1}+t_{i-1}+2\omega_{n-1}+1)\;,\\
\ssn &> 2\lambda_0^5(\rrn+ \mathfrak{s}_{n-1}+\omega_n)\;,\\
\ttn &>  \lambda_0^3(5\mathfrak{t}_{n-1} + \rrn +\mathfrak{s}_{n-1} + 2\omega_{n-1}+1)\;, \\
\ttn &> 4\frac{\lambda_n^4 + \lambda_n^2  -1}{\lambda_n^2} \rrn + \mathfrak{t}_{n-1} + \prod_{i=0}^n\lambda_i(\mathfrak{s}_{n-1} + 2\omega_{n-1}+\omega_n)\;,\\
\lambda_n^2 &< \lambda_{n-1} \;,\\
2^{2^{-n}} &> \frac{\rrn (\lambda_n^5-1)\lambda_{n-1}^2}{\mathfrak{r}_{n-1}(\lambda_{n-1}^5-1)\lambda_n^2},
\frac{\rrn (\lambda_n^6-1)\lambda_{n-1}^2}{\mathfrak{r}_{n-1}(\lambda_{n-1}^6-1)\lambda_n^2} \;.
\end{align*}

For $n\in \N$, let $\mathfrak X^n_n=\{p\}$ and $\mathfrak{h}_{n,p}^n= \id_{D(x,\mathfrak{r}_n)}$. In \Cref{p: zmn} we will continue defining  subsets $\mathfrak X^m_n \subset X$ for $0\leq n <m $, and pointed isometries $\ffmnz \colon (D(p,\rrn),p)  \to (D(z,\rrn),z)$ for $z\in \mathfrak X^m_n$. We will use the following notation: 
\[
\ppmn = \{ \, (l,z)\in \mathbb{N}\times \xset \mid n<l<m, \ z\in \mathfrak X^m_l \, \}\;. 
\]

Let $<$ denote the binary relation on $\ppmn$ defined by declaring $(l,z)< (l',z')$ if $l<l'$ and $z\in \mathfrak{h}^m_{l',z'}(\mathfrak{X}^{l'}_l)$, and let $\leq$ denote its reflexive closure. This is actually a partial order relation \cite[Section~4]{AlvarezBarral-realization}. Let $\overline{\mathfrak{P}}^m_n$ denote the subset of maximal elements of $\mathfrak{P}^m_n$. For every $(l,z)\in \mathfrak{P}^m_n$, there is a unique $(l',z')\in \overline{\mathfrak{P}}^m_n$ such that $(l,z)\leq (l',z')$ \cite[Section~4]{AlvarezBarral-realization}.

\begin{prop}[{See \cite[Proposition~4.1 and Remark~6]{AlvarezBarral-realization}}]\label{p: zmn}
 For integers  $0\leq n <m$, there is a set  $\mathfrak X^m_n\subset X$ and a pointed isometry $\mathfrak{h}^m_{n,z}\colon (D(p,\mathfrak{r}_n),p)\to (D(z,\mathfrak{r}_n),z)$ for every  $z\in \mathfrak X^m_n$ satisfying the following properties:
\begin{enumerate}[(i)]
\item \label{i: zmn zvq}The set $\mathfrak X^m_n$ is an $\mathfrak{s}_n$-separated subset of $\Omega_{n}\cap D(p,\rrm -\ttn)$.
\item \label{i: zmn f=ff} For every $ (l,z)\in \ppmn$ and $x\in \mathfrak{X}^m_n\cap D(z,\rrl)$, we have $\mathfrak{h}^m_{n,x}=\mathfrak{h}^m_{l,z} \mathfrak{h}^l_{n,x'}$ on $D (p, \rrn)$, where $x'=(\mathfrak{h}^m_{l,z})^{-1}(x)$.
\item \label{i: zmn cap bln}For any $ (l,z)\in \ppmn$, we have  $\zzmn \cap D(z,\mathfrak{r}_{l'}+\ssn)=\mathfrak{h}^m_{l,z}(\zzln)$.
\item \label{i: zmn either} For any $x\in \mathfrak X^m_n$ and $(l,z)\in \ppmn$, either $d(x,z)\geq \rrl +\ssn$, or $x\in\mathfrak{h}^m_{l,z}(\mathfrak{X}^l_n)$.
\item \label{i: zmn zlk} Consider integers $0\leq k\leq l$ such that either $l< m$ and $k\geq n$, or $l=m$ and $k>n$. Then  $\zzlk\subset \zzmn$, and $\mathfrak{h}^m_{n,z}=\mathfrak{h}^{l}_{k,z}|_{D(p,\rrn)}$ for all $z\in\zzlk$.
\item \label{i: zmn p} We have $p\in \zzmn$ and $\mathfrak{h}^m_{n,p}=\id_{D(p,\rrn)}$.
\end{enumerate}
\end{prop}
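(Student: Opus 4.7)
The plan is to produce $\zzmn$ together with the isometries $\ffmnz$ by a double induction: an outer induction on $m$ and, for each fixed $m$, a downward inner induction on $n$, starting from the already-fixed base $\zznn=\{p\}$, $\ffmnp=\id_{D(p,\rrn)}$. The construction closely mirrors \cite[Proposition~4.1]{AlvarezBarral-realization}, and is strictly easier in the present graph setting because every ``almost isometry'' appearing in the Riemannian version is replaced here by an honest pointed isometry, so no Gromov-type approximation arguments are needed.

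For the inductive step, assume that, for all $l$ with $n<l\le m$, the sets $\zzml$ and isometries $\mathfrak h^m_{l,z}$ have already been produced, so that in particular $\ppmn$ and $\olppmn$ are known. First I lay down the ``forced'' subset
\[
\mathfrak X^{m,\mathrm{f}}_n:=\{p\}\cup\bigcup_{(l,z)\in\olppmn}\mathfrak h^m_{l,z}\bigl(\mathfrak X^l_n\cap D(p,\mathfrak r_l)\bigr),
\]
and, for a point $x=\mathfrak h^m_{l,z}(x')$ of the $(l,z)$-summand, I set $\mathfrak h^m_{n,x}:=\mathfrak h^m_{l,z}\circ\mathfrak h^l_{n,x'}$; this enforces property~(ii) by construction. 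Then, by \Cref{c: existence of a separated net}, I extend $\mathfrak X^{m,\mathrm{f}}_n$ to a maximal $\mathfrak s_n$-separated subset $\zzmn$ of $\Omega_n\cap D(p,\mathfrak r_m-\mathfrak t_n)$, and for each newly adjoined $x$ I pick an arbitrary pointed isometry $\mathfrak h^m_{n,x}\colon(D(p,\mathfrak r_n),p)\to(D(x,\mathfrak r_n),x)$, which exists since $x\in\Omega_n$.

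For the verification, property~(i) reduces to $\mathfrak s_n$-separation of the forced part: within one summand this is inherited by isometry from the inductive hypothesis applied to $\zzln$, while across two distinct summands the separation comes from a triangle-inequality estimate combining the $\mathfrak s_l$-separation of the higher-level lattices with the growth bound~\eqref{2rl + sn < sl}. Property~(iii) is the combination of the tautological inclusion ``$\supset$'' and ``$\subset$'', the latter being a consequence of maximality together with the observation that any newly added point within distance $\mathfrak r_l+\mathfrak s_n$ of some $z\in\zzml$ would already belong to the forced part. Property~(iv) is an immediate corollary of~(iii); property~(v) is obtained by tracing the forced part through the chain of higher-level isometries; and property~(vi) is built into the base case.

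The main obstacle is the well-definedness of the forced part and its associated isometries: a single $x$ may lie in the image of $\mathfrak h^m_{l,z}$ and simultaneously in the image of $\mathfrak h^m_{l',z'}$ for distinct pairs of $\olppmn$, and the two candidate values of $\mathfrak h^m_{n,x}$ must agree. This is precisely where the partial order $<$ on $\ppmn$, together with the fact that each element admits a unique maximal successor, enters: any two such pairs are forced to be comparable, and the required compatibility then reduces to the inductive version of~(ii). Once this cocycle identity is tracked faithfully---which is what \cite[Section~4]{AlvarezBarral-realization} does in the more delicate Riemannian setting---the remainder of the proof is a string of triangle-inequality estimates controlled by the growth conditions on $\rrn$, $\ssn$, $\ttn$ and $\omega_n$ listed at the beginning of \Cref{s. repetitivity}.
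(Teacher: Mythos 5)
Here the paper supplies no argument of its own: \Cref{p: zmn} is imported wholesale from the companion paper (\cite[Proposition~4.1 and Remark~6]{AlvarezBarral-realization}), so the only possible comparison is with that cited construction, and your double induction is exactly the expected route: build the ``forced'' part from the images $\mathfrak h^m_{l,z}(\zzln)$ over the maximal pairs $(l,z)\in\olppmn$, extend it by \Cref{c: existence of a separated net} to a maximal $\ssn$-separated subset of $\Omega_n\cap D(p,\rrm-\ttn)$, and attach arbitrary pointed isometries to the newly adjoined points (legitimate since they lie in $\Omega_n$). This matches how the proposition is used later in the paper (e.g.\ the proof of \Cref{l: ymn} says ``arguing like in the proof of \Cref{p: zmn}~\ref{i: zmn zvq}''), and the graph setting does remove the almost-isometry bookkeeping of the Riemannian version.

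One step, as you state it, would not go through: ``any two such pairs are forced to be comparable'' cannot be the resolution of the well-definedness problem, because two \emph{distinct} elements of $\olppmn$ are by definition incomparable. What the argument actually needs is that the overlap never occurs: if $D(z,\rrl)\cap D(z',\mathfrak r_{l'})\neq\emptyset$ for distinct maximal pairs with $l\le l'$, then either $l=l'$ and the $\ssl$-separation of $\zzml$ together with $\ssl>2\rrl$ (from~\eqref{2rl + sn < sl}) is violated, or $l<l'$ and the inductively known item~\ref{i: zmn either}, applied at stage $(m,l)$ to $z\in\zzml$ and $(l',z')\in\mathfrak P^m_l$, forces $(l,z)<(l',z')$, contradicting maximality. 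So the forced summands are pairwise far apart, the maps $\mathfrak h^m_{n,x}$ on the forced part are defined unambiguously, and no cocycle check is needed there; the cocycle identity~\ref{i: zmn f=ff} for a \emph{non-maximal} $(l,z)$ is then obtained separately, by factoring $(l,z)$ through its unique maximal majorant $(l',z')$ and invoking~\ref{i: zmn f=ff} at stage $(m,l)$ together with the outer induction at stage $(l',\cdot)$; your phrase ``reduces to the inductive version of~(ii)'' belongs to this verification, not to well-definedness. Similarly, your one-line treatment of~\ref{i: zmn zlk} hides the point that makes it work: every pair $(m-1,z)$ is automatically maximal in $\ppmn$ (there is no admissible level above $m-1$), and in particular $(m-1,p)\in\olppmn$ with $\mathfrak h^m_{m-1,p}=\id$, which is what places $\mathfrak X^{m-1}_n$ inside the forced part; the remaining cases of~\ref{i: zmn zlk} then follow from the induction hypotheses. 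With these repairs your outline coincides with the cited proof.
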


For $n<m$, let $\mathfrak{c}^m_n \colon D(p,\mathfrak{r}_m) \to \{n+1, \ldots, m \}$ be defined by
\[
\mathfrak{c}^m_n(x)=\min\{\,n\in\Z \mid n< l \leq m,\ \exists z\in \zzml,\ x\in D(z, \rrl)\,\}\;.
\]

 Since the set $\zzml$ is $2\rrl$-separated by \Cref{p: zmn}~\ref{i: zmn zvq} and~\eqref{2rl + sn < sl}, if $x\in D(z,\rrl)$ for some $z\in \zzml$, then $z$ is the unique point in $\zzml$ that satisfies this condition. Let $\mathfrak{p}^m_n\colon \zzmn \to \xset$ be defined by assigning to every $x\in \zzmn$ the unique point $\mathfrak{p}^m_n(x)$ in $\mathfrak{X}^m_{\mathfrak{c}^m_n(x)}$  satisfying $x\in D(\mathfrak{p}^m_n(x), \mathfrak{r}_{\mathfrak{c}^m_n(x)})$.

For $n\in \N$, let $\preceq^n_n$ be the trivial order relation on $\zznn = \{p\}$.

\begin{prop}\label{p: preceqmn}
For integers  $0\leq n < m $, there is an order\footnote{In order relations, it is assumed that any pair of elements is comparable. Otherwise  we use the term \emph{partial order relation}.} relation $\preceq^m_n$ on $\zzmn$ such that:
\begin{enumerate}[(i)]

\item \label{i: preceqmn p} $p$ is the least element of $(\zzmn,\preceq^m_n)$;

\item \label{i: preceqmn cmnx} for $x,y\in\zzmn$, if $\mathfrak{c}^m_n(x)< \mathfrak{c}^m_n(y)$, then\footnote{This means that $x\preceq^m_n y$ and $x\neq y$. Similar notation is used with other order relations.}  $x \prec^m_n y$; and,

\item \label{i: preceqmn pmn}for any $(l,z)\in \ppmn$, the map $\ffmlz\colon (\zzln , \preceq^l_n ) \to (\zzmn \cap D(z, \rrl) , \preceq^m_n )$ is order preserving.

\end{enumerate}
\end{prop}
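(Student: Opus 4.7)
The plan is to construct $\preceq^m_n$ by induction on $m-n \geq 0$, with the trivial order on $\zznn=\{p\}$ as the base case. Before starting, I would fix once and for all an arbitrary linear ordering $<_{0}$ on the countable set $X$ with $p$ as its least element; it will act as a fallback at the bottom of the recursion.

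For $m > n$, assume all orders $\preceq^{m'}_{n'}$ with $m' - n' < m - n$ have been built and satisfy (i)--(iii). Given distinct $x, y \in \zzmn$, set $l_{x} = \mathfrak{c}^m_n(x)$, $l_{y} = \mathfrak{c}^m_n(y)$, $z_{x} = \mathfrak{p}^m_n(x)$, $z_{y} = \mathfrak{p}^m_n(y)$, and declare $x \prec^m_n y$ exactly when
\begin{itemize}
\item $l_{x} < l_{y}$; or
\item $l_{x} = l_{y} = l$, $z_{x} \neq z_{y}$ and $z_{x} \prec^m_l z_{y}$ in $\zzml$ (using $\preceq^m_l$, available since $m - l < m - n$); or
\item $l_{x} = l_{y} = l < m$, $z_{x} = z_{y} = z$, and $(\mathfrak{h}^m_{l,z})^{-1}(x) \prec^l_n (\mathfrak{h}^m_{l,z})^{-1}(y)$ (using $\preceq^l_n$, available since $l - n < m - n$); or
\item $l_{x} = l_{y} = m$ (which forces $z_{x} = z_{y} = p$) and $x <_{0} y$.
\end{itemize}
Properties (i) and (ii) are immediate from the construction, and linearity and transitivity follow by a routine check across the four cases.

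The crux of (iii) is the following compatibility lemma, which I would establish first: for $(l, z) \in \ppmn$, $x' \in \zzln$ and $x = \mathfrak{h}^m_{l,z}(x')$, one has $\mathfrak{c}^m_n(x) = \mathfrak{c}^l_n(x')$ and $\mathfrak{p}^m_n(x) = \mathfrak{h}^m_{l,z}(\mathfrak{p}^l_n(x'))$. The inequality $\mathfrak{c}^m_n(x) \leq \mathfrak{c}^l_n(x')$ is immediate from \Cref{p: zmn}\ref{i: zmn cap bln}. For the reverse, if $k = \mathfrak{c}^m_n(x) < l$ is witnessed by $w \in \mathfrak{X}^m_k$, then $d(x, w) \leq \mathfrak{r}_k$ and $d(x, z) \leq \mathfrak{r}_l - \mathfrak{t}_n$, and the constants from \Cref{s. constants} (in particular $\mathfrak{s}_k > 2\mathfrak{r}_k$) give $d(w, z) < \mathfrak{r}_l + \mathfrak{s}_k$; \Cref{p: zmn}\ref{i: zmn either} applied with $(l, z) \in \mathfrak{P}^m_k$ then forces $w = \mathfrak{h}^m_{l,z}(w')$ for some $w' \in \zzlk$, so that $x' \in D(w', \mathfrak{r}_k)$ witnesses $\mathfrak{c}^l_n(x') \leq k$.

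Property (iii) now follows by unfolding the parallel recursions defining $\preceq^m_n$ and $\preceq^l_n$ case by case. The first two cases transfer directly, the second using the induction hypothesis (property (iii) for the pair $(m, k)$ with $k > n$, so $m - k < m - n$). In the third, the transitivity identity $\mathfrak{h}^m_{k, z^{**}} = \mathfrak{h}^m_{l,z} \circ \mathfrak{h}^l_{k, z^{*}}$ from \Cref{p: zmn}\ref{i: zmn f=ff} ensures that the two recursive calls to $\preceq^k_n$ operate on the same pair. The subtle situation is when the recursion for $\preceq^l_n$ bottoms out at $<_{0}$; the recursion for $\preceq^m_n$ then falls into its third case and delegates back to $\preceq^l_n$, producing the same $<_{0}$-comparison. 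The main obstacle in the plan is the compatibility lemma itself, where the inequality $d(w, z) < \mathfrak{r}_l + \mathfrak{s}_k$ must be extracted from the constants of \Cref{s. constants}; everything else is bookkeeping on the recursion.
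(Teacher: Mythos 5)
Your construction of $\preceq^m_n$ coincides with the paper's four-way case split: compare $\mathfrak c^m_n$-levels, compare $\mathfrak p^m_n$-points via $\preceq^m_l$, pull back along $\mathfrak h^m_{l,z}$ to $\preceq^l_n$, and fall back to an arbitrary order at the top level. The only cosmetic difference is your single global fallback $<_0$ versus the paper's freshly chosen $\trianglelefteq^m_n$ at each stage. Where you genuinely diverge is the induction scheme and the proof of~\ref{i: preceqmn pmn}. You induct on $m-n$ (the paper inducts on $m$ and then downward on $n$), and you replace the paper's argument for~\ref{i: preceqmn pmn} by an explicit compatibility lemma, $\mathfrak c^m_n\circ\mathfrak h^m_{l,z}=\mathfrak c^l_n$ and $\mathfrak p^m_n\circ\mathfrak h^m_{l,z}=\mathfrak h^m_{l,z}\circ\mathfrak p^l_n$ on $\zzln$, which you derive from \Cref{p: zmn}~\ref{i: zmn zvq},\ref{i: zmn cap bln},\ref{i: zmn either} and the inequality $\mathfrak s_k>2\mathfrak r_k$; then~\ref{i: preceqmn pmn} follows by unfolding the two recursions, with \Cref{p: zmn}~\ref{i: zmn f=ff} aligning the nested calls. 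The paper instead splits $\ppmn$ into maximal and non-maximal elements, composes through the maximal one, and in the maximal case asserts that $\mathfrak c^m_n\equiv l$ and $\mathfrak p^m_n\equiv z$ hold on $D(z,\rrl)$ so that only case~(b) can fire; that assertion is delicate (for instance $\mathfrak c^m_n(z)=n+1$, since $z\in\zzml\subset\zzmsn$ by \Cref{p: zmn}~\ref{i: zmn zlk}), and your uniform compatibility-lemma route avoids it entirely, at the cost of the single metric estimate $d(w,z)<\rrl+\mathfrak s_k$. You correctly identify that estimate as the crux, and the constants of \Cref{s. constants} do supply it, so your proposal is sound.
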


\begin{proof}
We proceed by induction on $m$. Let $\preceq^{n+1}_n$ be an arbitrary ordering of $\zzsnn$ whose least element is $p$. For $m=n+1$, we have $\mathfrak{c}^m_n(x)= m$ for every $x\in \zzmn$ if $\ppmn=\emptyset$.  Thus~\ref{i: preceqmn cmnx} and~\ref{i: preceqmn pmn} are trivially satisfied  in this case. 

Suppose now that we have defined $\preceq^l_k$ when either $l>n$, or $l=n$ and $k<m$. Let $\trianglelefteq^m_n$ be an arbitrary ordering of $D(p,\rrm)\setminus \bigcup_{(l,z)\in \ppmn} D(z,\rrl)$. Then we define $\preceq^m_n$ using several cases as follows: 
\begin{enumerate}[(a)]
\item if $\mathfrak{c}^m_n(x)< \mathfrak{c}^m_n(y)$, then $x\prec^m_n y$;
\item if $\mathfrak{c}^m_n(x)= \mathfrak{c}^m_n(y) <m$ and $\mathfrak{p}^m_n(x)=\mathfrak{p}^m_n(y)$, then $x\preceq^m_n y$ if and only if
 \[
 \big(\mathfrak{h}^m_{\mathfrak{c}^m_n(x), \mathfrak{p}^m_n(x)}\big)^{-1}(x)
 \preceq^{\mathfrak{c}^m_n(x)}_{n}\big(\mathfrak{h}^m_{\mathfrak{c}^m_n(x), \mathfrak{p}^m_n(x)}\big)^{-1}(y)\;;
 \]
\item if $\mathfrak{c}^m_n(x)= \mathfrak{c}^m_n(y) <m$ and $\mathfrak{p}^m_n(x)\neq \mathfrak{p}^m_n(y)$, then $x\prec^m_n y$ if and only if $\mathfrak{p}^m_n(x)\prec^m_{\mathfrak{c}^m_n(x)}\mathfrak{p}^m_n(y)$; and,
\item if $\mathfrak{c}^m_n(x)= \mathfrak{c}^m_n(y) =m$, then $x\preceq^m_n y $ if and only if $x\trianglelefteq^m_n y$.
\end{enumerate}

It can be easily checked that this is indeed an order relation, and it is obvious that it satisfies~\ref{i: preceqmn p} and~\ref{i: preceqmn cmnx}. Let us prove that it also satisfies~\ref{i: preceqmn pmn}. Suppose first that $(l,z)\in \olppmn$. For any $x,y\in D(z,\rrl)$, we have $\mathfrak{c}^m_n(x)=\mathfrak{c}^m_n(y)= l$ and $\mathfrak{p}^m_n(x)=\mathfrak{p}^m_n(y)= z$, and therefore $\mathfrak{h}^m_{l,z}$ is order preserving by (b). 

Suppose now that $(l,z)\in \ppmn \setminus \olppmn$. Let $(l',z')\in \olppmn$ be the unique maximal element such that $(l,z)<(l',z')$ and let $z''=(\mathfrak{h}^m_{l',z'})^{-1}(z)$. By the induction hypothesis, the map
\[
\mathfrak{h}^m_{l,z''}\colon \big(\zzln , \preceq^l_n\big) \to \big( \zzlpin\cap D(z'', \rrl) , \preceq^{l'}_n\big)
\]
is order preserving, and 
\[
\mathfrak{h}^m_{l',z'}\colon \big(\mathfrak{X}^{l'}_n, \preceq^{l'}_n\big)\to \big(\zzmn \cap  D(z',\mathfrak{r}_{l'}),\preceq^m_n\big)
 \]
 is order preserving because $(l',z')\in \olppmn$. Therefore
 \[
\ffmlz = \mathfrak{h}^m_{l',z'}  \mathfrak{h}^m_{l,z''}\colon \big( \zzln , \preceq^l_n \big)\to \big(\zzmn \cap D(z,\rrl), \preceq^m_n\big)
\]
is also order preserving. 
\end{proof}

Define 
\begin{align}
\notag \mathfrak{X}_n&= \bigcup_{m\geq n} \mathfrak{X}^m_n\;,\\
\label{rrn} \Rn &= \bigcup_{m\geq n} \ppmn= \{\,(m,x)\in \mathbb{N}\times X \mid n<m,\, x\in \xxm \,\}\;.
\end{align}
For $n\in \N$ and $ x\in \xxn$, there is some $m\geq n$ such that $x\in \zzmn$. 
Let $\mathfrak{h}_{n,x}= \mathfrak{h}^m_{n,x}\colon (D(p,\mathfrak{r}_n),p)\to (D(x,\mathfrak{r}_n),x)$, which is independent of $m$ by \Cref{p: zmn}~\ref{i: zmn zlk}.

Let $<$ be the binary relation on $\Rn$ defined by declaring  $(m,x)<(m',x')$ if  $m<m'$ and $D(x,\rrm )\subset D(x', \mathfrak{r}_{m'})$, and let $\leq$ be the reflexive closure of $<$.

Consider the choice of $\rrn$, $\ssn$ and $\ttn$ given at the beginning of the present section.

\begin{prop}[{See \cite[Proposition~4.2 and Remark~6]{AlvarezBarral-realization}}]\label{p: xxn}
For $n\in \N$, the following properties hold: 
\begin{enumerate}[(i)]

\item \label{i: xxn subset}The set $\xxn$ is an $\mathfrak{s}_n$-separated subset of $\Omega_n$ containing $p$.

\item \label{i: xxn cap bmn}For any $(l,z)\in \Rn$,  we have $\xxn\cap D(z,\rrl)=\mathfrak{h}_{l,z}( \mathfrak{X}^l_n )$.

\item \label{i: xxn h=hh}For any $x\in \xxn$ and $(l,z)\in \Rn$ so that $x\in \mathfrak{X}_n\cap D(z,\mathfrak{r}_l)$, we have $\mathfrak{h}_{n,x}= \mathfrak{h}_{l,z}\mathfrak{h}_{n,x'}$ for $x'=\mathfrak{h}_{l,z}^{-1}(x)$.

\item \label{i: xxn either}  For any $ x\in \xxn$ and $(l,z)\in \Rn$, either $d(x,z)\geq \rrl+\ssn$, or $x\in h_{l,x}(\mathfrak{X}^l_n)$.

\item \label{i: xxn xxm} For $n\leq m$,  we have  $\xxm\subset \xxn$, and $\mathfrak{h}_{n,x}= \mathfrak{h}_{m,x}|_{D(p,\mathfrak{r}_n)}$ for $x\in \xxm$.

\item \label{i: xxn p} We have $p\in \xxn$ and $\mathfrak{h}_{n,p}=\id_{D(p,\mathfrak r_n)}$.

\end{enumerate}
\end{prop}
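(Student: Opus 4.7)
The plan is to deduce each of the six clauses directly from the corresponding clause of \Cref{p: zmn} by passing to a sufficiently large ambient index~$m$. The central observation is furnished by \Cref{p: zmn}~\ref{i: zmn zlk}: applying it with ambient parameters $(m',n)$ and secondary indices $(l,k)=(m,n)$ shows that, whenever $n\le m\le m'$, one has $\mathfrak{X}^m_n\subset\mathfrak{X}^{m'}_n$ and $\mathfrak{h}^{m'}_{n,x}=\mathfrak{h}^{m}_{n,x}$ for every $x\in\mathfrak{X}^m_n$. Thus the family $\{\mathfrak{X}^m_n\}_{m\ge n}$ is monotone increasing and the isometries are compatible, so $\mathfrak{X}_n$ is a directed union and $\mathfrak{h}_{n,x}$ is well defined independently of the choice of $m\ge n$ with $x\in\mathfrak{X}^m_n$ (as already recorded just before the statement). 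A first consequence, which I will use repeatedly, is that any finite collection of points drawn from various $\mathfrak{X}^{m_j}_{k_j}$ can be lifted into a single $\mathfrak{X}^{m}_{\cdot}$ by taking $m=\max_j m_j$ and again invoking \Cref{p: zmn}~\ref{i: zmn zlk}.

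\textbf{Deductions.} Clause \ref{i: xxn p} is immediate from \Cref{p: zmn}~\ref{i: zmn p}, since $p\in\mathfrak{X}^{n+1}_n\subset\mathfrak{X}_n$ and $\mathfrak{h}_{n,p}$ inherits the value $\id_{D(p,\mathfrak{r}_n)}$. Clause \ref{i: xxn xxm} follows by choosing, for a given $x\in\mathfrak{X}_m$, some $m'\ge m$ with $x\in\mathfrak{X}^{m'}_m$, and applying \Cref{p: zmn}~\ref{i: zmn zlk} to the ambient pair $(m',n)$ with $(l,k)=(m',m)$, which gives $\mathfrak{X}^{m'}_m\subset\mathfrak{X}^{m'}_n\subset\mathfrak{X}_n$ together with the required restriction identity. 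For \ref{i: xxn subset}, containment in $\Omega_n$ is inherited from \Cref{p: zmn}~\ref{i: zmn zvq}, and for the $\mathfrak{s}_n$-separation I take any $x\ne y$ in $\mathfrak{X}_n$, choose a common $m$ with $x,y\in\mathfrak{X}^m_n$, and conclude again by \Cref{p: zmn}~\ref{i: zmn zvq}. For \ref{i: xxn cap bmn}, given $(l,z)\in\Rn$, I pick $m$ large enough that $z\in\mathfrak{X}^m_l$; then \Cref{p: zmn}~\ref{i: zmn cap bln} yields $\mathfrak{X}^m_n\cap D(z,\mathfrak{r}_l+\mathfrak{s}_n)=\mathfrak{h}^m_{l,z}(\mathfrak{X}^l_n)$, and since \Cref{p: zmn}~\ref{i: zmn zvq} gives $\mathfrak{X}^l_n\subset D(p,\mathfrak{r}_l-\mathfrak{t}_n)\subset D(p,\mathfrak{r}_l)$, the isometric image $\mathfrak{h}^m_{l,z}(\mathfrak{X}^l_n)$ is already contained in $D(z,\mathfrak{r}_l)$, so intersecting with $D(z,\mathfrak{r}_l)$ does not remove any point. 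The reverse containment $\mathfrak{X}_n\cap D(z,\mathfrak{r}_l)\subset\mathfrak{h}_{l,z}(\mathfrak{X}^l_n)$ is obtained by observing that any such point already lies in some $\mathfrak{X}^{m''}_n$, and enlarging $m$ if necessary brings it and $z$ simultaneously into the finite-$m$ picture. Clauses \ref{i: xxn h=hh} and \ref{i: xxn either} are entirely analogous: given $x\in\mathfrak{X}_n$ and $(l,z)\in\Rn$, choose $m$ so that $x\in\mathfrak{X}^m_n$ and $z\in\mathfrak{X}^m_l$, then quote \Cref{p: zmn}~\ref{i: zmn f=ff} and \Cref{p: zmn}~\ref{i: zmn either} respectively; for \ref{i: xxn h=hh} I also use \ref{i: xxn cap bmn} to know that the preimage $x'=\mathfrak{h}_{l,z}^{-1}(x)$ actually belongs to $\mathfrak{X}^l_n$.

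\textbf{Main obstacle.} There is no substantive new difficulty; the proposition is essentially the statement that the directed system $\{(\mathfrak{X}^m_n,\mathfrak{h}^m_{n,\cdot})\}_{m\ge n}$ has the expected colimit, and every clause reduces verbatim to its finite-$m$ counterpart in \Cref{p: zmn}. The only genuine care needed is the bookkeeping to realise a finite set of relevant points inside a single ambient $\mathfrak{X}^m_\cdot$, together with the observation that the inclusion $\mathfrak{X}^l_n\subset D(p,\mathfrak{r}_l)$ (from \Cref{p: zmn}~\ref{i: zmn zvq}) is what lets the penumbra estimates in \Cref{p: zmn}~\ref{i: zmn cap bln} collapse down to the disks appearing in \ref{i: xxn cap bmn} and \ref{i: xxn either}. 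Both points depend only on \Cref{p: zmn}~\ref{i: zmn zvq} and~\ref{i: zmn zlk} together with the growth conditions~\eqref{rn}--\eqref{tn} on the scales $\mathfrak{r}_n,\mathfrak{s}_n,\mathfrak{t}_n$.
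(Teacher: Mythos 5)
Your proposal is correct, and it is exactly the intended derivation: the paper itself only cites \cite[Proposition~4.2 and Remark~6]{AlvarezBarral-realization}, but the result is precisely the directed-union consequence of \Cref{p: zmn}, with item~\ref{i: zmn zlk} providing the compatibility that makes $\mathfrak{X}_n$ a nested union and $\mathfrak{h}_{n,x}$ well defined, and each clause then following from its finite-$m$ counterpart after passing to a common large ambient index. The only point needing care—shrinking the penumbra radius $\mathfrak{r}_l+\mathfrak{s}_n$ in \Cref{p: zmn}~\ref{i: zmn cap bln} to the disk $D(z,\mathfrak{r}_l)$ via $\mathfrak{X}^l_n\subset D(p,\mathfrak{r}_l-\mathfrak{t}_n)$—is handled correctly in your argument.
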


\begin{rem}
Suppose that we replace the family $\Omega_n$ with  arbitrary, relatively dense, proper subsets 
\[
\Omega_n \subset \{\, x\in \xset\mid [D (p,\mathfrak{r}_n),p,d_{\xset}]=[ D (x, \mathfrak{r}_n),x,d_{\xset}] \,\}
\]
and, for every $x\in \Omega_n$, we choose a pointed isometry $f_{n,x}\colon (D(x,\mathfrak{r}_n),p )\to (D(x,\mathfrak{r}_n),p )$. Then, according to~\cite[Remark~4]{AlvarezBarral-realization}, we may assume that $\xxn\subset \Omega_n$ and every map $\mathfrak{h}_{n,x}$ is a composition of the form $f_{n_m,x_m}\cdots f_{n_1,x_1}$. Note that the constants $\omega_n$ may change with this assumption.
\end{rem}

\begin{prop}[{See \cite[Proposition~4.3 and Remark~6]{AlvarezBarral-realization}}]\label{p: mathfrak X_n is a net}
The subset $\mathfrak{X}_n$ is relatively dense in $X$, and the implied constant depends only on $\Delta$, $\epsilon_m$, and $\omega_m$ for $m\leq n$, and $\rrm$ for $m<n$.
\end{prop}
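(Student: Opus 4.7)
The plan is to adapt the proof of the Riemannian analogue, Proposition~4.3 of the companion paper~\cite{AlvarezBarral-realization}, to this graph setting, where the arguments simplify. The main ingredients are the assumed $\omega_n$-relative density of $\Omega_n$ in $\xset$ and the layered, self-similar construction of the family $\{\zzmn\}_{m\ge n}$ given in \Cref{p: zmn,p: xxn}.

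Since $\xxn=\bigcup_{m\ge n}\zzmn$ is an increasing union by \Cref{p: zmn}~\ref{i: zmn zlk}, $\mathfrak{r}_m\uparrow\infty$, and $\xxn\subset\Omega_n$ with $\Omega_n$ being $\omega_n$-relatively dense, it suffices to produce a constant $\bm{C}_n$ depending only on the allowed parameters such that, for all large $m$, every point of $\Omega_n\cap D(p,\rrm-\ttn-\bm{C}_n)$ lies within $\bm{C}_n$ of $\zzmn$. Indeed, given any $x\in\xset$, first pick $y\in\Omega_n$ with $d(x,y)\le\omega_n$, then choose $m$ large enough that $y$ sits deep inside $D(p,\rrm-\ttn)$; the constant $\bm{C}_n+\omega_n$ will then witness the required density of $\xxn$ in $\xset$.

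The local density of $\zzmn$ can be controlled through the cluster structure of \Cref{p: zmn}~\ref{i: zmn cap bln}: for every $(l,z')\in\ppmn$, $\zzmn\cap D(z',\rrl)$ is the $\ffmlz$-image of $\zzln$. Hence the density question near any $z'\in\zzml$ reduces to the same question at the smaller scale $l<m$, and by iterating down the chain $\zznn\subset\mathfrak{X}^{n+1}_n\subset\cdots\subset\zzmn$ I can transfer density from the innermost step. The construction of $\zzmn$ at each stage from the previously built smaller-scale points proceeds by an $\ssn$-separated filling within $\Omega_n$, and the resulting gaps are controlled by combining the $\omega_n$-density of $\Omega_n$ with the maximality guaranteed by \Cref{l: maximal}.

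The main obstacle is to keep $\bm{C}_n$ free of $\rrn$, $\ssn$, and $\ttn$. A direct application of \Cref{l: maximal} to the $\ssn$-separated $\zzmn$ would give only density $\ssn-1$, which depends on $\rrn$ through the lower bound~\eqref{2rl + sn < sl}. Sharpening this requires that the $\ssn$-separated filling be performed cluster by cluster: inside each $\ffmlz$-translate of $D(p,\rrl)$ the density is inherited from $\zzln$, whose internal density is controlled at stage $l<n$ by $\rrl$ and by $\omega_l$ via the inductive hypothesis. Tracking this bookkeeping carefully through the inductive construction yields the bound in the claimed form.
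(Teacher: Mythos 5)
Your reduction is the right shape: since $\xxn=\bigcup_{m\ge n}\zzmn$, $\rrm\uparrow\infty$, and $\Omega_n$ is $\omega_n$-relatively dense, it does suffice to produce a constant with the stated dependence controlling how dense $\zzmn$ is inside $\Omega_n\cap D(p,\rrm-\ttn)$ minus a border, and then combine with the $\omega_n$-density of $\Omega_n$. You also correctly name the obstacle: a maximal $\ssn$-separated set is only $(\ssn-1)$-relatively dense by \Cref{l: maximal}, and $\ssn>2\rrn$ by~\eqref{2rl + sn < sl}, so $\ssn$ cannot be allowed to enter the final constant.

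The step you offer to overcome this does not close. You write that inside each $\ffmlz$-translate of $D(p,\rrl)$ the density is ``controlled at stage $l<n$ by $\rrl$ and by $\omega_l$ via the inductive hypothesis,'' but in $\ppmn=\{(l,z)\mid n<l<m,\ z\in\zzml\}$ one always has $l>n$, not $l<n$: there is no inductive hypothesis at a stage below $n$ to invoke, and $\rrl$ for $l>n$ is not among the parameters $\rrm$, $m<n$, on which the constant may depend. More substantively, passing from $\zzmn$ to the copy of $\zzln$ inside $D(z',\rrl)$ (with $n<l<m$) reproduces the same density question at a smaller radius but with the \emph{same} $\ssn$-separation; the innermost nontrivial link of your chain $\zznn\subset\zzsnn\subset\cdots\subset\zzmn$ is itself just the $\ssn$-separated filling of $\Omega_n\cap D(p,\mathfrak{r}_{n+1}-\ttn)$, whose gaps are already of order $\ssn$. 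The recursion therefore never escapes the $\ssn$-scale, so the obstacle you raised is left unresolved rather than sharpened away, and ``tracking the bookkeeping carefully'' is not a substitute for the missing estimate. Note finally that this paper offers no proof of its own here — it cites Proposition~4.3 and Remark~6 of \cite{AlvarezBarral-realization} — so the density estimate genuinely has to be imported or reconstructed from that source, and your sketch does not reconstruct it.
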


By Propositions~\ref{p: zmn}~\ref{i: zmn p} and~\ref{p: preceqmn}~\ref{i: preceqmn pmn}, the order relations $\preceq^m_n$, $m\geq n$, define an order relation $\preceq_n$ on $\mathfrak{X}_n$. The following is a consequence of \Cref{p: preceqmn}.

\begin{prop}
For $n\in \N$, the following properties hold:
\begin{enumerate}[{\rm(}i\/{\rm)}]
\item The point $p$ is the least element of $(\mathfrak{X}_n,\preceq_n)$.
\item For $x,y\in\mathfrak X_n$, if $\mathfrak{c}_n(x)< \mathfrak{c}_n(y)$, then $x \prec_n y$.
\item For any $(l,z)\in \Rn$, the map $\mathfrak{h}_{l,z} \colon (\zzln , \preceq^l_n ) \to (\xxn \cap D(z, \rrl) , \preceq_n  )$ is order preserving.
\end{enumerate}
\end{prop}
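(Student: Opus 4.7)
My plan is to derive each item by reducing it to the corresponding finite-level statement from \Cref{p: preceqmn} for the orders $\preceq^m_n$. The reduction will rest on two observations. First, for any finite collection of points in $\xxn$, one can choose $m\geq n$ large enough that all the points lie in $\zzmn$, using the monotonicity $\zzmn\subset\mathfrak{X}^{m'}_n$ for $m\leq m'$ given by \Cref{p: zmn}~\ref{i: zmn zlk}. Second, by the paragraph preceding the proposition, the family $\{\preceq^m_n\}_{m\geq n}$ assembles into the global order $\preceq_n$; concretely, $\preceq_n$ restricts on $\zzmn$ to $\preceq^m_n$. The underlying compatibility $\preceq^{m'}_n|_{\zzmn}=\preceq^m_n$ is what one would obtain by invoking \Cref{p: preceqmn}~\ref{i: preceqmn pmn} with $(l,z)=(m,p)\in\mathfrak{P}^{m'}_n$, together with $\mathfrak{h}^{m'}_{m,p}=\id_{D(p,\rrm)}$ from \Cref{p: zmn}~\ref{i: zmn p}.

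Granting this setup, I would verify each item in turn. For (i), given $x\in\xxn$, I would pick $m$ with $x\in\zzmn$; since $p\in\zzmn$ by \Cref{p: zmn}~\ref{i: zmn p} and $p$ is the least element of $(\zzmn,\preceq^m_n)$ by \Cref{p: preceqmn}~\ref{i: preceqmn p}, I conclude $p\preceq_n x$. For (ii), reading $\mathfrak{c}_n$ as the common eventual value of the local maps $\mathfrak{c}^m_n$, I would pick $m$ large enough that $x,y\in\zzmn$ and $\mathfrak{c}_n(x)=\mathfrak{c}^m_n(x)<\mathfrak{c}^m_n(y)=\mathfrak{c}_n(y)$; then \Cref{p: preceqmn}~\ref{i: preceqmn cmnx} yields $x\prec^m_n y$, hence $x\prec_n y$. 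For (iii), given $(l,z)\in\Rn$, I would pick $m>l$ with $z\in\zzml$, so that $(l,z)\in\mathfrak{P}^m_n$ and $\mathfrak{h}_{l,z}=\ffmlz$; combining \Cref{p: xxn}~\ref{i: xxn cap bmn} with \Cref{p: zmn}~\ref{i: zmn cap bln} identifies $\xxn\cap D(z,\rrl)$ with $\zzmn\cap D(z,\rrl)$ as the common image $\mathfrak{h}_{l,z}(\zzln)$, and then \Cref{p: preceqmn}~\ref{i: preceqmn pmn} delivers the order-preserving property at level $m$, which transfers to $\preceq_n$ via the restriction agreement.

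I do not anticipate any substantive obstacle here, as the proposition essentially packages the already-proved finite-level statements into a statement about the global order. The only care required is choosing $m$ appropriately in each item so as to house all relevant points simultaneously, and the mild interpretive step of reading $\mathfrak{c}_n$ in (ii) as the natural global counterpart of $\mathfrak{c}^m_n$, whose consistency across $m$ is a routine check of the same flavour as the well-definedness of $\preceq_n$ itself.
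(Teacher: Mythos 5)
Your proposal is correct and is exactly the argument the paper intends: the text offers no proof beyond the remark that, by \Cref{p: zmn}~\ref{i: zmn p} (i.e.\ $\mathfrak{h}^{m'}_{m,p}=\id$) and \Cref{p: preceqmn}~\ref{i: preceqmn pmn}, the orders $\preceq^m_n$ are compatible and assemble into $\preceq_n$, after which the proposition is declared a consequence of \Cref{p: preceqmn}. Your reduction—choosing $m$ large enough via \Cref{p: zmn}~\ref{i: zmn zlk}, transferring each item from $\preceq^m_n$ to $\preceq_n$, and reading $\mathfrak{c}_n$ as the eventual (stabilized) value of the $\mathfrak{c}^m_n$—fills in precisely those routine details, with \Cref{p: xxn}~\ref{i: xxn cap bmn} handling item (iii) as you say.
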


Let 
\begin{align}
\nonumber \ppmone&=\{\, (l,z)\in \mathbb{N}\times X  \mid 0\leq l<m, \ z\in \zzml \,\}\quad(m\in\N)\;,\\
\label{defn rr-1} \Rone&= \{ \, (m,x)\in \mathbb{N}\times X \mid  x\in \xxm  \,  \}\;.
\end{align}
We can define on both of these sets the relation $<$ by declaring $(l,z)< (l',z')$ if $l<l'$ and $D(z,\rrl)\subset D(z',\mathfrak{r}_{l'})$. The induced reflexive closures $\leq$ are partial order relations. Let $\overline{\mathfrak{P}}^m_{-1}$ denote the subset of maximal elements of $\mathfrak{P}^m_{-1}$. For every $(l,z)\in \mathfrak{P}^m_{-1}$, there is a unique $(l',z')\in \overline{\mathfrak{P}}^m_{-1}$ such that $(l,z)\leq (l',z')$.

\section{Construction of $X_n$}\label{s. xn}

In this section we define a sequence of nested subsets $X_n\subset X$ that will constitute the centers of the clusters used in the construction of the colorings $\phi^N$, as explained in \Cref{s. idea}. This will be used to prove \Cref{t: finitary} in full generality. So we assume that $X$ satisfies the hypothesis of \Cref{t: finitary}~\ref{i: finitary repetitive}. If $X$ only satisfies the hypothesis of \Cref{t: finitary}~\ref{i: finitary limit aperiodic}, the same proof applies to \Cref{t: finitary}~\ref{i: finitary limit aperiodic} by taking  $\mathfrak{X}_n=\emptyset$, and therefore omitting the use of the sets $\mathfrak{P}_n$, numbers $\rrn$, and maps $\mathfrak{h}^m_{n,z}$ and $\mathfrak{h}_{n,x}$; a choice of basepoint $p$ is still needed, however.

For notational convenience, let  
\begin{equation}\label{constants -1}
(X_{-1},E_{-1} )= (X,E)\;, \quad d_{-1}=d\;, \quad r_{-1}=s_{-1}=R_{-1}^\pm=0\;, \quad l_{-1}=L_{-1}=1\;.
\end{equation}
By induction on $n\in\N$, we will continue defining constants $r_n$, subsets $X_n\subset X$ containing $p$ and $\mathfrak X_n$, and a connected graph structure $E_n$ on every $X_n$ with induced metric $d_n$. 
The subindex ``$n$'' will be added to the notation of disks, spheres, closed penumbras and coronas in $(X_n,d_n)$. 
With this notation, let $\eta_n\colon \N\to \mathbb{Q}$ be given by 
\begin{align}\label{etan defn}
\eta_n(a) = 
\begin{cases}
\exp_2\big(\big\lfloor\big(a - (\deg X_{-1})^{11} -1\big)/(\deg X_{-1})^3\big\rfloor\big) &\text{if $n=0$}\\[4pt] 
\exp_2\big(\big\lfloor\big(a - (\deg X_{n-1})^{11} -1\big)/(\deg X_{n-2})^{r_{n-1}^2s_{n-1}}\big\rfloor\big) &\text{if $n>0$}\;.
\end{cases}
\end{align}

Given $n\in \N$, suppose  that the graphs $(X_m,E_m)$ and constants $r_m$ have been defined for integers $-1\leq m <n$. 
Then let $r_n$ be defined  as follows: 
\begin{enumerate}[(A)]
\item \label{i: rn bar} If there is some $x\in D_{n-1}(p,\hat{r}_n(2s_n+1))$ such that 
\[
(|D_{n-1}(x,\hat{r}_ns_n)|+6)^2\geq \eta_n(|D_{n-1}(x,\hat{r}_n)|)\;,
\]
then let $r_n=\bar{r}_n$ (see~\eqref{defn barrn}). Note that $p\in X_{n-1}$.

\item \label{i: rn hat} Otherwise, let $r_n=\hat{r}_n$ (see~\eqref{tilder0 defn} and~\eqref{hatrn}).
\end{enumerate}
Observe that
\begin{equation}\label{r_0 > 2^11}
r_0>2^{11}
\end{equation}
by~\eqref{hat r_0 > 2^11},~\eqref{defn barr0},~\ref{i: rn bar} and~\ref{i: rn hat}.
Moreover, let
\begin{equation}\label{no bm multi}
\left.\begin{gathered}
\begin{alignedat}{2}
\Delta_n&=\bm\Delta_n(r_0,\dots,r_n)\;,&\quad 
L_n&=\bm{L}_n(r_0,\dots,r_n)\;,\\ 
\Gamma^\pm_n&=\bm\Gamma^\pm_n(r_0,\dots,r_n)\;,&\quad 
\overline{K}_n&=\bm{\overline{K}}_n(r_0,\dots,r_n)\;,
\end{alignedat}\\ 
K_n=\bm{K}_n(r_0,\dots,r_n)\;, \quad R_n^\pm = \bm R^\pm (r_n)\; , \quad l_n=\bm l_n(r_n)\;.
\end{gathered}\;\right\}
\end{equation} 
All functions in~\eqref{no bm multi} are monotone increasing on every coordinate (see \Cref{s. constants}). 
So, using the notation $\bm{\hat{r}_n}=(\hat{r}_0,\dots,\hat{r}_n)$, we get 
\begin{equation}\label{functions increasing}
\bm\Delta_n(\hat{\bm{r}}_n)\leq \Delta_n\leq \bm\Delta_n(\bar{\bm{r}}_n)\;, \quad 
\bm R_n^\pm(\hat{r}_n)\leq R_n^\pm\leq \bm R_n^\pm(\bar{r}_n)\;, \quad \bm\Gamma^\pm_n(\hat{\bm{r}}_n)\le \Gamma_n^\pm\le \bm\Gamma^\pm_n(\bar{\bm{r}}_n)\;.
\end{equation}
From~\eqref{gamma r bm},~\eqref{rn} and~\eqref{no bm multi}, it follows that
\begin{equation}\label{gamma r}
\mathfrak{r}_n>\Gamma_n^\pm \geq R_m^\pm
\end{equation}
for $m=0,\dots,n$.
Finally, let 
\begin{equation}\label{defn rnpm}
r_n^-=r_n\;, \quad r_n^+=r_ns_n\;.
\end{equation}
By~\eqref{defn functions 0},~\eqref{defn functions n},~\eqref{no bm multi} and~\eqref{defn rnpm}, we have
\begin{equation}\label{r_n^pm le R_n^pm}
r_n^\pm\le R_n^\pm\;.
\end{equation}

\begin{prop}\label{p: xn}
For $n\in \N$, there are disjoint subsets $X_n^+,X_n^-\subset X$ and a graph structure $E_n$ on $\xn:=X_n^-\cupdot X_n^+$  such that the following properties are satisfied:
\begin{enumerate}[(i)]

\item \label{i: xn subset} $\xxn \subset \xn \subset X_{n-1}$.

\item \label{i: xn rn-1} For all $(m,x)\in \mathfrak{P}_{n-1}$, we have
\[
\hhmx \left( \xn^\pm\cap D_{-1}(p, \mathfrak{r}_m-\overline{K}_{n}) \right)
= \xn^\pm\cap D_{-1}(x,\mathfrak{r}_m-\overline{K}_{n})\;.
\]
\item \label{i: xn1} For all $x\in \xn^\pm$, we have
\[
\eta_n( |D_{n-1}(x,r_n^\pm )|)\geq (6+|D_{n-1}(x,r_n^\pm s_n)|)^2\;.
\]

\item \label{i: xn separated net} $\xn$ is $(2r_n^+ +1)$-separated and $R_n^+$-relatively dense in $(\xyn, d_{n-1})$.

\item \label{i: xn connected} $(X_{n},E_n)$ is a connected graph. 
Let $d_n$ denote the induced metric.

\item \label{i: xn dn dn-1} We have $d_{n}\leq d_{n-1}\leq l_nd_n$ and $d_n \leq d_{-1}\leq L_nd_n$.

\item \label{i: xn deltan deltan-1} We have
\[
\deg X_n\leq \, \Delta_n, \, 4(\deg X_{n-1}-1)^{2R_n^+}\;.
\]

\item \label{i: xn partial} For any $(m,x)\in \mathfrak{P}_{n-1}$, the restriction of  $\hhmx$ to  $\xn\cap D_{-1}(p, \mathfrak{r}_m-K_n)$ is  an $(s_{n+1}R_{n+1}^+ + \Gamma_n^+)$-short scale isometry with respect to $d_n$.

\end{enumerate}
\end{prop}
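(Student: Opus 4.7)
The plan is to proceed by induction on $n\ge 0$, with the base case $n=-1$ already given by the conventions in \eqref{constants -1}. Fix the inductive hypothesis at level $n-1$ and construct $X_n$ in three stages: a local partition of $X_{n-1}$ by density, an equivariant extension via the maps $\mathfrak h_{m,x}$, and a verification of the eight conclusions.

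\textbf{Stage 1 (Local partition by density).} The dichotomy (A)/(B) defining $r_n$ is exactly the global obstruction to having every point satisfy the $X_n^-$-inequality in~\ref{i: xn1} at scale $r_n^- = r_n$. Classify every $x\in X_{n-1}$ as \emph{sparse} (if the $X_n^-$-inequality holds at $x$) or \emph{dense} (otherwise). The estimates \eqref{barr0 defn} and \eqref{barrn defn}, combined with \Cref{c: |D(x r)| le ...}, force every dense point to satisfy the weaker $X_n^+$-inequality at the larger scale $r_n^+ = r_n s_n$: indeed, if $\eta_n(|D_{n-1}(x,r_n)|) < (6+|D_{n-1}(x,r_n s_n)|)^2$, then $|D_{n-1}(x,r_n s_n)|$ exceeds $\sqrt{\bar\eta_n(\bar r_n)}-6$, and the estimate \eqref{barrn defn} together with the degree bound for $|D_{n-1}(x,r_n s_n^2)|$ from \Cref{c: |D(x r)| le ...} gives the $X_n^+$-inequality.

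\textbf{Stage 2 (Equivariant construction via $\mathfrak h_{m,x}$).} For~\ref{i: xn rn-1} and~\ref{i: xn partial} one cannot simply take arbitrary maximal separated subsets; the choice must be compatible with all the local isometries $\mathfrak h_{m,x}$. First define the sparse/dense partition and select $X_n^- \cap D_{-1}(p,\mathfrak r_n - \overline K_n)$ and $X_n^+ \cap D_{-1}(p,\mathfrak r_n - \overline K_n)$ as disjoint maximal $(2r_n^+ +1)$-separated subsets of the sparse and dense regions respectively, arranging that $\mathfrak X_n \cap D_{-1}(p,\mathfrak r_n - \overline K_n) \subset X_n^-$; this is possible because $\mathfrak X_n$ is $\mathfrak s_n$-separated and $\mathfrak s_n \gg 2r_n^+ +1$ by~\eqref{2rl + sn < sl}. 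Then, traversing $(\mathfrak X_{n-1},\preceq_{n-1})$ in increasing order, propagate: for every $(m,x)\in \mathfrak P_{n-1}$ set
\[
X_n^\pm \cap D_{-1}(x,\mathfrak r_m - \overline K_n) := \mathfrak h_{m,x}\bigl(X_n^\pm \cap D_{-1}(p,\mathfrak r_m - \overline K_n)\bigr).
\]
The cocycle identity in \Cref{p: xxn}~\ref{i: xxn h=hh} together with the inductive form of~\ref{i: xn rn-1} at level $n-1$ makes these prescriptions compatible on overlaps of disks, and the maximal separation carries through because $\mathfrak h_{m,x}$ is a pointed isometry. Outside $\bigcup_{x\in\mathfrak X_n} D_{-1}(x,\mathfrak r_n - \overline K_n)$, complete $X_n$ by an arbitrary maximal $(2r_n^+ +1)$-separated extension within the appropriate density region. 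Define $E_n$ to join $x,y\in X_n$ whenever $d_{n-1}(x,y)\le l_n$.

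\textbf{Stage 3 (Verification).} Items~\ref{i: xn subset},~\ref{i: xn1},~\ref{i: xn rn-1} are built into Stages~1--2. Item~\ref{i: xn separated net} is maximality plus \Cref{l: maximal}, using $R_n^+ = r_n(2s_n+3) > 2r_n^+$. For~\ref{i: xn connected}, approximate a minimizing $d_{n-1}$-geodesic between two points of $X_n$ by $R_n^+$-close points of $X_n$ at spacing $\le l_n$. Item~\ref{i: xn dn dn-1} is immediate from the edge definition ($d_{n-1}\le l_n d_n$ step-by-step) and the inductive bound $d_{-1}\le L_{n-1} d_{n-1}$. Item~\ref{i: xn deltan deltan-1} counts $|D_{n-1}(x,l_n)\cap X_n|\le |D_{n-1}(x,l_n)|$ via \Cref{c: |D(x r)| le ...}, using $l_n = 2R_n^+ +1$ and $\deg X_{n-1}\le \Delta_{n-1}$ from the induction together with the definition $\Delta_n=4(\Delta_{n-1}-1)^{2R_n^+}$ in \eqref{no bm multi}. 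Finally,~\ref{i: xn partial} follows from~\ref{i: xn rn-1} combined with the inductive~\ref{i: xn partial} at level $n-1$: on the restricted domain $D_{-1}(p,\mathfrak r_m - K_n)$, the map $\mathfrak h_{m,x}$ respects $X_n$ and preserves $d_{n-1}$ globally, and the margin $K_n - \overline K_n = L_n(s_{n+1}R_{n+1}^+ + \Gamma_n^+ + 2R_n^+)$ defined in \eqref{defn kn} is exactly what is needed to guarantee that short $d_n$-paths near $p$ (and their images) stay inside the region where the propagation of Stage~2 has been carried out cleanly.

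\textbf{Main obstacle.} The delicate point is the self-consistency of Stage~2. The arbitrary choices in Stage~1 near $p$ must propagate coherently to every copy $D_{-1}(x,\mathfrak r_m - \overline K_n)$, and wherever two such copies overlap the two prescriptions must agree; this is the reason for the poset structure $(\mathfrak X_n,\preceq_n)$ built in \Cref{p: preceqmn} and for the companion cocycle identity \Cref{p: xxn}~\ref{i: xxn h=hh}. Tracking the exact safety margins $\overline K_n$ and $K_n$ so that the equivariant interior and the arbitrary exterior extension can coexist without violating the separation, density, and degree bounds simultaneously is where the elaborate inductive constants of \Cref{s. constants} are doing their work.
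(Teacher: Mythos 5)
Your overall architecture (induction on $n$, a sparse/dense dichotomy driving the choice of $X_n^\pm$, equivariance forced through the maps $\mathfrak h_{m,x}$, then routine verifications) is the right shape, but there are two genuine gaps. First, the placement of $\mathfrak X_n$: you decree $\mathfrak X_n\cap D_{-1}(p,\mathfrak r_n-\overline K_n)\subset X_n^-$ and justify it only by separation. But membership in $X_n^-$ versus $X_n^+$ is constrained by item~\ref{i: xn1}: a point may go to $X_n^-$ only if the inequality holds at scales $(r_n,r_ns_n)$, and whether $p$ (hence, via the isometries, every point of $\mathfrak X_n$) satisfies it is a property of the graph, not a choice. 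This is precisely what the dichotomy (A)/(B) in the definition of $r_n$ is for: in case (A) one shows (\Cref{l: p xn}) that $p$ lies in $Y_n^+$ and, being the least element of the well-order, lands in $X_n^+$; only in case (B) does $p$ go to $X_n^-$. Your Stage~2 never verifies~\ref{i: xn1} for the forced points, and in case (A) the forced placement can simply be false; you also never actually prove $p\in X_n$, which item~\ref{i: xn subset} needs.

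Second, the equivariance, which you yourself flag as the ``main obstacle,'' is asserted rather than solved, and it is where the content lies. Defining $X_n^\pm\cap D_{-1}(x,\mathfrak r_m-\overline K_n)$ as the pushforward of a choice made near $p$ requires that choice to be self-similar: inside $D_{-1}(p,\mathfrak r_m-\overline K_n)$ there are copies $\mathfrak h_{l,z}(D_{-1}(p,\mathfrak r_l-\overline K_n))$ for $z\in\mathfrak X^m_l$ on which the prescription must already agree with the pushforward of the lower-level choice, so an arbitrary maximal separated choice near $p$ will not do, and the cocycle identity alone does not rescue it. The paper avoids definition-by-pushforward of $X_n^\pm$ altogether: it first builds an equivariant, well-ordered, maximal $2r_n$-separated net $Y_n$ level by level (\Cref{l: ymn}, \Cref{l: yn}), taking pushforwards only along the maximal elements of $\overline{\mathfrak P}^m_{n-1}$ and then extending maximally; it splits $Y_n$ into $Y_n^\pm$ by the density inequality, extracts $X_n^+$ by a greedy rule along the well-order, and defines $X_n^-$ by the thinning condition~\eqref{defn xn-}; equivariance (items~\ref{i: xn rn-1} and~\ref{i: xn partial}) is then a theorem proved by induction along the order (\Cref{p: h preserves xn+}, \Cref{l: h preserves xn-}), not an imposition needing overlap checks. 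Two smaller points: taking $X_n^\pm$ maximal separated ``in the sparse and dense regions respectively'' does not by itself give the joint $(2r_n^++1)$-separation of $X_n$ in item~\ref{i: xn separated net} (points of the two regions can be adjacent), so one must, e.g., choose $X_n^+$ first and then extend, or thin as in~\eqref{defn xn-}; and in item~\ref{i: xn partial}, $\mathfrak h_{m,x}$ preserves $d_{n-1}$ only in the short-scale sense of the inductive hypothesis, not ``globally.''
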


\begin{rem}
Note that $K_n<\ttn,\rrn$ by~\eqref{rn},~\eqref{tn} and the fact that $\bar r_n>r_n$. This and the inequality $K_n>\overline K_n$ yield $\rrm-\overline{K}_{n},\rrm-K_n>0$ in~\ref{i: xn rn-1} and~\ref{i: xn partial}. 
\end{rem}

\begin{rem}
In accordance with the discussion at the beginning of the section, to prove \Cref{t: finitary}, if $X$ does not satisfy the hypothesis of \Cref{t: finitary}~\ref{i: finitary repetitive}, items~\ref{i: xn rn-1} and~\ref{i: xn partial} must be omitted, and only the inclusion ``$\xn \subset X_{n-1}$" must be considered in~\ref{i: xn subset}.
\end{rem}

The rest of this section is devoted to prove \Cref{p: xn}.
We proceed by induction on $n$.
The following lemma follows from \Cref{p: xxn},~\eqref{constants -1} and~\eqref{defn rr-1}.
The items are irregularly numbered so that there is an obvious correspondence with those of \Cref{p: xn}.
\begin{lem}\label{l: xn induction}
The following properties hold:
\begin{enumerate}[(i')]

\item \label{i: xn subset prime} $\mathfrak{X}_0\subset X_{-1}$.

\item For all $(m,x)\in \mathfrak{P}_{-1}$, we have
 \[
  \hhmx \left(X_{-1}\cap D_{-1}(p, \mathfrak{r}_m) \right)= X_{-1}\cap D_{-1}(x,\mathfrak{r}_m)\;.
  \]
\addtocounter{enumi}{1}
\item $X_{-1}$ is $(2r_{-1}s_{-1}+1)$-separated and $R_{-1}^+$-relatively dense in $X$.

\item $(X_{-1},E_{-1})$ is a connected graph.

\item We have $d_{-1}=d=l_{-1} d_{-1}=L_{-1} d_{-1}$.

\item  We have $\deg X_{-1}=\Delta_{-1}$.

\item For any $(m,x)\in \mathfrak{P}_{-1}$, the restriction of  $\hhmx$ to $\xn\cap D_{-1}(p, \mathfrak{r}_m)$ is  an $(s_{0}R_{0}^+ + \Gamma_0^+)$-short scale isometry with respect to $d_{-1}$.
\end{enumerate}
\end{lem}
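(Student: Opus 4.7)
The plan is to verify each of the eight listed items by direct unpacking of the base-case constants fixed in~\eqref{constants -1} and~\eqref{defn s0}, together with the properties of $\mathfrak{X}_m$ and $\mathfrak{h}_{m,x}$ already established in~\Cref{p: xxn}. Because the constants at level $n=-1$ are all trivial ($r_{-1}=s_{-1}=R_{-1}^\pm=0$ and $l_{-1}=L_{-1}=1$), the lemma amounts to a book-keeping step supplying the base case for the induction proving~\Cref{p: xn}, and no substantive new argument is required.

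First I would dispatch the items that are purely definitional rewrites. Item~(i') is $\mathfrak{X}_0\subset X=X_{-1}$, immediate from the construction of $\mathfrak{X}_0$ in~\Cref{s. repetitivity}. Item~(v') is the connectedness of $(X_{-1},E_{-1})=(X,E)$, which is a standing hypothesis. Item~(vi') reduces to $d_{-1}=d$ with $l_{-1}=L_{-1}=1$, read off from~\eqref{constants -1}. Item~(vii') is the definition $\Delta_{-1}=\deg X$ recorded in~\eqref{defn s0}. For~(iv'), the separation $2r_{-1}s_{-1}+1=1$ holds because the graph metric is $\N$-valued and distinct vertices are at distance at least $1$, while $R_{-1}^+=0$-relative density is automatic from $X_{-1}=X$.

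The only items invoking~\Cref{s. repetitivity} in a nontrivial way are~(ii') and~(viii'). For these I would use~\eqref{defn rr-1}, which gives $\mathfrak{P}_{-1}=\{(m,x)\mid x\in\mathfrak{X}_m\}$, combined with~\Cref{p: xxn}~\ref{i: xxn subset} and the construction of $\mathfrak{h}_{m,x}$, to record that for each such $(m,x)$ the map $\mathfrak{h}_{m,x}\colon(D(p,\mathfrak{r}_m),p)\to(D(x,\mathfrak{r}_m),x)$ is a pointed isometry with respect to $d=d_{-1}$. After substituting $X_{-1}=X$, item~(ii') collapses to the tautology that this isometry is a bijection of its disk domain onto its disk codomain. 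For~(viii'), any bijective isometry between metric spaces is automatically an $a$-short scale isometry at every scale $a\ge 0$ in the sense of the definition preceding~\Cref{c: graph partial}, so the assertion at scale $s_0R_0^++\Gamma_0^+$ holds with no further work. The main ``obstacle'' here is purely organizational: one must unwind the definitions of the level $-1$ constants and of $\mathfrak{P}_{-1}$ and check that they align with the numbered hypotheses of~\Cref{p: xn} at $n=-1$; beyond this matching exercise, the proof has no real content.
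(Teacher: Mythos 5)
Your verification is correct and matches the paper, which dispatches this lemma in one line as a consequence of \Cref{p: xxn}, \eqref{constants -1} and \eqref{defn rr-1} — exactly the ingredients you unpack item by item. In particular, your observations that $(2r_{-1}s_{-1}+1)$-separation is automatic for an integer-valued graph metric and that the pointed isometries $\mathfrak{h}_{m,x}$ are trivially short-scale isometries at every scale are precisely the intended content of the base case.
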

This lemma can be considered the extension to $n=-1$ of properties~\ref{i: xn subset},~\ref{i: xn rn-1} and \ref{i: xn separated net}--\ref{i: xn partial} of \Cref{p: xn}.
In this way, we include the case $n=0$ in the induction step.
Thus suppose that, given  $n\geq 0$, we have already defined $X_m$, $E_m$, $d_m$ and $r_m$ for $m<n$, satisfying all required properties.
When we invoke the induction hypothesis with some item, say  e.g.~\ref{i: xn subset}, it will refer to \Cref{l: xn induction}~\ref{i: xn subset prime} if $n=0$, and to \Cref{p: xn}~\ref{i: xn subset} if $n>0$.

By~\eqref{functions increasing}, we have $\Delta_{n-1}\leq \bm\Delta_{n-1}(\bar{\bm{r}}_{n-1})$.
From this inequality and the definitions of $\eta_n$ and $\overline{\eta}_n$ in \eqref{dfn overlineeta} and \eqref{etan defn}, we obtain, for $a\in \N$,
\begin{equation}\label{etan oletan}
\eta_n(a)\geq \overline{\eta}_n(a)\;.
\end{equation}

Let $\hat{\mathfrak{c}}_n \colon \xyn\to \{n,n+1, \dots \}$ be defined by 
\begin{equation}\label{defn hatc}
\hat{\mathfrak{c}}_n(x)=\min\{\,l \geq n \mid \exists y\in\xxl\ \text{so that}\ (l,y)\in\Ryn\ 
\text{and}\ x\in D_{-1}(y,  \rrl - K_{n-1} )\,\}\;.
\end{equation} 
This map is well-defined because $\mathfrak{r}_l\to \infty$ as $l\to \infty$ by \eqref{rn} and \eqref{tn}. 
By \Cref{p: xxn}~\ref{i: xxn subset}, for each $x\in \xyn$,  there is a unique point $\hat{\mathfrak{p}}_n(x)\in \mathfrak{X}_{\hat{\mathfrak{c}}_n(x)}$ 
such that $x\in D_{n-1}(\hat{\mathfrak{p}}_n(x),\mathfrak r_{\hat{\mathfrak c}_n(x)} - K_{n-1} )$. 
This defines a map $\hat{\mathfrak{p}}_n\colon\hat{\mathfrak{c}}_n^{-1}( \{n,n+1, \dots \}) \to \xxn$.

\begin{lem}\label{l: ymn}
For integers $m\geq n\ge0$, there are ordered sets $(\Ymn, \leq_n^m)$ so that the following properties hold:
\begin{enumerate}[(a)]

\item \label{i: ymn separated} $\Ymn$ is a maximal $2r_n$-separated subset of $(D_{-1}(p,\mathfrak{r}_m-K_{n-1})\cap X_{n-1},d_{n-1})$ containing $p$.

\item\label{i: ymn subset} If $m>n$, then $\Yymn\subset \Ymn$, and the map $(\Yymn, \leq_n^{m-1} )\hookrightarrow (\Ymn,\leq_n^m)$ is order-preserving.

\item\label{i: ymn fmlz} For any $(l,z)\in \ppmyn$, we have $\mathfrak{h}^m_{l,z}(\Yln ) = \Ymn \cap D_{-1}(z,\mathfrak{r}_l-K_{n-1} )$, 
and the map
\[
\mathfrak{h}^m_{l,z}\colon\big( \Yln, \leq^l_n\big) \to ( \Ymn \cap D_{-1}(z,\mathfrak{r}_l-K_{n-1}), \leq^m_n)
\]
is order-preserving.

\item\label{i: ymn conditions} For all $x,y\in Y^m_n$, we have $x<^m_n y$ if one  of the following conditions holds:
\begin{enumerate}[(I)]

\item\label{i: ymn cn < cn} $\hat{\mathfrak{c}}_n(x)<\hat{\mathfrak{c}}_n(y)$; 

\item\label{i: ymn pn neq pn} $\hat{\mathfrak{c}}_n(x)=\hat{\mathfrak{c}}_n(y)$ and $d_{-1}(\hat{\mathfrak{p}}_n(x),p)<d_{-1}( \hat{\mathfrak{p}}_n(y),p)$; or

\item\label{i: ymn cn = cn} $\hat{\mathfrak{c}}_n(x)= \hat{\mathfrak{c}}_n(y)$, $\hat{\mathfrak{p}}_n(x)= \hat{\mathfrak{p}}_n(y)$ and $d_{-1}(x, \hat{\mathfrak{p}}_n(x))< d_{-1}(y, \hat{\mathfrak{p}}_n(x))$.

\end{enumerate}
\end{enumerate}
\end{lem}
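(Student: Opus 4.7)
The approach is induction on $m \geq n$, following the pattern of the construction of $\zzmn$ in Proposition~\ref{p: zmn}. For the base case $m = n$, I would use Zorn's lemma to extend $\{p\}$ to a maximal $2r_n$-separated subset of $D_{-1}(p, \mathfrak{r}_n - K_{n-1}) \cap X_{n-1}$ with respect to $d_{n-1}$; the index set $\ppmyn$ is empty (no integer $l$ satisfies $n-1 < l < n$), so clauses (b) and (c) are vacuous, and the order is obtained by sequentially applying (I), (II), (III) and breaking any remaining ties arbitrarily.

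For the inductive step, suppose $(\Yln, \leq^l_n)$ has been constructed for all $n \leq l < m$ satisfying the conclusion. Define the \emph{seed}
$$
Y_{\mathrm{seed}} \;=\; \bigcup_{(l, z) \in \olppmyn}\ffmlz(\Yln),
$$
and then use Zorn's lemma to extend $Y_{\mathrm{seed}}$ to a maximal $2r_n$-separated subset $\Ymn$ of $D_{-1}(p, \mathfrak{r}_m - K_{n-1}) \cap X_{n-1}$. That $Y_{\mathrm{seed}}$ is itself $2r_n$-separated in $d_{n-1}$ has two sources: within a single translated cluster, the inductive hypothesis (a) for $\Yln$ combines with the short-scale isometry property of $\ffmlz$ for $d_{n-1}$ (the analogue of Proposition~\ref{p: xn}(viii) at the appropriate depth) to give the required separation; between distinct clusters, the $\mathfrak{s}_l$-separation of $\zzml$ from Proposition~\ref{p: zmn}(i) combined with~\eqref{2rl + sn < sl} and $d_{n-1} \leq d_{-1}$ furnishes the gap. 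Clause (b) then follows from $(m-1, p) \in \olppmyn$: indeed $p \in \zzmym$ by Proposition~\ref{p: zmn}(vi), maximality follows because $l < m$ forces $l \leq m-1$, and $\mathfrak{h}^m_{m-1, p} = \id$ delivers $\Yymn \subset Y_{\mathrm{seed}}$.

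For clause (c), given $(l, z) \in \ppmyn$, let $(l', z') \in \olppmyn$ be the unique maximal majorant; by Proposition~\ref{p: zmn}(ii), $z = \mathfrak{h}^m_{l', z'}(z'')$ for a unique $z'' \in \mathfrak{X}^{l'}_l$, and $\ffmlz = \mathfrak{h}^m_{l', z'} \circ \mathfrak{h}^{l'}_{l, z''}$. The inductive (c) for $Y^{l'}_n$ yields $\mathfrak{h}^{l'}_{l, z''}(\Yln) = Y^{l'}_n \cap D_{-1}(z'', \mathfrak{r}_l - K_{n-1})$, and transporting by $\mathfrak{h}^m_{l', z'}$ gives $\ffmlz(\Yln) = \Ymn \cap D_{-1}(z, \mathfrak{r}_l - K_{n-1})$. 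The order $\leq^m_n$ is defined by first comparing via (I), then (II), then (III); inside each cluster $\ffmlz(\Yln)$ the remaining ties are resolved by inheriting $\leq^l_n$ via $\ffmlz$, which is consistent with (I)--(III) because $\ffmlz$ preserves $\hat{\mathfrak{c}}_n$ and $\hat{\mathfrak{p}}_n$ up to the obvious shift (by the inductive (d) for $\Yln$); on the leftover region ties are broken arbitrarily.

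The main obstacle is verifying the reverse inclusion $\Ymn \cap D_{-1}(z, \mathfrak{r}_l - K_{n-1}) \subset \ffmlz(\Yln)$ in (c), which amounts to showing that the Zorn extension cannot add points inside any cluster disk. This reduces to $\ffmlz(\Yln)$ being maximal $2r_n$-separated inside its cluster, and hence to $\ffmlz$ faithfully intertwining $d_{n-1}$ at scale $2r_n$. This faithful intertwining is precisely what the calibration of $K_{n-1}$ in~\eqref{defn kn} and the short-scale isometry statement Proposition~\ref{p: xn}(viii) are designed to deliver; once granted, the remaining order-theoretic bookkeeping is routine.
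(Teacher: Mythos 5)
Your proposal is correct and takes essentially the same route as the paper: induction on $m$, seeding $\Ymn$ with $\bigcup_{(l,z)\in\olppmyn}\mathfrak{h}^m_{l,z}(\Yln)$, completing to a maximal $2r_n$-separated subset via Zorn, and deducing (c) from the decomposition of $\mathfrak{h}^m_{l,z}$ through its unique maximal majorant together with the maximality of the seed inside cluster disks. The paper phrases the order construction slightly differently—it constrains the seed order $\widetilde{\leq}^m_n$ to satisfy (b), (I), (II) and then argues (III) follows, rather than pushing $\leq^l_n$ forward wholesale and invoking a shift—but this is a presentational difference, and both treatments leave the same order-theoretic compatibility check to the reader.
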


\begin{proof}
We proceed by induction on $m$.
Let $\Ynn$ be any maximal $2r_n$-separated subset  of the metric space  $(D_{-1}(p,\rrn - K_{n-1})\cap X_{n-1},d_{n-1})$   containing $p$. 
Let $\leq_n^n$ be any order relation on $Y_n^n$ such that, if $d_{-1}(x,p)<d_{-1}(y,p)$, then $x<^n_n y$.
Since $\hat{\mathfrak{c}}_n(x)=n$ and $\hat{\mathfrak{p}}_n(x)=p$ for all $x\in Y^n_n$, this relation satisfies the properties of the statement for $m=n$.

Suppose that we have defined $\Yln$ and $\leq^l_n$  for $ n\leq l<m$, satisfying the stated properties.
Let 
\[
\wtYmn= \bigcupdot_{(l,z)\in \olppmyn} \mathfrak{h}^m_{l,z}( \Yln )\;.
\] 
By the induction hypothesis with~\ref{i: xn partial}, for every $(l,z)\in \olppmyn$, the set $\mathfrak{h}_{l,z}( \Yln )=\mathfrak{h}^m_{l,z}( \Yln )$ is contained in $\xyn$ and is $2r_n$-separated with respect to $d_{n-1}$. 
Arguing like in the proof of \Cref{p: zmn}~\ref{i: zmn zvq}, we get that $\wtYmn$ is a maximal $2r_n$-separated subset of 
\[
\bigcupdot_{(l,z)\in \olppmyn} D_{-1}(z,\rrl-K_{n-1})\;,
\] 
with respect to $d_{n-1}$, containing $p$.
Now let $\Ymn$ be any maximal $2r_n$-separated subset of the metric space  $(D_{-1}(p,\rrn - K_{n-1})\cap X_{n-1},d_{n-1})$  containing $\wtYmn$; in particular, $\Ymn$ safisfies~\ref{i: ymn separated}.

Let $\widetilde{\leq}^m_n$ be any ordering of $\wtYmn$ satisfying the analogues of~\ref{i: ymn subset},~\ref{i: ymn cn < cn} and~\ref{i: ymn pn neq pn} with $\wtYmn$ instead of $\Ymn$. 
Then, by the induction hypothesis with~\ref{i: ymn cn = cn} and the definition of $\wtYmn$, the order $\widetilde{\leq}^m_n$ also satisfies the analogue of~\ref{i: ymn cn = cn}. 
Let $\widehat{\leq}^m_n$ be any ordering of $\widehat Y^m_n:=\Ymn\setminus \wtYmn$ satisfying the analogue of~\ref{i: ymn cn = cn} with $\widehat Y^m_n$ instead of $Y^m_n$. 
Let $\leq^m_n$ be the order relation on $Y^m_n$ defined by $\widetilde{\leq}^m_n$ and $\widehat{\leq}^m_n$ on $\wtYmn$ and $\widehat Y^m_n$, respectively, and satisfying $x\leq^m_n y$ for all $x\in \wtYmn$ and $y\in\widehat Y^m_n$.
It is easy to check that $\leq^m_n$ satisfies  the stated properties.
\end{proof}

Let $\Yn=\bigcup \nolimits_{m\geq n}Y^m_n$. 
Like in the case of the relations $\preceq^m_n$ (\Cref{s. repetitivity}), the order relations $\leq^m_n$ define an order relation $\leq_n$ on $Y_n$.

\begin{lem}\label{l: yn}
The ordered sets $(\Yn,\leq_n)$ satisfy the following properties:
\begin{enumerate}[(a)]

\item \label{i: yn maximal} $\Yn$ is a maximal $2r_n$-separated subset of $(\xyn,d_{n-1})$ containing $p$, and therefore it is $2r_n$-relatively dense in $(\xyn,d_{n-1})$.

\item \label{i: yn rn-1} For any $(l,z)\in \Ryn$, we have $\mathfrak{h}_{l,z}(\Yln ) = \Yn \cap D_{-1}(z, \mathfrak{r}_l-K_{n-1})$, and the map $$\mathfrak{h}_{l,z}\colon ( \Yln, \leq^l_n) \to ( \Yn \cap D_{-1}(z,\mathfrak{r}_l-K_{n-1}), \leq_n)$$ is order-preserving.

\item \label{i: yn conditions} For all $x,y\in Y_n$, we have $x<_n y$ if one of the following conditions holds:
\begin{enumerate}[(I)]

\item \label{i: yn cn < cn} $\hat{\mathfrak{c}}_n(x)<\hat{\mathfrak{c}}_n(y)$;

\item\label{i: yn pn neq pn} $\hat{\mathfrak{c}}_n(x)=\hat{\mathfrak{c}}_n(y)$ and $d_{-1}(\hat{\mathfrak{p}}_n(x),p)<d_{-1}( \hat{\mathfrak{p}}_n(y),p)$; or

\item \label{i: yn cn = cn} $\hat{\mathfrak{c}}_n(x)= \hat{\mathfrak{c}}_n(y)$, $\hat{\mathfrak{p}}_n(x)= \hat{\mathfrak{p}}_n(y)$ and $d_{-1}(x, \hat{\mathfrak{p}}_n(x))< d_{-1}(y, \hat{\mathfrak{p}}_n(x))$.

\end{enumerate}

\item \label{i: yn well} $(\Yn,\leq_n)$ is well-ordered.

\end{enumerate}
\end{lem}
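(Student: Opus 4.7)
The strategy is to lift each property from the finite-scale analogues established in \Cref{l: ymn} by exploiting the direct-limit structure $\Yn=\bigcup_{m\geq n}\Ymn$ with its order-preserving inclusions $\Yymn\hookrightarrow\Ymn$.

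For \textbf{(a)}, I would observe that $2r_n$-separation is preserved under unions, so $\Yn$ is $2r_n$-separated in $(\xyn,d_{n-1})$, and $p\in\Ynn\subset\Yn$. For maximality, any $y\in\xyn\setminus\Yn$ lies in the ambient $D_{-1}(p,\mathfrak r_m-K_{n-1})\cap\xyn$ of $\Ymn$ for all sufficiently large $m$ (since $\mathfrak r_m\to\infty$), so the maximality of $\Ymn$ forces some $y'\in\Ymn\subset\Yn$ with $d_{n-1}(y,y')<2r_n$; relative density then follows from \Cref{l: maximal}. For \textbf{(b)}, given $(l,z)\in\Ryn$, I would choose $m$ with $(l,z)\in\ppmyn$ and invoke \Cref{l: ymn}\ref{i: ymn fmlz} to obtain the order-preserving identification $\ffmlz(\Yln)=\Ymn\cap D_{-1}(z,\mathfrak r_l-K_{n-1})$. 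Since $\mathfrak h_{l,z}=\ffmlz$ on the relevant disk, it remains to check $\Yn\cap D_{-1}(z,\mathfrak r_l-K_{n-1})=\Ymn\cap D_{-1}(z,\mathfrak r_l-K_{n-1})$, which I would do by picking any $y$ in the left-hand side, finding $m''\geq m$ with $y\in Y_n^{m''}$, and re-applying \Cref{l: ymn}\ref{i: ymn fmlz} at level $m''$ to push $y$ back into $\ffmlz(\Yln)$. Item \textbf{(c)} is immediate: pick $m$ with $x,y\in\Ymn$ and apply \Cref{l: ymn}\ref{i: ymn conditions}.

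The main obstacle is \textbf{(d)}, since a direct limit of finite linearly ordered sets need not be well-ordered (for instance, $\Z=\varinjlim\{-m,\dots,m\}$), so well-orderedness must be extracted from the specific ordering conditions in (c). Given a nonempty $S\subset\Yn$, my plan is a three-stage restriction. First set $c_0=\min_{x\in S}\hat{\mathfrak c}_n(x)$ and $S_0=\{x\in S:\hat{\mathfrak c}_n(x)=c_0\}$, so by (c)(I) any least element of $S$ lies in $S_0$. Next set $d_0=\min_{x\in S_0}d_{-1}(\hat{\mathfrak p}_n(x),p)$ and $S_0'=\{x\in S_0:d_{-1}(\hat{\mathfrak p}_n(x),p)=d_0\}$, so by (c)(II) the least element of $S$ lies in $S_0'$ (both minima exist since the values lie in $\N$). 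The crucial claim is that $S_0'$ is finite: the possible values $\hat{\mathfrak p}_n(x)$ for $x\in S_0'$ lie in $\mathfrak X_{c_0}\cap S_{-1}(p,d_0)$, which is finite by \Cref{l: proper} since $\deg X<\infty$; and for each such $q$, $\{x\in\Yn:\hat{\mathfrak p}_n(x)=q\}\subset D_{-1}(q,\mathfrak r_{c_0}-K_{n-1})$ is also finite. Hence $S_0'\subset\Ymn$ for some $m$, and being linearly ordered by $\leq_n^m$ it has a least element $x^*$; condition (c) then guarantees $x^*$ is the least element of all of $S$.
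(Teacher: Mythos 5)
Your proof is correct and follows essentially the same route as the paper's. For (a)--(c) you lift from \Cref{l: ymn} via the nested direct-limit structure, which is exactly what the paper asserts (without detail). For (d), the paper proves well-orderedness by noting (via (c)(I)) it suffices to handle each fiber $Y_n\cap\hat{\mathfrak c}_n^{-1}(m)$, and then exhibiting each fiber as an increasing union of finite initial segments cut out by (c)(II) and the containment $\{d_{-1}(\hat{\mathfrak p}(y),p)\le l\}\subset D_{-1}(p,l+\mathfrak r_m-K_{n-1})$; your version instead takes an arbitrary nonempty $S\subset Y_n$ and extracts a least element by the same three-stage restriction (by $\hat{\mathfrak c}_n$, then by $d_{-1}(\hat{\mathfrak p}_n(\cdot),p)$, then using finiteness of the residual set). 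These are two faces of the same argument, and your cautionary remark that a direct limit of finite linear orders need not be well-ordered correctly identifies why one must exploit conditions (c)(I)--(III) rather than the limit structure alone.
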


\begin{proof}
Properties~\ref{i: yn maximal}--\ref{i: yn conditions} follow from \Cref{l: ymn}~\ref{i: ymn separated}--\ref{i: yn conditions} and the definition of $(Y_n,\leq_n)$. So let us prove~\ref{i: yn well}.
By~\ref{i: yn cn < cn}, it is enough to prove that, for each $m\geq n$, the ordered subset $(Y_n\cap \hat{\mathfrak{c}}_n^{-1}(m),\leq_n)$ is well-ordered.
By~\ref{i: yn pn neq pn}, the subsets $\{\, y\in Y_n\cap\hat{\mathfrak{c}}_n^{-1}(m)\mid d_{-1}(\hat{\mathfrak{p}}(y),p)\leq l \, \}$, with $l\in \N$, form an increasing sequence of finite initial segments of $(Y_n\cap \hat{\mathfrak{c}}_n^{-1}(m),\leq_n)$ covering $Y_n\cap \hat{\mathfrak{c}}_n^{-1}(m)$. 
Since
\[
\{\, y\in Y_n\cap\hat{\mathfrak{c}}_n^{-1}(m)\mid d_{-1}(\hat{\mathfrak{p}}(y),p)\leq l \,\}
\subset \CPen_{-1}(Y_n\cap D_{-1}(p,l), \mathfrak{r}_m-K_{n-1} )
\subset D_{-1}(p, l+\mathfrak{r}_m-K_{n-1})\;,
\]
all sets $\{\, y\in Y_n\cap \hat{\mathfrak{c}}_n^{-1}(m)\mid d_{-1}(\hat{\mathfrak{p}}(y),p)\leq l \, \}$ are finite, and therefore well-ordered with $\le_n$. 
It easily follows that $Y_n\cap \hat{\mathfrak{c}}_n^{-1}(m)$ is well-ordered, completing the proof of~\ref{i: yn well}.
\end{proof}

\begin{rem}\label{r. yn xxn}
Note that  $\{n\}\times\mathfrak{X}_n\subset \mathfrak{P}_{n-1}$ by definition. By \Cref{l: yn}~\ref{i: yn maximal},\ref{i: yn rn-1}, for any $x\in \mathfrak{X}_n$, we have $x = \mathfrak{h}_{n,x}(p)\subset \Yn$, yielding $\xxn \subset \Yn$.  
\end{rem}

\begin{rem}\label{r. yn p}
For any $x\in D_{-1}(p,\rrm - K_{n-1})$, we have $\hat{\mathfrak{c}}_n(x)=n$ and $\hat{\mathfrak{p}}_n(x)=p$ by definition. So, by~\ref{i: yn pn neq pn}, $D_{-1}(p,\rrm - K_{n-1})$ is an initial segment of $Y_n$. Therefore  $p$ is the least element of $Y_n$ by~\ref{i: yn cn = cn}. 
\end{rem}

Let now
\begin{align*}
Y^-_n &= \{\, y\in \Yn \mid \eta_n(|D_{n-1}(y,r_n^+ )|) <(6+|D_{n-1}(y,r_n^+s_n )|)^2   \, \}\;,  \\ 
Y^+_n &= \{\, y\in \Yn \mid \eta_n(|D_{n-1}(y,r_n^+ )|)\geq (6+|D_{n-1}(y, r_n^+s_n)|)^2  \, \}\;.
\end{align*}

\begin{lem}\label{l: ballnrnsn}
We have 
\[
y\in D_{-1}(p, \rrl - K_{n-1} - L_{n-1} r_ns_n^2)\Longrightarrow D_{n-1}(y, r_n^+s_n)\subset D_{-1}(p, \rrl - K_{n-1})\;.
\]
\end{lem}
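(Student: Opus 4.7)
The plan is very short: this is essentially a one-line triangle inequality, where the buffer $L_{n-1} r_n s_n^2$ appearing in the hypothesis has been set up precisely to absorb the distortion between the ambient metric $d_{-1}$ and the induced metric $d_{n-1}$ on $X_{n-1}$.

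First I would pick any $x \in D_{n-1}(y, r_n^+ s_n)$. By definition $x\in X_{n-1}$ and $d_{n-1}(x,y) \le r_n^+ s_n = r_n s_n^2$, using $r_n^+ = r_n s_n$ from \eqref{defn rnpm}. Then I would invoke the inductive hypothesis of \Cref{p: xn}~\ref{i: xn dn dn-1} at the previous level, which yields $d_{-1} \le L_{n-1} d_{n-1}$ on $X_{n-1}$; for $n=0$ this is the trivial identity $d_{-1} = L_{-1} d_{-1}$ from \Cref{l: xn induction}, with $L_{-1} = 1$. This gives $d_{-1}(x,y) \le L_{n-1} r_n s_n^2$.

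Combining with the hypothesis $d_{-1}(y,p) \le \mathfrak{r}_l - K_{n-1} - L_{n-1} r_n s_n^2$, the triangle inequality yields
\[
d_{-1}(x,p) \le L_{n-1} r_n s_n^2 + (\mathfrak{r}_l - K_{n-1} - L_{n-1} r_n s_n^2) = \mathfrak{r}_l - K_{n-1}\;,
\]
so $x \in D_{-1}(p, \mathfrak{r}_l - K_{n-1})$, which is the desired containment.

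There is no serious obstacle here: the lemma is really a bookkeeping consequence of the way the constants were chosen in \Cref{s. constants} together with the choice $r_n^+ = r_n s_n$. The only point requiring a bit of attention is that we must make sure the metric comparison $d_{-1} \le L_{n-1} d_{n-1}$ at level $n-1$ is actually part of the running induction for \Cref{p: xn} (or handled by the base case \Cref{l: xn induction} when $n=0$), so that the argument is not circular.
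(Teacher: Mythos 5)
Your proposal is correct and follows essentially the same route as the paper's proof: a triangle inequality in $d_{-1}$ combined with the metric comparison $d_{-1}\le L_{n-1}d_{n-1}$ on $X_{n-1}$ coming from the induction hypothesis of \Cref{p: xn}~\ref{i: xn dn dn-1} (with the base case supplied by \Cref{l: xn induction}). Your version is in fact slightly cleaner than the paper's, which appears to contain a small typo (writing $d_{n-1}(y,p)$ where $d_{-1}(y,p)$ is intended) in the intermediate step.
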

\begin{proof}
By the induction hypothesis with \Cref{p: xn}~\ref{i: xn dn dn-1},  we have
\begin{align*}
d_{-1}(x,p)&\le d_{-1}(x,y)+d_{-1}(y,p)\le L_{n-1}d_{n-1}(x,y)+d_{n-1}(y,p)\\ 
&\le L_{n-1}r_ns_n^2+\rrl - K_{n-1} -L_{n-1} r_ns_n^2=\rrl - K_{n-1}\;.\qedhere
\end{align*}
\end{proof}

\begin{lem}\label{l: h preserve ynpm}
For any $(l,z)\in \mathfrak{P}_{n-1}$ and $y\in \Yn\cap D_{-1}(p, \rrl - K_{n-1} - L_{n-1} r_ns_n^2)$, we have that $y\in \Yn^{\pm}$ if and only if $\mathfrak{h}_{l,z}(y)\in \Yn^{\pm}$.
\end{lem}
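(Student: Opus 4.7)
The sets $Y_n^\pm$ depend on $y$ only through the cardinalities $|D_{n-1}(y,r_n^+)|$ and $|D_{n-1}(y,r_n^+s_n)|$, so the claimed equivalence $y\in Y_n^\pm\iff \mathfrak{h}_{l,z}(y)\in Y_n^\pm$ will follow as soon as I show that $\mathfrak{h}_{l,z}$ induces a $d_{n-1}$-isometry from $D_{n-1}(y,r_n^+s_n)$ onto $D_{n-1}(\mathfrak{h}_{l,z}(y),r_n^+s_n)$.

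First I will locate each of these two disks inside a region where $\mathfrak{h}_{l,z}$ is under control. \Cref{l: ballnrnsn} applied directly to $y$ yields $D_{n-1}(y,r_n^+s_n)\subset D_{-1}(p,\mathfrak{r}_l-K_{n-1})$. Since $\mathfrak{h}_{l,z}$ is a pointed $d_{-1}$-isometry, $d_{-1}(\mathfrak{h}_{l,z}(y),z)=d_{-1}(y,p)$, and the same lemma applied to $\mathfrak{h}_{l,z}(y)$ with basepoint $z$ in place of $p$ gives $D_{n-1}(\mathfrak{h}_{l,z}(y),r_n^+s_n)\subset D_{-1}(z,\mathfrak{r}_l-K_{n-1})$.

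Next I invoke the induction hypothesis at level $n-1$. From \Cref{p: xn}~\ref{i: xn rn-1} (noting $K_{n-1}\geq\overline K_{n-1}$ from~\eqref{defn kn}) together with the fact that $\mathfrak{h}_{l,z}$ preserves $d_{-1}$-distances to basepoints, its restriction is a bijection $X_{n-1}\cap D_{-1}(p,\mathfrak{r}_l-K_{n-1})\to X_{n-1}\cap D_{-1}(z,\mathfrak{r}_l-K_{n-1})$. By \Cref{p: xn}~\ref{i: xn partial}, that bijection is an $(s_nR_n^++\Gamma_{n-1}^+)$-short scale isometry with respect to $d_{n-1}$. Because $2r_n^+s_n=2r_ns_n^2<r_ns_n(2s_n+3)=s_nR_n^+$, this forces the restriction of $\mathfrak{h}_{l,z}$ to $D_{n-1}(y,r_n^+s_n)$ to be a $d_{n-1}$-isometric embedding into $D_{n-1}(\mathfrak{h}_{l,z}(y),r_n^+s_n)$.

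The hard part will be the reverse inclusion: the short scale isometry statement of~\ref{i: xn partial} is one-sided, and the resulting one-sided inequality $|D_{n-1}(y,r)|\le|D_{n-1}(\mathfrak{h}_{l,z}(y),r)|$ alone does not suffice to exchange the two membership conditions of $Y_n^\pm$. I expect to close this gap by the natural symmetry of the setup: the inverse $\mathfrak{h}_{l,z}^{-1}$ is itself a pointed $d_{-1}$-isometry $(D_{-1}(z,\mathfrak{r}_l),z)\to(D_{-1}(p,\mathfrak{r}_l),p)$, and the edge structure $E_{n-1}$ produced in the proof of \Cref{p: xn}~\ref{i: xn connected} at level $n-1$ is invariant under such $d_{-1}$-isometries, so that $\mathfrak{h}_{l,z}^{-1}$ also satisfies the corresponding short scale isometry property on the symmetric domain. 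Running the previous paragraph verbatim, but with $p$ and $z$ (and $y$ and $\mathfrak{h}_{l,z}(y)$) exchanged, then produces the reverse embedding; combining both embeddings yields the claimed $d_{n-1}$-isometry, hence equality of both cardinalities, and hence the equivalence.
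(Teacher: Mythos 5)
Your proposal follows essentially the same route as the paper's (very short) proof: \Cref{l: ballnrnsn} places $D_{n-1}(y,r_ns_n^2)$ inside $D_{-1}(p,\mathfrak{r}_l-K_{n-1})\subset\dom\mathfrak{h}_{l,z}$, and then the induction hypothesis \Cref{p: xn}~\ref{i: xn partial} at level $n-1$ (the $s_nR_n^+$-short scale isometry property of $\mathfrak{h}_{l,z}$ there) gives $|D_{n-1}(y,r_ns_n^i)|=|D_{n-1}(\mathfrak{h}_{l,z}(y),r_ns_n^i)|$ for $i=1,2$, which is all that matters since membership in $Y_n^\pm$ is decided by these two cardinalities. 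Your worry about the reverse inclusion is legitimate at the level of the literal definition: the short scale isometry condition is one-sided, and $\mathfrak{h}_{l,z}^{-1}$ is not itself one of the maps $\mathfrak{h}_{m,x}$ to which \Cref{p: xn}~\ref{i: xn partial} applies, so the surjectivity of $\mathfrak{h}_{l,z}$ from $D_{n-1}(y,r_ns_n^i)$ onto $D_{n-1}(\mathfrak{h}_{l,z}(y),r_ns_n^i)$ does need a word (the paper leaves this implicit).

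However, the justification you offer for the symmetric step is not right as stated: the edge structure $E_{n-1}$ is certainly not invariant under arbitrary $d_{-1}$-isometries; its compatibility with the maps $\mathfrak{h}_{m,x}$ on restricted domains is precisely the nontrivial content of \Cref{p: xn}~\ref{i: xn rn-1} and~\ref{i: xn partial}, established by induction, so you cannot appeal to a general invariance. The correct source is the induction step itself: in the proof of \Cref{p: xn}~\ref{i: xn partial} one shows that, on a suitable subdisk, $\mathfrak{h}_{m,x}$ preserves the relation $E_{n-1}$ in both directions (equivalently, the relevant restriction of $\mathfrak{h}_{l,z}$ is a graph isomorphism onto its image, which by \Cref{p: xn}~\ref{i: xn rn-1} is $X_{n-1}\cap D_{-1}(z,\mathfrak{r}_l-\overline{K}_{n-1})$ intersected with the appropriate disk); then \Cref{c: graph partial} applies equally to the inverse, giving the missing short scale control on $\mathfrak{h}_{l,z}^{-1}$ and hence the reverse embedding. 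With that reference in place of the invariance claim, your argument is complete and coincides with the intended one.
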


\begin{proof}
By \Cref{l: ballnrnsn}, we have  $D_{n-1}(y, r_ns_n^2)\subset D_{-1}(p, \rrl - K_{n-1})\subset \dom\mathfrak{h}_{l,z}$. 
Then $|D_{n-1}(y, r_ns_n^i)|= |D_{n-1}(\mathfrak{h}_{l,z}(y), r_ns_n^i)|$ for $i=1,2$ since  $\mathfrak{h}_{l,z}$ is an  $s_nR_n^+$-short scale isometry on $(D_{-1}(p, \rrl - K_{n-1}),d_{n-1})$.
\end{proof}

Using that $(\Yn,\leq_n)$ is a well-ordered set (\Cref{l: yn}~\ref{i: yn well}), let $\xn^{+}\subset\Yn^{+}$ be  inductively defined as follows:
\begin{itemize}

\item If $y_0$ is the least element of $(Y_n^{+},\leq_n)$, then $y_0\in X_n^{+}$.

\item For  all $y \in Y_n^{+}$ such that $y>_ny_0$, we have $y\in \xn^{+}$ if and only if $d_{n-1}(y,y')>2r_ns_n$ for all $y'\in \xn^{+}$ with $y'<_ny$.

\end{itemize} 

\begin{rem}\label{r. xn+ sep} 
Observe that $\xn^{+}$ is $(2r_ns_n+1)$-separated and $2r_ns_n$-relatively dense in $(Y_n^+,d_{n-1})$.
\end{rem}

\begin{rem}\label{r. yln y cap} 
Note that \Cref{l: yn}~\ref{i: yn rn-1} yields $\Yln = \Yn \cap D_{-1}(p, \mathfrak{r}_l-K_{n-1})$ because $\mathfrak{h}_{l,p}=\id$ by \Cref{p: xxn}~\ref{i: xxn p}.
\end{rem}

\begin{lem}\label{pl. h preserves xn+}
For all $z\in \xxn$ and $y\in \Yn\cap D_{-1}(p, \mathfrak{r}_n-K_{n-1}- L_{n-1} r_ns_n^2)$, we have  $y\in \xn^{+}$ if and only if $\mathfrak{h}_{n,z}(y )\in \xn^{+}$.
\end{lem}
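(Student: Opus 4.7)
I will prove the lemma by transfinite induction on $y$ with respect to the well-ordering $\leq_n$ on $\Yn$ (available by \Cref{l: yn}~\ref{i: yn well}), combined with the greedy construction defining $\xn^{+}$ from $\Yn^{+}$. Fix $z\in \xxn$, so that $(n,z)\in \Ryn$, set $\bar y:=\mathfrak{h}_{n,z}(y)$, and write $V:=\Yn\cap D_{-1}(p,\mathfrak r_n-K_{n-1}-L_{n-1}r_n s_n^2)$ for the domain of the statement and $\Ynn=\Yn\cap D_{-1}(p,\mathfrak r_n-K_{n-1})$ for the slightly larger domain on which $\mathfrak{h}_{n,z}$ is nicely controlled. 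The main tools are: \Cref{l: h preserve ynpm}, giving $y\in\Yn^{\pm}\iff \bar y\in\Yn^{\pm}$ for $y\in V$; \Cref{l: yn}~\ref{i: yn rn-1}, providing an order-preserving bijection $\mathfrak{h}_{n,z}\colon\Ynn\to\Yn\cap D_{-1}(z,\mathfrak r_n-K_{n-1})$; and the induction hypothesis \Cref{p: xn}~\ref{i: xn partial} at level $n-1$, which makes $\mathfrak{h}_{n,z}$ an $(s_n R_n^{+}+\Gamma_{n-1}^{+})$-short-scale $d_{n-1}$-isometry on $X_{n-1}\cap D_{-1}(p,\mathfrak r_n-K_{n-1})$, a range comfortably exceeding the threshold $2r_n s_n$ entering the separation condition for $\xn^{+}$.

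\textbf{The induction step.} Consider $y\in V$ and assume the statement for all $y''<_n y$ in $V$. If $y\in\Yn^{-}$ then $\bar y\in\Yn^{-}$ by the first tool, so neither is in $\xn^{+}$ and the biconditional is trivial. Otherwise $y,\bar y\in\Yn^{+}$, and the definition of $\xn^{+}$ reduces the task to comparing the near $d_{n-1}$-predecessors (those within distance $2r_n s_n$) that lie in $\xn^{+}$. Using $d_{-1}\le L_{n-1}d_{n-1}$ from \Cref{p: xn}~\ref{i: xn dn dn-1} at level $n-1$, any $y'\in\Yn$ with $d_{n-1}(y,y')\le 2r_n s_n$ obeys $d_{-1}(y',p)\le d_{-1}(y,p)+2L_{n-1}r_n s_n\le \mathfrak r_n-K_{n-1}$ and is therefore in $\Ynn$; analogously, every near $d_{n-1}$-predecessor $\bar y'$ of $\bar y$ lies in $\mathfrak{h}_{n,z}(\Ynn)$. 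The second and third tools then produce an order-preserving $d_{n-1}$-isometric bijection between the near predecessors of $y$ in $\Ynn$ and those of $\bar y$ in $\mathfrak{h}_{n,z}(\Ynn)$; applying the induction hypothesis to each corresponding pair transfers the separation condition from $y$ to $\bar y$ and back, giving $y\in\xn^{+}\iff \bar y\in\xn^{+}$.

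\textbf{Main obstacle.} The delicate point is that a near predecessor $y'$ identified above need only lie in $\Ynn$, not necessarily in $V$: the estimate misses the $V$-bound by $2L_{n-1}r_n s_n$, whereas the buffer in $V$ is $L_{n-1}r_n s_n^2$, so the induction hypothesis does not apply to $y'$ outside $V$ a priori. This is where the argument requires the most care, and the resolution depends crucially on the generous constants set up in \Cref{s. constants} (in particular $s_n\ge 27$ and the choice of $K_{n-1}$). The intended route is either to strengthen the inductive statement to handle all of $\Ynn$ via composition with auxiliary pointed isometries $\mathfrak{h}_{l,z'}$ with $(l,z')\in\Ryn$, or to argue that any near predecessor of $y$ belonging to $\xn^{+}$ must in fact satisfy the stronger $V$-bound — both reductions exploiting the large gap between $2L_{n-1}r_n s_n$ and $L_{n-1}r_n s_n^2$. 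Once this is settled, the biconditional for near predecessors propagates along $\leq_n$ and closes the induction.
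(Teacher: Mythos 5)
You have set up the right framework (induction along $\leq_n$, \Cref{l: h preserve ynpm} to reduce to $Y_n^+$, and the induction hypothesis \Cref{p: xn}~\ref{i: xn partial} plus \Cref{l: yn}~\ref{i: yn rn-1} to transport the separation condition through $\mathfrak h_{n,z}$), but the proof is not complete: the step you yourself label the ``main obstacle'' is exactly the crux, and you leave it as a disjunction of two possible strategies rather than carrying either one out. Moreover, the mechanism you hint at --- ``the generous constants'' and ``the large gap between $2L_{n-1}r_ns_n$ and $L_{n-1}r_ns_n^2$'' --- is not what closes it, and no purely numerical comparison can: a disqualifying point $u\in X_n^+$ with $d_{n-1}(u,y)\le 2r_ns_n$ could a priori sit anywhere in the annulus of $d_{-1}$-width $2L_{n-1}r_ns_n$ around $y$, and nothing about the sizes of the buffers forces it back inside $V$.

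What actually closes the gap in the paper is structural, not metric: only $\leq_n$-\emph{predecessors} matter in the greedy definition of $X_n^+$, and on the relevant region the order $\leq_n$ refines the distance to the basepoint. Concretely, if $u\in X_n^+$ disqualifies $y$, then $u<_n y$ and $d_{n-1}(u,y)\le 2r_ns_n$; by \Cref{l: ballnrnsn} (using only $s_n>2$) one gets $u\in D_{-1}(p,\mathfrak r_n-K_{n-1})$, so $\hat{\mathfrak c}_n(u)=\hat{\mathfrak c}_n(y)=n$ and $\hat{\mathfrak p}_n(u)=\hat{\mathfrak p}_n(y)=p$; then \Cref{l: yn}~\ref{i: yn cn = cn} together with $u<_n y$ forces $d_{-1}(u,p)\le d_{-1}(y,p)$, so $u$ lies in the \emph{same} ball $D_{-1}(p,\mathfrak r_n-K_{n-1}-L_{n-1}r_ns_n^2)$ as $y$ and the induction hypothesis does apply to it (this is your second suggested route, but it needs this order-versus-distance compatibility, not a constants gap). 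The same observation is what makes the base case work: the least element $y_1$ of $Y_n^+\cap V$ can have no such disqualifier, since the disqualifier would itself be an element of $Y_n^+\cap V$ strictly below $y_1$; your write-up skips the base case entirely. Finally, for the reverse implication (a disqualifier of $\mathfrak h_{n,z}(y)$ pulling back to one of $y$) you need the analogous argument centered at $z$, using that $\mathfrak h_{n,z}$ is order-preserving on $D_{-1}(p,\mathfrak r_n-K_{n-1})$ (\Cref{l: yn}~\ref{i: yn rn-1} and \Cref{r. yln y cap}); this direction is asserted but not argued in your sketch.
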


\begin{proof}
By \Cref{l: h preserve ynpm}, it is enough to prove the statement for points $y\in Y_n^+$.
We proceed by induction on the elements of $\Yn^+\cap D_{-1}(p, \mathfrak{r}_n-K_{n-1}- L_{n-1}r_ns_n^2)$ using $\leq_n$.
Let $y_1$ be the least element of  $\Yn^+\cap D_{-1}(p, \mathfrak{r}_n-K_{n-1}- L_{n-1} r_ns_n^2)$.
We first show that $y_1,\mathfrak{h}_{n,z}(y_1)\in \xn^{+}$, establishing the desired property for $y_1$.

By absurdity, suppose that $y_1\notin \xn^{+}$. This means that $y_1>_ny_0$ and there is some $u\in \xn^{+}$ such that $u<_n y_1$ and $d_{n-1}(y_1,u)\leq 2r_ns_n$.
Since $s_n>2$ by~\eqref{defn s0} and~\eqref{defn sn}, it follows from \Cref{l: ballnrnsn} that $u\in D_{-1}(p, \mathfrak{r}_n-K_{n-1})$.
Then  $\hat{\mathfrak{c}}_n(y_1)=\hat{\mathfrak{c}}_n(u)=n$ and $\hat{\mathfrak{p}}_n(y_1)=\hat{\mathfrak{p}}_n(u)=p$.
\Cref{l: yn}~\ref{i: yn cn = cn} and the assumption that $u<_n y_1$ yield $d_{-1}(p,u)\leq d_{-1}(p,y_1)$. 
So, in fact, $u\in D_{-1}(p, \mathfrak{r}_n-K_{n-1}- L_{n-1}r_ns_n^2)$, contradicting the hypothesis that $y_1$ is the least element of $D_{-1}(p, \mathfrak{r}_n-K_{n-1}- L_{n-1}r_ns_n^2)$.
This shows that $y_1\in X_n^+$.

By \Cref{l: yn}~\ref{i: yn rn-1} and \Cref{r. yln y cap}, the map $\mathfrak{h}_{n,z}$ preserves $\leq_n$ over $D_{-1}(p, \mathfrak{r}_n-K_{n-1})$. 
So, using the same argument, we get  $\mathfrak{h}_{n,z}(y_1)\in \xn^{+}$.

Now, given $y\in \Yn^+\cap D_{-1}(p, \mathfrak{r}_n-K_{n-1}- L_{n-1} r_ns_n^2)$ so that $y_1<_ny$, suppose that the result is true for all $y'\in \Yn^+\cap D_{-1}(p, \mathfrak{r}_n-K_{n-1}- L_{n-1} r_ns_n^2)$ with $y'<_n y$. 
By definition, we have  $y\notin \xn^+$ if and only if there is some $u\in \xn^+$ such that  $u<_n y$ and $d_{n-1}(u,p)\leq 2r_ns_n$. 
Using the same argument as before, we obtain that, necessarily, $u\in D_{-1}(p,\mathfrak{r}_n-K_{n-1}-L_{n-1} r_ns_n^2)$. 
By the induction hypothesis, we have  $\mathfrak{h}_{n,z}(u)\in \xn^+$. 
Then $y\notin \xn^+$ if and only if there is some $u\in D_{-1}(\mathfrak{r}_n-K_{n-1})$ with $\mathfrak{h}_{n,z}(u)\in \xn^+$ and $d_{n-1}(\mathfrak{h}_{n,z}(u),\mathfrak{h}_{n,z}(y))\leq 2r_ns_n$. 
But, by the induction hypothesis with \ref{i: xn partial}, we have $d_{n-1}(\mathfrak{h}_{n,z}(u),\mathfrak{h}_{n,z}(y))=d_{n-1}(u,y)\leq 2r_ns_n$.
So $y\in \xn^{+}$ if and only if $\mathfrak{h}_{n,z}(y )\in \xn^{+}$, as desired.
\end{proof}

\begin{prop}\label{p: h preserves xn+}
For all $(l,z)\in \Ryn$ and $y\in \Yn\cap D_{-1}(p, \mathfrak{r}_l-K_{n-1}- L_{n-1} r_ns_n^2)$, we have  $y\in \xn^{+}$ if and only if $\mathfrak{h}_{l,z}(y )\in \xn^{+}$.
\end{prop}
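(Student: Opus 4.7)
The plan is to extend the argument of \Cref{pl. h preserves xn+} from the case $l = n$, $z \in \xxn$ to arbitrary $(l,z) \in \Ryn$; the structural ingredients remain the same. By \Cref{l: h preserve ynpm}, the property $y \in \Yn^\pm$ is preserved by $\mathfrak{h}_{l,z}$ on the given domain, so one reduces to showing $y \in \xn^{+} \iff \mathfrak{h}_{l,z}(y) \in \xn^{+}$ for $y \in \Yn^+$. Two properties of $\mathfrak{h}_{l,z}$ then carry the argument: order preservation from $(\Yln, \leq^l_n)$ onto $(\Yn \cap D_{-1}(z, \mathfrak{r}_l - K_{n-1}), \leq_n)$ via \Cref{l: yn}\ref{i: yn rn-1}; and the short-scale isometry property with respect to $d_{n-1}$ on a sufficiently large domain, from the induction hypothesis \Cref{p: xn}\ref{i: xn partial} applied at stage $n-1$.

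I would proceed by transfinite induction on the well-ordered set $\Yn^+ \cap D_{-1}(p, \mathfrak{r}_l - K_{n-1} - L_{n-1} r_n s_n^2)$ ordered by $\leq_n$. In the base case, if the least element $y_1$ were not in $\xn^{+}$, there would exist $u \in \xn^{+}$ with $u <_n y_1$ and $d_{n-1}(u, y_1) \leq 2 r_n s_n$. Combining $d_{-1} \leq L_{n-1} d_{n-1}$ with $s_n > 2$ places $u$ in $D_{-1}(p, \mathfrak{r}_l - K_{n-1})$, hence in $\Yln$. A case analysis on the values of $\hat{\mathfrak{c}}_n$ and $\hat{\mathfrak{p}}_n$ via \Cref{l: yn}\ref{i: yn conditions}, using the closeness $d_{n-1}(u, y_1) \leq 2 r_n s_n$, then forces $u$ into the restricted domain, contradicting minimality of $y_1$. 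The symmetric argument inside $D_{-1}(z, \mathfrak{r}_l - K_{n-1})$ yields $\mathfrak{h}_{l,z}(y_1) \in \xn^{+}$.

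For the inductive step at $y >_n y_1$ in the restricted domain, $y \notin \xn^{+}$ is witnessed by some $u \in \xn^{+}$ with $u <_n y$ and $d_{n-1}(u, y) \leq 2 r_n s_n$. The same localization places $u$ in the restricted domain, so by the induction hypothesis $\mathfrak{h}_{l,z}(u) \in \xn^{+}$; order preservation and the short-scale isometry produce $\mathfrak{h}_{l,z}(u) <_n \mathfrak{h}_{l,z}(y)$ and $d_{n-1}(\mathfrak{h}_{l,z}(u), \mathfrak{h}_{l,z}(y)) = d_{n-1}(u, y) \leq 2 r_n s_n$, so $\mathfrak{h}_{l,z}(y) \notin \xn^{+}$. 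The converse implication follows symmetrically via $\mathfrak{h}_{l,z}^{-1}$.

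The main obstacle is the localization step that confines the witness $u$ to the restricted domain. In \Cref{pl. h preserves xn+} this was immediate because every $y$ in the smaller domain there satisfies $\hat{\mathfrak{c}}_n(y) = n$ and $\hat{\mathfrak{p}}_n(y) = p$, so condition~(III) of \Cref{l: yn}\ref{i: yn conditions} directly gives $d_{-1}(u, p) \leq d_{-1}(y_1, p)$. Here $\hat{\mathfrak{c}}_n(y) \leq l$ and $\hat{\mathfrak{p}}_n(y)$ need not be $p$, so a careful split on the three cases (I)--(III) is required; the buffer $L_{n-1} r_n s_n^2$ in the domain is calibrated precisely so that the displacement $2 L_{n-1} r_n s_n$ (in $d_{-1}$) between $u$ and $y$ keeps them inside the same cluster at level $\hat{\mathfrak{c}}_n(y)$, after which comparison~(III) reduces the argument to the distance-based version already used in \Cref{pl. h preserves xn+}.
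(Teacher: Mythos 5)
There is a genuine gap, and it sits exactly where you placed your ``main obstacle'': the localization of the witness $u$ into the restricted domain fails, and the paper needs a second, outer induction on $l$ precisely to cope with that failure. From $u<_n y$ and $d_{n-1}(u,y)\le 2r_ns_n$ you only get $d_{-1}(u,p)\le d_{-1}(y,p)+2L_{n-1}r_ns_n$, so the buffer $L_{n-1}r_ns_n^2$ merely places $u$ in the big ball $D_{-1}(p,\mathfrak{r}_l-K_{n-1})$, not in $D_{-1}(p,\mathfrak{r}_l-K_{n-1}-L_{n-1}r_ns_n^2)$. To pull $u$ into the restricted ball you need $d_{-1}(u,p)\le d_{-1}(y,p)$, and the only source for that is the contrapositive of \Cref{l: yn}~\ref{i: yn cn = cn}, which is available only when $\hat{\mathfrak{c}}_n(u)=\hat{\mathfrak{c}}_n(y)=l$ (so that $\hat{\mathfrak{p}}_n(u)=\hat{\mathfrak{p}}_n(y)=p$). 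In the remaining case $\hat{\mathfrak{c}}_n(u)<l$, the witness lies in a cluster copied at an earlier stage, centered at some $\hat{\mathfrak{p}}_n(u)\neq p$; condition~\ref{i: yn cn = cn} then only compares distances to that center, gives no control on $d_{-1}(u,p)$, and your inner induction hypothesis cannot be applied to $u$. In particular your base case overclaims: unlike in \Cref{pl. h preserves xn+}, where every point of the small ball has $\hat{\mathfrak{c}}_n=n$ and $\hat{\mathfrak{p}}_n=p$, for $l>n$ the least element $y_1$ of the restricted domain need not belong to $\xn^+$ at all (a witness with $\hat{\mathfrak{c}}_n(u)<l$ is not excluded), so what must be proved there is the equivalence $y_1\in\xn^+\Leftrightarrow\mathfrak{h}_{l,z}(y_1)\in\xn^+$, not membership.

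The paper's proof closes this case by a double induction: the proposition is proved by induction on $l\ge n$, the case $l=n$ being \Cref{pl. h preserves xn+}, and within each $l$ by transfinite induction along $\leq_n$ as you propose. When $\hat{\mathfrak{c}}_n(u)<l$, the outer hypothesis (the statement for the smaller level $\hat{\mathfrak{c}}_n(u)$, applied to the cluster of $\hat{\mathfrak{p}}_n(u)$) is what transfers the $\xn^+$-membership of the witness; then the short-scale isometry from \Cref{p: xn}~\ref{i: xn partial} gives $d_{n-1}(\mathfrak{h}_{l,z}(u),\mathfrak{h}_{l,z}(y))=d_{n-1}(u,y)\le 2r_ns_n$ and hence $\mathfrak{h}_{l,z}(y)\notin\xn^+$ (and symmetrically for the converse). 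Your single-induction scheme has no hypothesis covering such a $u$, so the argument does not close; to repair it you should reinstate the induction on $l$ and, in both the base case and the inductive step, split the witness analysis into $\hat{\mathfrak{c}}_n(u)=l$ (localize via \Cref{l: yn}~\ref{i: yn cn = cn}) versus $\hat{\mathfrak{c}}_n(u)<l$ (invoke the proposition at the lower level).
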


\begin{proof}
We proceed by induction on $l\geq n$.
The case $l=n$ is precisely the statement of \Cref{pl. h preserves xn+}.
Therefore take any $l>n$ and  suppose that the result is true for  $n\leq l'<l$.

By \Cref{l: h preserve ynpm}, it is enough to prove the statement for points $y\in Y_n^+$.
We proceed by induction on the elements of $\Yn^+\cap D_{-1}(p, \mathfrak{r}_l-K_{n-1}- L_{n-1}r_ns_n^2)$ using $\leq_n$.
Let $y_1$ be the least element of  $\Yn^+\cap D_{-1}(p, \mathfrak{r}_l-K_{n-1}- L_{n-1} r_ns_n^2)$.
We will prove that $y_1\notin \xn^{+}$ if and only if $ \mathfrak{h}_{l,z}(y_1)\notin \xn^{+}$, establishing the desired property for $y_1$.

The condition $y_1\notin \xn^{+}$ means that $y_1>_ny_0$ and there is some $u\in \xn^{+}$ such that $u<_n y_1$ and $d_{n-1}(y_1,u)\leq 2r_ns_n$. 
Since $s_n>2$ by~\eqref{defn s0} and~\eqref{defn sn}, it follows from \Cref{l: ballnrnsn} that $u\in D_{-1}(p, \mathfrak{r}_l-K_{n-1})$, and therefore $\hat{\mathfrak{c}}(y_1),\hat{\mathfrak{c}}(u)\leq l$.
We will consider several cases about $u$.

Suppose that  $\hat{\mathfrak{c}}_n(u)> \hat{\mathfrak{c}}_n(y_1)$. Then $y_1<_n u$ by \Cref{l: yn}~\ref{i: yn cn < cn}, contradicting the assumption that  $u<_n y_1$.

Suppose then that $\hat{\mathfrak{c}}(y_1)=\hat{\mathfrak{c}}(u)=l$.
Thus $\hat{\mathfrak{p}}(y_1)=\hat{\mathfrak{p}}(u)=p$.
\Cref{l: yn}~\ref{i: yn cn = cn} and the assumption that $u<_n y_1$ yield $d_{-1}(p,u)\leq d_{-1}(p,y_1) $.
Therefore $u\in \Yn^+\cap D_{-1}(p, \mathfrak{r}_l-K_{n-1}- L_{n-1}r_ns_n^2)$, contradicting the hypothesis that $y_1$ is the least element in $\Yn^+\cap D_{-1}(p, \mathfrak{r}_l-K_{n-1}- L_{n-1}r_ns_n^2)$.

Suppose finally that $\hat{\mathfrak{c}}(u)<l$.
Then $\mathfrak{h}_{\hat{\mathfrak{c}}(u),\hat{\mathfrak{p}}(u)}(u)\in X_n^+$ by the induction hypothesis with $l$.
But, by the induction hypothesis with \ref{i: xn partial}, we have $d_{n-1}(\mathfrak{h}_{l,z}(u),\mathfrak{h}_{l,z}(y_1))=d_{n-1}(u,y_1)\leq 2r_ns_n$. So  $\mathfrak{h}_{l,z}(y_1 )\notin \xn^{+}$.

Thus far, we have proved that $y_1\notin \xn^{+}$ implies $\mathfrak{h}_{l,z}(y_1 )\notin X_n^+$. 
The proof of the converse implication is similar

Now, given $y\in \Yn^+\cap D_{-1}(p, \mathfrak{r}_l-K_{n-1}- L_{n-1} r_ns_n^2)$ so that $y_1<_ny$, suppose that the result is true for all $y'\in \Yn^+\cap D_{-1}(p, \mathfrak{r}_l-K_{n-1}- L_{n-1} r_ns_n^2)$ with $y'<_n y$. 
By definition, $y\notin \xn^+$ if and only if there is some $u\in \xn^+$ such that  $u<_n y$ and $d_{n-1}(u,p)\leq 2r_ns_n$. 
Using the same argument as before, we obtain that, either $\hat{\mathfrak{c}}_n(u)<l$, or $u\in D_{-1}(p,\mathfrak{r}_l-K_{n-1}-L_{n-1} r_ns_n^2)$.
If  $\hat{\mathfrak{c}}_n(u)<l$, we get $\mathfrak{h}_{l,z}(y)\notin X_n^+$ arguing as before.
If $u\in D_{-1}(p,\mathfrak{r}_l-K_{n-1}-L_{n-1} r_ns_n^2)$, then $\mathfrak{h}_{l,z}(u)\in \xn^+$ by the induction hypothesis in $\Yn^+\cap D_{-1}(p, \mathfrak{r}_l-K_{n-1}- L_{n-1} r_ns_n^2)$.
Thus $y\notin \xn^+$ if and only if there is some $u\in D_{-1}(\mathfrak{r}_l-K_{n-1})$ with $\mathfrak{h}_{l,z}(u)\in \xn^+$ and $d_{n-1}(\mathfrak{h}_{l,z}(u),\mathfrak{h}_{l,z}(y))\leq 2r_ns_n$. 
But $d_{n-1}(\mathfrak{h}_{l,z}(u),\mathfrak{h}_{l,z}(y))=d_{n-1}(u,y)\leq 2r_ns_n$ by the induction hypothesis with~\ref{i: xn partial}.
So $y\in \xn^{+}$ if and only if $\mathfrak{h}_{l,z}(y )\in \xn^{+}$, as desired.
\end{proof}

Let
\begin{equation}\label{defn xn-}
\xn^{-}=\{\, y\in \Yn^{-}   \mid d_{n-1}( y, \xn^{+} )>r_n(2s_n+1) \, \}\;.
\end{equation}
Recall that $\xn=X_n^-\cupdot X_n^+$. 

\begin{lem}\label{l: p xn}
We have $p\in X_n$.
\end{lem}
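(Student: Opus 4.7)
The plan is to verify $p\in X_n=X_n^-\cupdot X_n^+$ by case analysis on whether $p$ lies in $Y_n^+$ or in $Y_n^-$. The initial ingredients are that $p\in Y_n$ (see \Cref{r. yn xxn}) and that $p$ is the least element of $(Y_n,\leq_n)$ (see \Cref{r. yn p}). If $p\in Y_n^+$, the inclusion $Y_n^+\subseteq Y_n$ is order-preserving, so $p$ is also the least element of $(Y_n^+,\leq_n)$; by the first clause of the inductive definition of $X_n^+$, this means $p=y_0\in X_n^+\subseteq X_n$, and we are done.

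The substantive case is $p\in Y_n^-$, where by~\eqref{defn xn-} it suffices to show $d_{n-1}(p,X_n^+)>r_n(2s_n+1)$. Since $X_n^+\subseteq Y_n^+$, I will establish the stronger statement that $D_{n-1}(p,r_n(2s_n+1))\cap Y_n^+=\emptyset$. Assume for contradiction that some $y\in Y_n^+$ satisfies $d_{n-1}(p,y)\leq r_n(2s_n+1)$. The triangle inequality gives the nested inclusions
\[
D_{n-1}(y,r_ns_n)\subseteq D_{n-1}(p,r_n(3s_n+1)),\qquad D_{n-1}(p,r_n(s_n^2-2s_n-1))\subseteq D_{n-1}(y,r_ns_n^2),
\]
where the lower radius is positive because $s_n\geq s_0\geq 27$ by~\eqref{defn s0}. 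Combining the $Y_n^+$-inequality at $y$ and the $Y_n^-$-inequality at $p$, together with the monotonicity of $\eta_n$ and the cardinality bound $|D_{n-1}(\cdot,r)|\leq 4(\Delta_{n-1}-1)^r$ of \Cref{c: |D(x r)| le ...}, reduces the situation to an inequality that contradicts the defining inequality of $r_n$: \eqref{hatrn} in Case~\ref{i: rn hat} (when $r_n=\hat r_n$) or \eqref{barrn defn} in Case~\ref{i: rn bar} (when $r_n=\bar r_n$). The key point is that the right-hand side of those inequalities features $(4(\Delta_{n-1}-1)^{r_ns_n^2}+6)^2$, so that $\eta_n$ on a disk of radius $r_n$ dominates the square of any disk cardinality at the blown-up radius $r_ns_n^2$, which is exactly the blow-up produced by transferring between $p$-centered and $y$-centered disks.

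The main obstacle is Case~2. Its proof requires careful bookkeeping of the four scales $r_ns_n$, $r_ns_n^2$, $r_n(3s_n+1)$, $r_n(s_n^2-2s_n-1)$ and exploiting the asymmetry between the double-exponential growth of $\eta_n$ (in cardinality) and the merely single-exponential growth of disk cardinalities (in radius). The calibration of $\hat r_n$ and $\bar r_n$ in~\eqref{hatrn} and~\eqref{barrn defn} is exactly what is needed to convert the simultaneous hypotheses $p\in Y_n^-$ and $y\in Y_n^+$ into a numerical contradiction, and the large lower bound $s_n\geq 27$ is what makes the arithmetic close.
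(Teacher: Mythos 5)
Your Case 1 is correct and is exactly the paper's mechanism: if $p\in Y_n^+$ then, being the least element of $(Y_n,\leq_n)$ (\Cref{r. yn p}), it is the least element of $(Y_n^+,\leq_n)$, hence $p=y_0\in X_n^+$ by the first clause of the construction of $X_n^+$.

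The gap is in Case 2, and as written it cannot be closed. From $y\in Y_n^+$ with $d_{n-1}(p,y)\le r_n(2s_n+1)$, your inclusions transfer the $Y_n^+$-inequality to $p$ only in the form $\eta_n(|D_{n-1}(p,r_n(3s_n+1))|)\ge(6+|D_{n-1}(p,r_n(s_n^2-2s_n-1))|)^2$, which has the \emph{larger} radius inside $\eta_n$ and the \emph{smaller} radius inside the square; this is a weakening of the $Y_n^+$-condition at $p$, so it is perfectly compatible with $p\in Y_n^-$, i.e.\ with $\eta_n(|D_{n-1}(p,r_ns_n)|)<(6+|D_{n-1}(p,r_ns_n^2)|)^2$. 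Nor can \eqref{hatrn} or \eqref{barrn defn} be ``contradicted'': they are unconditional numerical facts about $\hat r_n,\bar r_n$ (and they bound the double application $\bar\eta_n(\sqrt{\bar\eta_n(r_n)}-6)$, not $\eta_n(|D_{n-1}(p,r_n)|)$, and certainly not $\eta_n(|D_{n-1}(p,r_ns_n)|)$, which is what $p\in Y_n^-$ constrains). Already for $n=0$ one sees that all your ingredients are mutually consistent: let $X$ be a long path through $p$ with a complete binary tree attached on one side at distance $r_0s_0(s_0-1)$ from $p$; then $|D(p,r_0s_0)|=2r_0s_0+1$ while $|D(p,r_0s_0^2)|\ge 2^{r_0s_0}$, so the $Y_0^-$-inequality holds at $p$, whereas a vertex $y$ on the opposite side with $r_0s_0<d(p,y)\le r_0(2s_0+1)$ has both its $r_0s_0$- and $r_0s_0^2$-disks disjoint from the tree and satisfies the $Y_0^+$-inequality; the inclusions, the monotonicity of $\eta_0$, \Cref{c: |D(x r)| le ...} and \eqref{tilder0 defn}/\eqref{barr0 defn} all hold there, so no contradiction can be extracted from them. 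What your argument never invokes---and what carries the actual content of the paper's proof---is the dichotomy~\ref{i: rn bar}/\ref{i: rn hat} in the definition of $r_n$. The paper splits on that dichotomy: under~\ref{i: rn bar} it uses the witness point $x\in D_{n-1}(p,\hat r_n(2s_n+1))$ with $(|D_{n-1}(x,\hat r_ns_n)|+6)^2\ge\eta_n(|D_{n-1}(x,\hat r_n)|)$, together with $r_n=\bar r_n=\hat r_n(3s_n+1)$ and \eqref{barrn defn}, to prove outright that $p\in Y_n^+$, so that your Case 2 never arises in that case; and the remaining case is handled by appealing to hypothesis~\ref{i: rn hat} itself (the absence of any such witness near $p$), not to the mere numerical calibration of $\hat r_n$. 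Without bringing this case hypothesis into play, the contradiction you announce in Case 2 does not exist.
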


\begin{proof}
Suppose first that  condition~\ref{i: rn bar} is satisfied in the definition of $r_n$, and consequently $r_n=\bar{r}_n$.
Then there is some  $x\in D_{n-1}(p,\hat{r}_n(2s_n+1))$ such that
\begin{equation}\label{inequality rn x}
(|D_{n-1}(x,\hat{r}_ns_n)|+6)^2\geq \eta_n(|D_{n-1}(x,\hat{r}_n)|)\;.
\end{equation} 
So $D_{n-1}(x,\hat{r}_ns_n)\subset D_{n-1}(p, \hat{r}_n(3s_n+1)) $, and therefore 
\begin{equation}\label{inequality rn x 2}
|D_{n-1}(p, r_n)|= |D_{n-1}(p, \hat{r}_n(3s_n+1))|\geq|D_{n-1}(x,\hat{r}_ns_n)|\;.
\end{equation}
Using~\eqref{defn barrn},~\eqref{barrn defn},~\eqref{etan oletan},~\eqref{inequality rn x} and~\eqref{inequality rn x 2}, we get 
\begin{align*}
\eta_n(|D_{n-1}(p,r_ns_n)|)
&\geq\eta_n(|D_{n-1}(x,\hat{r}_ns_n)|)
\geq\eta_n\big(\sqrt{\eta_n(|D_{n-1}(x,\hat{r}_n)|)}-6\big) \\
&\geq\overline{\eta}_n\big(\sqrt{\overline{\eta}_n(|D_{n-1}(x,\hat{r}_n)|)}-6\big) 
>\overline{\eta}_n\big(\sqrt{\overline{\eta}_n(\hat{r}_n))}-6\big) \\
&\geq \Big(4(\bm{\Delta}_{n-1}(\bar{\bm{r}}_{n-1})-1)^{\bar{r}_ns_n^2}+6\Big)^2\;.
\end{align*}
The assumption $r_n=\bar{r}_n$ implies $\bar{\bm{r}}_{n-1}=(r_0,\dots, r_{n-1})$ and $\bm{\Delta}_{n-1}(\bar{\bm{r}}_{n-1})=\Delta_{n-1}$ according to~\eqref{no bm multi}.
Hence, by \Cref{c: |D(x r)| le ...},
\begin{align*}
\eta_n(|D_{n-1}(p,r_ns_n)|)&\geq \left(4(\bm{\Delta}_{n-1}(\bar{\bm{r}}_{n-1})-1)^{\bar{r}_ns_n^2}+6\right)^2 \\
&= \left(4(\Delta_{n-1}-1)^{r_ns_n^2}+6\right)^2 \geq (|D_{n-1}(p,r_ns_n^2)|+6)^2\;,
\end{align*}
and therefore $p\in Y_n^+$.
Thus the statement follows in this case from \Cref{r. yn p} and the definition of $X_n^+$.

Suppose now that  condition~\ref{i: rn hat} holds. 
Then $p\in Y_n^-$ and $Y_n^+\cap D_{n-1}(p, r_n(2s_n+1))=\emptyset$, and the statement also  follows in this second case.
\end{proof}

By \eqref{defn bmkn},~\eqref{defn kn} and~\eqref{no bm multi}, we have
\begin{align}
\label{overlinekn actual}
\overline{K}_{n} &=K_{n-1}+ L_{n} (r_ns_n^2 + r_n(2s_n+1) )\;, \\
\label{kn actual}
K_n &= \overline{K}_{n} + L_n(s_{n+1}R_{n+1}^+ + \Gamma_n^+ +  2R_n^+)\;.
\end{align}

\begin{lem}\label{l: h preserves xn-}
For all $(l,z)\in \Ryn$ and $y\in \Yn\cap D_{-1}(p, \mathfrak{r}_l-\overline{K}_{n})$, we have  $y\in \xn^{-}$ if and only if $\mathfrak{h}_{l,z}(y )\in \xn^{-}$.
\end{lem}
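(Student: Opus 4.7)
The plan is to reduce everything to \Cref{p: h preserves xn+}, transporting any putative close $X_n^+$-neighbour through $\mathfrak{h}_{l,z}$ and exploiting its short-scale isometry property. First, because $\overline{K}_n \geq K_{n-1} + L_{n-1} r_n s_n^2$ by \eqref{overlinekn actual} together with the monotonicity $L_{n-1}\le L_n$, the membership in $Y_n^-$ is already preserved by \Cref{l: h preserve ynpm}. It therefore suffices to show that the distance condition $d_{n-1}(y, X_n^+) > r_n(2s_n+1)$ defining $X_n^-$ is preserved under $\mathfrak{h}_{l,z}$.

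For the forward implication, suppose $y\in X_n^-$ and yet there exists $u'\in X_n^+$ with $d_{n-1}(\mathfrak{h}_{l,z}(y),u')\le r_n(2s_n+1)$. Set $u:=\mathfrak{h}_{l,z}^{-1}(u')$. The induction hypothesis of \Cref{p: xn}\ref{i: xn dn dn-1} together with a triangle inequality gives
\[
d_{-1}(u',z)\le L_{n-1}r_n(2s_n+1) + \mathfrak{r}_l - \overline{K}_n\;,
\]
and the choice of $\overline{K}_n$ in \eqref{overlinekn actual} (which contains precisely the extra $L_{n-1}r_n(2s_n+1)$ beyond the slack $L_{n-1}r_ns_n^2$ required by \Cref{p: h preserves xn+}) yields $d_{-1}(u',z)\le \mathfrak{r}_l - K_{n-1} - L_{n-1}r_ns_n^2$. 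Thus $u\in Y_n$ by \Cref{l: yn}\ref{i: yn rn-1} and \Cref{r. yln y cap}, $u\in Y_n^+$ by \Cref{l: h preserve ynpm}, and finally $u\in X_n^+$ by \Cref{p: h preserves xn+}. The induction hypothesis of \Cref{p: xn}\ref{i: xn partial} at level $n-1$ provides an $(s_nR_n^+ +\Gamma_{n-1}^+)$-short scale isometry on $X_{n-1}\cap D_{-1}(p,\mathfrak{r}_l-K_{n-1})$; since $s_nR_n^+ = s_nr_n(2s_n+3)$ comfortably exceeds $r_n(2s_n+1)$, we obtain $d_{n-1}(y,u) = d_{n-1}(\mathfrak{h}_{l,z}(y),u') \le r_n(2s_n+1)$, contradicting $y\in X_n^-$.

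The reverse implication is entirely symmetric: starting from $u\in X_n^+$ within $r_n(2s_n+1)$ of $y$, the same bookkeeping places $u$ inside the domain of \Cref{p: h preserves xn+}, producing $\mathfrak{h}_{l,z}(u)\in X_n^+$, while the same short-scale isometry witnesses $d_{n-1}(\mathfrak{h}_{l,z}(y),\mathfrak{h}_{l,z}(u))\le r_n(2s_n+1)$, contradicting $\mathfrak{h}_{l,z}(y)\in X_n^-$. The main obstacle is not conceptual but arithmetic: one must check that $\overline{K}_n$ as tuned in \eqref{overlinekn actual} absorbs simultaneously the $L_{n-1}r_ns_n^2$ slack demanded by \Cref{p: h preserves xn+} and the extra $L_{n-1}r_n(2s_n+1)$ needed to shepherd the transported $X_n^+$-neighbour into that proposition's domain; this is exactly the role the summand $L_n r_n(2s_n+1)$ plays in \eqref{overlinekn actual}.
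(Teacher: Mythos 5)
Your proof is correct and follows essentially the same strategy as the paper's: both reduce to \Cref{p: h preserves xn+} via \Cref{l: h preserve ynpm}, transport a putative nearby $X_n^+$-point through $\mathfrak{h}_{l,z}$ (or its inverse), and use the short-scale isometry from \Cref{p: xn}~\ref{i: xn partial} together with the $L_n r_n(2s_n+1)$ slack built into $\overline{K}_n$ in~\eqref{overlinekn actual}. The only cosmetic difference is that you argue the forward implication explicitly by pulling $u'$ back through $\mathfrak{h}_{l,z}^{-1}$, whereas the paper argues the other direction by pushing $x$ forward and leaves the converse as ``similar''; the content is the same symmetric argument.
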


\begin{proof}
Let $y\in \Yn\cap D_{-1}(p, \mathfrak{r}_l-\overline{K}_{n})$. 
Then, by \eqref{overlinekn actual},
\[
y\in \Yn\cap D_{-1}(p, \mathfrak{r}_l-K_{n-1}- L_{n-1} (r_ns_n^2 + r_n(2s_n+1) ))\;.
\]
By \Cref{l: h preserve ynpm}, we can assume $y,\mathfrak{h}_{l,z}(y )\in Y_n^-$.
Hence, by definition, $y\notin \xn^{-}$ if and only if there is some $x\in X_n^+$ with $d_{n-1}(y,x)\leq r_n(2s_n+1)$.
In this case, by the induction hypothesis with~\ref{i: xn dn dn-1}, we have $d_{-1}(x,y)\leq L_{n-1}r_n(2s_n+1)$.
Therefore, by the triangle inequality, $x\in D_{-1}(p, \mathfrak{r}_l-K_{n-1}- L_{n-1} r_ns_n^2)\subset D_{-1}(p, \mathfrak{r}_m-K_n)$.
Applying now \Cref{p: h preserves xn+}, we get $\mathfrak{h}_{l,z}(x)\in X_n^+$.
Also, by the induction hypothesis with~\ref{i: xn partial}, $\mathfrak{h}_{l,z}$ is an $s_nR_{n}^+$-short scale isometry on  $(X_{n-1}\cap D_{-1}(p, \mathfrak{r}_m-K_n),d_{n-1})$.
Therefore $\mathfrak{h}_{l,z}(x)\in X_n^+$ and $d_{n-1}(\mathfrak{h}_{l,z}(x),\mathfrak{h}_{l,z}(y))\leq r_n(2s_n+1)$, obtaining $\mathfrak{h}_{l,z}(y)\notin X_n^-$.

The proof of the converse implication is similar.
\end{proof}

After these preliminaries, let us show  that $X_n$ satisfies the statement of \Cref{p: xn}. Let us start with~\ref{i: xn subset}. 
By \Cref{l: p xn}, we have  $p\in X_n$ and $(n,x)\in \mathfrak{P}_{n-1}$ for each $x\in \mathfrak{X}_n$. 
\Cref{p: h preserves xn+} and \Cref{l: h preserves xn-} then imply   $x=\mathfrak{h}_{n,x}(p)\in X_n$ for all $x\in\mathfrak X_n$, obtaining $\mathfrak{X}_n\subset X_n$.
The inclusion $X_n\subset X_{n-1}$ follows from \Cref{l: yn}~\ref{i: yn maximal} and the fact that $X_n\subset Y_n$. This completes the proof of~\ref{i: xn subset}.

For all $(m,x)\in \mathfrak{P}_{n-1}$, the  map $\mathfrak{h}_{m,x}\colon (D_{-1}(p,\mathfrak{r}_m),p)\to (D_{-1}(x,\mathfrak{r}_m),x)$ is a pointed isometry by definition.
Therefore $\mathfrak{h}_{m,x}(D_{-1}(p,\mathfrak{r}_m-\overline{K}_n)) = D_{-1}(x,\mathfrak{r}_m-\overline{K}_n)$.
Then property~\ref{i: xn rn-1} follows from  \Cref{p: h preserves xn+} and \Cref{l: h preserves xn-}.

Let us prove~\ref{i: xn1}.
For $x\in X_n^+$, the result is an immediate consequence of the definition of $Y_n^+$ and the fact that $X_n^+  \subset Y_n^+$.
So assume $x\in X_n^-$.
By absurdity, suppose that
\[
(|D_{n-1}(x, r_ns_n)|+6)^2>\eta_n(|D_{n-1}(x,r_n)|)\;.
\]
Since $\eta_n$ is an increasing function, and using~\eqref{etan oletan},~\eqref{barrn defn},~\eqref{no bm multi} and \Cref{c: |D(x r)| le ...}, we get
\begin{align*}
\eta_n(|D_{n-1}(x, r_ns_n)|) &\geq \eta_n\big( \sqrt{\eta_n(|D_{n-1}(x,r_n )|)}-6\big) 
\geq \overline{\eta}_n\big( \sqrt{\overline{\eta}_n(|D_{n-1}(x,r_n )|)}-6\big)\\
&> \overline{\eta}_n\big( \sqrt{\overline{\eta}_n(r_n)}-6\big) 
> \Big(4(\bm{\Delta}_{n-1}(\bar{\bm{r}}_{n-1})-1)^{\bar{r}_ns_n^2}+6\Big)^2\\
&= \Big(4(\Delta_{n-1}-1)^{\bar{r}_ns_n^2}+6\Big)^2 \geq (|D_{n-1}(x,r_ns_n^2)|+6)^2\;.
\end{align*}
So  $x\notin Y_n^-$ by definition, contradicting the assumption that $x\in X_n^-$, which completes the proof of~\ref{i: xn1}.

Let us prove~\ref{i: xn separated net}. First, define
 \begin{align}
\Zmiyn &= \{\,z\in \xyn \mid  d_{n-1}(z,\xn^+)-2r_ns_n > d_{n-1}(z,\xn^-) -r_n \,\}\;, \label{def zn-}\\ 
Z^+_{n-1} &= \{\,z\in \xyn \mid  d_{n-1}(z,\xn^+)-2r_ns_n \leq d_{n-1}(z,\xn^-) -r_n \,\}\;. \label{def zn+} 
\end{align}
Thus $X_{n-1}=\Zmiyn \cupdot Z^+_{n-1}$.
On the other hand, using~\eqref{defn functions 0},~\eqref{defn functions n},~\eqref{constants -1} and~\eqref{no bm multi}, we get
\[
R_n^-=4r_n-1\;, \quad R_n^+ = r_n(2s_n+3)\;.
\]

\begin{lem}\label{l: Xn1 net in Cn1}
$\xn^+$ is $(2r_ns_n+1)$-separated and $R_n^+$-relatively dense in $(Z^+_{n-1},d_{n-1})$.
\end{lem}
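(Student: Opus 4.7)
The plan is to handle the two assertions separately. The separatedness is essentially free: by the remark immediately after the definition of $X_n^+$, this set is $(2r_ns_n+1)$-separated in $(Y_n^+,d_{n-1})$; since $X_n^+\subset Y_n^+\subset X_{n-1}$, the same separation holds in any subset of $X_{n-1}$ carrying the induced metric, in particular in $(Z_{n-1}^+,d_{n-1})$.

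For the relative density, fix $z\in Z_{n-1}^+$. By \Cref{l: yn}~\ref{i: yn maximal}, $Y_n$ is $2r_n$-relatively dense in $(X_{n-1},d_{n-1})$, so I can pick $y\in Y_n$ with $d_{n-1}(z,y)\le 2r_n$. I will then split the argument into three cases according to where $y$ lies, showing in each case that there exists $x\in X_n^+$ with $d_{n-1}(z,x)\le R_n^+=r_n(2s_n+3)$.

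If $y\in Y_n^+$, invoke the second half of \Cref{r. xn+ sep}, namely that $X_n^+$ is $2r_ns_n$-relatively dense in $(Y_n^+,d_{n-1})$, to pick $x\in X_n^+$ with $d_{n-1}(y,x)\le 2r_ns_n$; the triangle inequality gives $d_{n-1}(z,x)\le 2r_n+2r_ns_n<R_n^+$. If $y\in Y_n^-\setminus X_n^-$, the defining condition~\eqref{defn xn-} forces $d_{n-1}(y,X_n^+)\le r_n(2s_n+1)$, and the triangle inequality again yields $d_{n-1}(z,X_n^+)\le 2r_n+r_n(2s_n+1)=R_n^+$. If $y\in X_n^-$, then $d_{n-1}(z,X_n^-)\le 2r_n$; combining with the defining inequality~\eqref{def zn+} of $Z_{n-1}^+$ gives $d_{n-1}(z,X_n^+)\le d_{n-1}(z,X_n^-)+2r_ns_n-r_n\le r_n(2s_n+1)\le R_n^+$.

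The only subtle point is to check that the case analysis remains valid when $X_n^+$ or $X_n^-$ is empty. If $X_n^+=\emptyset$, then $Y_n^+=\emptyset$ as well (otherwise its least element, which always enters $X_n^+$ by the inductive construction, would be in $X_n^+$), so every $z\in X_{n-1}$ has $d_{n-1}(z,X_n^+)=\infty$ while $d_{n-1}(z,X_n^-)$ is finite (recall $p\in X_n$ by \Cref{l: p xn}); hence $Z_{n-1}^+=\emptyset$ and there is nothing to prove. If $X_n^-=\emptyset$, only the first two sub-cases can occur and the proof proceeds unchanged. I do not anticipate any real obstacle beyond the routine bookkeeping of constants in the three sub-cases.
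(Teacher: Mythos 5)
Your proof is correct and follows essentially the same route as the paper's: the separation is taken directly from \Cref{r. xn+ sep}, and the relative density is proved by picking $y\in Y_n$ with $d_{n-1}(z,y)\le 2r_n$ via \Cref{l: yn} and splitting into the same three cases ($y\in Y_n^+$, $y\in X_n^-$, $y\in Y_n^-\setminus X_n^-$) with the same constants. Your extra check of the degenerate cases $X_n^\pm=\emptyset$ is a harmless addition not present in the paper.
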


\begin{proof}
By \Cref{r. xn+ sep}, we only need to show that $\xn^+$ is $R_n^+$-relatively dense in $(Z^+_{n-1},d_{n-1})$.
Take an arbitrary point $z\in Z_{n-1}^+$. 
Since $\Yn$ is $2r_n$-relatively dense in $(X_{n-1},d_{n-1})$ by \Cref{l: yn}~\ref{i: yn maximal}, there is some  $y\in Y_n$ with $d_{n-1}(x,z)\leq 2r_n$. 

If $y\in  Y_n^+$, then, by \Cref{r. xn+ sep}, there is some $x\in \xn^+$ with $d_{n-1}(y,x)\leq 2r_ns_n$. 
Using the triangle inequality, we get 
\[
d(z,x)\leq d(z,y)+d(y,x)\leq 2r_n +2r_ns_n<r_n(2s_n+3)=R_n^+\;.
\]

If $y\in \xn^-$, we have $d_{n-1}(z, \xn^-)\leq 2r_n$. 
Then \eqref{def zn+} implies $d_{n-1}(z,\xn^+)-2r_ns_n\leq r_n$, obtaining $d_{n-1}(z,\xn^+)\leq r_n(2s_n+1)<R_n^+$. 

Finally, suppose that $y\in \Yn^-\setminus \xn^-$. 
By \eqref{defn xn-}, there is some  $x\in \xn^+$ with $d_{n-1}(x,y)\leq r_n(2s_n+1)$, and the lemma follows applying the triangle inequality:
\[
d(z,x)\leq d(z,y)+d(y,x)\leq 2r_n +r_n(2s_n+1)=r_n(2s_n+3)=R_n^+\;.\qedhere
\]
\end{proof}

\begin{lem}\label{l: Xn0 net in Cn0}
$\xn^-$ is $(2r_ns_n+1)$-separated and $R_n^-$-relatively dense in $(\Zmiyn,d_{n-1})$.
\end{lem}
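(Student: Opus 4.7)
The plan is to follow the template of \Cref{l: Xn1 net in Cn1}, swapping the roles of the $+$ and $-$ decorations and using the defining inequality~\eqref{def zn-} in place of~\eqref{def zn+}. The relative density is where the case analysis happens; the separation is the more delicate point.

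For the relative density, given $z\in Z_{n-1}^-$, I would use \Cref{l: yn}~\ref{i: yn maximal} to pick $y\in Y_n$ with $d_{n-1}(z,y)\le 2r_n$ and distinguish three cases. (a) If $y\in X_n^-$, then already $d_{n-1}(z,X_n^-)\le 2r_n$. (b) If $y\in Y_n^-\setminus X_n^-$, then~\eqref{defn xn-} gives some $x\in X_n^+$ with $d_{n-1}(y,x)\le r_n(2s_n+1)$, hence $d_{n-1}(z,X_n^+)\le r_n(2s_n+3)$; substituting into~\eqref{def zn-} yields $d_{n-1}(z,X_n^-)<r_n(2s_n+3)-2r_ns_n+r_n=4r_n$. (c) If $y\in Y_n^+$, then \Cref{r. xn+ sep} provides $x\in X_n^+$ with $d_{n-1}(y,x)\le 2r_ns_n$, hence $d_{n-1}(z,X_n^+)\le 2r_n(s_n+1)$, and~\eqref{def zn-} yields $d_{n-1}(z,X_n^-)<3r_n$. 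In all three cases, integrality of $d_{n-1}$ gives $d_{n-1}(z,X_n^-)\le 4r_n-1=R_n^-$. The remaining subcase $y\in X_n^+$ cannot occur, since it would force $d_{n-1}(z,X_n^+)\le 2r_n$ while~\eqref{def zn-} forces $d_{n-1}(z,X_n^+)>2r_ns_n-r_n$, which is incompatible with $s_n\ge 27$ (from~\eqref{defn s0} and~\eqref{defn sn}).

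The hard part is the $(2r_ns_n+1)$-separation. A priori, from \Cref{l: yn}~\ref{i: yn maximal}, $X_n^-\subset Y_n$ inherits only $2r_n$-separation, and~\eqref{defn xn-} only regulates distance to $X_n^+$. I would resolve this by reading~\eqref{defn xn-} together with an implicit greedy thinning of $X_n^-$ along the well-order $\le_n$ of $Y_n$, exactly paralleling the construction of $X_n^+$ from $Y_n^+$: enforce $d_{n-1}(y,y')>2r_ns_n$ for all already selected $y'<_n y$. Such a thinning yields $(2r_ns_n+1)$-separation automatically, preserves~\eqref{defn xn-}, and remains compatible with \Cref{pl. h preserves xn+} and \Cref{p: h preserves xn+} because $\le_n$ is transported by the $\mathfrak{h}_{l,z}$ via \Cref{l: yn}~\ref{i: yn rn-1}. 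Crucially, this refinement does not damage the relative-density case analysis above, since the only property of $X_n^-$ that analysis invokes is~\eqref{defn xn-}, which is preserved under thinning.
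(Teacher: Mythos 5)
Your relative-density argument coincides with the paper's proof of this lemma: the same choice of $y\in Y_n$ with $d_{n-1}(z,y)\le 2r_n$ via \Cref{l: yn}, the same case analysis using \Cref{r. xn+ sep}, \eqref{defn xn-} and \eqref{def zn-}, the same constants $3r_n$ and $4r_n$, and the same integrality step giving $R_n^-=4r_n-1$. (The paper also excludes $y\in X_n^+$ by a contradiction with \eqref{def zn-}; in your arrangement that subcase is redundant, since \Cref{r. xn+ sep} already covers it, but it is harmless.)

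The separation half is where the proposal fails. You are right that \eqref{defn xn-} together with the $2r_n$-separation of $Y_n$ does not by itself give $(2r_ns_n+1)$-separation of $X_n^-$; note that the paper performs no thinning of $X_n^-$ anywhere in \Cref{s. xn}, and its proof of this lemma only argues the density statement, \eqref{defn xn-} being invoked at the end of the section only for the cross-separation $d_{n-1}(X_n^-,X_n^+)\ge 2r_ns_n+1$. But your repair --- redefining $X_n^-$ by a greedy $>2r_ns_n$ thinning along $\le_n$ --- is not a proof of the stated lemma about the set actually defined by \eqref{defn xn-}, and it is self-defeating: under the thinned definition, a point $y\in Y_n^-\setminus X_n^-$ close to $z$ need no longer satisfy $d_{n-1}(y,X_n^+)\le r_n(2s_n+1)$, because it may have been discarded on account of a retained point of $X_n^-$ at distance up to $2r_ns_n$; your case (b) then only yields $d_{n-1}(z,X_n^-)\le 2r_n(s_n+1)$, far larger than $R_n^-=4r_n-1$. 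Contrary to your closing claim, the density analysis does not use merely that points of $X_n^-$ satisfy the distance condition in \eqref{defn xn-}; it uses that every point of $Y_n^-$ \emph{failing} that condition is close to $X_n^+$, and thinning destroys exactly this. The $R_n^-$ constant cannot be traded away: it is what gives $\overline C_{n,n-1}(x)\subset D_{n-1}(x,R_n^-)$ for $x\in X_n^-$ (\Cref{l: overlinecnx rnpm}), hence $\Gamma_n^-$ of order $r_n$ rather than $r_ns_n$, which is precisely what \Cref{l: rnpm gamma} and \Cref{p: definitive} need on the $X_n^-$ side. (A modified $X_n^-$ would also force you to reprove the equivariance \Cref{l: h preserves xn-} by the inductive scheme of \Cref{p: h preserves xn+}, rather than citing the $X_n^+$ statements as you do.) So whatever argument is given for the separation claim must work with $X_n^-$ as defined, and your proposal does not supply one.
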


\begin{proof}
Let $z\in \Zmiyn$. 
Like in \Cref{l: Xn1 net in Cn1}, there is some $y\in \Yn$ with $d_{n-1}(z,y)\leq 2r_n$. 

In the case where $y\in \xn^-$, the lemma is trivial. 

If $y\in \xn^+$, then  $d_{n-1}(z,X_n^+)\leq 2r_n$, yielding  $d_{n-1}(z,\xn^+)-2r_ns_n\leq 2r_n(1-s_n)$. 
Using~\eqref{def zn-}, we get $d_{n-1}(y,\xn^-)-r_n < 2r_n(1-s_n)$, and therefore $d_{n-1}(y,\xn^-)<2r_n(2-s_n)$. 
However, by~\eqref{defn s0} and~\eqref{defn sn}, we have $s_n>2$, reaching a contradiction.
Therefore $y\notin \xn^+$.

Now suppose $y\in  \Yn^+\setminus \xn^+$. 
By \Cref{r. xn+ sep}, there is some $x\in \xn^+$ with $d_{n-1}(x,y)\leq 2r_ns_n$, and we get $d_{n-1}(z,x)\leq 2r_n(s_n+1)$ using the triangle inequality. 
Then \eqref{def zn-} yields 
\begin{align*}
d_{n-1}(z,\xn^-)&<d_{n-1}(z,\xn^+)- 2r_ns_n+r_n\leq d_{n-1}(z,x)-2r_ns_n+r_n\\
&\leq 2r_n(s_n+1)-2r_ns_n+r_n= 3r_n \leq R_n^-\;.
\end{align*}

Finally, suppose $y\in Y_n^- \setminus X_n^-$. 
By \eqref{defn xn-}, there is some  $x\in \xn^+$ with $d_{n-1}(x,y)\leq r_n(2s_n+1)$, obtaining $d_{n-1}(z,X_n^+)\leq r_n(2s_n+3)$ by the triangle inequality. 
Therefore $d_{n-1}(z,X_n^+)-2r_ns_n  \leq 3r_n$, obtaining $d_{n-1}(z,X_n^-)<4r_n$ by~\eqref{def zn-}; i.e., $d_{n-1}(z,X_n^-)\le4r_n-1=R_n^-$.
\end{proof}

To finish the proof of \Cref{p: xn}~\ref{i: xn separated net}, it only remains to show that $d_{n-1}(\xn^-,\xn^+)\geq 2r_ns_n+1$, which follows from~\eqref{defn xn-}.

To prove the next items of \Cref{p: xn}, we need some more preliminary results.

\begin{lem}\label{l: mathcalcn local}
For all $z\in \xyn$, we have  $z\in Z^+_{n-1}$ if and only if 
\begin{equation}\label{zpln xn}
d_{n-1}(z,\xn^+\cap D_{n-1}(z,R_n^+))-2r_ns_n\leq d_{n-1}(z,\xn^-\cap D_{n-1}(z,R_n^+))-r_n\;.
\end{equation}
\end{lem}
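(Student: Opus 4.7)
The lemma is a \emph{localization} statement: the definition of $Z^+_{n-1}$ in~\eqref{def zn+} uses the global infima $d_{n-1}(z,X_n^\pm)$, while~\eqref{zpln xn} uses the infima restricted to points lying in the disk $D_{n-1}(z,R_n^+)$. Set
\[
A=d_{n-1}(z,X_n^+)\;,\ B=d_{n-1}(z,X_n^-)\;,\ A'=d_{n-1}(z,X_n^+\cap D_{n-1}(z,R_n^+))\;,\ B'=d_{n-1}(z,X_n^-\cap D_{n-1}(z,R_n^+))\;,
\]
where an infimum over the empty set is $+\infty$. Clearly $A'\ge A$ and $B'\ge B$, and, whenever the nearest point to $z$ in $X_n^\pm$ lies in $D_{n-1}(z,R_n^+)$, the localized and global distances coincide. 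Note also $R_n^-=4r_n-1<r_n(2s_n+3)=R_n^+$ since $s_n\ge 27$.

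For the forward implication, assume $z\in Z^+_{n-1}$. By \Cref{l: Xn1 net in Cn1}, $X_n^+$ is $R_n^+$-relatively dense in $(Z_{n-1}^+,d_{n-1})$, so $A\le R_n^+$, whence $A'=A$. If $B\le R_n^+$, then $B'=B$, and~\eqref{zpln xn} is exactly the defining inequality of $Z^+_{n-1}$ in~\eqref{def zn+}. If $B>R_n^+$, the set $X_n^-\cap D_{n-1}(z,R_n^+)$ is empty, $B'=+\infty$, and~\eqref{zpln xn} holds trivially.

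For the reverse implication, argue by contrapositive. Suppose $z\in Z^-_{n-1}$, i.e., $A-2r_ns_n>B-r_n$ by~\eqref{def zn-}. By \Cref{l: Xn0 net in Cn0}, $X_n^-$ is $R_n^-$-relatively dense in $(Z_{n-1}^-,d_{n-1})$, so $B\le R_n^-<R_n^+$, hence $B'=B$. Combining this with $A'\ge A$, we obtain
\[
A'-2r_ns_n\ge A-2r_ns_n>B-r_n=B'-r_n\;,
\]
so~\eqref{zpln xn} fails. Since $Z^-_{n-1}$ and $Z^+_{n-1}$ partition $X_{n-1}$ by~\eqref{def zn-} and~\eqref{def zn+}, the contrapositive yields the desired implication.

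There is no serious obstacle; the only subtle point is handling the case in which the restricted set $X_n^-\cap D_{n-1}(z,R_n^+)$ is empty, which is dealt with by the convention that distance to $\emptyset$ is $+\infty$. Everything else reduces to applying \Cref{l: Xn1 net in Cn1,l: Xn0 net in Cn0} to ensure that the relevant nearest point lies already inside the disk of radius $R_n^+$.
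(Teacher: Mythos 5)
Your proof is correct, and its forward implication coincides with the paper's: both use \Cref{l: Xn1 net in Cn1} to see that the nearest point of $X_n^+$ to $z\in Z_{n-1}^+$ already lies in $D_{n-1}(z,R_n^+)$, so the plus-side distance is unchanged by localization while the minus-side distance can only grow (your extra split on whether $X_n^-\cap D_{n-1}(z,R_n^+)$ is empty is harmless). The difference is in the converse: the paper argues directly, invoking \Cref{p: xn}~\ref{i: xn separated net} (the $R_n^+$-relative density of $X_n$ in $X_{n-1}$) to guarantee that the left-hand side of~\eqref{zpln xn} is finite and then passing back to the global inequality defining $Z_{n-1}^+$, whereas you argue by contrapositive: for $z\in Z_{n-1}^-$ you use \Cref{l: Xn0 net in Cn0} to get $d_{n-1}(z,X_n^-)\le R_n^-<R_n^+$, so the minus-side localized and global distances agree, and the strict inequality in~\eqref{def zn-} then violates~\eqref{zpln xn}; the partition $X_{n-1}=Z_{n-1}^-\cupdot Z_{n-1}^+$ finishes the argument. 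Your route buys a cleaner treatment of the degenerate cases: the $+\infty$ convention handles empty intersections explicitly, and the case $X_n^-\cap D_{n-1}(z,R_n^+)=\emptyset$, which the paper's rather terse direct argument leaves implicit (it requires the small extra estimate $R_n^+-2r_ns_n=3r_n< R_n^+-r_n$), never arises on your side. What the paper's direct version buys is independence from \Cref{l: Xn0 net in Cn0} and from the partition fact; both arguments rest on the same localization idea and on the constants $R_n^\pm$.
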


\begin{proof}
Suppose first that $z\in Z^+_{n-1}$. 
\Cref{l: Xn1 net in Cn1} implies $\xn^+\cap D_{n-1}(z,R_n^+)\neq \emptyset$, and therefore  
\[
d_{n-1}(z,\xn^+\cap D_{n-1}(z,R_n^+))=d_{n-1}(z,\xn^+)\;.
\] 
Then \eqref{def zn-} implies \eqref{zpln xn}. 

Suppose now that \eqref{zpln xn} holds for some $z\in X_{n-1}$. 
Property~\ref{i: xn separated net} implies that at least one of the inequalities $d_{n-1}(z,\xn^-)\leq R_n^+$ or $d_{n-1}(z,\xn^+)\leq R_n^+$ is satisfied. So at least the left-hand side of \eqref{zpln xn} is finite. 
Therefore~\eqref{zpln xn} yields~\eqref{def zn-}.
\end{proof}

\begin{cor}\label{c. h preserves z}
For all $u\in \xyn\cap D_{-1}(p, \rrl - \overline{K}_{n}-L_{n-1}R_n^+)$ and $(l,z)\in \mathfrak{P}_{n-1}$, we have $u\in \Zpmyn$ if and only if $\mathfrak{h}_{l,z} (u)\in \Zpmyn$.
\end{cor}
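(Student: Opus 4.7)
The plan is to reduce the equivalence to a local one via Lemma~\ref{l: mathcalcn local} and then transport that local condition through $\mathfrak{h}_{l,z}$ using the pieces of Proposition~\ref{p: xn} that are currently available. Because $X_{n-1}=\Zmiyn\cupdot\Zplyn$ and the induction hypothesis for~\ref{i: xn rn-1} maps $X_{n-1}$ into $X_{n-1}$ on the relevant disk, it suffices to treat the ``$+$'' case and show $u\in\Zplyn\iff\mathfrak{h}_{l,z}(u)\in\Zplyn$.

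The hypothesis $u\in D_{-1}(p,\rrl-\overline K_n-L_{n-1}R_n^+)$ combined with the induction hypothesis for~\ref{i: xn dn dn-1} yields $D_{n-1}(u,R_n^+)\subset D_{-1}(p,\rrl-\overline K_n)$, and since $\overline K_n\ge K_{n-1}\ge\overline K_{n-1}$ by~\eqref{overlinekn actual} and~\eqref{kn actual}, this disk lies inside the domains of all three ingredients needed: the already-established Proposition~\ref{p: xn}~\ref{i: xn rn-1} together with the induction hypothesis for~\ref{i: xn rn-1} (for $X_{n-1}$), which identifies the intersections of $X_n^\pm$ and of $X_{n-1}$ with the disks around $p$ and around $z$; and the induction hypothesis for~\ref{i: xn partial}, which makes $\mathfrak{h}_{l,z}$ an $(s_nR_n^+ +\Gamma_{n-1}^+)$-short scale isometry with respect to $d_{n-1}$ on $X_{n-1}\cap D_{-1}(p,\rrl-K_{n-1})$. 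Since $s_n>2$ gives $R_n^+\le s_nR_n^+ +\Gamma_{n-1}^+$, these combine into a bijection
\[
\mathfrak{h}_{l,z}\colon X_n^\pm\cap D_{n-1}(u,R_n^+)\ \longrightarrow\ X_n^\pm\cap D_{n-1}(\mathfrak{h}_{l,z}(u),R_n^+)
\]
that preserves all $d_{n-1}$-distances involved, so
\[
d_{n-1}\!\left(u,X_n^\pm\cap D_{n-1}(u,R_n^+)\right)=d_{n-1}\!\left(\mathfrak{h}_{l,z}(u),X_n^\pm\cap D_{n-1}(\mathfrak{h}_{l,z}(u),R_n^+)\right)\;,
\]
and plugging this into the criterion of Lemma~\ref{l: mathcalcn local} on both sides immediately gives the required equivalence.

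The only point that takes real work is the surjectivity of the displayed bijection. Given $w\in X_n^\pm\cap D_{n-1}(\mathfrak{h}_{l,z}(u),R_n^+)$, the $d_{-1}$-isometric bijectivity of $\mathfrak{h}_{l,z}$ on the ambient disk produces a preimage $v\in X_n^\pm\cap D_{-1}(p,\rrl-\overline K_n)$ via the inverse direction of~\ref{i: xn rn-1}; what must still be verified is $d_{n-1}(u,v)\le R_n^+$. I would do this by pulling back, edge by edge, a $d_{n-1}$-geodesic from $\mathfrak{h}_{l,z}(u)$ to $w$ of length $\le R_n^+$: the inequality $d_{-1}\le L_{n-1}d_{n-1}$ keeps the entire path inside $D_{-1}(z,\rrl-\overline K_n)$, where every vertex has a unique $\mathfrak{h}_{l,z}^{-1}$-preimage in $X_{n-1}$, and consecutive preimages are $d_{n-1}$-adjacent because a short-scale isometry of scale $\ge 1$ is automatically a graph morphism and, being bijective onto its image by the symmetric form of~\ref{i: xn rn-1}, carries edges in both directions. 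The pulled-back path then witnesses $d_{n-1}(u,v)\le R_n^+$. This is the main obstacle; everything else amounts to bookkeeping against the constants defined in \Cref{s. constants}.
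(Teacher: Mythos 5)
Your proposal follows the same path as the paper's short proof: note that $D_{n-1}(u,R_n^+)\subset D_{-1}(p,\rrl-\overline K_n)\subset\dom\mathfrak{h}_{l,z}$ by the induction hypothesis with~\ref{i: xn dn dn-1} and the triangle inequality; argue that $\mathfrak{h}_{l,z}$ on this disk preserves $X_n^\pm$ (the paper cites \Cref{p: h preserves xn+} and \Cref{l: h preserves xn-} directly, whereas you cite \Cref{p: xn}~\ref{i: xn rn-1}, which is the consolidated form just derived from them) and is an $R_n^+$-short scale $d_{n-1}$-isometry from the induction hypothesis with~\ref{i: xn partial}; then conclude via \Cref{l: mathcalcn local}. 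So the decomposition, the key lemma, and the transport mechanism are all the same.

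The one place you diverge is in treating the ``surjectivity'' as the main obstacle, namely that the preimage $v=\mathfrak{h}_{l,z}^{-1}(w)$ of a point $w\in X_n^\pm\cap D_{n-1}(\mathfrak{h}_{l,z}(u),R_n^+)$ must itself satisfy $d_{n-1}(u,v)\le R_n^+$. The paper absorbs this silently into ``is an $R_n^+$-short scale isometry,'' so your instinct that the step deserves attention is reasonable, and the geodesic-pullback idea is the right mechanism. However, the justification you give for it has a gap: a bijective graph morphism need not be a graph isomorphism, so the chain ``short-scale isometry of scale $\ge1$ is a graph morphism, and bijective, hence carries edges in both directions'' does not follow as stated (e.g., the identity from a path onto a cycle is a bijective $1$-short scale isometry but not an isomorphism). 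What actually makes the pullback valid is that \Cref{p: xn}~\ref{i: xn partial} at stage $n-1$ is obtained, via \Cref{c: graph partial}, from the bidirectional equivalence $xE_{n-1}y\Leftrightarrow\mathfrak{h}_{l,z}(x)E_{n-1}\mathfrak{h}_{l,z}(y)$ established in its proof, so $\mathfrak{h}_{l,z}^{-1}$ is also an $R_n^+$-short scale $d_{n-1}$-isometry on the corresponding disk; with that substitution your pullback goes through verbatim, and the argument is correct and essentially the same as the paper's.
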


\begin{proof}
Let $u\in \xyn\cap D_{-1}(p, \rrl - \overline{K}_{n}-L_{n-1}R_n^+)$ and $(l,z)\in \mathfrak{P}_{n-1}$. 
Since $X_{n-1}= Z^-_{n-1} \cupdot Z^+_{n-1}$, it is enough to prove that $u\in Z^+_{n-1}$ if and only if $\mathfrak{h}_{l,z} (u)\in Z^+_{n-1}$.

The induction hypothesis with~\ref{i: xn dn dn-1} and the triangle inequality yield $D_{n-1}(u,R_n^+)\subset D_{-1}(p, \rrl - \overline{K}_{n})\subset \dom\mathfrak{h}_{l,z}$.
\Cref{p: h preserves xn+}, \Cref{l: h preserves xn-} and the induction hypothesis with~\ref{i: xn partial} imply that the restriction of $\mathfrak{h}_{l,z}$ to $D_{-1}(p, \rrl - \overline{K}_{n})$ preserves $X_n^\pm$ and is an $R_n^+$-short scale  isometry with respect to $d_{n-1}$. Then the result follows from \Cref{l: mathcalcn local}.
\end{proof}

\begin{rem}
Note that \eqref{overlinekn actual} yields $K_{n}\geq \overline{K}_{n}+L_{n-1}R_n^+$.
Then $\mathfrak{r}_l-\overline{K}_{n}-L_{n-1}R_n^+>0$ in \Cref{c. h preserves z} by~\eqref{rn}.
\end{rem}

Recall the definition of $r_n^\pm$ given in~\eqref{defn rnpm}.
\begin{lem}\label{l: xni ball cni}
 If $x\in \xn^\pm$, then $D_{n-1}(x,r_n^\pm)\subset Z^\pm_{n-1}$.
\end{lem}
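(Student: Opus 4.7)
The plan is to argue by direct case analysis on the sign superscript, unpacking the definitions of $Z^\pm_{n-1}$ from \eqref{def zn-} and \eqref{def zn+} together with the definitions of $X_n^\pm$. The case of $X_n^+$ will be essentially automatic from the triangle inequality and the fact that $s_n>2$, while the case of $X_n^-$ will use the buffer of width $r_n(2s_n+1)$ built into the definition \eqref{defn xn-}.

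Suppose first that $x\in X_n^+$ and let $z\in D_{n-1}(x,r_n^+)=D_{n-1}(x,r_ns_n)$. The plan is to bound $d_{n-1}(z,X_n^+)\le d_{n-1}(z,x)\le r_ns_n$, so $d_{n-1}(z,X_n^+)-2r_ns_n\le -r_ns_n$. Since $d_{n-1}(z,X_n^-)-r_n\ge -r_n$ and $s_n>2$ by \eqref{defn s0} and \eqref{defn sn}, we get $-r_ns_n\le -r_n\le d_{n-1}(z,X_n^-)-r_n$, verifying the defining inequality of $Z^+_{n-1}$ in \eqref{def zn+}.

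Suppose now that $x\in X_n^-$ and let $z\in D_{n-1}(x,r_n^-)=D_{n-1}(x,r_n)$. Then $d_{n-1}(z,X_n^-)\le r_n$, so $d_{n-1}(z,X_n^-)-r_n\le 0$. On the other hand, by the definition of $X_n^-$ in \eqref{defn xn-} we have $d_{n-1}(x,X_n^+)>r_n(2s_n+1)$, and the triangle inequality yields $d_{n-1}(z,X_n^+)\ge d_{n-1}(x,X_n^+)-d_{n-1}(x,z)>r_n(2s_n+1)-r_n=2r_ns_n$. Therefore $d_{n-1}(z,X_n^+)-2r_ns_n>0\ge d_{n-1}(z,X_n^-)-r_n$, verifying the strict inequality defining $Z^-_{n-1}$ in \eqref{def zn-}.

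There is no real obstacle here; the lemma is essentially a sanity check that the definitions of $X_n^\pm$ and $Z^\pm_{n-1}$ have been calibrated so that a point of $X_n^\pm$ has a disk of radius $r_n^\pm$ lying inside $Z^\pm_{n-1}$. The only subtlety to keep in mind is that the bound $r_n(2s_n+1)$ appearing in \eqref{defn xn-} is strictly larger than $r_n(2s_n)=r_n+2r_ns_n-r_n$, which is exactly what is needed to guarantee the strict inequality in the second case.
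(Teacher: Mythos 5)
Your proof is correct and follows essentially the same route as the paper: both arguments simply unpack \eqref{def zn-}, \eqref{def zn+} and \eqref{defn xn-} and apply the triangle inequality, the only difference being that the paper phrases the $X_n^-$ case as a proof by contradiction while you argue directly (and in the $X_n^+$ case you only need $s_n\ge 1$, which your citation of $s_n>2$ certainly covers).
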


\begin{proof}
For $x\in X_n^-$, suppose on the contrary that there is some $z\in D_{n-1}(x,r_n)$ such that 
\[
d_{n-1}(z,\xn^+)-2r_ns_n\leq d_{n-1}(z,\xn^-)-r_n\;.
\]
In particular,  $d_{n-1}(z,\xn^+)\leq 2r_ns_n$ because $d_{n-1}(z,\xn^-)\leq d_{n-1}(z,x)\leq r_n$. 
By the triangle inequality, it follows that
\[
d_{n-1}(x,\xn^+)\le d_{n-1}(x,z)+d_{n-1}(z,\xn^+)\le r_n+2r_ns_n=r_n(2s_n+1)\;,
\] 
contradicting the definition of $\xn^-$ in~\eqref{defn xn-}.

The proof when $x\in X_n^+$ is similar.
\end{proof}

For every $x\in\xn^\pm$, let
\begin{equation}\label{defn overlinecnn-1x}
\olcnyn (x)= \{\,z\in Z_{n-1}^\pm \mid d_{n-1}(z,x)= d_{n-1}(z,\xn^\pm) \,\}\;.
\end{equation}

\begin{rem}\label{r. overlinecnx partition}
Observe that the sets $\olcnyn(x)$, for $x\in \xn$, cover $\xyn$.
\end{rem}

\begin{lem}\label{l: overlinecnx rnpm}
For $x\in \xn^\pm$, we have $\olcnyn(x)\subset D_{n-1}(x, R_n^\pm) $.
\end{lem}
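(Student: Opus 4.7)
The plan is to derive the inclusion directly from the relative density of $X_n^\pm$ in $Z_{n-1}^\pm$ that was established in the preceding two lemmas. Fix $x \in X_n^\pm$ and take any $z \in \overline{C}_{n,n-1}(x)$. By definition~\eqref{defn overlinecnn-1x}, $z \in Z_{n-1}^\pm$ and
\[
d_{n-1}(z,x) = d_{n-1}(z, X_n^\pm)\;.
\]

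The main step is then to bound $d_{n-1}(z, X_n^\pm)$. For $X_n^+$ this is \Cref{l: Xn1 net in Cn1}, which states that $X_n^+$ is $R_n^+$-relatively dense in $(Z_{n-1}^+, d_{n-1})$; and for $X_n^-$ this is \Cref{l: Xn0 net in Cn0}, which states that $X_n^-$ is $R_n^-$-relatively dense in $(Z_{n-1}^-, d_{n-1})$. In either case,
\[
d_{n-1}(z,x) = d_{n-1}(z, X_n^\pm) \le R_n^\pm\;,
\]
so $z \in D_{n-1}(x, R_n^\pm)$, as desired.

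There is essentially no obstacle here: the lemma is a direct unpacking of the definition of the Voronoi-type cell $\overline{C}_{n,n-1}(x)$ combined with the relative density bounds already proved. The substantive content of the bound $R_n^\pm$ was built into \Cref{l: Xn1 net in Cn1} and \Cref{l: Xn0 net in Cn0}, where the triangle inequality and the tiered construction of $X_n$ via $Y_n$, $Y_n^\pm$ and the separation condition~\eqref{defn xn-} were used; the present lemma is the painless corollary that will let later arguments control the $d_{n-1}$-diameter of the clusters indexed by points of $X_n$.
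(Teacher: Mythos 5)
Your proof is correct and follows exactly the paper's route: the paper also deduces the lemma directly from \Cref{l: Xn1 net in Cn1,l: Xn0 net in Cn0}, using the definition~\eqref{defn overlinecnn-1x} of $\olcnyn(x)$ and the $R_n^\pm$-relative density of $X_n^\pm$ in $(Z_{n-1}^\pm,d_{n-1})$. You have simply spelled out the one-line argument the paper leaves implicit.
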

\begin{proof}
This is a direct consequence of \Cref{l: Xn1 net in Cn1,l: Xn0 net in Cn0}.
\end{proof}

Define a graph structure $E_n$ on $\xn$ by declaring that $x,y\in \xn$ are joined by an edge if 
\begin{equation}\label{xeny}
d_{n-1}(\olcnyn (x),\olcnyn (y) )\leq 1\;.
\end{equation}

To prove~\ref{i: xn connected}, let $x,y\in \xn$.
By the induction hypothesis with~\ref{i: xn connected}, $\xyn$ is connected, and, by construction, $X_{n}\subset \xyn$.
So there is some path in $(\xyn,E_{n-1})$ of the form $(u_0=x, u_1, \ldots, u_a= y)$. 
By \Cref{r. overlinecnx partition}, for each $i=0,\dots,a$, there is some $z_i\in \xn$ such that $u_i\in \olcnyn(z_i)$, $z_0 = x$ and $z_a=y$. 
Clearly, $d_{n-1}(\olcnyn(z_{i-1}), \olcnyn(z_i))\leq 1$ for $i=1,\dots,a$. Thus  $(z_0,\ldots, z_a)$ is a path in $\xn$ connecting $x$ to $y$. 

Let us prove~\ref{i: xn dn dn-1}. 
For $x,y\in \xn$ with $d_n(x,y)=a$, there is a finite sequence  $(x_0=x,x_1,\ldots, x_a=y)$ in $X_n$ such that $d_{n}(\olcnyn(x_{i-1}),\olcnyn(x_{i}))\leq 1$  for $i=1,\ldots, a$. 
By \Cref{l: overlinecnx rnpm},~\eqref{defn functions 0} and~\eqref{no bm multi}, we have  $d_{n-1}(x_{i-1},x_{i})\leq 2R_n^+ + 1=l_n$. 
Then~\ref{i: xn dn dn-1} follows from the triangle inequality, using~\eqref{defn functions n},~\eqref{constants -1} and~\eqref{no bm multi}.

Let us prove~\ref{i: xn deltan deltan-1}. 
For $x,y\in \xn$, if $xE_ny$, then $d_{n-1}(x,y)\leq 2R_n^+ +1$ by~\eqref{xeny} and \Cref{l: overlinecnx rnpm}.
So 
\[
|S_n(x,1)|\leq |D_{n-1}(x,2R_n^++1 )| \leq 4(\deg X_{n-1}-1)^{2R_n^+}
\]
 by \Cref{c: |D(x r)| le ...}.
Then the bound $\deg X_n\leq \Delta_n$ follows by induction with~\ref{i: xn deltan deltan-1}, using~\eqref{defn functions 0},~\eqref{defn functions n} and~\eqref{no bm multi}.

Let us prove~\ref{i: xn partial}. 
Let $(m,z)\in \mathfrak{P}_{n-1}$ and $x\in \xn\cap D_{-1}(p, \mathfrak{r}_m-\overline{K}_{n}- 2L_{n} R_n^+ )$.
Then 
\begin{equation}\label{overlinecnn-1 hmz}
\overline{C}_{n,n-1}(x)\subset D_{-1}(p, \mathfrak{r}_m-\overline{K}_{n}- L_{n} R_n^+ )\subset \dom\mathfrak{h}_{m,z}
\end{equation}
by \Cref{l: overlinecnx rnpm}, \Cref{p: xxn}~\ref{i: xxn xxm}, and the induction hypothesis with~\ref{i: xn dn dn-1} and~\ref{i: xn partial}. Recall that $\mathfrak{P}_{n-1}\subset \mathfrak{P}_{n-2}$ by~\eqref{rrn} and~\eqref{defn rr-1}.
Furthermore, from the induction hypothesis with~\ref{i: xn partial}, \Cref{p: xxn}~\ref{i: xxn xxm}, \Cref{c. h preserves z},~\eqref{defn overlinecnn-1x} and~\eqref{overlinecnn-1 hmz}, it follows that 
\begin{equation}\label{h preserves overlinec}
\mathfrak{h}_{m,z}\left(\overline{C}_{n,n-1}(x)\right)= \overline{C}_{n,n-1}\left(\mathfrak{h}_{m,z}(x)\right)\;.
\end{equation}
So, for $x,y\in \xn\cap D_{-1}(p, \mathfrak{r}_m-\overline{K}_{n}- 2L_{n} R_n^+ )$, \eqref{xeny} holds  if and only if 
\[
d_{n-1}(\overline{C}_{n,n-1}(\mathfrak{h}_{m,z}(x)),\overline{C}_{n,n-1}(\mathfrak{h}_{m,z}(y)))\leq 1\;.
\]
Therefore $xE_ny$ if and only if $\mathfrak{h}_{m,z}(x)E_n\mathfrak{h}_{m,z}(y)$. 
Then~\ref{i: xn partial} is a consequence of \Cref{c: graph partial},~\eqref{kn actual} and the induction hypothesis with~\ref{i: xn dn dn-1}.
This completes the proof of \Cref{p: xn}.

\section{Clusters}

In order to define the colorings satisfying the conditions of \Cref{t: finitary}, we will divide the sets $X_{n-1}$ into ``clusters'', denoted by $C_{n,n-1}(x)$ and indexed by $x\in X_n$.
These will be used in \Cref{s. colorings} to construct the suitable colorings ``locally" on this family of sets.

In \Cref{s. xn} we have defined well-ordered sets $(Y_n,\leq_{n})$ for $n\in \N$, whose restrictions to  the subsets $X_n$ determine a family of well-orders also denoted by $\leq_n$. 
For $n\in \N$, let $\pi_{n-1}^\pm \colon \Zpmyn \to \xn^\pm$ be defined by
\begin{equation} \label{defn pn-1}
\pi_{n-1}^\pm (u)= \inf \{\,   x\in \xn^\pm \, | \, d_{n-1}(u,x)= d_{n-1}(u,\xn^\pm)   \, \}\;, 
\end{equation}
with respect to $\leq_n$.
Denote by $\pi_{n-1}$ the union of $\pi_{n-1}^-$ and $\pi_{n-1}^+$, which is defined on $Z_{n-1}^+\cupdot Z_{n-1}^-=X_{n-1}$.
For each $n\in \N$ and $x\in \xn^\pm$, let $\cnyn(x)=(\pi_n^\pm)^{-1}(x)$.
These sets form a partition of $\xyn$ and satisfy
\begin{equation}\label{cnyn overlinecn}
\cnyn(x)=\overline{C}_{n,n-1}(x)\setminus\bigcup_{x'\in\xn^\pm,\ x'<_n x} \overline{C}_{n,n-1}(x')
\end{equation}
for $x\in\xn^\pm$ by~\eqref{defn overlinecnn-1x} and~\eqref{defn pn-1}.
For $-1\leq m< n-1 $, we continue defining sets $\cnm(x)$ and $\overline{C}_{n,m}(x)$ by reverse induction on $m$, taking 
\[ 
\cnm(x) = \bigcup_{u \in \cnsm(x)} \csmm(u)\;, \quad 
\overline{C}_{n,m}(x)=\bigcup_{u \in \overline{C}_{n,m+1}(x)} \overline{C}_{m+1,m}(u)\;.
\] 
It is straightforward to check that, for  $-1\leq l_1<l_2< l_3 \leq n$,
\begin{equation}\label{cluster intermediate}
C_{l_3,l_1}(x) = \bigcup_{u \in C_{l_3,l_2}(x)} C_{l_2,l_1}(u)\;, \quad 
\overline C_{l_3,l_1}(x) = \bigcup_{u \in \overline C_{l_3,l_2}(x)} \overline C_{l_2,l_1}(u)\;.
\end{equation}

By~\eqref{defn functions n} and~\eqref{no bm multi}, we have
\begin{equation}\label{expression gamma}
\Gamma^\pm_0=R_0^\pm \;, \quad 
\Gamma^\pm_n= R_n^\pm L_{n-1}+\Gamma_{n-1}^\pm\;.
\end{equation}

\begin{lem}\label{l: cn-1 gamma} 
$\cnone(x)\subset \overline{C}_{n,-1}(x)\subset D_{-1}(x,\Gamma_n^\pm)$.
\end{lem}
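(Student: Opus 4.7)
The plan is to handle the two inclusions separately. For $\cnone(x)\subset\overline C_{n,-1}(x)$, I would prove by reverse induction on $m$ (from $m=n-1$ down to $m=-1$) the stronger claim that $C_{n,m}(x)\subset\overline C_{n,m}(x)$. The base case $m=n-1$ is given by~\eqref{cnyn overlinecn}, which exhibits $C_{n,n-1}(x)$ as a subset of $\overline C_{n,n-1}(x)$. For the inductive step, combining the inductive hypothesis $C_{n,m+1}(x)\subset\overline C_{n,m+1}(x)$ with the base-case inclusion $C_{m+1,m}(u)\subset\overline C_{m+1,m}(u)$ for each $u\in X_{m+1}^\pm$ (again from~\eqref{cnyn overlinecn}, applied at level $m+1$) inside the recursive definitions immediately yields $C_{n,m}(x)\subset\overline C_{n,m}(x)$.

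For $\overline C_{n,-1}(x)\subset D_{-1}(x,\Gamma_n^\pm)$ I would argue by induction on $n$, after a preliminary monotonicity observation: $\Gamma_m^-\le\Gamma_m^+$ for every $m\in\N$. The base case $m=0$ is clear from~\eqref{expression gamma} together with $R_0^-\le R_0^+$, and the inductive step follows from the recursion $\Gamma_m^\pm=R_m^\pm L_{m-1}+\Gamma_{m-1}^+$ in~\eqref{expression gamma}, whose two sides share the common tail $\Gamma_{m-1}^+$. The base case $n=0$ of the main induction is \Cref{l: overlinecnx rnpm} combined with $\Gamma_0^\pm=R_0^\pm$. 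For the inductive step, I would use~\eqref{cluster intermediate} to decompose $\overline C_{n,-1}(x)=\bigcup_{u\in\overline C_{n,n-1}(x)}\overline C_{n-1,-1}(u)$. For any $u$ in the outer index set, \Cref{l: overlinecnx rnpm} yields $d_{n-1}(x,u)\le R_n^\pm$, so Proposition~\ref{p: xn}~\ref{i: xn dn dn-1} at level $n-1$ upgrades this to $d_{-1}(x,u)\le L_{n-1}d_{n-1}(x,u)\le L_{n-1}R_n^\pm$. The induction hypothesis applied to $u\in X_{n-1}$ (together with the monotonicity above, since $u$ may carry either decoration) gives $d_{-1}(u,v)\le\Gamma_{n-1}^+$ for every $v\in\overline C_{n-1,-1}(u)$. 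The triangle inequality and the recursion in~\eqref{expression gamma} then bound $d_{-1}(x,v)\le L_{n-1}R_n^\pm+\Gamma_{n-1}^+=\Gamma_n^\pm$, as required.

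The only subtle point, and the reason the auxiliary monotonicity lemma is needed, is that the $\pm$ decoration is attached to $x\in X_n^\pm$ but is not canonically propagated to the intermediate points $u\in\overline C_{n,n-1}(x)\subset X_{n-1}$, which may lie in either $X_{n-1}^+$ or $X_{n-1}^-$. This is precisely why the recursion~\eqref{expression gamma} employs the uniform upper bound $\Gamma_{n-1}^+$ rather than $\Gamma_{n-1}^\pm$, and the monotonicity reduces both possibilities for $u$ to that common bound.
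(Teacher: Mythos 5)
Your proof is correct and follows essentially the same route as the paper's: both argue by induction on $n$, invoking~\eqref{cnyn overlinecn}, \Cref{l: overlinecnx rnpm}, the decomposition~\eqref{cluster intermediate}, the metric comparison in \Cref{p: xn}~\ref{i: xn dn dn-1}, and the recursion~\eqref{expression gamma}. If anything you are a little more careful at the inductive step: the paper's proof bounds $d_{n-1}(x,u)\le R_n^+$ uniformly (which only recovers $\Gamma_n^+$ when $x\in X_n^-$), whereas you correctly track $R_n^\pm$ and isolate the monotonicity $\Gamma_{n-1}^-\le\Gamma_{n-1}^+$ needed because the decoration of the intermediate points $u\in\overline C_{n,n-1}(x)\subset X_{n-1}$ is not determined by that of $x$.
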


\begin{proof}
We proceed by induction on $n$. 
For $n=0$ and $x\in X_0^\pm$, we have $C_{0,-1}(x)\subset D_{-1}(x,R_0^\pm)$ by \Cref{l: overlinecnx rnpm} and~\eqref{cnyn overlinecn}.
Now take any $n>0$ and suppose that $C_{m,-1}(y)\subset \overline C_{m,-1}(y)\subset D_{-1}(y,\Gamma_m^\pm)$ for $0\leq m <n $ and $y\in X^\pm_m$.
By~\eqref{cluster intermediate}, 
\[ 
C_{n,-1}(x) = \bigcup_{u \in C_{n,n-1}(x)} C_{n-1,-1}(u)\;, \quad 
\overline C_{n,-1}(x) = \bigcup_{u \in \overline C_{n,n-1}(x)} \overline C_{n-1,-1}(u) \;.
\]
We get $d_{n-1}(x,u)\leq R_n^+$ for all $u\in \overline C_{n,n-1}(x)$ by \Cref{l: overlinecnx rnpm} and~\eqref{cnyn overlinecn}.
So $d_{-1}(x,u)\leq L_{n-1}R_n^+$ by \Cref{p: xn}~\ref{i: xn deltan deltan-1}.
Then the result follows easily from the induction hypothesis using the triangle inequality.
\end{proof}

\begin{lem}\label{l: cnn-1 contained}
We have $D_{n-1}(x,r_n^\pm)\subset C_{n,n-1}(x)$ for every $n\in \N$ and $x\in X_n^\pm$.
\end{lem}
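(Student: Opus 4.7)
Fix $n\in\N$ and $x\in X_n^\pm$, and let $u\in D_{n-1}(x,r_n^\pm)$. The plan is to show directly that $\pi_{n-1}^\pm(u)=x$, which by the definition $C_{n,n-1}(x)=(\pi_{n-1}^\pm)^{-1}(x)$ will give $u\in C_{n,n-1}(x)$.

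First I would check that $\pi_{n-1}^\pm(u)$ is even defined on $u$, i.e.\ that $u\in Z_{n-1}^\pm$. This is immediate from \Cref{l: xni ball cni}, which tells us $D_{n-1}(x,r_n^\pm)\subset Z_{n-1}^\pm$. Next I would show that $x$ is in fact the unique $\leq_n$-minimum (indeed, the unique minimizer at all) in the set
\[
\{\,y\in X_n^\pm \mid d_{n-1}(u,y)=d_{n-1}(u,X_n^\pm)\,\}\;.
\]
For this, note that by \Cref{p: xn}~\ref{i: xn separated net}, $X_n$ is $(2r_n^++1)$-separated with respect to $d_{n-1}$, so for every $y\in X_n\setminus\{x\}$ (in particular every $y\in X_n^\pm\setminus\{x\}$),
\[
d_{n-1}(x,y)\ge 2r_n^++1\;.
\]
Combining this with $d_{n-1}(u,x)\le r_n^\pm\le r_n^+$ via the triangle inequality gives
\[
d_{n-1}(u,y)\ge d_{n-1}(x,y)-d_{n-1}(u,x)\ge 2r_n^++1-r_n^\pm> r_n^\pm\ge d_{n-1}(u,x)\;.
\]
Hence $x$ strictly beats every other candidate in $X_n^\pm$, so the set of minimizers in the definition of $\pi_{n-1}^\pm(u)$ is the singleton $\{x\}$. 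Taking the $\leq_n$-infimum forces $\pi_{n-1}^\pm(u)=x$, which gives $u\in C_{n,n-1}(x)$ as desired.

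There is no real obstacle here: the statement is a direct consequence of \Cref{l: xni ball cni} (which keeps $u$ on the correct side $Z_{n-1}^\pm$ so that $\pi_{n-1}^\pm$ applies) together with the separation estimate of \Cref{p: xn}~\ref{i: xn separated net} (which makes $x$ the unique nearest point in $X_n$). The only small point to keep in mind is the uniform bound $r_n^\pm\le r_n^+$, implied by $r_n^-=r_n\le r_n s_n=r_n^+$, which is what makes the triangle-inequality computation above work in both sign cases simultaneously.
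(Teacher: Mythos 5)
Your proof is correct and takes essentially the same approach as the paper: you invoke \Cref{l: xni ball cni} to place $u$ in $Z_{n-1}^\pm$ so that $\pi_{n-1}^\pm(u)$ is defined, and then use the $(2r_n^++1)$-separation from \Cref{p: xn}~\ref{i: xn separated net} (together with $d_{n-1}(u,x)\le r_n^\pm\le r_n^+$) to show $x$ is the unique nearest point of $X_n^\pm$, so $\pi_{n-1}^\pm(u)=x$. The paper's proof is terser but rests on precisely these same two facts and the definition~\eqref{defn pn-1}; your write-up just spells out the triangle-inequality step explicitly.
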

\begin{proof}
We  have $u\in Z_{n-1}^\pm$ for $u\in D_{n-1}(x,r_n^\pm)$ by \Cref{l: xni ball cni}, and $d_{n-1}(u,X_n)\leq r_n^\pm$ by definition.
Then the result follows from~\eqref{defn pn-1} and the fact that $X_n^\pm$ is $(2r_n^++1)$-separated by \Cref{p: xn}~\ref{i: xn separated net}.
\end{proof}

The following result follows from \Cref{l: cnn-1 contained} by induction.

\begin{cor}\label{c. cn-1 contained}
We have $D_{-1}(x,\sum \nolimits_{i=0}^n r_i)\subset C_{n,n-1}(x)$ for every $n\in \N$ and $x\in X_n$.
\end{cor}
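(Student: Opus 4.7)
The plan is to induct on $n \in \N$. The base case $n = 0$ is immediate from \Cref{l: cnn-1 contained}: the metric $d_{-1}$ is $d_{n-1}$ with $n=0$, and $r_0 \leq r_0^\pm$, so $D_{-1}(x, r_0) \subset D_{-1}(x, r_0^\pm) \subset C_{0, -1}(x)$ for every $x \in X_0$. (I read the conclusion of the corollary as $C_{n,-1}(x)$, matching the label; otherwise the inclusion does not live in a single ambient set since $C_{n,n-1}(x)\subset X_{n-1}$.)

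For the inductive step, assume $D_{-1}(u, \sum_{i=0}^{n-1} r_i) \subset C_{n-1, -1}(u)$ for every $u \in X_{n-1}$, and fix $x \in X_n$. The cluster identity~\eqref{cluster intermediate} gives $C_{n, -1}(x) = \bigcup_{u \in C_{n, n-1}(x)} C_{n-1, -1}(u)$, while \Cref{l: cnn-1 contained} (applied with $r_n \leq r_n^\pm$) gives $D_{n-1}(x, r_n) \subset C_{n, n-1}(x)$. Combining these with the inductive hypothesis yields
\[
C_{n,-1}(x) \;\supset\; \bigcup_{u \in D_{n-1}(x, r_n)} D_{-1}\Big(u, \sum_{i=0}^{n-1} r_i\Big).
\]

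It then suffices to verify that every $z \in X$ with $d_{-1}(x, z) \leq \sum_{i=0}^n r_i$ belongs to some $D_{-1}(u, \sum_{i=0}^{n-1} r_i)$ whose centre $u \in X_{n-1}$ satisfies $d_{n-1}(x, u) \leq r_n$. I expect this to be the main obstacle. A natural strategy is to take the $X_{n-1}$-projection $u = \sigma_{n-1}(z)$, which by \Cref{l: cn-1 gamma} satisfies $d_{-1}(z, u) \leq \Gamma_{n-1}^+$, and to check that $u$ lies in $C_{n, n-1}(x)$; alternatively, one can walk a distance close to $r_n$ along a $d_{-1}$-geodesic from $x$ to $z$ and perturb the resulting vertex to the nearest element of $X_{n-1}$. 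Either approach introduces an additive error of size $\Gamma_{n-1}^+$, so the argument must exploit the comparability $d_{n-1} \leq d_{-1} \leq L_{n-1} d_{n-1}$ on $X_{n-1}$ from \Cref{p: xn}~\ref{i: xn dn dn-1}, together with the quantitative largeness of $r_n$ relative to the lower-order densities fixed in \Cref{s. constants}, in order to absorb this error into the remaining slack between $r_n$ and the upper bound $\sum_{i=0}^n r_i$.
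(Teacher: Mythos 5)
Your reading of the conclusion as $C_{n,-1}(x)$, the base case, and the reduction via \eqref{cluster intermediate} and \Cref{l: cnn-1 contained} to producing a suitable $u\in X_{n-1}$ are all correct. The gap is in the last step, and the constant-absorption you propose cannot close it. Let $u\in X_{n-1}$ be the cluster center of $z$, i.e.\ the unique point with $z\in C_{n-1,-1}(u)$, so that $d_{-1}(u,z)\le\Gamma_{n-1}^+$ by \Cref{l: cn-1 gamma}. You need $u\in C_{n,n-1}(x)$, and by \Cref{l: cnn-1 contained} this would follow from $d_{n-1}(x,u)\le r_n$ when $x\in X_n^-$ (for $x\in X_n^+$ the larger radius $r_n^+=r_ns_n$ is available, but you cannot assume that). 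The only estimate the metric comparability yields is $d_{n-1}(x,u)\le d_{-1}(x,u)\le d_{-1}(x,z)+\Gamma_{n-1}^+\le\sum_{i=0}^n r_i+\Gamma_{n-1}^+$, which is strictly larger than $r_n$. The inequality $d_{n-1}\le d_{-1}$ from \Cref{p: xn}~\ref{i: xn dn dn-1} never makes $d_{n-1}$ smaller than the corresponding $d_{-1}$-distance, so no tuning of the constants in \Cref{s. constants} rescues the case $x\in X_n^-$: there is no slack in the needed direction.

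What is actually needed is a contraction property of the cluster-center map, not comparability of the two metrics on $X_{n-1}$. Let $\rho\colon X\to X_{n-1}$ send $y$ to the unique $u\in X_{n-1}$ with $y\in C_{n-1,-1}(u)$; by~\eqref{cluster intermediate} this is the composite $\pi_{n-2}\circ\cdots\circ\pi_{0}\circ\pi_{-1}$. Each $\pi_{m-1}$ takes $E_{m-1}$-adjacent points to equal or $E_m$-adjacent points by~\eqref{xeny} (this is exactly the step used to prove connectedness in \Cref{p: xn}~\ref{i: xn connected}), so $\rho$ is $1$-Lipschitz from $(X,d_{-1})$ to $(X_{n-1},d_{n-1})$. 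Moreover $\rho$ collapses the whole cluster $C_{n-1,-1}(x)$ — which contains $D_{-1}(x,\sum_{i=0}^{n-1}r_i)$ by the inductive hypothesis — onto the single point $x$. Hence, for $z\in D_{-1}(x,\sum_{i=0}^{n}r_i)$, choose a $d_{-1}$-geodesic $\gamma$ from $x$ to $z$ and let $\gamma_k$ be its vertex at $d_{-1}$-distance $k=\min\{\sum_{i=0}^{n-1}r_i,\,d_{-1}(x,z)\}$ from $x$. Then $\rho(\gamma_k)=x$ and $d_{n-1}(x,\rho(z))=d_{n-1}(\rho(\gamma_k),\rho(z))\le d_{-1}(\gamma_k,z)\le r_n$, so $\rho(z)\in D_{n-1}(x,r_n)\subset C_{n,n-1}(x)$ by \Cref{l: cnn-1 contained}, and $z\in C_{n-1,-1}(\rho(z))\subset C_{n,-1}(x)$ by~\eqref{cluster intermediate}. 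The mechanism is that the inductive hypothesis absorbs the first $\sum_{i=0}^{n-1}r_i$ steps of $\gamma$ into the single cluster of $x$, and each of the remaining at most $r_n$ steps advances $\rho$ by at most one $E_{n-1}$-edge; this is the quantitative fact your sketch was missing.
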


The following lemma states that every $C_{n,n-1}(x)$ is a star-shaped subset of $(X_{n-1},E_{n-1})$ with center $x$.
\begin{lem}\label{l: cnn-1 connected}
For $x\in X_n^\pm$ and $u\in C_{n,n-1}(x)$, any geodesic segment in $(X_{n-1},E_{n-1})$ of the form $\tau = (x=\tau_0,\dots,\tau_l=u)$ is a path in $C_{n,n-1}(x)$.
\end{lem}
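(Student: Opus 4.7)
The plan is to verify, for each intermediate vertex $\tau_i$ of the geodesic $\tau$, that $\tau_i\in C_{n,n-1}(x)$. Setting $l=d_{n-1}(x,u)$ and treating the case $x\in X_n^+$ (the case $x\in X_n^-$ being symmetric), the definition $C_{n,n-1}(x)=(\pi^+_{n-1})^{-1}(x)$ from~\eqref{defn pn-1} breaks this into three assertions: (a) $\tau_i\in Z^+_{n-1}$, so that $\pi^+_{n-1}$ is defined at $\tau_i$; (b) $d_{n-1}(\tau_i,x)=d_{n-1}(\tau_i,X_n^+)$; and (c) $x$ is the $\leq_n$-least element of $X_n^+$ realising this minimum distance from $\tau_i$. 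Throughout, the geodesic hypothesis supplies the two equalities $d_{n-1}(x,\tau_i)=i$ and $d_{n-1}(\tau_i,u)=l-i$ that drive everything.

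For (b) and (c), the hypothesis $u\in C_{n,n-1}(x)$ provides $d_{n-1}(u,x)=d_{n-1}(u,X_n^+)=l$ with $x$ being $\leq_n$-minimal for this property. For any $x'\in X_n^+$, the triangle inequality then yields
\[
d_{n-1}(\tau_i,x')\geq d_{n-1}(u,x')-(l-i)\geq l-(l-i)=i=d_{n-1}(\tau_i,x),
\]
which proves (b). If equality holds here for some $x'$, then $d_{n-1}(u,x')\leq (l-i)+i=l$, so $x'$ realises the minimum distance from $u$ too, and the $\leq_n$-minimality of $x$ at $u$ forces $x\leq_n x'$, giving (c).

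For (a), the key is to invoke~\eqref{def zn+}: since $u\in Z^+_{n-1}$ and $d_{n-1}(u,X_n^+)=l$, one has $d_{n-1}(u,X_n^-)\geq l+r_n-2r_ns_n$. Propagating this bound along the geodesic via the triangle inequality yields $d_{n-1}(\tau_i,X_n^-)\geq i+r_n-2r_ns_n$, while simultaneously $d_{n-1}(\tau_i,X_n^+)\leq d_{n-1}(\tau_i,x)=i$. Combining these, $d_{n-1}(\tau_i,X_n^+)-2r_ns_n\leq i-2r_ns_n\leq d_{n-1}(\tau_i,X_n^-)-r_n$, so the defining inequality of $Z^+_{n-1}$ in~\eqref{def zn+} holds at $\tau_i$.

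I do not anticipate a serious obstacle. The only nuances are keeping the orientation of the triangle inequality correct in (a), and noticing that for the symmetric case $x\in X_n^-$ the strict inequality in~\eqref{def zn-} propagates along $\tau$ by the same argument with the roles of $X_n^+$ and $X_n^-$ exchanged; no additional geometric input is needed beyond what is already packaged in the definitions of $Z^\pm_{n-1}$ and $\pi^\pm_{n-1}$.
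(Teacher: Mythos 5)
Your argument is correct, and it differs from the paper's in two noticeable ways. The paper proves the lemma by reverse induction on the index $k$ of the geodesic, deriving $\tau_k\in C_{n,n-1}(x)$ from the single fact that its geodesic successor $\tau_{k+1}$ lies in $C_{n,n-1}(x)$ (using adjacency and a short chain of (in)equalities, plus a proof by contradiction for the $\leq_n$-minimality). You instead compare each $\tau_i$ directly against the endpoint $u=\tau_l$, which lets you do the whole argument in one jump via the two geodesic equalities $d_{n-1}(x,\tau_i)=i$ and $d_{n-1}(\tau_i,u)=l-i$ and the triangle inequality. The logical engine is the same in both — any competing $\leq_n$-minimizer at $\tau_i$ (or $\tau_k$) would propagate to a competing minimizer further along the geodesic, contradicting minimality there — but your non-inductive formulation is a bit more transparent. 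You also explicitly verify step (a), that each $\tau_i\in Z^\pm_{n-1}$, by propagating the defining inequality of $Z^\pm_{n-1}$ from $u$ to $\tau_i$; the paper asserts $\tau_k\in\overline C_{n,n-1}(x)$ directly from the distance equality, leaving the $Z^\pm_{n-1}$ membership implicit, so your version closes a small gap in exposition. One tiny slip that does no harm: for $x\in X_n^+$ you write $d_{n-1}(\tau_i,X_n^+)\leq d_{n-1}(\tau_i,x)=i$, but by the point in the argument where you invoke it you have already shown the equality $d_{n-1}(\tau_i,X_n^+)=i$; the weaker inequality is still sufficient for what follows.
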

\begin{proof}
We prove that $\tau_k\in C_{n,n-1}(x)$ by reverse induction on $k=0,\dots, l$.
We have  $\tau_l=u\in C_{n,n-1}(x)$ by hypothesis.
Now suppose that $\tau_{k+1}\in C_{n,n-1}(x)$ for some $k=0,\dots,l-1$.
Assume by absurdity that $\tau_k\notin C_{n,n-1}(x)$.
Since $\tau$ is a geodesic segment, 
\begin{align*}
d_{n-1}(\tau_k,\xn^\pm)&\le d_{n-1}(\tau_k,x)=d_{n-1}(\tau_{k+1},x)-1\\
&= d_{n-1}(\tau_{k+1},\xn^\pm)-1\le d_{n-1}(\tau_k,\xn^\pm)\;,
\end{align*}
and therefore $\tau_k\in \overline{C}_{n,n-1}(x)$. 
So, according to~\eqref{defn pn-1}, there must be some $y\in X_n^\pm$ such that $d_{n-1}(\tau_k,y)=d_{n-1}(\tau_k,x)=k$ and $y<_nx$.
But then $d_{n-1}(\tau_{k+1},y)\leq k+1 = d_{n-1}(\tau_{k+1},x)$, yielding $\tau_{k+1}\notin C_{n,n-1}(x)$ by~\eqref{defn pn-1}, a contradiction.
\end{proof}

\begin{lem}\label{l: h preserves Cn}
Let $x\in X_n\cap D_{-1}(p,\mathfrak{r}_m-K_{n-1}- 2L_{n-1}R_n^+)$ and $(m,z)\in \mathfrak{P}_{n-1}$. 
Then $C_{n,n-1}(x)\subset \dom\mathfrak{h}_{m,z}$ and $\mathfrak{h}_{m,z}(\cnyn(x) )= \cnyn(\mathfrak{h}_{m,z}(x))$.
\end{lem}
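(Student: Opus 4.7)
The plan is to handle the two assertions separately, with the first being a direct distance computation and the second relying on a locality argument for $\pi_{n-1}^{\pm}$.

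For the domain containment, I would argue as follows. By \Cref{l: overlinecnx rnpm} and~\eqref{cnyn overlinecn}, $\cnyn(x)\subset\overline{C}_{n,n-1}(x)\subset D_{n-1}(x,R_n^{\pm})\subset D_{n-1}(x,R_n^+)$. Applying $d_{-1}\le L_{n-1}d_{n-1}$ from \Cref{p: xn}~\ref{i: xn dn dn-1}, this gives $\cnyn(x)\subset D_{-1}(x,L_{n-1}R_n^+)$, and the triangle inequality with the hypothesis on $x$ shows
\[
\cnyn(x)\subset D_{-1}(p,\mathfrak{r}_m-K_{n-1}-L_{n-1}R_n^+)\subset D_{-1}(p,\mathfrak{r}_m)=\dom\mathfrak{h}_{m,z}.
\]

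For the cluster identity $\mathfrak{h}_{m,z}(\cnyn(x))=\cnyn(\mathfrak{h}_{m,z}(x))$, the key observation is that, by definition~\eqref{defn pn-1}, $\pi_{n-1}^{\pm}(u)$ is the $\leq_n$-infimum of the finite set $\Lambda(u):=\{y\in\xn^{\pm}\mid d_{n-1}(u,y)=d_{n-1}(u,\xn^{\pm})\}$. For any $u\in\cnyn(x)$, every candidate $y\in\Lambda(u)$ satisfies $d_{n-1}(u,y)=d_{n-1}(u,x)\le R_n^+$, so $\Lambda(u)\subset D_{n-1}(x,2R_n^+)\subset D_{-1}(x,2L_{n-1}R_n^+)$, and by the hypothesis we obtain $\{u\}\cup\Lambda(u)\subset D_{-1}(p,\mathfrak{r}_m-K_{n-1})$. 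Thus the three preservation facts I need are all available on this local domain:
\begin{itemize}
\item $\mathfrak{h}_{m,z}$ is a short-scale isometry with respect to $d_{n-1}$ at scale $\ge2R_n^+$ on $\xyn\cap D_{-1}(p,\mathfrak{r}_m-K_{n-1})$; this comes from applying \Cref{p: xn}~\ref{i: xn partial} one level down, using $\mathfrak{P}_{n-1}\subset\mathfrak{P}_{n-2}$ (see~\eqref{rrn},~\eqref{defn rr-1}).
\item $\mathfrak{h}_{m,z}$ preserves membership in $\xn^{\pm}$, via \Cref{p: h preserves xn+} and \Cref{l: h preserves xn-} (after checking that $\Lambda(u)$ lies inside the domains appearing in those results, which follows from the hypothesis exactly as in the proof of \Cref{p: xn}~\ref{i: xn partial}).
\item $\mathfrak{h}_{m,z}$ preserves the ordering $\leq_n$ on $\Yn\cap D_{-1}(p,\mathfrak{r}_m-K_{n-1})$ by \Cref{l: yn}~\ref{i: yn rn-1}.
\end{itemize}
Combining these, $\mathfrak{h}_{m,z}$ restricts to an $\leq_n$-order-preserving bijection $\Lambda(u)\to\Lambda(\mathfrak{h}_{m,z}(u))$, so it sends the $\leq_n$-infimum $\pi_{n-1}^{\pm}(u)=x$ to $\pi_{n-1}^{\pm}(\mathfrak{h}_{m,z}(u))=\mathfrak{h}_{m,z}(x)$. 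This yields $\mathfrak{h}_{m,z}(\cnyn(x))\subset\cnyn(\mathfrak{h}_{m,z}(x))$.

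For the reverse inclusion, I would run the symmetric argument: since $\mathfrak{h}_{m,z}$ is a pointed isometry on $(D_{-1}(p,\mathfrak{r}_m),p)$, its inverse satisfies the analogous preservation properties on the image side, and the same local reasoning shows $\cnyn(\mathfrak{h}_{m,z}(x))\subset\mathfrak{h}_{m,z}(\cnyn(x))$. The principal obstacle here is the bookkeeping: one must verify that the constant $K_{n-1}+2L_{n-1}R_n^+$ in the hypothesis is large enough that the candidate set $\Lambda(u)$, and in particular all $\leq_n$-competitors of $x$ near $x$, fall simultaneously inside the (smaller) domains of \Cref{p: h preserves xn+} and \Cref{l: h preserves xn-}. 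This is a careful but mechanical comparison of $K_{n-1}$, $\overline{K}_n$, $L_{n-1}$ and $L_n$ using~\eqref{overlinekn actual} and $L_n=L_{n-1}l_n$, entirely parallel to the constant-chasing already performed in the proof of \Cref{p: xn}.
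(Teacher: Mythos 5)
Your proposal is correct in substance, but it takes a more hands-on route than the paper, whose proof of this lemma is a one-liner: it combines the containment~\eqref{overlinecnn-1 hmz}, the already-established equivariance $\mathfrak{h}_{m,z}(\overline{C}_{n,n-1}(x))=\overline{C}_{n,n-1}(\mathfrak{h}_{m,z}(x))$ of~\eqref{h preserves overlinec}, the formula~\eqref{cnyn overlinecn} writing $C_{n,n-1}(x)$ as $\overline{C}_{n,n-1}(x)$ minus the sets $\overline{C}_{n,n-1}(x')$ with $x'<_n x$, and the order-preservation of $\mathfrak{h}_{m,z}$ from \Cref{l: yn}~\ref{i: yn rn-1}. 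You instead re-derive equivariance of the nearest-point projection $\pi_{n-1}^\pm$ directly through your minimizer sets $\Lambda(u)$, using the short-scale isometry of \Cref{p: xn}~\ref{i: xn partial} one level down, the preservation of $X_n^\pm$ (\Cref{p: h preserves xn+} and \Cref{l: h preserves xn-}, which in the paper are already packaged into \Cref{c. h preserves z} and~\eqref{h preserves overlinec}), and the same order-preservation. This is essentially an unpacking of the paper's citations: it buys independence from~\eqref{h preserves overlinec}, at the cost of redoing the two-sidedness bookkeeping (your ``reverse inclusion''), where you should say explicitly that a minimizer $w$ of $d_{n-1}(\mathfrak{h}_{m,z}(u),\cdot)$ on the image side is of the form $\mathfrak{h}_{m,z}(w')$ with $w'\in Y_n$, which comes from the two-sided statement in \Cref{l: yn}~\ref{i: yn rn-1} (or \Cref{p: xn}~\ref{i: xn rn-1}), before the ``if and only if'' results can be applied to $w'$.

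One caution on the constants, which you flag yourself: \Cref{p: h preserves xn+} and \Cref{l: h preserves xn-} are stated for points of $Y_n$ in $D_{-1}(p,\mathfrak{r}_m-K_{n-1}-L_{n-1}r_ns_n^2)$ and $D_{-1}(p,\mathfrak{r}_m-\overline{K}_n)$, whereas the hypothesis of the lemma only places $\Lambda(u)$ in $D_{-1}(p,\mathfrak{r}_m-K_{n-1})$, so the required slack is not literally available and the verification is not purely mechanical under the hypothesis as stated. The same tension is present in the paper's own proof, since~\eqref{overlinecnn-1 hmz} and~\eqref{h preserves overlinec} were established for $x\in D_{-1}(p,\mathfrak{r}_m-\overline{K}_n-2L_nR_n^+)$; in the lemma's actual applications (e.g.\ in \Cref{l: rmn hmy}) the relevant points lie in $D_{-1}(p,\mathfrak{r}_m-K_n)$ with $K_n\geq\overline{K}_n+2L_nR_n^+$ by~\eqref{kn actual}, so the stronger containments do hold there. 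Thus this is a shared imprecision rather than a defect specific to your argument, but it deserves an explicit remark rather than being labelled routine.
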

\begin{proof}
It is an immediate consequence of~\eqref{overlinecnn-1 hmz},~\eqref{h preserves overlinec},~\eqref{cnyn overlinecn} and \Cref{l: yn}~\ref{i: yn rn-1}.
\end{proof}

\section{Colorings}\label{s. colorings}

In this section we will define families of colorings ``locally"  on the clusters. For this we will need several intermediate objects, as well as suitable notions of equivalences. Again, making all these constructions compatible with the sets $\mathfrak{X}_n$ and maps $\mathfrak{h}^m_{n,x}$ makes it more convoluted, so all references to these objects can be omitted in the first reading.

\subsection{Colorings $\chi_n$}\label{s. chin}

Given $a\in \N$, let $[a]=\{0,\dots,a-1\}$. For $n\in \N$ and $x\in \xn^\pm$, let 
\begin{equation}\label{def inx}
H_{n,x} =  \left[\eta_n\left( \left|D_{n-1}\left(x,r_n^\pm \right)\right|\right)\right]\;, \quad
I_{n,x} =  \left[5+ \left|D_{n-1}\left(x,r_n^\pm s_n\right)\right|\right]\;.
\end{equation}
The standard ordering of $\N$ and the calligraphic ordering of $I_{n,x}^2$ can be used to realize $I_{n,x}^2$ as an initial segment of $\N$. 
Since $| I_{n,x}|^2\leq |H_{n,x}|$ by \Cref{p: xn}~\ref{i: xn1},  the sets $I_{n,x}$ and $I_{n,x}^2$ become initial segments of $H_{n,x}$.
For $n\in \N$, let
\begin{equation}\label{defn mathcalin}
\mathcal{H}_{n} = \bigcup_{x\in \xn} H_{n,x}\;, \quad \mathcal{I}_{n}= \bigcup_{x\in \xn} I_{n,x}\;.
\end{equation}
From now on, when referring to a coloring $\phi\colon \xn\to \mathcal{H}_{n}$ (respectively, $\phi\colon \xn\to \mathcal{I}_{n}$), we assume $\phi(x)\in H_{n,x}$ (respectively, $\phi(x)\in I_{n,x}$) for all $x\in \xn$.

\begin{prop}\label{p: chin}
For every $n\in \N$, there is a coloring $\chi_n \colon \xn \to \mathcal{I}_{n}$ satisfying the following conditions:
\begin{enumerate}[(i)]
\item \label{i: chin zero} We have $\chi_n(x)=0$ if and only if $x\in \xxn$.

\item \label{i: chin dn-1} For all $x,y\in \xn^\pm$ with $d_{n-1}(x,y)\leq r_n^\pm s_n $, we have $\chi_n(x)< \chi_n(y)$ if and only if $x<_n y$.
In particular, if $0<d_{n-1}(x,y)\leq r_n^\pm s_n $, then $\chi_n(x)\neq \chi_n(y)$.

\item \label{i: chin h} For every $(m,z)\in \mathfrak{P}_{n-1}$, the map $\mathfrak{h}_{m,z}\colon (D_n(p,\Gamma_m^+),\chi_n)\to (D_n(z,\Gamma_m^+),\chi_n)$ is color-preserving.
\end{enumerate}
\end{prop}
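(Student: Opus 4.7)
The plan is to construct $\chi_n$ by a rank‐based formula using the well–order $\leq_n$ from \Cref{l: yn}, treating $X_n^-$ and $X_n^+$ separately because the separation constants afford them very different flavors.

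On $X_n^-$, \Cref{p: xn}~\ref{i: xn separated net} asserts that $X_n$ is $(2r_n^+ + 1)$–separated in $d_{n-1}$, and since $2r_n^+ + 1 = 2r_n s_n + 1 > r_n s_n = r_n^- s_n$, any two distinct points of $X_n^-$ are at $d_{n-1}$–distance exceeding $r_n^- s_n$. Hence property~\ref{i: chin dn-1} is vacuous on $X_n^-$, and it suffices to set $\chi_n(x)=0$ on $\mathfrak{X}_n\cap X_n^-$ and $\chi_n(x)=1$ on $X_n^-\setminus\mathfrak{X}_n$.

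On $X_n^+$ the substantive definition is
\[
\chi_n(x) = \begin{cases} 0 & \text{if } x\in \mathfrak{X}_n,\\[2pt] 1 + \bigl|\{y\in X_n^+\cap D_{n-1}(x,r_n^+ s_n) : y <_n x\}\bigr| & \text{otherwise.} \end{cases}
\]
Property~\ref{i: chin zero} holds because the $\mathfrak{s}_n$–separation of $\mathfrak{X}_n$ in $d_{-1}$ from \Cref{p: xxn}~\ref{i: xxn subset}, the distortion bound $d_{-1}\le L_{n-1}d_{n-1}$ from \Cref{p: xn}~\ref{i: xn dn dn-1}, and the estimate $\mathfrak{s}_n > L_{n-1}r_n^+ s_n$ coming from~\eqref{2rl + sn < sl} together imply that $\mathfrak{X}_n$ meets any $d_{n-1}$–ball of radius $r_n^+ s_n$ in at most one point; and \Cref{l: yn}~\ref{i: yn conditions} shows that such a point is the $<_n$–minimum of the ball, so the formula collapses to $0$ precisely on $\mathfrak{X}_n$. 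The codomain requirement $\chi_n(x)\in I_{n,x}$ is immediate from~\eqref{def inx}.

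For~\ref{i: chin dn-1} on $X_n^+$, given $x<_n y$ with $d_{n-1}(x,y)\le r_n^+ s_n$, I aim to show $\{z\in X_n^+\cap D_{n-1}(x,r_n^+ s_n):z<_n x\}\cup\{x\}$ embeds into $\{z\in X_n^+\cap D_{n-1}(y,r_n^+ s_n):z<_n y\}$. Transitivity handles the $<_n y$ side, so the crux is containment in $D_{n-1}(y,r_n^+ s_n)$. Here I use the lexicographic structure of $<_n$ in \Cref{l: yn}~\ref{i: yn conditions}: since $r_n^+ s_n$ is dwarfed by $\mathfrak{r}_n - K_{n-1}$ (by~\eqref{rn}), the close pair $x,y$ share the same cell data $(\hat{\mathfrak{c}}_n,\hat{\mathfrak{p}}_n)$, so $<_n$ reduces on the relevant ball to comparison of $d_{-1}$–distance to the anchor with a tiebreaker on spheres. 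The $<_n$–predecessors of $x$ then sit on the anchor side of $x$, and a careful geodesic argument using the large factor $s_n$ in $r_n^+ s_n=r_n s_n^2$ places them inside $D_{n-1}(y,r_n^+ s_n)$. Condition~\ref{i: chin h} is equivariance: all data entering the formula are transported by $\mathfrak{h}_{m,z}$ on $D_{-1}(p,\mathfrak{r}_m-K_{n-1})$ (the set $X_n^+$ by \Cref{p: xn}~\ref{i: xn rn-1}, the metric $d_{n-1}$ by \Cref{p: xn}~\ref{i: xn partial}, the order $<_n$ by \Cref{l: yn}~\ref{i: yn rn-1}, and the subset $\mathfrak{X}_n$ by \Cref{p: xxn}~\ref{i: xxn cap bmn} and~\ref{i: xxn h=hh}); the ball $D_n(p,\Gamma_m^+)$ sits inside this domain by~\eqref{gamma r} and~\eqref{kn actual}.

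The main obstacle will be the geometric nesting argument in~\ref{i: chin dn-1}: a naive triangle inequality only gives $d_{n-1}(z,y)\le 2r_n^+ s_n$, so predecessors of $x$ could in principle lie just outside $D_{n-1}(y,r_n^+ s_n)$. The resolution is to restrict to the anchor–side ``hemisphere'' afforded by the lex order, exploiting the large $s_n$–factor built into the radius, with the ``$+5$'' buffer in $|I_{n,x}|$ absorbing any tiebreaker discrepancies on common anchor–spheres.
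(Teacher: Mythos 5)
Your proposal replaces the paper's \emph{greedy first-available coloring} with a \emph{rank-based formula}, and this is where the argument breaks down. The paper defines, for $x\in X_n^\pm\setminus\mathfrak{X}_n$, by transfinite induction along the well-order $\leq_n$,
\[
\chi_n(x)=\min\bigl(I_{n,x}\setminus\bigl(\{0\}\cup\chi_n\bigl(A_x^\pm\cap D_{n-1}(x,r_n^\pm s_n)\bigr)\bigr)\bigr)\;,\qquad A_x^\pm=\{\,y\in X_n^\pm : y<_n x\,\}\;,
\]
i.e.\ the classical greedy coloring: each point gets the least color not used by an already-colored close predecessor, and $|I_{n,x}|=5+|D_{n-1}(x,r_n^\pm s_n)|$ guarantees a color is always available. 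Under this rule, if $y<_nx$ and $d_{n-1}(x,y)\leq r_n^\pm s_n$, then $\chi_n(y)$ is \emph{explicitly excluded} at the step when $\chi_n(x)$ is chosen, so $\chi_n(x)\neq\chi_n(y)$ by construction.

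Your rank formula $\chi_n(x)=1+\bigl|\{y\in X_n^+\cap D_{n-1}(x,r_n^+s_n):y<_nx\}\bigr|$ has no such built-in exclusion. It produces the claimed inequalities only if the sets of close $<_n$-predecessors nest as one moves up in $\leq_n$, and that nesting genuinely fails. Concretely, take $x_1<_nx_2<_nx_3$ in $X_n^+\setminus\mathfrak{X}_n$ with $d_{n-1}(x_1,x_2)\leq r_n^+s_n$, $d_{n-1}(x_2,x_3)\leq r_n^+s_n$ but $d_{n-1}(x_1,x_3)>r_n^+s_n$, and no other points of $X_n^+$ nearby. Then $\chi_n(x_1)=1$, $\chi_n(x_2)=2$, $\chi_n(x_3)=2$, so already the ``in particular'' injectivity conclusion of~\ref{i: chin dn-1} fails for the pair $(x_2,x_3)$ — and that is precisely the part of~\ref{i: chin dn-1} used downstream in \Cref{p: equivalence identity}, \Cref{r. psi adapted}, and \Cref{l: h preserves}. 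Nothing in the lexicographic description of $<_n$ in \Cref{l: yn} rules this configuration out: within a cell the order compares $d_{-1}$-distance to the anchor, but two points at comparable $d_{-1}$-distance from a far-away anchor can sit on opposite sides of $x_2$ in $d_{n-1}$, and the triangle inequality only bounds $d_{n-1}(x_1,x_3)\leq 2r_n^+s_n$, which can exceed $r_n^+s_n$. You flag exactly this obstacle in your last paragraph, but the sketched resolution (``anchor-side hemisphere'' plus the ``$+5$'' buffer in $I_{n,x}$) does not close the gap; the ``$+5$'' exists so that the greedy choice always has room and so that the marker colors $0,\dots,5$ in \Cref{d. phi_{n,x}^0 i} are reserved, not to absorb failures of nesting.

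The greedy formula also makes the equivariance in~\ref{i: chin h} transparent in the way you intend: since all data entering the greedy rule (the metric $d_{n-1}$, the order $\leq_n$, the split $X_n^\pm$, the segments $I_{n,x}$, and the subset $\mathfrak{X}_n$) are transported by $\mathfrak{h}_{m,z}$ on a large enough ball, the forced greedy choice agrees on both sides. Your observation that~\ref{i: chin dn-1} is vacuous on $X_n^-$ (because $r_n^-s_n<2r_n^++1$ and $X_n$ is $(2r_n^++1)$-separated) is correct, but the greedy definition subsumes it without a case split, since $A_x^-\cap D_{n-1}(x,r_n^-s_n)$ is then automatically empty.
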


\begin{proof}
First, set $\chi_n(x)=0$ for all $x\in \xxn$.
Then we define $\chi_n(x)$ for $x\in X_n^\pm\setminus \xxn$ by induction using $\leq_n$.
Let $A_x^\pm=\{\ y\in X_n^\pm\  |\ y<_n x \ \}$, and let 
\begin{equation}\label{defn chin} 
\chi_n(x)=\min\big( I_{n,x} \setminus \big( \{0\} \cup\chi_n\big(A_x^\pm\cap D_{n-1}(x,r_n^\pm s_n)\big) \big) \big)\;.
\end{equation}
Note that this is well defined since 
\[
|A_x^\pm\cap D_{n-1}(x,r_n^\pm s_n)|\leq |D_{n-1}(x,r_n^\pm s_n)|-1\leq |I_{n,x}|-1\;.
\]
With this definition, it is obvious that $\chi_n$ satisfies~\ref{i: chin zero} and~\ref{i: chin dn-1}.

To prove~\ref{i: chin h}, we show by induction on $(X_n\setminus \xxn,\leq_n)$ that, if $x\subset D_n(z,\Gamma_m^+)$ for $(m,z)\in \mathfrak{P}_{n-1}$, then $\chi_n(x)= \chi_n(\mathfrak{h}_{m,z}^{-1}(x))$.
By \Cref{r. yn p},  the set $X_n\cap D_{-1}(p,\rrm - K_{n-1})$ is an initial segment of $(X_n,\leq_n)$.
For $x\in X_n\cap D_{-1}(p,\rrm - K_{n-1})$, the result is trivial since $\mathfrak h_{m,p}$ is the identity.
Suppose  $x\in X_n\cap D_{n}(z,\Gamma_m^+)$ for some $(m,z)\in \mathfrak{P}_{n-1}$ with $z\neq p$.
By~\eqref{rn} and~\eqref{no bm multi}, we have $D_{n-1}(x,r_n^\pm s_n)\subset D_{-1}(z,\rrm - K_{n-1})$.
Thus
\begin{equation}\label{defn chin preserve}
\mathfrak{h}_{m,z}\colon (D_{n-1}(\mathfrak{h}_{m,z}^{-1}(x),r_n^\pm s_n),\leq_n) \to (D_{n-1}(x,r_n^\pm s_n),\leq_n)
\end{equation}
is order-preserving and an $r_n^\pm s_n$-short scale isometry with respect to $d_{n-1}$ by \Cref{p: xn}~\ref{i: xn partial} and \Cref{l: yn}~\ref{i: yn rn-1}.
Therefore
\[
A_x^\pm \cap D_{n-1}(x,r_n^\pm s_n)=\mathfrak{h}_{m,z}(A^\pm_{\mathfrak{h}_{m,z}^{-1}(x)}\cap 
D_{n-1}(\mathfrak{h}_{m,z}^{-1}(x),r_n^\pm s_n) )\;.
\]
Then, by the induction hypothesis, we have
\[
\chi_n(A_x^\pm \cap D_{n-1}(x,r_n^\pm s_n))=\chi_n(A^\pm_{\mathfrak{h}_{m,z}^{-1}(x)}\cap 
D_{n-1}(\mathfrak{h}_{m,z}^{-1}(x),r_n^\pm s_n) ) \;.
\]
Moreover $I_{n,x}= I_{n,\mathfrak{h}_{m,y}^{-1}(x)}$ because~\eqref{defn chin preserve} is order-preserving and an $r_n^\pm s_n$-short scale isometry with respect to $d_{n-1}$.
Then the result follows from~\eqref{defn chin}.
\end{proof}

\subsection{Equivalences}

We define the notion of $n$-equivalence between points $x,y\in X_n$ by induction on $n\in \N$.
In addition, an explicit family of $n$-equivalences will be constructed, together with an induced equivalence relation.

Consider the restriction of the graph structure $E_{n-1}$ to $C_{n,n-1}(x)$ for every $n\in \N$ and $x\in X_n$.
\begin{defn}\label{d. equivalence zero}
For $x,y\in X_0$, a $0$-\emph{equivalence} from $x$ to $y$, denoted by $f\colon x \to y$, is a pointed graph isomorphism $f\colon (\overline C_{0,-1}(x),x) \to (\overline C_{0,-1}(y),y)$ such that $f( C_{0,-1}(x))= C_{0,-1}(f(x))$.
\end{defn}

Let $\sim_0^\pm$ be the equivalence relation on $X_0^\pm$  defined by declaring $x\sim_0^\pm y$ if there is some $0$-equivalence $x\to y$. 
Let $\Phi_0$ be the map defined on $X_0=X_0^+\cupdot X_0^-$ that sends every $x\in X_0^{\pm}$ to its $\sim_0^\pm$-equivalence class. 
The range of each of these maps  is obviously finite.

\begin{lem}\label{l: x0phi}
For  $n\in \N$, there are disjoint subsets $X_0^{-,\Phi}, X_0^{+,\Phi}\subset X_0$ satisfying the following properties:
\begin{enumerate}[(a)]
\item \label{i: x0phi maximal} The sets $X_0^{\pm,\Phi}$ are maximal among the subsets of $X_0^\pm$ where $\Phi_0$ is injective.
\item \label{i: x0phi dn} For $u\in X_0^{\pm,\Phi}$ and $v\in X_0^\pm$, if $\Phi_0(u)=\Phi_0(v)$, then $d_0(u,p)\leq d_0(v,p)$. 
\end{enumerate} 
\end{lem}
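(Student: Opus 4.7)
The plan is simple and essentially an application of the well-ordering of $\N$ combined with the finiteness of the equivalence relation. Recall that the text right before the lemma notes that the range of $\Phi_0$ restricted to $X_0^\pm$ is finite, so we only need to choose one representative per $\sim_0^\pm$-equivalence class, taken to have minimal $d_0$-distance to $p$.

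More precisely, for each sign and each $c\in\Phi_0(X_0^\pm)$, consider the non-empty preimage $A_c^\pm := \Phi_0^{-1}(c)\cap X_0^\pm$. The set of values $\{d_0(v,p)\mid v\in A_c^\pm\}$ is a non-empty subset of $\N$ and hence attains a minimum; pick any $u_c\in A_c^\pm$ realizing this minimum and set
\[
X_0^{\pm,\Phi} = \{\,u_c \mid c\in\Phi_0(X_0^\pm)\,\}.
\]
The disjointness $X_0^{-,\Phi}\cap X_0^{+,\Phi}=\emptyset$ is automatic from $X_0^-\cap X_0^+=\emptyset$ (\Cref{p: xn}).

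For property~\ref{i: x0phi maximal}, the restriction $\Phi_0|_{X_0^{\pm,\Phi}}$ is injective by construction, since its domain contains exactly one point in each $\sim_0^\pm$-class. If $x\in X_0^\pm\setminus X_0^{\pm,\Phi}$, then $\Phi_0(x)=c$ for some $c\in\Phi_0(X_0^\pm)$, so $\Phi_0(x)=\Phi_0(u_c)$ with $u_c\in X_0^{\pm,\Phi}$ and $x\neq u_c$; thus $\Phi_0$ fails to be injective on $X_0^{\pm,\Phi}\cup\{x\}$, proving maximality. Property~\ref{i: x0phi dn} is immediate from the choice: if $v\in X_0^\pm$ satisfies $\Phi_0(v)=\Phi_0(u)$ for some $u=u_c\in X_0^{\pm,\Phi}$, then $v\in A_c^\pm$, and so $d_0(u,p)\le d_0(v,p)$ by the minimality in the definition of $u_c$.

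The only subtlety worth flagging is the role of ``$n$'' in the statement: it appears vacuously since the lemma concerns only $X_0$, so the quantifier can be ignored (or read as a harmless artifact of the inductive set-up that follows). No serious obstacle is anticipated; the argument is a one-line selection argument relying only on the finiteness of the range of $\Phi_0$ and the well-ordering of $\N$.
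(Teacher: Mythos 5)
Your proposal is correct and coincides with the paper's own (one-line) argument: in every $\sim_0^\pm$-equivalence class, choose a representative minimizing the $d_0$-distance to $p$, the rest being routine verification. Nothing further is needed.
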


\begin{proof}
In every $\sim_0^\pm$-equivalence class,  take a representative that minimizes the $d_0$-distance to $p$.
\end{proof}

By \Cref{l: x0phi}, for every  $x\in X_0^{\pm}$, there is a unique element $u\in X_0^{\pm,\Phi}$ satisfying $\Phi_0(x)= \Phi_0(u)$. Let $\rep_0^\pm \colon X_0^\pm \to X_0^{\pm,\Phi}$ be the maps determined by this correspondence, and let $\rep_0\colon X_0\to X_0^\Phi:=X_0^{+,\Phi}\cupdot X_0^{-,\Phi}$ be their union.

\begin{lem}\label{l: rmn hmy zero}
For  all $(m,y)\in \mathfrak{P}_{-1}$ and $x\in X_0^\pm\cap D_{0}(p,\Gamma^+_0)$,  the following properties hold:
\begin{enumerate}[(a)]

\item \label{i: rmn hmy zero ball} $\overline C_{0,-1}(x)\subset \dom\mathfrak{h}_{m,y}$.

\item\label{i: rmn hmy zero iso} The map $\mathfrak{h}_{m,y}$ restricts to a $0$-equivalence $x\to\mathfrak{h}_{m,y}(x)$; in particular, $x\sim_0 \mathfrak{h}_{m,y}(x)$ and $p\sim_0 y$. 

\end{enumerate} 
\end{lem}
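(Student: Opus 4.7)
The idea is that almost all the work is already done: both parts should follow by assembling the general cluster-preservation properties at $n=0$ with the elementary distance control from \Cref{l: cn-1 gamma}, \Cref{p: xn}\ref{i: xn dn dn-1}, and the calibration of $\mathfrak{r}_m$ in~\eqref{rn}. The last sentence about $p\sim_0 y$ will then just be the case $x=p$.

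For part~\ref{i: rmn hmy zero ball}, I would first use \Cref{p: xn}\ref{i: xn dn dn-1} at $n=0$ to translate the hypothesis $d_0(x,p)\le\Gamma_0^+$ into the $d_{-1}$-estimate $d_{-1}(x,p)\le L_0\Gamma_0^+$. Combining this with \Cref{l: cn-1 gamma} (which gives $\overline C_{0,-1}(x)\subset D_{-1}(x,\Gamma_0^+)$) and the triangle inequality yields $\overline C_{0,-1}(x)\subset D_{-1}(p,(L_0+1)\Gamma_0^+)$. Since $(m,y)\in\mathfrak P_{-1}$ forces $y\in\mathfrak X_m$, and the inequalities in \eqref{rn}, \eqref{defn kn} and~\eqref{gamma r} were arranged so that $\mathfrak r_m$ dominates $K_m$ and in particular $(L_0+1)\Gamma_0^+$, the desired inclusion $\overline C_{0,-1}(x)\subset D_{-1}(p,\mathfrak r_m)=\dom\mathfrak h_{m,y}$ follows.

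For part~\ref{i: rmn hmy zero iso}, the key observation is that since $\mathfrak h_{m,y}$ is a pointed isometry on $D_{-1}(p,\mathfrak r_m)$, it automatically restricts to a pointed graph isomorphism between any two induced subgraphs of $D_{-1}(p,\mathfrak r_m)$ and $D_{-1}(y,\mathfrak r_m)$ that it matches setwise, because edges in $X_{-1}=X$ are characterised by $d_{-1}=1$. It therefore suffices to verify that the image of the pair $(\overline C_{0,-1}(x),C_{0,-1}(x))$ is exactly $(\overline C_{0,-1}(\mathfrak h_{m,y}(x)),C_{0,-1}(\mathfrak h_{m,y}(x)))$. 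The second equality is \Cref{l: h preserves Cn} at $n=0$, and the first is equation~\eqref{h preserves overlinec} at $n=0$ (established inside the proof of \Cref{p: xn}\ref{i: xn partial}); both require the distance bound supplied by part~\ref{i: rmn hmy zero ball} together with~\eqref{rn}. To know that $\mathfrak h_{m,y}(x)\in X_0^\pm$ with the same sign as $x$, I would invoke \Cref{p: h preserves xn+} and \Cref{l: h preserves xn-} at $n=0$. This produces a $0$-equivalence in the sense of \Cref{d. equivalence zero}, hence $x\sim_0\mathfrak h_{m,y}(x)$. Finally, specialising to $x=p$, which lies in $D_0(p,\Gamma_0^+)$ trivially and in $X_0$ by \Cref{l: p xn}, gives the $0$-equivalence $\mathfrak h_{m,y}\colon p\to\mathfrak h_{m,y}(p)=y$, so $p\sim_0 y$.

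The only real obstacle is bookkeeping: tracking which of the constants $K_n$, $\overline K_n$, $\Gamma_n^\pm$, $L_n$, $R_n^\pm$ is demanded by each prerequisite and checking that every one is majorised by $\mathfrak r_m$. But the chain of inequalities in~\eqref{rn},~\eqref{overlinekn actual},~\eqref{kn actual} and~\eqref{gamma r} has been set up exactly so that this verification is mechanical, and no new argument is needed beyond the assembly sketched above.
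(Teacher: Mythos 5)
Your proof is correct and follows essentially the same route as the paper's. For part (a) you assemble \Cref{l: cn-1 gamma}, \Cref{p: xn}\ref{i: xn dn dn-1}, and the calibration of $\mathfrak{r}_m$ from~\eqref{rn} exactly as the paper does (the paper happens to write the penumbra radius as $L_0\Gamma_m^+ + \Gamma_0^+$ rather than $(L_0+1)\Gamma_0^+$, because it is implicitly using the bound $d_0(x,p)\le\Gamma_m^+$ — matching \Cref{l: rmn hmy} — rather than the $\Gamma_0^+$ printed in the statement; your weaker version is also fine and even uses the hypothesis as literally written). For part (b) the paper's proof is a terse one-liner pointing to~\eqref{h preserves overlinec} and \Cref{p: xn}\ref{i: xn partial}; your more explicit unwinding — observing that a pointed isometry automatically restricts to a graph isomorphism on whatever subgraphs it matches, then invoking~\eqref{h preserves overlinec} for the $\overline{C}$-clusters, \Cref{l: h preserves Cn} for the $C$-clusters, and \Cref{p: h preserves xn+}/\Cref{l: h preserves xn-} for sign preservation — is exactly the content packed into the paper's citation, and your final specialisation to $x=p$ for $p\sim_0 y$ is correct.
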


\begin{proof}
By \Cref{l: cn-1 gamma} and the triangle inequality, 
 \begin{equation}\label{r0n h0y}
 \overline C_{0,-1}(x)\subset D_{-1}(x,\Gamma^+_0)\subset D_{-1}\left(p,L_{0}\Gamma_m^+ + \Gamma^+_0 \right)\;.
 \end{equation}
By~\eqref{rn} and~\eqref{no bm multi},
\[
\mathfrak{r}_m > 4L_m\Gamma_m^+ + K_m\;.
\]
The assumption $(m,y)\in \mathfrak{P}_{-1}$ implies $m\geq 0$ according to~\eqref{defn rr-1}.
So  $L_m \geq L_0 \geq L_{-1}=1$ by~\eqref{defn functions n} and~\eqref{no bm multi}, $K_m\geq K_0>K_{-1}=0$ by~\eqref{defn bmkn},~\eqref{defn kn} and~\eqref{no bm multi}, and  $\Gamma_m^+\geq R_0^+$ by~\eqref{gamma r}.
Therefore 
\[
\mathfrak{r}_m - K_{-1}- 2L_{-1}R_0^+ > 4L_m\Gamma_m^+ + K_m - 2R_0^+ > L_{0}\Gamma_m^+ + R_0^+ \;.
\]
Then~\eqref{r0n h0y} yields
\begin{equation}\label{inclusion rm0 hmy}
\overline C_{0,-1}(x)\subset D_{-1}\left(p,\mathfrak{r}_m-K_{-1}- 2L_{-1}R_0^+\right)\;,
\end{equation}
completing the proof of~\ref{i: rmn hmy ball} because $\dom\mathfrak{h}_{m,y}=D_{-1}(p,\mathfrak{r}_m)$.

Property~\ref{i: rmn hmy iso} follows from~\eqref{h preserves overlinec} and \Cref{p: xn}~\ref{i: xn partial}. 
\end{proof}

\begin{prop}\label{p: hnx zero}
For  $x\in X_0^\pm$, there is a $0$-equivalence $h_{0,x}\colon\rep_0(x)\to x$ 
satisfying the following properties:
\begin{enumerate}[(i)]
\item \label{i: hnx zero phi} If $x\in X_0^{\pm,\Phi}$, then $h_{0,x}$ is the identity on  $\overline C_{0,-1}(x)$.

\item \label{i: hnx zero hh} For $(m,y)\in \mathfrak{P}_{-1}$ and $x\in X_0\cap D_0(y, \Gamma_0^+)$, we have $h_{0,x}=\mathfrak{h}_{m,y}h_{0, \mathfrak{h}_{m,y}^{-1}(x)}$.

\item \label{i: hnx zero xxn} If $x\in \mathfrak{X}_0$, then $h_{0,x}= \mathfrak{h}_{0,x}|_{ \overline C_{0,-1}(x)}$.
\end{enumerate}
\end{prop}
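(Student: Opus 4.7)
The plan is to set $h_{0,u}=\id_{\overline C_{0,-1}(u)}$ for every $u\in X_0^{\pm,\Phi}$---which secures~\ref{i: hnx zero phi} at once---and then extend to the remaining $x\in X_0$ by using~\ref{i: hnx zero hh} as a transport rule. Observe that~\ref{i: hnx zero xxn} is then forced by~\ref{i: hnx zero phi} together with~\ref{i: hnx zero hh} applied to the pair $(0,x)\in \mathfrak P_{-1}$: since $\mathfrak h_{0,x}^{-1}(x)=p$ and $p\in X_0^\Phi$ (it minimises distance to itself in its own $\sim_0$-class, by \Cref{l: x0phi}~\ref{i: x0phi dn}), we have $h_{0,p}=\id$, and hence $h_{0,x}=\mathfrak h_{0,x}|_{\overline C_{0,-1}(p)}$; reading the image as $\overline C_{0,-1}(x)$ via \Cref{l: rmn hmy zero}~\ref{i: rmn hmy zero iso} gives the stated form in~\ref{i: hnx zero xxn}.

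For the remaining $x\in X_0^\pm\setminus X_0^{\pm,\Phi}$, the idea is to partition $X_0^\pm$ into the orbits of the partial action generated by $x\mapsto \mathfrak h_{m,y}^{-1}(x)$, indexed by $(m,y)\in \mathfrak P_{-1}$ with $x\in D_0(y,\Gamma_0^+)$. By \Cref{l: rmn hmy zero}~\ref{i: rmn hmy zero iso} each orbit lies in a single $\sim_0^{\pm}$-class, and by \Cref{l: x0phi}~\ref{i: x0phi maximal} such a class meets $X_0^{\pm,\Phi}$ in at most one point. On an orbit that meets $X_0^{\pm,\Phi}$, say at $u$, I would propagate $h_{0,u}=\id$ to the rest of the orbit using~\ref{i: hnx zero hh} as a definition, noting that this $u$ is necessarily $\rep_0(x)$ for every $x$ in the orbit; on an orbit disjoint from $X_0^{\pm,\Phi}$, I would fix an arbitrary $x_0$, pick any $0$-equivalence $h_{0,x_0}\colon \rep_0(x_0)\to x_0$ (which exists because $x_0\sim_0\rep_0(x_0)$), and propagate identically. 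Properties~\ref{i: hnx zero phi}--\ref{i: hnx zero xxn} can then be read off from the construction once the propagation is coherent.

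The main obstacle is precisely the coherence of the propagation: two chains of translations $\mathfrak h_{m_i,y_i}$ connecting the base point of an orbit to the same $x$ must yield equal composite $0$-equivalences $\overline C_{0,-1}(\rep_0(x))\to \overline C_{0,-1}(x)$. This should reduce to the hierarchical cocycle $\mathfrak h_{m_1,y_1}=\mathfrak h_{m_2,y_2}\circ \mathfrak h_{m_1,y_1'}$ of \Cref{p: xxn}~\ref{i: xxn h=hh} (with $y_1'=\mathfrak h_{m_2,y_2}^{-1}(y_1)$), provided one first shows that, for a fixed $x$, the pairs $(m,y)\in \mathfrak P_{-1}$ with $y\neq p$ and $x\in D_0(y,\Gamma_0^+)$ form a chain in the partial order on $\mathfrak P_{-1}$. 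This chain property is where the separation constants $\mathfrak s_m$ of the sets $\mathfrak X_m$ enter crucially: the $\mathfrak s_m$-separation dominates the $d_{-1}$-diameter of $D_0(y,\Gamma_0^+)$ (cf.\ \eqref{2rl + sn < sl} and \Cref{p: xn}~\ref{i: xn dn dn-1}), so two distinct pairs at the same level $m$ cannot both be relevant to $x$, while pairs of different levels are automatically nested via \Cref{p: xxn}~\ref{i: xxn cap bmn}; with the chain in hand, the cocycle collapses any two chains to the longest, reducing the check to the tautology that $\mathfrak h_{m,y}$ is itself uniquely defined on $D_{-1}(p,\mathfrak r_m)$.
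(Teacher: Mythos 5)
Your high-level plan coincides with the paper's: set $h_{0,x}=\id$ on $X_0^{\pm,\Phi}$ to settle~\ref{i: hnx zero phi}, derive~\ref{i: hnx zero xxn} from~\ref{i: hnx zero phi} and~\ref{i: hnx zero hh} applied to $(0,x)$, and use~\ref{i: hnx zero hh} as the transport rule elsewhere. Where you diverge is in how far the transport is pushed. Your orbits live entirely inside the fixed-radius disks $D_0(y,\Gamma_0^+)$, $y\in\mathfrak X_0$, and at any $x$ with $d_0(x,\mathfrak X_0)>\Gamma_0^+$ you make $h_{0,x}$ a free choice. The paper instead defines $h_{0,x}$ by induction on a growing filtration $A_m=\bigcupdot_{y\in\mathfrak X_m}D_0(y,\Gamma_m^+)\cap X_0\setminus X_0^\Phi$, $m\ge 0$: on the new central piece $D_0(p,\Gamma_m^+)$ the choice is free, and everywhere else $h_{0,x}=\mathfrak h_{m,y}\,h_{0,\mathfrak h_{m,y}^{-1}(x)}$. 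Since $p\in\mathfrak X_m$ for every $m$ and $\Gamma_m^+\uparrow\infty$, the filtration exhausts $X_0$, so the transport identity is imposed on a $\Gamma_m^+$-ball about each $y\in\mathfrak X_m$, not merely a $\Gamma_0^+$-ball.

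This is more than cosmetic. For~\ref{i: hnx zero hh} as literally printed (with $\Gamma_0^+$) your construction does work, but that $\Gamma_0^+$ appears to be a misprint for $\Gamma_m^+$: the $n\ge 1$ analogue \Cref{p: hnx}~\ref{i: hnx hh} reads $D_n(y,\Gamma_m^+)$, and \Cref{p: definitive rep} invokes $h_{0,\mathfrak h_{n,x}(z)}=\mathfrak h_{n,x}h_{0,z}$ for $z$ ranging over $C_{n,0}(p)$, which lies in a $\Gamma_n^+$-ball, not a $\Gamma_0^+$-ball, about $p$. Under that reading your free choices break~\ref{i: hnx zero hh} wherever $d_0(x,\mathfrak X_0)>\Gamma_0^+$, and the multi-level filtration is exactly what removes this freedom. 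A smaller remark: your coherence discussion is correct but heavier than needed. The $\mathfrak s_m$-separation you cite already shows there is at most one $y\in\mathfrak X_0\setminus\{p\}$ with $x\in D_0(y,\Gamma_0^+)$, so each orbit is a star centred at its unique point in $D_0(p,\Gamma_0^+)$; the ``chain'' then collapses by the restriction compatibility $\mathfrak h_{n,y}=\mathfrak h_{m,y}|_{D(p,\mathfrak r_n)}$ of \Cref{p: xxn}~\ref{i: xxn xxm}, and neither the cocycle~\ref{i: xxn h=hh} nor~\ref{i: xxn cap bmn} (which you cite for the nesting) is really what is doing the work.
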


\begin{proof}
First, set $h_{0,x}=\id_{\overline C_{0,-1}(x)}$  for every $x\in X_0^{\pm,\Phi}$, so that~\ref{i: hnx zero phi} is satisfied. 

Now we are going to  give a different definition of $h_{0,x}$  for $x\in A_m\setminus A_{m-1}$, where 
\[
A_m=\bigcupdot_{y\in \mathfrak{X}_{m}} D_{0}(y, \Gamma_m^+)\cap X_0 \setminus  X_0^{\Phi}
\] 
for $m\geq 0$, and $A_{-1}=\emptyset$. Note that the sets used in this expression of $A_m$ are disjoint  by \Cref{p: xxn}~\ref{i: xxn subset}, since $\mathfrak s_m\ge\Gamma_m^+$ by~\eqref{2rl + sn < sl} and~\eqref{no bm multi}.
This will complete  the definition of $h_{0,x}$ for all $x\in X_0$ because $X_0=\bigcup_{m\ge 0}A_m$ since $p\in\mathfrak{X}_{m}$ (\Cref{p: xxn}~\ref{i: xxn subset})  and $\Gamma_m^+\uparrow\infty$.
Moreover~\ref{i: hnx zero xxn} is a direct consequence of~\ref{i: hnx zero phi} and~\ref{i: hnx zero hh}, and therefore we will  only have to check~\ref{i: hnx zero hh}.

Let $x\in A_m\setminus A_{m-1}$ for $m\ge 0$. On the one hand, if 
\[
x\in  \left(D_0(p, \Gamma^+_m) \setminus X_0^\Phi\right)\setminus A_{m-1}\;,
\]
then let $h_{0,x}$ be any $0$-equivalence $\rep_0(x)\to x$. On the other hand, if
\[
x\in \left(D_{0}(y, \Gamma^+_m) \setminus  X_0^{\Phi}\right)\setminus A_{m-1}
\]
for some $y\in \mathfrak{X}_{m}\setminus\{p\}$, then $\rep_0(x)\in D_0(p,\Gamma^+_m)$ by \Cref{l: x0phi}~\ref{i: x0phi dn}, and let $h_{0,x}=\mathfrak{h}_{m,y} h_{0, \mathfrak{h}_{m,y}^{-1}(x)}$. Note that this composite is well defined because
\[
\im h_{0, \mathfrak{h}_{m,y}^{-1}(x)} = D_{-1}(x,r_0^\pm) \subset D_{-1}(x,R_0^\pm) \subset \dom \mathfrak{h}_{m,y}
\]
by \Cref{l: rmn hmy zero}~\ref{i: rmn hmy zero ball} and~\eqref{r_n^pm le R_n^pm}.
Property~\ref{i: hnx zero hh} is obvious with this definition of $h_{0,x}$.
\end{proof}

Now, given any integer $n>0$, suppose that we have already defined the equivalence relations $\sim_m$, the sets $X_m^\Phi$, and  maps $\rep_m$ and $h_{m,x}$ for $0\leq m <n$.
Let 
\[
\mathcal C_{n,-1}(x)=  \bigcup_{v\in D_{n}(x,n)} \overline C_{n,-1}(v)\;, \quad 
\mathcal C_{n,n-1}(x)=  \bigcup_{v\in D_{n}(x,n)} \overline C_{n,n-1}(v)\;.
\]

\begin{defn}\label{d. equivalence}
For $n\in \N$ and $x,y\in X_n^\pm$, a pointed graph isomorphism $f\colon \left( \mathcal C_{n,-1}(x),x\right)\to \left(\mathcal C_{n,-1}(y),y\right)$  is called an $n$-\emph{equivalence} from $x$ to $y$, denoted by $f \colon x \to y$, if it satisfies the following properties for $0\leq m<n$ and $v\in D_{n}(x,n)$:
\begin{enumerate}[(i)]
\item \label{i: equivalence ball} We have $f(D_{n}(x,n))= D_{n}(f(x),n)$.

\item \label{i: equivalence v} We have $f(\overline C_{n,n-1}(v))= \overline C_{n,n-1}(f(v))$ and $f( C_{n,n-1}(v))=  C_{n,n-1}(f(v))$.

\item \label{i: equivalence cnm} We have 
\[
f\left (X_{n-1}^\pm\cap \mathcal C_{n,n-1}(x) \right)=X_{n-1}^\pm\cap  \mathcal C_{n,n-1}(y)\;,
\]
 and 
\[
f\colon \left(\mathcal C_{n,n-1}(x),\chi_{n-1}\right) \to \left( \mathcal C_{n,n-1}(y),\chi_{n-1}\right)
\] is a color-preserving  graph isomorphism with respect to $E_{n-1}$.

\item \label{i: equivalence xxm} We have 
\[
f\left (\xxyn\cap \mathcal C_{n,n-1}(x) \right)=\xxyn \cap  \mathcal C_{n,n-1}(y)\;.
\]

\item \label{i: equivalence cmm-1} For all $u\in  \CPen_{n-1}(C_{n,n-1}(x),1)$, the restriction $f\colon  \mathcal C_{n-1,-1}(u) \to  \mathcal C_{n-1,-1}(f(u))$ is  $h_{n-1,f(u)} h_{n-1,u}^{-1}$; in particular, it is an $(n-1)$-equivalence.

\end{enumerate}
\end{defn}

\begin{rem}
Note that $X_{n-1}^\pm\cap  \overline C_{n,n-1}(x),\overline  C_{n-1,-1}(u)\subset \overline C_{n,-1}(x)$ by~\eqref{cluster intermediate}.
\end{rem}

\begin{rem}
For $u\in  \CPen_{n-1}(C_{n,n-1}(x),1)$ and $v\in D_{n-1}(u,n-1)$, we get  $d_{n}(x,\pi_n(v))\leq n$ by \Cref{p: xn}~\ref{i: xn dn dn-1} and the definition of $E_n$.
So $\overline C_{n-1,-1}(v)\subset \dom f$ in \Cref{d. equivalence}~\ref{i: equivalence cmm-1}.
\end{rem}

The following lemma is an immediate consequence of \Cref{d. equivalence}.

\begin{lem}\label{l: equivalence composition}
For $n\in \N$, the family of $n$-equivalences  between points of $X_n^\pm$ is closed by the operations of composition and inversion of maps.
\end{lem}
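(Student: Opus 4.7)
The plan is to verify directly that each of the five conditions of \Cref{d. equivalence} is preserved under composition and inversion. Both operations preserve the property of being a pointed graph isomorphism between the relevant pointed graphs, so I will focus on conditions~\ref{i: equivalence ball}--\ref{i: equivalence cmm-1}.

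Suppose $f\colon x\to y$ and $g\colon y\to z$ are $n$-equivalences. For the composition $g\circ f\colon x\to z$, conditions~\ref{i: equivalence ball}--\ref{i: equivalence xxm} are purely set-theoretic statements about images of distinguished subsets (disks $D_n(\cdot,n)$, closed clusters $\overline C_{n,n-1}(v)$, open clusters $C_{n,n-1}(v)$, the subsets $X_{n-1}^\pm$, $\mathfrak{X}_{n-1}$, and level sets of $\chi_{n-1}$). Applying the corresponding property of $f$ and then of $g$ gives each of these identities immediately; color preservation in~\ref{i: equivalence cnm} is transitive. Inversion is handled symmetrically: since each condition has the form ``$f$ maps a distinguished subset onto the corresponding distinguished subset,'' and $f$ is a bijection on $\mathcal C_{n,-1}(x)$, the equalities can be read backwards to yield the analogous statements for $f^{-1}$.

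The delicate part is~\ref{i: equivalence cmm-1}. For composition, take $u\in\CPen_{n-1}(C_{n,n-1}(x),1)$. By~\ref{i: equivalence v} and~\ref{i: equivalence cnm} applied to $f$, the point $f(u)$ lies in $\CPen_{n-1}(C_{n,n-1}(y),1)$, so~\ref{i: equivalence cmm-1} for $g$ applies at $f(u)$. Therefore on $\mathcal C_{n-1,-1}(u)$ we compute
\[
(g\circ f)|_{\mathcal C_{n-1,-1}(u)}
=\bigl(h_{n-1,g(f(u))}h_{n-1,f(u)}^{-1}\bigr)\bigl(h_{n-1,f(u)}h_{n-1,u}^{-1}\bigr)
=h_{n-1,(g\circ f)(u)}h_{n-1,u}^{-1},
\]
which is the required form. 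For inversion, given $u'\in\CPen_{n-1}(C_{n,n-1}(y),1)$, set $u=f^{-1}(u')$, which lies in $\CPen_{n-1}(C_{n,n-1}(x),1)$ by the analogous reasoning applied to $f^{-1}$ on~\ref{i: equivalence v} and~\ref{i: equivalence cnm}. Condition~\ref{i: equivalence cmm-1} for $f$ at $u$ identifies $f|_{\mathcal C_{n-1,-1}(u)}=h_{n-1,u'}h_{n-1,u}^{-1}$, and inverting this yields
\[
f^{-1}|_{\mathcal C_{n-1,-1}(u')}
=h_{n-1,u}h_{n-1,u'}^{-1}
=h_{n-1,f^{-1}(u')}h_{n-1,u'}^{-1},
\]
as required.

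The only real obstacle is bookkeeping: one must confirm at each step that the point $f(u)$ lies in the correct penumbra so that condition~\ref{i: equivalence cmm-1} of the second equivalence is applicable, and that the domains $\mathcal C_{n-1,-1}(u)$ get mapped onto $\mathcal C_{n-1,-1}(f(u))$ so that the algebraic identities of $h$-maps can be written down unambiguously. Both facts follow from conditions~\ref{i: equivalence v}, \ref{i: equivalence cnm} and~\ref{i: equivalence cmm-1} of the given $n$-equivalences, together with the definition of $\mathcal C_{n-1,-1}$ in terms of the already-processed clusters at level $n-1$. Since nothing beyond~\Cref{d. equivalence} itself is used, the argument is entirely formal.
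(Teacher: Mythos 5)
Your proof is correct, and it fills in exactly the verification that the paper treats as an immediate consequence of Definition~\ref{d. equivalence} (the paper gives no explicit proof). In particular, the key step in condition~\ref{i: equivalence cmm-1}---showing that $f(u)$ lies in $\CPen_{n-1}(C_{n,n-1}(y),1)$ so that the $h$-map formula telescopes---is handled properly via conditions~\ref{i: equivalence v} and~\ref{i: equivalence cnm}.
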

 
According to \Cref{l: equivalence composition}, for $n\in \N$, an equivalence relation $\sim_n^\pm$ on $\xn^\pm$ is  defined by declaring $x\sim_n^\pm y$ if there is some $n$-equivalence $x\to y$.
Let $\Phi_n$ be the map defined on $X_n=X_n^+\cupdot X_n^-$ that sends every  $x\in X_n^{\pm}$ to its $\sim_n^\pm$-equivalence class. 
The range of each of these maps is obviously finite. 

\begin{lem}\label{l: xnphi}
For  $n\in \N$, there are disjoint subsets $\xn^{-,\Phi}, \xn^{+,\Phi}\subset X_n$ satisfying the following properties:
\begin{enumerate}[(a)]
\item \label{i: xnphi maximal} The sets $\xn^{\pm,\Phi}$ are maximal among the subsets of $\xn^\pm$ where $\Phi_n$ is injective.
\item \label{i: xnphi dn} For $u\in \xn^{\pm,\Phi}$ and $v\in \xn^\pm$, if $\Phi_n(u)=\Phi_n(v)$, then $d_n(u,p)\leq d_n(v,p)$. 
\end{enumerate} 
\end{lem}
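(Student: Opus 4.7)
The plan is to mirror exactly the argument used for \Cref{l: x0phi}, which treated the base case $n=0$. For each sign $\pm$ and each $\sim_n^\pm$-equivalence class $E$ in $X_n^\pm$, the quantity
\[
m(E)=\inf\{d_n(v,p) \mid v\in E\}
\]
is a non-negative integer (since $d_n$ takes values in $\N$), hence is attained by some point in $E$. The set $X_n^\pm$ is countable by \Cref{l: X is countable} applied to the graph $(X_n,E_n)$, so I can fix a well-order on $X_n$ once and for all and, within every $\sim_n^\pm$-class $E$, select as representative the least element (in that well-order) among those attaining $m(E)$. Define $X_n^{\pm,\Phi}$ to be the set of these chosen representatives.

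Disjointness of $X_n^{-,\Phi}$ and $X_n^{+,\Phi}$ is automatic from $X_n^-\cap X_n^+=\emptyset$. Property~\ref{i: xnphi dn} is built into the construction: if $v\in X_n^\pm$ satisfies $\Phi_n(v)=\Phi_n(u)$ for some $u\in X_n^{\pm,\Phi}$, then $v$ lies in the same $\sim_n^\pm$-class $E$ as $u$, and $d_n(u,p)=m(E)\le d_n(v,p)$ by the choice of $u$. For property~\ref{i: xnphi maximal}, injectivity of $\Phi_n$ on $X_n^{\pm,\Phi}$ is immediate from the one-representative-per-class selection; conversely, any $v\in X_n^\pm\setminus X_n^{\pm,\Phi}$ satisfies $\Phi_n(v)=\Phi_n(u)$ for its class representative $u\in X_n^{\pm,\Phi}$ with $u\ne v$, so $X_n^{\pm,\Phi}\cup\{v\}$ is not a set on which $\Phi_n$ is injective. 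Hence $X_n^{\pm,\Phi}$ is maximal.

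I do not expect a substantive obstacle here: this is a direct choice-of-representatives argument. The only point worth flagging is that the infimum $m(E)$ is attained, which holds trivially because $d_n$ is $\N$-valued; no compactness or finiteness of equivalence classes is needed. This essentially reproduces the one-line proof of \Cref{l: x0phi}, with the sole change that $X_0$ and $d_0$ are replaced by $X_n$ and $d_n$, and the equivalence relation $\sim_0^\pm$ is replaced by $\sim_n^\pm$ from \Cref{d. equivalence}.
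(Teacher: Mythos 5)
Your proof is correct and takes essentially the same approach as the paper's one-line argument: within each $\sim_n^\pm$-equivalence class, pick a representative minimizing $d_n(\cdot,p)$. The additional well-order tie-breaking and the explicit verification of the two properties are harmless elaborations of the same idea.
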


\begin{proof}
In every $\sim_n^\pm$-equivalence class, take  a representative that minimizes the $d_n$-distance  to $p$.
\end{proof}

By \Cref{l: xnphi}, for any $x\in \xn^{\pm}$, there is a unique $u\in \xn^{\pm,\Phi}$ with $\Phi_n(x)= \Phi_n(u)$. Let $\rep_n^\pm \colon \xn^\pm \to \xn^{\pm,\Phi}$ be the maps determined by this correspondence, and let $\rep_n\colon X_n\to X_n^\Phi:=\xn^{+,\Phi}\cupdot \xn^{-,\Phi}$ be their union.

\begin{lem}\label{l: rmn hmy}
For  all $(m,y)\in \mathfrak{P}_{n-1}$ and $x\in X_n^\pm\cap D_{n}(p,\Gamma^+_m)$,  the following properties hold:
\begin{enumerate}[(a)]

\item \label{i: rmn hmy ball} $ \mathcal C_{n,-1}(v)\subset \dom\mathfrak{h}_{m,y}$.

\item\label{i: rmn hmy iso} The map $\mathfrak{h}_{m,y}$ restricts to an $n$-equivalence $x\to\mathfrak{h}_{m,y}(x)$; in particular, $x\sim_n \mathfrak{h}_{m,y}(x)$ and $p\sim_n y$. 

\end{enumerate} 
\end{lem}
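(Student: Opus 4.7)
The plan is to prove \ref{i: rmn hmy ball} and \ref{i: rmn hmy iso} simultaneously by induction on $n$, with $n=0$ being precisely \Cref{l: rmn hmy zero}. Thus, fix $n\ge 1$ and assume the statement for all smaller indices, as well as the existence of maps $h_{n-1,\cdot}$ satisfying the obvious extensions of \Cref{p: hnx zero} to level $n-1$ (these maps are defined inductively just after this lemma in the same pattern as $h_{0,\cdot}$).

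For part \ref{i: rmn hmy ball}, I would first bound $\mathcal C_{n,-1}(x)$ inside a disk of $(X,d_{-1})$ centered at $x$. By definition $\mathcal C_{n,-1}(x)=\bigcup_{v\in D_n(x,n)}\overline C_{n,-1}(v)$, and for each such $v$ one has $d_{-1}(x,v)\le L_n\,n$ by \Cref{p: xn}~\ref{i: xn dn dn-1}, while $\overline C_{n,-1}(v)\subset D_{-1}(v,\Gamma_n^+)$ by \Cref{l: cn-1 gamma}. Combined with the hypothesis $x\in D_n(p,\Gamma_m^+)$, which gives $d_{-1}(p,x)\le L_n\Gamma_m^+$, the triangle inequality yields $\mathcal C_{n,-1}(x)\subset D_{-1}(p,L_n\Gamma_m^++L_n n+\Gamma_n^+)$. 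The choice \eqref{rn} of $\mathfrak r_m$ (together with $K_m\ge K_{n-1}+2L_{n-1}R_n^+$, by \eqref{defn bmkn}--\eqref{kn actual} and \eqref{gamma r}) makes this disk sit inside $D_{-1}(p,\mathfrak r_m-K_{n-1}-2L_{n-1}R_n^+)\subset\dom\mathfrak h_{m,y}$, which is the disk on which previous results about $\mathfrak h_{m,y}$ apply.

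For part \ref{i: rmn hmy iso}, I would set $f:=\mathfrak h_{m,y}|_{\mathcal C_{n,-1}(x)}$ and check the six conditions of \Cref{d. equivalence} one by one. Condition~\ref{i: equivalence ball} follows from the fact that, by \Cref{p: xn}~\ref{i: xn partial}, the restriction of $\mathfrak h_{m,y}$ to the relevant disk is a short-scale isometry with respect to $d_n$ (since $n\le s_{n+1}R_{n+1}^++\Gamma_n^+$). Condition~\ref{i: equivalence v} follows from \Cref{l: h preserves Cn} applied to all $v\in D_n(x,n)$, together with the analogous statement for $\overline C_{n,n-1}$ (its proof is identical, replacing \eqref{cnyn overlinecn} by \eqref{defn overlinecnn-1x}). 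Condition~\ref{i: equivalence cnm} splits into two parts: the preservation of $X_{n-1}^\pm$ by $\mathfrak h_{m,y}$ is \Cref{p: h preserves xn+} together with \Cref{l: h preserves xn-} (and \Cref{c. h preserves z} to preserve the $\pm$ decomposition), while preservation of the coloring $\chi_{n-1}$ is \Cref{p: chin}~\ref{i: chin h}. Condition~\ref{i: equivalence xxm} follows from \Cref{p: xxn}~\ref{i: xxn cap bmn},\ref{i: xxn h=hh}, since $\mathfrak h_{m,y}$ sends relevant elements of $\mathfrak X_{n-1}$ to $\mathfrak X_{n-1}$.

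The main obstacle is condition~\ref{i: equivalence cmm-1}, which ties the current level to the inductive construction of $h_{n-1,\cdot}$. Here I would use the compatibility property of $h_{n-1,\cdot}$ (the analogue of \Cref{p: hnx zero}~\ref{i: hnx zero hh}): for every $u\in\CPen_{n-1}(C_{n,n-1}(x),1)$ lying in $D_{n-1}(p,\Gamma_m^+)$, we have $h_{n-1,\mathfrak h_{m,y}(u)}=\mathfrak h_{m,y}\,h_{n-1,u}$ on $\mathcal C_{n-1,-1}(u)$ (this is where the definition of $h_{n-1,\cdot}$, made exactly to enforce such compatibility across the nested orbits of the $\mathfrak h_{m,y}$, is critical). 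Combined with the fact that such $u$ lies in the domain where $\mathfrak h_{m,y}$ is an $(n-1)$-scale isometry, this identifies $f|_{\mathcal C_{n-1,-1}(u)}=\mathfrak h_{m,y}|_{\mathcal C_{n-1,-1}(u)}$ with $h_{n-1,f(u)}\,h_{n-1,u}^{-1}$, as required. The containment $\mathcal C_{n-1,-1}(u)\subset\dom\mathfrak h_{m,y}$ is itself the inductive instance of~\ref{i: rmn hmy ball} applied at level $n-1$ to $u$, since $u\in D_{n-1}(p,\Gamma_m^+)$ by the above bounds and the short-scale isometry property.
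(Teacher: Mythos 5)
Your argument follows the paper's proof closely: part~(a) uses the same disk bounds (Lemma~\ref{l: cn-1 gamma}, Proposition~\ref{p: xn}, and the size of $\mathfrak r_m$), and part~(b) verifies the five (not six) conditions of Definition~\ref{d. equivalence} by induction on $n$ using essentially the same ingredients. The one place you give more detail is condition~\ref{i: equivalence cmm-1}, which the paper dispatches with ``follows by the induction hypothesis''; your identification of that hypothesis with the compatibility $h_{n-1,f(u)}=\mathfrak h_{m,y}\,h_{n-1,u}$ from the level-$(n-1)$ instance of Proposition~\ref{p: hnx} is exactly the intended content.
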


\begin{proof}
 By \Cref{l: cn-1 gamma}, $\overline C_{n,-1}(v)\subset D_{-1}(v,\Gamma^+_n)$ for every $v\in D_{n}(x,n)$.
 Using the triangle inequality, we get
 \begin{equation}\label{rmn hmy}
 \mathcal C_{n,-1}(v)\subset D_{-1}(x,\Gamma^+_n+ nL_n)\subset D_{-1}\left(p,L_{n}(\Gamma_m^+ +n) + \Gamma^+_n \right)\;.
 \end{equation}
By~\eqref{rn} and~\eqref{no bm multi}, we have
\[
\mathfrak{r}_m > 4L_m(\Gamma_m^++m) + K_m\;.
\]
The assumption $(m,y)\in \mathfrak{P}_{n-1}$ implies $m\geq n$ according to~\eqref{rrn}.
So  $L_m \geq L_n > L_{n-1}$ by~\eqref{defn functions n} and~\eqref{no bm multi}, $K_m\geq K_n>K_{n-1}$ by~\eqref{defn bmkn},~\eqref{defn kn} and~\eqref{no bm multi}, and  $\Gamma_m^+\geq R_n^+$ by~\eqref{gamma r}.
Therefore 
\[
\mathfrak{r}_m - K_{n} > 4L_m(\Gamma_m^+ +m)+ K_m - K_{n}  > L_{n}(\Gamma_m^+ +n) + \Gamma^+_n\;.
\]
Then~\eqref{rmn hmy} yields
\begin{equation}\label{inclusion rmn hmy}
 \mathcal C_{n,-1}(v)\subset D_{-1}\left(p,\mathfrak{r}_m-K_{n}\right)\;,
\end{equation}
completing the proof of~\ref{i: rmn hmy ball} because $\dom\mathfrak{h}_{m,y}=D_{-1}(p,\mathfrak{r}_m)$.

Let us prove~\ref{i: rmn hmy iso}. 
We proceed by induction on $n$.
For $n=0$, the result follows from \Cref{l: rmn hmy zero}~\ref{i: rmn hmy zero iso}.
So suppose that, given some $n>0$, the result is true for $0\leq m<n$.
\Cref{d. equivalence}~\ref{i: equivalence ball} follows from \Cref{p: xn}~\ref{i: xn partial} and~\eqref{inclusion rmn hmy}.
By \Cref{l: h preserves Cn},~\eqref{h preserves overlinec} and~\eqref{inclusion rmn hmy}, we get  $\mathfrak{h}_{m,y}( \overline C_{n,n-1}(u) )= \overline C_{n,n-1}(\mathfrak{h}_{m,y}(u))$ and $\mathfrak{h}_{m,y}( C_{n,n-1}(u) )=  C_{n,n-1}(\mathfrak{h}_{m,y}(u))$ for every $v\in D_{n}(x,n)$ and $u\in\overline C_{n,l}(v)$.
Thus \Cref{d. equivalence}~\ref{i: equivalence v} is satisfied.
The map $\mathfrak{h}_{m,y}\colon \mathcal C_{n,n-1}(v)\to \mathcal C_{n,n-1}(w)$ is a graph isomorphism that preserves $\chi_{n-1}$ by Propositions~\ref{p: xn}~\ref{i: xn partial} and~\ref{p: chin}~\ref{i: chin h}.
Therefore
\[
\mathfrak h_{m,y}(X_m^\pm\cap \mathcal C_{n,n-1}(x))=X_m^\pm\cap \mathcal C_{n,n-1}(y)
\] by \Cref{p: xn}~\ref{i: xn rn-1},\ref{i: xn partial}.
Hence $\mathfrak{h}_{m,y}$ satisfies \Cref{d. equivalence}~\ref{i: equivalence cnm}.
\Cref{d. equivalence}~\ref{i: equivalence cmm-1} follows by the induction hypothesis.
By \Cref{p: xxn}~\ref{i: xxn cap bmn}, we have $\mathfrak{X}_{n-1}\cap D(y,\mathfrak{r}_l)=\mathfrak{h}_{m,y}( \mathfrak{X}^m_{n-1} )$ for each $(m,y)\in \mathfrak{P}_{n-1}$.
In particular, for $(m,y)=(m,p)$, we obtain  $ \mathfrak{X}^m_{n-1}= \mathfrak{X}_{n-1}\cap D(p,\mathfrak{r}_m)$.
So  
\[
\mathfrak{X}_{n-1}\cap  D(y,\mathfrak{r}_l)=\mathfrak{h}_{m,y}(\mathfrak{X}_{n-1}\cap D(p,\mathfrak{r}_l))\;,
\]  and \Cref{d. equivalence}~\ref{i: equivalence xxm} follows using~\eqref{rmn hmy} and~\ref{i: rmn hmy ball}, since $\mathfrak{r}_m\geq R_n^+\geq r_n^\pm$ according to~\eqref{gamma r}--\eqref{r_n^pm le R_n^pm}.
Therefore $\mathfrak{h}_{m,y}$ satisfies \Cref{d. equivalence}~\ref{i: equivalence xxm}.
This completes the proof of~\ref{i: rmn hmy iso}.
\end{proof}

\begin{prop}\label{p: hnx}
For  $n\in \N$ and $x\in \xn$, there is an $n$-equivalence $h_{n,x}\colon \rep_n(x) \to x$ satisfying the following properties:
\begin{enumerate}[(i)]
\item \label{i: hnx phi} If $x\in X_n^{\Phi}$, then $h_{n,x}$ is the identity on $\mathcal C_{n,-1}(x)$.

\item \label{i: hnx hh} For $(m,y)\in \mathfrak{P}_{n-1}$ and $x\in X_n\cap D_n(y, \Gamma_m^+)$, we have $h_{n,x}=\mathfrak{h}_{m,y} h_{n, \mathfrak{h}_{m,y}^{-1}(x)}$.

\item \label{i: hnx xxn} If $x\in \xxn$, then $h_{n,x}= \mathfrak{h}_{n,x}$ on $\mathcal C_{n,-1}(x)$.
\end{enumerate}
\end{prop}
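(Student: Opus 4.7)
My plan is to mirror the argument of \Cref{p: hnx zero} for general $n$, replacing $0$-equivalences by $n$-equivalences and using as inputs the analogous results already proved at level $n$, namely \Cref{l: cn-1 gamma}, \Cref{l: equivalence composition}, \Cref{l: xnphi} and \Cref{l: rmn hmy}. Concretely, I will construct $h_{n,x}$ in two stages: first for $x\in X_n^\Phi$, then inductively over a natural filtration of $X_n\setminus X_n^\Phi$ indexed by the sets $\xxm$.

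For $x\in X_n^\Phi$, set $h_{n,x}=\id_{\mathcal{C}_{n,-1}(x)}$; this is trivially an $n$-equivalence and gives property~\ref{i: hnx phi}. For the remaining points, consider
\[
A_m \;=\; \bigcupdot_{y\in\xxm}\bigl(D_n(y,\Gamma_m^+)\cap X_n\bigr)\setminus X_n^\Phi \qquad (m\geq n),\qquad A_{n-1}=\emptyset.
\]
The displayed union is disjoint by the $\mathfrak{s}_m$-separation of $\xxm$ from \Cref{p: xxn}~\ref{i: xxn subset} together with $\mathfrak{s}_m\geq\Gamma_m^+$ (which holds by~\eqref{2rl + sn < sl} and~\eqref{no bm multi}); since $\Gamma_m^+\uparrow\infty$ and $p\in\xxm$, the sets $A_m$ cover $X_n\setminus X_n^\Phi$. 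Proceeding by induction on $m\geq n$, for $x\in A_m\setminus A_{m-1}$ I proceed in two sub-cases. If $x\in D_n(p,\Gamma_m^+)\setminus X_n^\Phi$, pick any $n$-equivalence $\rep_n(x)\to x$, which exists by the very definition of $\sim_n^\pm$. Otherwise, $x$ lies in $D_n(y,\Gamma_m^+)$ for a unique $y\in\xxm\setminus\{p\}$; by \Cref{l: rmn hmy}~\ref{i: rmn hmy iso} we have $x\sim_n\mathfrak{h}_{m,y}^{-1}(x)$, so \Cref{l: xnphi}~\ref{i: xnphi dn} gives $\mathfrak{h}_{m,y}^{-1}(x)\in D_n(p,\Gamma_m^+)$; this point is therefore either in $X_n^\Phi$ or in $A_{m'}$ for some $m'\le m$ and has already been processed, so I may set
\[
h_{n,x}\;=\;\mathfrak{h}_{m,y}\,h_{n,\,\mathfrak{h}_{m,y}^{-1}(x)}.
\]
This composite is well-defined because $\mathcal{C}_{n,-1}(\mathfrak{h}_{m,y}^{-1}(x))\subset\dom\mathfrak{h}_{m,y}$ by \Cref{l: rmn hmy}~\ref{i: rmn hmy ball}, and it is an $n$-equivalence by \Cref{l: equivalence composition} since both factors are.

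With this construction, property~\ref{i: hnx hh} holds for the particular $(m,y)$ that places $x$ in $A_m\setminus A_{m-1}$. Property~\ref{i: hnx xxn} then follows automatically from~\ref{i: hnx phi} and~\ref{i: hnx hh}: if $x\in\xxn$, then $(n,x)\in\mathfrak{P}_{n-1}$, and \Cref{l: rmn hmy}~\ref{i: rmn hmy iso} gives $p\sim_n x$; since $p$ minimizes $d_n$ to itself, \Cref{l: xnphi}~\ref{i: xnphi dn} forces $p\in X_n^\Phi$ and $\rep_n(x)=p$, so applying~\ref{i: hnx hh} with $(m,y)=(n,x)$ yields $h_{n,x}=\mathfrak{h}_{n,x}\,h_{n,p}=\mathfrak{h}_{n,x}$ on $\mathcal{C}_{n,-1}(x)$, as required.

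The main obstacle is verifying that~\ref{i: hnx hh} actually holds for \emph{every} pair $(m',y')\in\mathfrak{P}_{n-1}$ with $x\in D_n(y',\Gamma_{m'}^+)$, not only for the single pair picked in the inductive step. To handle this, I would compare any two candidate pairs $(m,y)$ and $(m',y')$ using the cocycle identity $\mathfrak{h}_{n,x}=\mathfrak{h}_{l,z}\mathfrak{h}_{n,x'}$ from \Cref{p: xxn}~\ref{i: xxn h=hh} together with the containment partial order on $\mathfrak{P}_{n-1}$: both pairs can be compared to a common dominating element in $\mathfrak{P}_{n-1}$, and the two prescriptions for $h_{n,x}$ then differ only by factorizations of the form $\mathfrak{h}_{m,y}^{-1}\mathfrak{h}_{m',y'}$ which, by the cocycle identity and the inductive hypothesis, cancel out exactly as in the proof of \Cref{p: hnx zero}. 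Everything else --- the distance estimates, the inclusions $\mathcal{C}_{n,-1}(\cdot)\subset\dom\mathfrak{h}_{m,y}$, and the fact that $h_{n,\mathfrak{h}_{m,y}^{-1}(x)}$ is already constructed at the point where it is invoked --- is a routine consequence of \Cref{l: cn-1 gamma}, \Cref{l: rmn hmy}~\ref{i: rmn hmy ball}, and the large-scale bounds on $\mathfrak{r}_m$ recorded in~\eqref{rn}.
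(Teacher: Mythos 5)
Your construction coincides with the paper's own proof: the identity on $X_n^{\Phi}$, the same exhaustion of $X_n\setminus X_n^{\Phi}$ by the sets $A_m$, the same two sub-cases, and the same formula $h_{n,x}=\mathfrak{h}_{m,y}\,h_{n,\mathfrak{h}_{m,y}^{-1}(x)}$, with the paper then simply recording that property~\ref{i: hnx hh} is immediate from this definition and that~\ref{i: hnx xxn} follows from~\ref{i: hnx phi} and~\ref{i: hnx hh}, so your closing verifications only elaborate on what the paper treats as obvious. One caveat: your justification that $\mathfrak{h}_{m,y}^{-1}(x)\in D_n(p,\Gamma_m^+)$ is circular --- \Cref{l: rmn hmy}~\ref{i: rmn hmy iso} already presupposes that the source point lies in $D_n(p,\Gamma_m^+)$, and \Cref{l: xnphi}~\ref{i: xnphi dn} only compares $\rep_n(x)$ with $x$ --- so this membership should instead be extracted from the fact that $\mathfrak{h}_{m,y}$ preserves $E_n$ (hence $d_n$-paths of length at most $\Gamma_m^+$) on a disk whose size is guaranteed by~\eqref{rn} and \Cref{p: xn}, which is also what the paper relies on implicitly.
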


\begin{proof}
First, define $h_{n,x}$ as the identity on $\mathcal C_{n,-1}(x)$ for every $x\in X_n^{\Phi}$, so that~\ref{i: hnx phi} is satisfied. 

Next we are going to give a different definition of  $h_{n,x}$  for $x\in A_m\setminus A_{m-1}$, where
\[
A_m=\bigcupdot_{y\in \mathfrak{X}_{m}} D_{n}(y, \Gamma_m^+)\cap X_n \setminus  X_n^{\Phi}
\] 
for $m\geq n$, and $A_{n-1}=\emptyset$. Note that the sets used in this expression of $A_m$ are disjoint  by \Cref{p: xxn}~\ref{i: xxn subset}, since $\mathfrak s_m\ge\Gamma_m^+$ by~\eqref{2rl + sn < sl} and~\eqref{no bm multi}.
This will complete  the definition of $h_{n,x}$ for all $x\in X_n$ because $X_n=\bigcup_{m\ge n}A_m$ since $p\in\mathfrak{X}_{m}$ (\Cref{p: xxn}~\ref{i: xxn subset})  and $\Gamma_m^+\uparrow\infty$.
Moreover~\ref{i: hnx xxn} is a direct consequence of~\ref{i: hnx phi} and~\ref{i: hnx hh}, and therefore we will  only have to check~\ref{i: hnx hh}.

Let $x\in A_m\setminus A_{m-1}$ for $m\ge n$. On the one hand, if 
\[
x\in  \left(D_n(p, \Gamma^+_m) \cap\xn\setminus\xn^\Phi\right)\setminus A_{m-1}\;,
\]
then let $h_{n,x}\colon \rep_n(x)\to x$ be any $n$-equivalence, whose existence is guaranteed by the definition of $\rep_n$. On the other hand, if
\[
x\in \left(D_{n}(y, \Gamma^+_m)\cap X_n \setminus  X_n^{\Phi}\right)\setminus A_{m-1}
\]
for some $y\in \mathfrak{X}_{m}\setminus\{p\}$, then $\rep_n(x)\in D_n(p,\Gamma^+_m)$ by Lemmas~\ref{l: x0phi}~\ref{i: x0phi dn} and~\ref{l: xnphi}~\ref{i: xnphi dn}. In this case, take $h_{n,x}=\mathfrak{h}_{m,y} h_{n, \mathfrak{h}_{m,y}^{-1}(x)}$, which is well defined because, for $x\in X_n^\pm$,
\[
\im h_{n, \mathfrak{h}_{m,y}^{-1}(x)} = D_{n-1}(x,r_n^\pm) \subset D_{n-1}(x,R_n^\pm) \subset \dom \mathfrak{h}_{m,y}
\]
 by \Cref{l: rmn hmy}~\ref{i: rmn hmy ball} and~\eqref{r_n^pm le R_n^pm}.
Property~\ref{i: hnx hh} is obvious with this definition of $h_{n,x}$.
\end{proof}

\begin{rem}
In accordance with the discussion at the beginning of \Cref{s. xn}, only \Cref{p: hnx}~\ref{i: hnx phi} is needed to prove \Cref{t: finitary}~\ref{i: finitary limit aperiodic}, whereas the whole \Cref{p: hnx} is needed to prove \Cref{t: finitary}~\ref{i: finitary repetitive}.
\end{rem}

\begin{rem}
Note that the definitions of $\sim_n^\pm$, $\Phi_n$ e $\rep_n^\pm$, and the properties of $\xn^{\pm,\Phi}$ already guarantee the existence of $n$-equivalences $h_{n,x}$.
Moreover there is no problem to assume~\ref{i: hnx phi} and~\ref{i: hnx xxn}.
So the really new contribution of \Cref{p: hnx} is~\ref{i: hnx hh}.
\end{rem}

\subsection{Weak equivalences}

Next we introduce another notion of equivalence very similar to that of $n$-equivalence. We need both concepts due to the way  we prove the crucial \Cref{l: h preserves}. In that result,  we will first prove that a certain map is an $n$-weak equivalence, concluding that it is in fact an $n$-equivalence over a smaller domain.

\begin{defn}\label{d. weak equivalence zero}
For $x,y\in X_0$, a $0$-\emph{weak equivalence} from $x$ to $y$, denoted by $f\colon x \to y$, is a pointed graph isomorphism $(D_{-1}(x,r_0^\pm),x) \to (D_{-1}(y,r_0^\pm),y)$.
\end{defn}

Let $\widehat \sim_0^\pm$ be the equivalence relation on $X_0^\pm$  defined by declaring $x\widehat \sim_0^\pm y$  if there is some $0$-weak equivalence $x\to y$. 
Let $\widehat\Phi_0$ be the map defined on $X_0=X_0^+\cupdot X_0^-$ that sends every  $x\in X_0^{\pm}$ to its $\widehat \sim_0^\pm$-equivalence class. 
The range of each of these maps  is obviously finite. 

The next result follows easily from \Cref{l: cnn-1 contained}.

\begin{lem}\label{l: weak}
Any $0$-equivalence $x\to y$ restricts to a $0$-weak equivalence $x\to y$; in particular, $x\sim_0 y$ implies $x \widehat{\sim}_0 y$.
\end{lem}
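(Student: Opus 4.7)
The plan is to show that the restriction of a $0$-equivalence $f\colon x\to y$ to the smaller disk $D_{-1}(x,r_0^\pm)$ lands exactly in $D_{-1}(y,r_0^\pm)$, and that it remains a pointed graph isomorphism there, which is precisely the definition of a $0$-weak equivalence.

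First, I would verify that the restriction makes sense: by \Cref{l: cnn-1 contained} applied at $n=0$, we have the chain of inclusions $D_{-1}(x,r_0^\pm)\subset C_{0,-1}(x)\subset \overline{C}_{0,-1}(x)$, so $D_{-1}(x,r_0^\pm)$ lies in the domain of $f$, and similarly $D_{-1}(y,r_0^\pm)\subset\overline{C}_{0,-1}(y)$.

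The key step is to compare the ambient metric $d_{-1}$ with the metric of the subgraph $\overline{C}_{0,-1}(x)$. For any $z\in D_{-1}(x,r_0^\pm)$, fix a minimizing geodesic segment in $X_{-1}$ from $x$ to $z$; since $z\in C_{0,-1}(x)$, \Cref{l: cnn-1 connected} (the star-shapedness of $C_{0,-1}(x)$ around $x$) ensures that this geodesic is entirely contained in $C_{0,-1}(x)\subset \overline{C}_{0,-1}(x)$. Hence $d_{\overline{C}_{0,-1}(x)}(x,z)=d_{-1}(x,z)\le r_0^\pm$. Because $f\colon(\overline{C}_{0,-1}(x),x)\to(\overline{C}_{0,-1}(y),y)$ is a pointed graph isomorphism, it is an isometry for the respective subgraph metrics, so $d_{\overline{C}_{0,-1}(y)}(y,f(z))=d_{\overline{C}_{0,-1}(x)}(x,z)\le r_0^\pm$, and therefore $d_{-1}(y,f(z))\le r_0^\pm$. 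This yields $f(D_{-1}(x,r_0^\pm))\subset D_{-1}(y,r_0^\pm)$. Applying the same reasoning to $f^{-1}$ (which is a $0$-equivalence $y\to x$ by symmetry of \Cref{d. equivalence zero}) gives the reverse inclusion, hence equality.

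Finally, since both $D_{-1}(x,r_0^\pm)$ and $D_{-1}(y,r_0^\pm)$ carry the graph structure induced from $X_{-1}$, and the subgraph structures on the $\overline{C}_{0,-1}$-sets are also induced from $X_{-1}$, an edge of $D_{-1}(x,r_0^\pm)$ is just an edge of $\overline{C}_{0,-1}(x)$ between two points already in $D_{-1}(x,r_0^\pm)$. Because $f$ preserves edges of $\overline{C}_{0,-1}(x)$ and sends $D_{-1}(x,r_0^\pm)$ bijectively onto $D_{-1}(y,r_0^\pm)$, its restriction is a graph isomorphism; combined with $f(x)=y$, it is a pointed graph isomorphism, i.e.\ a $0$-weak equivalence by \Cref{d. weak equivalence zero}. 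The second assertion follows immediately from the first. There is no real obstacle here beyond correctly invoking the star-shapedness lemma to pass from the graph distance in the cluster back to the ambient distance.
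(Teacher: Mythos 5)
Your proof is correct and fills in the detail the paper elides with ``follows easily from \Cref{l: cnn-1 contained}.'' The one ingredient you add beyond that containment lemma is \Cref{l: cnn-1 connected} (star-shapedness of $C_{0,-1}(x)$), and it is indeed needed: a graph isomorphism of the $\overline{C}_{0,-1}$-clusters a priori preserves only the cluster metric, and the star-shapedness is exactly what identifies the cluster metric with the ambient $d_{-1}$ on $D_{-1}(x,r_0^\pm)$, so that the restriction of $f$ carries the ambient ball onto the ambient ball.
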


\begin{lem}\label{l: weak x0phi}
For  $n\in \N$, there are disjoint subsets $X_0^{-,\widehat \Phi}, X_0^{+,\widehat \Phi}\subset X_0$ satisfying the following properties:
\begin{enumerate}[(a)]

\item \label{i: weak x0phi maximal} The sets $X_0^{\pm,\widehat \Phi}$ are maximal among the subsets of $X_0^\pm$ where $\widehat \Phi_0$ is injective.

\item \label{i: weak x0phi dn} For $u\in X_0^{\pm,\widehat \Phi}$ and $v\in X_0^\pm$, if $\widehat \Phi_0(u)=\widehat \Phi_0(v)$, then $d_0(u,p)\leq d_0(v,p)$. 

\item \label{i: weak x0phi hat} We have $X_0^{\pm, \widehat \Phi}\subset X_0^{\pm, \Phi}$.
\end{enumerate} 
\end{lem}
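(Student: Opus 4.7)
The plan is to mimic the construction in Lemma~\ref{l: x0phi}, exploiting the fact that $\sim_0^\pm$ refines $\widehat{\sim}_0^\pm$. By Lemma~\ref{l: weak}, any $\sim_0^\pm$-equivalence class is contained in a unique $\widehat{\sim}_0^\pm$-class, so every $\widehat{\sim}_0^\pm$-class $\widehat{C}\subset X_0^\pm$ decomposes as a disjoint union $\widehat{C}=C_1\cupdot\cdots\cupdot C_k$ of $\sim_0^\pm$-classes. Also, the $\widehat{\sim}_0^\pm$-classes are obviously finite in number up to $\widehat{\Phi}_0$, since the relevant balls $D_{-1}(x,r_0^\pm)$ have bounded cardinality.

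First I would pick, in each $\widehat{\sim}_0^\pm$-class $\widehat C$ of $X_0^\pm$, a representative $u\in\widehat C$ minimizing $d_0(\cdot,p)$ over $\widehat C$, and let $X_0^{\pm,\widehat{\Phi}}$ be the resulting set of representatives. Since $X_0^{\pm,\widehat{\Phi}}$ contains exactly one element from each $\widehat{\sim}_0^\pm$-class of $X_0^\pm$, item~\ref{i: weak x0phi maximal} is immediate, and item~\ref{i: weak x0phi dn} holds directly by construction. The two sets $X_0^{-,\widehat{\Phi}}$ and $X_0^{+,\widehat{\Phi}}$ are disjoint because $X_0^-$ and $X_0^+$ are.

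The heart of the argument is~\ref{i: weak x0phi hat}. The key observation is elementary: if $u\in\widehat C$ minimizes $d_0(\cdot,p)$ over $\widehat C$ and $C\subset\widehat C$ denotes the $\sim_0^\pm$-class containing $u$, then $u$ automatically minimizes $d_0(\cdot,p)$ over the smaller set $C$ as well. Since Lemma~\ref{l: x0phi} only asserts existence, requiring us merely to pick, in each $\sim_0^\pm$-class, \emph{some} element minimizing $d_0(\cdot,p)$, I would re-select those representatives coordinately: for each $\sim_0^\pm$-class $C$ containing some $u\in X_0^{\pm,\widehat{\Phi}}$, take the representative of $C$ in $X_0^{\pm,\Phi}$ to be that $u$; for every other $\sim_0^\pm$-class, pick any minimizer of $d_0(\cdot,p)$ as in the original proof. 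By the observation above, every such choice is legal, yielding admissible sets $X_0^{\pm,\Phi}$ that contain $X_0^{\pm,\widehat{\Phi}}$.

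I do not anticipate a serious obstacle; this is essentially a bookkeeping argument. The only delicate point is the simultaneous choice of representatives for the two equivalence relations, and this is made possible precisely because $\sim_0^\pm$ refines $\widehat{\sim}_0^\pm$ (Lemma~\ref{l: weak}) together with the flexibility allowed by the existence statement in Lemma~\ref{l: x0phi}.
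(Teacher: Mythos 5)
Your proof is correct and follows essentially the same route as the paper, which likewise just selects in every $\widehat\sim_0^\pm$-equivalence class a representative minimizing the $d_0$-distance to $p$ (items (a) and (b) being immediate from that choice). Your additional step—re-coordinating the choice of $X_0^{\pm,\Phi}$ so that each such representative is also the chosen $\sim_0^\pm$-representative, which is legitimate because a minimizer over a $\widehat\sim_0^\pm$-class is automatically a minimizer over the $\sim_0^\pm$-class it lies in (by Lemma~\ref{l: weak}) and Lemma~\ref{l: x0phi} only requires some minimizer—simply makes explicit the compatibility of choices for item (c) that the paper leaves implicit.
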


\begin{proof}
In every $\widehat\sim_0^\pm$-equivalence class, take  a representative that minimizes the $d_0$-distance to $p$.
\end{proof}

By \Cref{l: weak x0phi}, for any $x\in X_0^{\pm}$, there is a unique $u\in X_0^{\pm,\widehat \Phi}$ satisfying $\widehat \Phi_0(x)= \widehat \Phi_0(u)$. Let $\widehat \rep_0^\pm \colon X_0^\pm \to X_0^{\pm,\widehat \Phi}$ be the maps determined by this correspondence, and let $\widehat \rep_0\colon X_0\to X_0^{\widehat \Phi}:=X_0^{+,\widehat \Phi}\cupdot X_0^{-,\widehat \Phi}$ be their union.

The following lemma follows from \Cref{l: rmn hmy zero,l: weak}.

\begin{lem}\label{l: weak rmn hmy zero}
For  all $(m,y)\in \mathfrak{P}_{-1}$ and $x\in X_0^\pm\cap D_{0}(p,\Gamma^+_0)$,  the following properties hold:
\begin{enumerate}[(a)]

\item \label{i: weak rmn hmy zero ball} $D_{-1}(x,r_0^\pm)(x)\subset \dom\mathfrak{h}_{m,y}$.

\item\label{i: weak rmn hmy zero iso} The map $\mathfrak{h}_{m,y}$ restricts to a $0$-weak equivalence $x\to\mathfrak{h}_{m,y}(x)$; in particular, $x\, \widehat \sim_0 \,\mathfrak{h}_{m,y}(x)$ and $p\,\widehat \sim_0 \,y$.

\end{enumerate} 
\end{lem}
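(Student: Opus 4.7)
The plan is immediate: this lemma is essentially a direct weakening of \Cref{l: rmn hmy zero} via \Cref{l: weak}, with a size reduction coming from \Cref{l: cnn-1 contained}. The proof should take only a few lines.

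First, I would establish part~\ref{i: weak rmn hmy zero ball}. By \Cref{l: rmn hmy zero}~\ref{i: rmn hmy zero ball}, the larger set $\overline{C}_{0,-1}(x)$ is already contained in $\dom \mathfrak{h}_{m,y}$. Since \Cref{l: cnn-1 contained} (applied at $n=0$) gives $D_{-1}(x,r_0^\pm) \subset C_{0,-1}(x) \subset \overline{C}_{0,-1}(x)$, part~\ref{i: weak rmn hmy zero ball} follows.

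Next, for part~\ref{i: weak rmn hmy zero iso}, I would invoke \Cref{l: rmn hmy zero}~\ref{i: rmn hmy zero iso} to conclude that $\mathfrak{h}_{m,y}$ restricts to a $0$-equivalence $x \to \mathfrak{h}_{m,y}(x)$. By \Cref{l: weak}, any $0$-equivalence restricts to a $0$-weak equivalence between the same endpoints, using part~\ref{i: weak rmn hmy zero ball} to identify the domain $D_{-1}(x,r_0^\pm)$. This gives $x \,\widehat{\sim}_0\, \mathfrak{h}_{m,y}(x)$. For the final assertion $p \,\widehat{\sim}_0\, y$, I would specialize to $x = p$: by \Cref{l: p xn}, $p \in X_0$ (so $p \in X_0^\pm$ for one sign), and trivially $p \in D_0(p,\Gamma_0^+)$; since $\mathfrak{h}_{m,y}$ is a pointed isometry $(D_{-1}(p,\mathfrak{r}_m),p) \to (D_{-1}(y,\mathfrak{r}_m),y)$, we have $\mathfrak{h}_{m,y}(p) = y$, and the preceding conclusion specializes to $p \,\widehat{\sim}_0\, y$.

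There is essentially no obstacle here; the substantive content resides in \Cref{l: rmn hmy zero,l: weak,l: cnn-1 contained}, and this lemma is a purely formal corollary. The only small point of care is to check that the basepoint $p$ is indeed in $X_0$ (hence in $X_0^+$ or $X_0^-$) so that the $\widehat{\sim}_0$ relation applies to the pair $(p,y)$, which is immediate from \Cref{l: p xn}.
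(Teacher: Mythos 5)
Your proposal is correct and matches the paper's own (one-line) argument, which simply derives the lemma from \Cref{l: rmn hmy zero} together with \Cref{l: weak} (the latter itself resting on \Cref{l: cnn-1 contained}). Your extra remarks on the containment $D_{-1}(x,r_0^\pm)\subset C_{0,-1}(x)$ and on $p\in X_0$ with $\mathfrak{h}_{m,y}(p)=y$ are just explicit spellings-out of the same route.
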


\begin{prop}\label{p: weak hnx zero}
For any $x\in X_0^\pm$, there is a $0$-weak equivalence $\hat h_{0,x}:\widehat\rep_0(x)\to x$ satisfying the following properties:
\begin{enumerate}[(i)]
\item \label{i: weak hnx zero phi} If $x\in X_0^{\pm,\widehat \Phi}$, then $\hat h_{0,x}$ is the identity on  $D_{-1}(x,r_0^\pm)(x)$.

\item \label{i: weak hnx zero hh} For all $x\in X_0^{\pm}$, we have  $\hat h_{0,x}=  h_{0,x} \hat h_{0,\rep_0(x)}$.
\end{enumerate}
\end{prop}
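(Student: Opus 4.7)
The plan is to mirror the construction of $h_{0,x}$ in \Cref{p: hnx zero}, stratifying $X_0$ according to $\widehat\rep_0$ rather than $\rep_0$. Three cases arise. First, for $x\in X_0^{\pm,\widehat\Phi}$, set $\hat h_{0,x}=\id_{D_{-1}(x,r_0^\pm)}$, so that (i) is automatic. Second, for $x\in X_0^{\pm,\Phi}\setminus X_0^{\pm,\widehat\Phi}$, let $\hat h_{0,x}$ be any $0$-weak equivalence $\widehat\rep_0(x)\to x$; one exists by the very definition of $\widehat\Phi_0$. Third, for $x\in X_0^\pm\setminus X_0^{\pm,\Phi}$, define $\hat h_{0,x}:=h_{0,x}\circ\hat h_{0,\rep_0(x)}$, where $\hat h_{0,\rep_0(x)}$ was constructed in Stage~1 or~2 (since $\rep_0(x)\in X_0^{\pm,\Phi}$), so the recursion terminates.

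To check the third clause is well-posed, I would first argue that $h_{0,x}$ restricts to a $0$-weak equivalence $\rep_0(x)\to x$. This is essentially the content of \Cref{l: weak}; explicitly, by \Cref{l: cnn-1 contained} the ambient disk $D_{-1}(\rep_0(x),r_0^\pm)$ lies inside $C_{0,-1}(\rep_0(x))\subset\overline C_{0,-1}(\rep_0(x))$, so every $d_{-1}$-geodesic from $\rep_0(x)$ to a point of this disk stays within $\overline C_{0,-1}(\rep_0(x))$; the pointed graph isomorphism $h_{0,x}$ then carries such a geodesic to a path of the same length in $\overline C_{0,-1}(x)$ landing inside $D_{-1}(x,r_0^\pm)$, and the same argument applied to $h_{0,x}^{-1}$ yields the reverse inclusion. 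Since $\rep_0(x)\sim_0 x$ implies $\rep_0(x)\,\widehat\sim_0\, x$ by \Cref{l: weak}, we have $\widehat\rep_0(\rep_0(x))=\widehat\rep_0(x)$, so $\hat h_{0,\rep_0(x)}$ has source $\widehat\rep_0(x)$. The composition $\hat h_{0,x}$ is then a $0$-weak equivalence $\widehat\rep_0(x)\to x$, as desired.

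It then remains to verify (ii). This holds by construction in Stage~3. In Stages~1 and~2 one has $\rep_0(x)=x$, because $X_0^{\pm,\widehat\Phi}\subset X_0^{\pm,\Phi}$ by \Cref{l: weak x0phi}~\ref{i: weak x0phi hat}, so \Cref{p: hnx zero}~\ref{i: hnx zero phi} yields $h_{0,x}=\id$ on $\overline C_{0,-1}(x)\supset D_{-1}(x,r_0^\pm)$, and (ii) collapses to the tautology $\hat h_{0,x}=\hat h_{0,x}$. The only genuine subtlety is the comparison of subgraph and ambient metrics in Stage~3—a short-scale isometry phenomenon akin to \Cref{l: d_Y = d_X}—but the argument above using \Cref{l: cnn-1 contained} dispatches it directly, so no deeper input is needed.
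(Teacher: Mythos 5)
Your construction coincides with the paper's own proof: the same three-stage definition (the identity on $X_0^{\pm,\widehat\Phi}$, an arbitrary $0$-weak equivalence $\widehat\rep_0(x)\to x$ on $X_0^{\pm,\Phi}\setminus X_0^{\pm,\widehat\Phi}$, and $h_{0,x}\hat h_{0,\rep_0(x)}$ on $X_0^\pm\setminus X_0^{\pm,\Phi}$), and your added checks—that $h_{0,x}$ restricts to a $0$-weak equivalence $\rep_0(x)\to x$, that $\widehat\rep_0(\rep_0(x))=\widehat\rep_0(x)$, and that property (ii) is tautological in the first two stages—are all correct. Nothing further is needed.
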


\begin{proof}
First, for every $x\in X_0^{\pm, \widehat \Phi}$, let $\hat h_{0,x}$  be the identity on $D_{-1}(x,r_0^\pm)$.
Then, for points $x\in X_0^{\pm, \Phi}\setminus X_0^{\pm,\widehat \Phi}$, let $\hat h_{0,x}\colon \widehat \rep_0(x)\to x$ be any $0$-weak equivalence.
Finally, for every $x\in X_0\setminus X_0^{\pm, \Phi}$, let $\hat h_{0,x}=  h_{0,x} \hat h_{0,\rep_0(x)}$.
\end{proof}

Now, given any integer $n>0$, suppose that we have already defined the equivalence relations $\widehat\sim_m$, the sets $X_m^{\widehat\Phi}$, and the maps $\widehat\rep_m$ and $\hat h_{m,x}$ for $0\leq m <n$.
For $x\in X_n^\pm$,  let 
\[
\mathscr{C}_{n}(x)= \bigcup_{u\in D_{n-1}(x,r_n^\pm)} \overline C_{n-1,-1}(u)\;.
\]

\begin{defn}\label{d. weak equivalence}
For $n\in \N$ and $x,y\in X_n^\pm$, a pointed graph isomorphism $f\colon( \mathscr{C}_{n}(x),x)\to(\mathscr{C}_{n}(y),y)$  is called an $n$-\emph{weak equivalence} from $x$ to $y$, denoted by $f:x\to y$, if it satisfies the following properties for $0\leq m<n$ and $v\in D_{n}(x,n)$:
\begin{enumerate}[(i)]
\item \label{i: weak equivalence ball} We have $f(D_{n-1}(x,r_n^\pm))= D_{n-1}(y,r_n^\pm)$.

\item \label{i: weak equivalence cnm} We have 
\[
f\left (X_{n-1}^\pm\cap D_{n-1}(x,r_n^\pm) \right)=X_{n-1}^\pm\cap D_{n-1}(y,r_n^\pm)\;,
\]
and 
\[
f\colon \left( D_{n-1}(x,r_n^\pm),\chi_{n-1}\right) \to \left(  D_{n-1}(y,r_n^\pm),\chi_{n-1}\right)
\] 
is a color-preserving  graph isomorphism with respect to $E_{n-1}$.

\item \label{i: weak equivalence xxm} We have \[
f\left (\xxyn\cap D_{n-1}(x,r_n^\pm) \right)=\xxyn \cap  D_{n-1}(x,r_n^\pm)\;.
\]

\item \label{i: weak equivalence cmm-1} For every $u\in  D_{n-1}(x,r_n^\pm-1)$,  the restriction $f\colon  \mathcal C_{n-1}(u) \to  \mathcal C_{n-1}(f(u))$ equals $h_{n-1,f(u)} h_{n-1,u}^{-1}$; in particular, it is an $(n-1)$-equivalence.

\end{enumerate}
\end{defn}

\begin{rem}
Note that, for $n>0$, $x\in X_n$ and $u\in  D_{n-1}(x,r_n^\pm-1)$, we have $\mathcal C_{n-1}(u)\subset \mathscr C_{n}(x)$ because $D_{n-1}(u,1)\subset D_{n-1}(x,r_n^\pm)$.
\end{rem}

The following lemma is an immediate consequence of \Cref{d. equivalence,d. weak equivalence}.

\begin{lem}\label{l: weak equivalence composition}
The family of $n$-weak equivalences between points of $X_n^\pm$ is closed by the operations of composition and inversion of maps. Moreover the composition of an $n$-weak equivalence and an $n$-equivalence is an $n$-weak equivalence; in particular, every $n$-equivalence is an $n$-weak equivalence.
\end{lem}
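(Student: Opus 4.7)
I will verify the first assertion by checking each of the four defining conditions of \Cref{d. weak equivalence} directly; the second assertion will then follow by showing that every $n$-equivalence $f\colon \mathcal C_{n,-1}(x)\to\mathcal C_{n,-1}(y)$ restricts to an $n$-weak equivalence $\mathscr C_n(x)\to\mathscr C_n(y)$. Closure under inversion is essentially formal: conditions~\ref{i: weak equivalence ball}--\ref{i: weak equivalence xxm} are manifestly symmetric in $x$ and $y$, and condition~\ref{i: weak equivalence cmm-1} is symmetric once one notes that $(h_{n-1,f(u)}h_{n-1,u}^{-1})^{-1}=h_{n-1,u}h_{n-1,f(u)}^{-1}$ already has the required form with roles reversed.

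For closure under composition, suppose $f\colon x\to y$ and $g\colon y\to z$ are $n$-weak equivalences. Conditions~\ref{i: weak equivalence ball},~\ref{i: weak equivalence cnm} and~\ref{i: weak equivalence xxm} transfer immediately to $g\circ f$. For~\ref{i: weak equivalence cmm-1}, given $u\in D_{n-1}(x,r_n^\pm-1)$ I first show that $f(u)\in D_{n-1}(y,r_n^\pm-1)$: since the disk $D_{n-1}(x,r_n^\pm)$ is star-shaped with center $x$ (geodesics to the center stay inside a disk), the $E_{n-1}$-graph isomorphism $f\colon D_{n-1}(x,r_n^\pm)\to D_{n-1}(y,r_n^\pm)$ coming from~\ref{i: weak equivalence cnm} preserves distance to the center, whence $d_{n-1}(y,f(u))=d_{n-1}(x,u)\le r_n^\pm-1$. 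Then condition~\ref{i: weak equivalence cmm-1} applies to $f$ at $u$ and to $g$ at $f(u)$, and the telescoping calculation
\[
(g\circ f)|_{\mathcal C_{n-1}(u)}=\bigl(h_{n-1,g(f(u))}h_{n-1,f(u)}^{-1}\bigr)\bigl(h_{n-1,f(u)}h_{n-1,u}^{-1}\bigr)=h_{n-1,g(f(u))}h_{n-1,u}^{-1}
\]
exhibits the composition in the required form.

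To show that every $n$-equivalence $f\colon x\to y$ restricts to an $n$-weak equivalence, I first observe that $\mathscr C_n(x)\subset\mathcal C_{n,-1}(x)$: taking $v=x$ in the definition of $\mathcal C_{n,-1}(x)$, together with \Cref{l: cnn-1 contained} and~\eqref{cluster intermediate}, gives $\mathscr C_n(x)=\bigcup_{u\in D_{n-1}(x,r_n^\pm)}\overline C_{n-1,-1}(u)\subset\overline C_{n,-1}(x)\subset \mathcal C_{n,-1}(x)$. The critical step is verifying condition~\ref{i: weak equivalence ball}: by \Cref{l: cnn-1 contained} the disk $D_{n-1}(x,r_n^\pm)$ sits inside $C_{n,n-1}(x)$; by \Cref{l: cnn-1 connected} the cluster $C_{n,n-1}(x)$ is star-shaped with center $x$ in $(X_{n-1},E_{n-1})$; and conditions~\ref{i: equivalence v}--\ref{i: equivalence cnm} of \Cref{d. equivalence} make $f$ restrict to an $E_{n-1}$-graph isomorphism $C_{n,n-1}(x)\to C_{n,n-1}(y)$. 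Combining these gives $d_{n-1}(y,f(u))=d_{n-1}(x,u)$ for all $u\in C_{n,n-1}(x)$, hence $f(D_{n-1}(x,r_n^\pm))=D_{n-1}(y,r_n^\pm)$. Conditions~\ref{i: weak equivalence cnm} and~\ref{i: weak equivalence xxm} then restrict directly from~\ref{i: equivalence cnm} and~\ref{i: equivalence xxm}, while~\ref{i: weak equivalence cmm-1} is inherited from~\ref{i: equivalence cmm-1} since $D_{n-1}(x,r_n^\pm-1)\subset C_{n,n-1}(x)\subset\CPen_{n-1}(C_{n,n-1}(x),1)$.

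The main obstacle will be this last verification: although the two notions of equivalence look nearly identical, their domains differ, and the inner-disk control demanded by \Cref{d. weak equivalence}~\ref{i: weak equivalence ball} is not packaged into \Cref{d. equivalence}; extracting it requires combining the star-shapedness of clusters with the $E_{n-1}$-isomorphism property to translate between intrinsic cluster distances and ambient graph distances. Once the restriction of every $n$-equivalence to an $n$-weak equivalence is in place, the claim that composites of $n$-weak equivalences with $n$-equivalences are $n$-weak equivalences reduces to the composition case already treated, completing the proof.
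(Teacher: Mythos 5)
Your detailed verifications are essentially sound, and they supply far more than the paper does: the paper's entire proof is the sentence that the lemma is an immediate consequence of \Cref{d. equivalence,d. weak equivalence}. Your closure-under-composition and inversion arguments are correct, and your key observation is the right one: since disks and the clusters $C_{n,n-1}(x)$ (by \Cref{l: cnn-1 connected}) are star-shaped about their centers, a pointed $E_{n-1}$-isomorphism supplied by \Cref{d. weak equivalence}~\ref{i: weak equivalence cnm} (resp.\ by \Cref{d. equivalence}~\ref{i: equivalence v},\ref{i: equivalence cnm}) preserves distance to the center, which yields $f(D_{n-1}(x,r_n^\pm-1))=D_{n-1}(y,r_n^\pm-1)$ for the telescoping in condition~\ref{i: weak equivalence cmm-1}, and $f(D_{n-1}(x,r_n^\pm))=D_{n-1}(y,r_n^\pm)$ for condition~\ref{i: weak equivalence ball} in the restriction step.

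There is, however, one requirement of \Cref{d. weak equivalence} that your restriction argument never addresses: an $n$-weak equivalence must be a pointed graph isomorphism of $\mathscr{C}_n(x)$ \emph{onto} $\mathscr{C}_n(y)$, and checking conditions~\ref{i: weak equivalence ball}--\ref{i: weak equivalence cmm-1} together with $\mathscr{C}_n(x)\subset\mathcal{C}_{n,-1}(x)$ does not show that the restricted $n$-equivalence has image $\mathscr{C}_n(y)$. Since $\mathscr{C}_n(x)=\bigcup_{u\in D_{n-1}(x,r_n^\pm)}\overline{C}_{n-1,-1}(u)$, what is needed is $f(\overline{C}_{n-1,-1}(u))=\overline{C}_{n-1,-1}(f(u))$ for every $u$ in the disk (in particular for $u$ on the boundary sphere, which condition~\ref{i: weak equivalence cmm-1} does not even reach), and this does not follow from disk preservation because the sets $\overline{C}_{n-1,-1}(u)$ protrude from the disk. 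The natural repair is to invoke \Cref{d. equivalence}~\ref{i: equivalence cmm-1}, which identifies $f$ on $\mathcal{C}_{n-1,-1}(u)$ with the $(n-1)$-equivalence $h_{n-1,f(u)}h_{n-1,u}^{-1}$, and then to prove by induction on the level (base case: \Cref{d. equivalence zero} demands exactly that $\overline{C}_{0,-1}$ be carried to $\overline{C}_{0,-1}$) that $m$-equivalences carry $\overline{C}_{m,-1}$ of their basepoint onto $\overline{C}_{m,-1}$ of the image; note that the inductive step requires applying \Cref{d. equivalence}~\ref{i: equivalence cmm-1} to all points of $\overline{C}_{m,m-1}$ of the basepoint, so one must also say why those points lie in the penumbra $\CPen_{m-1}(C_{m,m-1}(\cdot),1)$, or reorganize the recursion accordingly. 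The paper leaves this point implicit as well (it is tacitly used, e.g., when $\hat h_{n,x}=h_{n,x}\hat h_{n,\rep_n(x)}$ is declared an $n$-weak equivalence in \Cref{p: weak hnx}), but since your goal was a self-contained proof, this step should be stated and closed explicitly.
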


According to \Cref{l: weak equivalence composition}, for $n\in \N$, an equivalence relation $\widehat \sim_n^\pm$ on $\xn^\pm$ is  defined by declaring $x\widehat \sim_n^\pm y$ if there is some $n$-weak equivalence $x\to y$. 
Let $\widehat \Phi_n$ be the map defined on $X_n=X_n^+\cupdot X_n^-$ that sends every  $x\in X_n^{\pm}$ to its $\widehat\sim_n^\pm$-equivalence class. 
The range of each of these maps is obviously finite. 

\begin{lem}\label{l: weak xnphi}
For  $n\in \N$, there are disjoint subsets $\xn^{-,\widehat \Phi}, \xn^{+,\widehat \Phi}\subset X_n$ satisfying the following properties:
\begin{enumerate}[(a)]
\item \label{i: weak xnphi hat} We have $X_n^{\pm, \widehat \Phi}\subset X_n^{\pm, \widehat \Phi}$.

\item \label{i: weak xnphi maximal} The sets $\xn^{\pm,\widehat \Phi}$ are maximal among the subsets of $\xn^\pm$ where $\widehat \Phi_n$ is injective.
\item \label{i: weak xnphi dn} For $u\in \xn^{\pm,\widehat \Phi}$ and $v\in \xn^\pm$, if $\widehat \Phi_n(u)=\widehat \Phi_n(v)$, then $d_n(u,p)\leq d_n(v,p)$. 
\end{enumerate} 
\end{lem}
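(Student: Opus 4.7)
The plan is to imitate the proof of \Cref{l: weak x0phi}, selecting one representative in each $\widehat{\sim}_n^\pm$-equivalence class, but now with an extra constraint to secure~\ref{i: weak xnphi hat} (which I read as $X_n^{\pm,\widehat\Phi}\subset X_n^{\pm,\Phi}$, by analogy with \Cref{l: weak x0phi}\ref{i: weak x0phi hat}). Concretely, for each $\widehat{\sim}_n^\pm$-class $[x]\subset X_n^\pm$, I would observe that $[x]\cap X_n^{\pm,\Phi}\ne\emptyset$: picking any $y\in[x]$, its image $\rep_n(y)\in X_n^{\pm,\Phi}$ satisfies $\rep_n(y)\sim_n^\pm y$ by \Cref{p: hnx} and the definition of $\rep_n$, and \Cref{l: weak equivalence composition} then yields $\rep_n(y)\,\widehat\sim_n^\pm\, y$, so $\rep_n(y)\in[x]\cap X_n^{\pm,\Phi}$. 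Since $d_n(\cdot,p)$ takes values in $\N$, I would then select $u_{[x]}\in[x]\cap X_n^{\pm,\Phi}$ minimizing $d_n(\cdot,p)$ over this nonempty set, and set $X_n^{\pm,\widehat\Phi}=\{u_{[x]}\mid [x]\in X_n^\pm/\widehat\sim_n^\pm\}$.

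Property~\ref{i: weak xnphi hat} is immediate from the construction. Property~\ref{i: weak xnphi maximal} follows because the construction picks exactly one element per $\widehat{\sim}_n^\pm$-class, so $\widehat\Phi_n$ is injective on $X_n^{\pm,\widehat\Phi}$; adjoining any $v\in X_n^\pm\setminus X_n^{\pm,\widehat\Phi}$ would reintroduce an element $\widehat{\sim}_n^\pm$-equivalent to the already-chosen $u_{[v]}$, destroying injectivity.

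For~\ref{i: weak xnphi dn}, let $u\in X_n^{\pm,\widehat\Phi}$ and $v\in X_n^\pm$ with $\widehat\Phi_n(u)=\widehat\Phi_n(v)$. If $v\in X_n^{\pm,\Phi}$, the minimization defining $u$ gives $d_n(u,p)\le d_n(v,p)$ directly. Otherwise, set $v'=\rep_n(v)\in X_n^{\pm,\Phi}$; then $v'\sim_n^\pm v$ so $v'\,\widehat\sim_n^\pm\, v\,\widehat\sim_n^\pm\, u$, and \Cref{l: xnphi}\ref{i: xnphi dn} gives $d_n(v',p)\le d_n(v,p)$. The minimization defining $u$ then yields $d_n(u,p)\le d_n(v',p)\le d_n(v,p)$.

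The only real subtlety is the need to restrict the minimization domain to $X_n^{\pm,\Phi}$ rather than $X_n^\pm$; a naive minimization over the entire $\widehat{\sim}_n^\pm$-class could produce a representative outside $X_n^{\pm,\Phi}$, which would later obstruct the definition of weak equivalences $\hat h_{n,x}$ along the lines of \Cref{p: weak hnx zero}\ref{i: weak hnx zero hh} (where one composes $h_{n,x}$ with $\hat h_{n,\rep_n(x)}$, requiring compatibility with the $\Phi_n$-level data). The argument above threads this needle by combining the optimality built into $\rep_n$ via \Cref{l: xnphi}\ref{i: xnphi dn} with a secondary optimization at the $\widehat\Phi_n$-level.
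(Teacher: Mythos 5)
Your proposal is correct and, modulo a small amount of extra care, is the approach the paper takes: the paper's proof of \Cref{l: weak xnphi} is the one-liner ``in every $\widehat\sim_n^\pm$-equivalence class, take a representative that minimizes the $d_n$-distance to $p$,'' exactly parallel to the proofs of \Cref{l: xnphi,l: weak x0phi}. You also correctly read through the typo in item~\ref{i: weak xnphi hat}: as printed it says $X_n^{\pm,\widehat\Phi}\subset X_n^{\pm,\widehat\Phi}$, but the intended content (matching \Cref{l: weak x0phi}~\ref{i: weak x0phi hat}, and needed in the proof of \Cref{p: weak hnx}, whose three cases $X_n^{\pm,\widehat\Phi}$, $X_n^{\pm,\Phi}\setminus X_n^{\pm,\widehat\Phi}$, $X_n\setminus X_n^{\pm,\Phi}$ only partition $X_n$ under this inclusion) is $X_n^{\pm,\widehat\Phi}\subset X_n^{\pm,\Phi}$.

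Where you add value: the paper's literal prescription --- minimize $d_n(\cdot,p)$ over the whole $\widehat\sim_n^\pm$-class --- does not by itself secure the inclusion $X_n^{\pm,\widehat\Phi}\subset X_n^{\pm,\Phi}$, since \Cref{l: xnphi}~\ref{i: xnphi dn} gives only a non-strict inequality and a tie could be won by a point outside $X_n^{\pm,\Phi}$. Your version restricts the minimization to the nonempty set $[x]\cap X_n^{\pm,\Phi}$, which makes the inclusion hold by construction; and since every $\sim_n^\pm$-class inside $[x]$ already has its $d_n$-minimum realized by the $\rep_n$-representative (again \Cref{l: xnphi}~\ref{i: xnphi dn}), your restricted minimum equals the global minimum, so item~\ref{i: weak xnphi dn} still holds. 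This is the correct and complete way to read the paper's terse proof, and your verification of~\ref{i: weak xnphi dn} for $v\notin X_n^{\pm,\Phi}$ via $v'=\rep_n(v)$ is exactly the bridging step the one-liner leaves implicit.
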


\begin{proof}
In every $\widehat \sim_n^\pm$-equivalence class, take  a representative that minimizes the $d_n$-distance  to $p$.
\end{proof}

By \Cref{l: xnphi}, for every $x\in \xn^{\pm}$, there is a unique  $u\in \xn^{\pm,\widehat \Phi}$ with $\widehat \Phi_n(x)= \widehat \Phi_n(u)$. Let $\widehat \rep_n^\pm \colon \xn^\pm \to \xn^{\pm,\widehat \Phi}$ be the maps determined by this correspondence, and let $\widehat \rep_n\colon X_n\to X_n^{\widehat \Phi}:=\xn^{+,\widehat \Phi}\cupdot \xn^{-,\widehat\Phi}$ be their union.

The following result follows from \Cref{l: weak rmn hmy zero,l: weak equivalence composition}.

\begin{lem}\label{l: weak rmn hmy}
For  all $(m,y)\in \mathfrak{P}_{-1}$ and $x\in X_n^\pm\cap D_{n}(p,\Gamma^+_0)$,  the following properties hold.
\begin{enumerate}[(a)]
\item \label{i: weak rmn hmy ball} $\mathscr C_{n}(x)\subset \dom\mathfrak{h}_{m,y}$.
\item\label{i: weak rmn hmy iso} The map $\mathfrak{h}_{m,y}$ restricts to an $n$-weak equivalence $x\to\mathfrak{h}_{m,y}(x)$; in particular, $x\,\widehat\sim_n\,\mathfrak{h}_{m,y}(x)$ and $p\,\widehat\sim_n\,y$.
\end{enumerate} 
\end{lem}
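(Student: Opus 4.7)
The plan is to prove \Cref{l: weak rmn hmy} by induction on $n$, with the base case $n=0$ being precisely \Cref{l: weak rmn hmy zero}. For the inductive step, part~(a) is a direct diameter estimate, and part~(b) amounts to checking each clause of \Cref{d. weak equivalence} against properties of $\mathfrak{h}_{m,y}$ already established.

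For part~(a), I would argue as follows. For each $u\in D_{n-1}(x,r_n^\pm)$, \Cref{p: xn}~\ref{i: xn dn dn-1} gives $d_{-1}(x,u)\le L_{n-1}r_n^\pm\le L_{n-1}R_n^+$ via~\eqref{r_n^pm le R_n^pm}, and \Cref{l: cn-1 gamma} gives $\overline C_{n-1,-1}(u)\subset D_{-1}(u,\Gamma_{n-1}^+)$. Combined with~\eqref{expression gamma} and the triangle inequality, this yields $\mathscr C_n(x)\subset D_{-1}(x,\Gamma_n^+)$. The bound $d_{-1}(x,p)\le L_n\Gamma_0^+$ (from $x\in D_n(p,\Gamma_0^+)$ and \Cref{p: xn}~\ref{i: xn dn dn-1}) then gives $\mathscr C_n(x)\subset D_{-1}(p,L_n\Gamma_0^++\Gamma_n^+)\subset D_{-1}(p,\mathfrak r_m)=\dom\mathfrak{h}_{m,y}$, with the last inclusion coming from the lower bound on $\mathfrak r_m$ in~\eqref{rn} and the expressions for $K_m$, $\overline{K}_m$ in~\eqref{overlinekn actual}--\eqref{kn actual}.

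For part~(b), I would verify the four clauses of \Cref{d. weak equivalence} in turn. Clauses~\ref{i: weak equivalence ball} and~\ref{i: weak equivalence cnm} follow from $\mathfrak{h}_{m,y}$ being a short-scale isometry of $(X_{n-1},d_{n-1})$ by \Cref{p: xn}~\ref{i: xn partial}, together with the preservation of $X_{n-1}^\pm$ from \Cref{p: xn}~\ref{i: xn rn-1} and of the coloring $\chi_{n-1}$ from \Cref{p: chin}~\ref{i: chin h}. Clause~\ref{i: weak equivalence xxm} follows from \Cref{p: xxn}~\ref{i: xxn cap bmn}. Clause~\ref{i: weak equivalence cmm-1}, the intertwining with the maps $h_{n-1,\bullet}$, is the substantive point: for each $u\in D_{n-1}(x,r_n^\pm-1)$, I would apply \Cref{p: hnx}~\ref{i: hnx hh} at level $n-1$ to obtain $h_{n-1,u}=\mathfrak{h}_{m,y}\circ h_{n-1,\mathfrak{h}_{m,y}^{-1}(u)}$, which rearranges to $\mathfrak{h}_{m,y}|_{\mathcal C_{n-1}(u)}=h_{n-1,\mathfrak{h}_{m,y}(u)}\circ h_{n-1,u}^{-1}$, exactly as required. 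The assertions $x\,\widehat\sim_n\,\mathfrak{h}_{m,y}(x)$ and $p\,\widehat\sim_n\,y$ follow immediately from the definition of $\widehat\sim_n$ together with $\mathfrak{h}_{m,p}=\id$ (\Cref{p: xxn}~\ref{i: xxn p}).

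The main obstacle is that $(m,y)\in\mathfrak P_{-1}$ is weaker than the hypotheses naturally required by \Cref{p: hnx}~\ref{i: hnx hh} and \Cref{p: xn}~\ref{i: xn partial}, which prefer $(m,y)\in\mathfrak P_{n-2}$ or $\mathfrak P_{n-1}$; the trade-off is precisely the stricter condition $x\in D_n(p,\Gamma_0^+)$ rather than $D_n(p,\Gamma_m^+)$. Here the closure of weak equivalences under composition (\Cref{l: weak equivalence composition}), combined with the compatibility $\mathfrak{h}_{m,y}|_{D(p,\mathfrak r_k)}=\mathfrak{h}_{k,y}$ from \Cref{p: xxn}~\ref{i: xxn xxm} for $k\le m$, should allow a reduction to the base case by composing weak equivalences at successively lower levels; this is where the two cited lemmas in the hint combine to bootstrap from $n=0$ up to arbitrary $n$.
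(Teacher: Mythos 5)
The paper's own proof is a one-line reduction: by \Cref{l: rmn hmy} (itself bootstrapped from the base case \Cref{l: weak rmn hmy zero}), $\mathfrak{h}_{m,y}$ restricts to an $n$-equivalence $x\to\mathfrak{h}_{m,y}(x)$, and by \Cref{l: weak equivalence composition} an $n$-equivalence, restricted to $\mathscr C_n(x)$ (which lies inside $\mathcal C_{n,-1}(x)$ by \Cref{l: cnn-1 contained} and \eqref{cluster intermediate}), is an $n$-weak equivalence. Your clause-by-clause verification of \Cref{d. weak equivalence} therefore re-proves the weak analogue of \Cref{l: rmn hmy} from scratch; for $(m,y)\in\mathfrak P_{n-1}$ it is essentially sound, modulo the margin bookkeeping (the analogue of \eqref{inclusion rmn hmy}) and one slip in clause~\ref{i: weak equivalence cmm-1}: you must apply \Cref{p: hnx}~\ref{i: hnx hh} at the point $\mathfrak{h}_{m,y}(u)\in D_{n-1}(y,\Gamma_m^+)$, not at $u$; as written, ``$h_{n-1,u}=\mathfrak{h}_{m,y}h_{n-1,\mathfrak{h}_{m,y}^{-1}(u)}$'' would require $u$ to lie in a $\Gamma_m^+$-disk around $y$, which it does not, though the corrected application does yield the identity $\mathfrak{h}_{m,y}|_{\mathcal C_{n-1,-1}(u)}=h_{n-1,\mathfrak{h}_{m,y}(u)}h_{n-1,u}^{-1}$.

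The genuine gap is exactly the point you flag as the ``main obstacle'': your proposed bootstrap for $(m,y)\in\mathfrak P_{-1}$ with $m<n$ cannot work. \Cref{p: xxn}~\ref{i: xxn xxm} only restricts $\mathfrak{h}_{m,y}$ to smaller disks ($\mathfrak{h}_{k,y}=\mathfrak{h}_{m,y}|_{D(p,\mathfrak r_k)}$ for $k\le m$); nothing enlarges its domain $D_{-1}(p,\mathfrak r_m)$, so composing equivalences ``at successively lower levels'' never produces a map defined on $\mathscr C_n(x)$. Indeed, for $m$ fixed and $n$ large enough that $r_n>\mathfrak r_m$, already part~(a) is unattainable: taking $x=p$ and $y=p$ (so $(m,p)\in\mathfrak P_{-1}$ and $\mathfrak{h}_{m,p}=\id_{D_{-1}(p,\mathfrak r_m)}$), the set $\mathscr C_n(p)\supset D_{n-1}(p,r_n^\pm)$ contains points at $d_{-1}$-distance at least $r_n>\mathfrak r_m$ from $p$, since $d_{n-1}\le d_{-1}$ and $(X_{n-1},d_{n-1})$ is unbounded. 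So the hypothesis must be read as $(m,y)\in\mathfrak P_{n-1}$ (with $\Gamma_m^+$), exactly as in \Cref{l: rmn hmy} and as in its $n=0$ instance \Cref{l: weak rmn hmy zero}; under that reading the one-line reduction above (or your direct verification, with the correction noted) suffices, and no cross-level bootstrap is needed or available.
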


\begin{prop}\label{p: weak hnx}
For every $x\in X_n^\pm$, there is an $n$-weak equivalence $\hat h_{n,x}\colon \widehat \rep_n(x)\to x$
satisfying the following properties:
\begin{enumerate}[(i)]

\item \label{i: weak hnx phi} If $x\in X_n^{\pm,\widehat\Phi}$, then $\hat h_{n,x}$ is the identity on  $D_{n-1}(x,r_n^\pm)$.

\item \label{i: weak hnx hh} For all $x\in X_n^{\pm}$, we have $\hat h_{n,x}=  h_{n,x} \hat h_{n,\rep_n(x)}$.

\end{enumerate}
\end{prop}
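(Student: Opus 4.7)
\smallskip
\noindent\textbf{Proof proposal.} The plan is to mirror the three-case construction used in \Cref{p: weak hnx zero}, exploiting the fact that at stage $n$ we now already have the $n$-equivalences $h_{n,x}$ from \Cref{p: hnx} together with the inclusion $X_n^{\pm,\widehat\Phi}\subset X_n^{\pm,\Phi}$ from \Cref{l: weak xnphi}~(a), and the comparison between $\sim_n$ and $\widehat\sim_n$ afforded by \Cref{l: weak equivalence composition}. Concretely, I will define $\hat h_{n,x}$ by partitioning $X_n^\pm$ into the three regions $X_n^{\pm,\widehat\Phi}$, $X_n^{\pm,\Phi}\setminus X_n^{\pm,\widehat\Phi}$, and $X_n^\pm\setminus X_n^{\pm,\Phi}$, giving separate definitions on each.

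First, for $x\in X_n^{\pm,\widehat\Phi}$, set $\hat h_{n,x}=\id_{\mathscr{C}_n(x)}$. A direct verification of \Cref{d. weak equivalence}~\ref{i: weak equivalence ball}--\ref{i: weak equivalence cmm-1} shows the identity is a valid $n$-weak equivalence $x\to x$ (note $h_{n-1,u}h_{n-1,u}^{-1}=\id$, so~\ref{i: weak equivalence cmm-1} is automatic). Also $\widehat\rep_n(x)=x$ in this case, and furthermore $x\in X_n^{\pm,\Phi}$ gives $\rep_n(x)=x$ and $h_{n,x}=\id$ by \Cref{p: hnx}~\ref{i: hnx phi}, so~\ref{i: weak hnx hh} holds trivially, and~\ref{i: weak hnx phi} is immediate since $D_{n-1}(x,r_n^\pm)\subset\mathscr{C}_n(x)$. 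Second, for $x\in X_n^{\pm,\Phi}\setminus X_n^{\pm,\widehat\Phi}$, by definition of $\widehat\rep_n$ there exists some $n$-weak equivalence $\widehat\rep_n(x)\to x$; pick any such and call it $\hat h_{n,x}$. Since $\rep_n(x)=x$ and $h_{n,x}=\id$ again, condition~\ref{i: weak hnx hh} reduces to $\hat h_{n,x}=\hat h_{n,x}$. Third, for $x\in X_n^\pm\setminus X_n^{\pm,\Phi}$, set $\hat h_{n,x}=h_{n,x}\hat h_{n,\rep_n(x)}$, which makes~\ref{i: weak hnx hh} hold by construction; property~\ref{i: weak hnx phi} is vacuous since $x\notin X_n^{\pm,\widehat\Phi}$.

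The substantive point to verify is that in the third case the composition is well defined and is genuinely an $n$-weak equivalence $\widehat\rep_n(x)\to x$. For well-definedness, since $h_{n,x}$ is an $n$-equivalence it is defined on $\mathcal{C}_{n,-1}(\rep_n(x))$, and an unraveling using \Cref{l: cnn-1 contained} together with the definitions shows $\mathscr{C}_n(\rep_n(x))\subset\overline C_{n,-1}(\rep_n(x))\subset\mathcal{C}_{n,-1}(\rep_n(x))$, so $h_{n,x}$ can be evaluated on the image of $\hat h_{n,\rep_n(x)}$. For the domain, since $x\sim_n\rep_n(x)$ implies $x\,\widehat\sim_n\,\rep_n(x)$ by \Cref{l: weak equivalence composition}, we have $\widehat\rep_n(\rep_n(x))=\widehat\rep_n(x)$, so $\hat h_{n,\rep_n(x)}$ has the correct source. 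Finally, \Cref{l: weak equivalence composition} directly yields that the composite of an $n$-weak equivalence with an $n$-equivalence (viewed as an $n$-weak equivalence via restriction to $\mathscr{C}_n$) is an $n$-weak equivalence.

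The hardest step of the plan is that last restriction issue: one has to check that the restriction of the $n$-equivalence $h_{n,x}$ from its natural domain $\mathcal{C}_{n,-1}(\rep_n(x))$ down to $\mathscr{C}_n(\rep_n(x))$ satisfies all four clauses of \Cref{d. weak equivalence}, in particular that it maps $D_{n-1}(\rep_n(x),r_n^\pm)$ bijectively onto $D_{n-1}(x,r_n^\pm)$. This follows from \Cref{d. equivalence}~\ref{i: equivalence cnm}, \ref{i: equivalence xxm}, and~\ref{i: equivalence cmm-1}, combined with \Cref{l: cnn-1 contained} which places $D_{n-1}(x,r_n^\pm)$ inside $C_{n,n-1}(x)$ where $f=h_{n,x}$ is already a $d_{n-1}$-isometry onto $C_{n,n-1}(\rep_n(x))$; the remaining clauses of the weak-equivalence definition are inherited verbatim from the corresponding clauses of the equivalence definition. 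This is the step where one must be most careful with the bookkeeping, but no new ideas beyond those already present in the proof of \Cref{p: hnx} are required.
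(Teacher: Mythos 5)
Your proposal is correct and follows exactly the same three-case construction that the paper uses (the paper's own proof is just the one line ``The proof is identical to that of \Cref{p: weak hnx zero}''), relying on the same ingredients: $X_n^{\pm,\widehat\Phi}\subset X_n^{\pm,\Phi}$, the $n$-equivalences $h_{n,x}$ from \Cref{p: hnx}, and the closure of weak equivalences under composition with equivalences from \Cref{l: weak equivalence composition}. The extra bookkeeping you supply — checking $\mathscr{C}_n(\rep_n(x))\subset\mathcal{C}_{n,-1}(\rep_n(x))$ and $\widehat\rep_n(\rep_n(x))=\widehat\rep_n(x)$ so that the composite $h_{n,x}\hat h_{n,\rep_n(x)}$ is well defined and has the correct source — is left implicit in the paper but is exactly what is needed.
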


\begin{proof}
The proof is identical to that of \Cref{p: weak hnx zero}.
\end{proof}

\subsection{BFS-orderings}

We introduce a special kind of orderings on graphs that are used to produce aperiodic colorings. They are essentially a reformulation of the \emph{breadth-first search spanning trees} in \cite{CollinsTrenk2006}.

\begin{defn}\label{d. parent}
Let $(A,x)$ be a pointed connected graph with finite vertex degrees endowed with an order relation $\leq$. 
Define the \emph{parent map}, $\Pa\colon  A\setminus \{x\} \to A$, by
\begin{equation}\label{defn pa}
\Pa(u)=\min S(u,1)\;.
\end{equation}
For $v\in A$, its \emph{children set}, denoted by $\Ch(v)$,  is 
\begin{equation}\label{defn ch}
\Ch(v)=\Pa^{-1}(v)=S(v,1) \setminus\bigg(\bigcup_{w< v} S(w,1)\cup  \{x\}\bigg)\;.
\end{equation}
\end{defn}

\begin{defn}\label{d. dfs}
A \emph{BFS-ordering} on a pointed connected graph $(A,x)$ is an order $\trianglelefteq$ on $A$ satisfying the following conditions for all $u,v\in A$:
\begin{enumerate}[(i)]

\item \label{i: dfs distance} If $d(x,u)<d(x,v)$, then $u \vartriangleleft v$; in particular, $x$ is the least element of $(A,\trianglelefteq)$.

\item \label{i: dfs children} If $u,v\neq x$ and $\Pa(u)\vartriangleleft \Pa(v)$,  then $u\vartriangleleft v$.

\end{enumerate} 
\end{defn}

\begin{lem}\label{l: dfs extension}
Let $(A,x)$ be a pointed connected graph with finite vertex degrees.
Then there is a  BFS-ordering $\trianglelefteq$ on $(A,x)$.
\end{lem}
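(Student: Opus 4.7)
The plan is to build the ordering $\trianglelefteq$ level by level, where the levels are the spheres $S(x,r)$. First I would construct, by induction on $r\in\N$, a total order $\trianglelefteq_r$ on the disk $D(x,r)$ such that $\trianglelefteq_{r+1}$ extends $\trianglelefteq_r$, every element of $S(x,r+1)$ is strictly greater than every element of $D(x,r)$, and $\trianglelefteq_r$ satisfies the analogue of \Cref{d. dfs} on $(D(x,r),x)$. For $r=0$ there is nothing to do since $D(x,0)=\{x\}$. For the inductive step, note that for each $v\in S(x,r+1)$ the neighbor set $S(v,1)$ is finite (vertex degrees are finite) and by property~\eqref{exists z} it meets $S(x,r)$. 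Thus I can \emph{provisionally} set
\[
\Pa(v)=\min_{\trianglelefteq_r}\big(S(v,1)\cap S(x,r)\big)\;,
\]
which is well-defined since $S(x,r)$ is already totally ordered. I then choose any total order $\trianglelefteq_{r+1}$ on $S(x,r+1)$ such that $u\vartriangleleft_{r+1}v$ whenever $\Pa(u)\vartriangleleft_r\Pa(v)$, breaking ties among siblings arbitrarily (possible since each sibling class is finite, being a subset of $S(w,1)$ for a single $w\in S(x,r)$). Finally I extend $\trianglelefteq_{r+1}$ to $D(x,r+1)$ by declaring $D(x,r)\vartriangleleft_{r+1}S(x,r+1)$.

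Setting $\trianglelefteq=\bigcup_r\trianglelefteq_r$ gives a total order on $A=\bigcup_rD(x,r)$ (which covers $A$ because $A$ is connected). Condition~\ref{i: dfs distance} of \Cref{d. dfs} is immediate from the construction. The one point that requires verification is that the parent map $\Pa$ of \Cref{d. parent} computed in the full order $\trianglelefteq$ coincides with the provisional parent used above; given this, condition~\ref{i: dfs children} holds by design. For any $v\in S(x,r+1)$, the set $S(v,1)$ is contained in $S(x,r)\cup S(x,r+1)\cup S(x,r+2)$, and by~\ref{i: dfs distance} the $\trianglelefteq$-minimum must lie in the lowest-distance layer that $S(v,1)$ meets, namely $S(x,r)\cap S(v,1)$; on that finite set the global minimum agrees with the $\trianglelefteq_r$-minimum because $\trianglelefteq$ extends $\trianglelefteq_r$, so $\Pa(v)$ matches the provisional choice. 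The only real subtlety in the argument is this compatibility of the parent map, which is why the layered inductive construction (rather than, say, a single global definition) is needed; everything else is routine bookkeeping.
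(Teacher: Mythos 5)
Your proposal is correct and follows essentially the same route as the paper's proof: both build the ordering by induction on disks $D(x,r)$, keeping each previous disk as an initial segment and ordering the new sphere so that the (provisional) parent map is respected. The only difference is cosmetic — you carefully spell out why the globally defined parent map $\Pa(v)=\min S(v,1)$ agrees with the provisional $\min_{\trianglelefteq_r}(S(v,1)\cap S(x,r))$, a point the paper leaves implicit; note also that for $v\in S(x,r+1)$ one has $S(v,1)\cap D(x,r)=S(v,1)\cap S(x,r)$, so your formula coincides with the paper's $\min(S(v,1)\cap D(x,n-1))$.
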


\begin{proof}
By induction on $n\in \N$,  a BFS-ordering $\trianglelefteq$ can be defined on every disk $D(x,n)$ as follows. 
It is trivially defined on $D(x,0)$. Then, assuming that $n\ge1$ and $\trianglelefteq$ is defined on $D(x,n-1)$, extend the definition of $\trianglelefteq$ to $D(x,n)$ so that $D(x,n-1)$ an initial segment, and its restriction to $S(x,n)$ is any order satisfying 
\[
\min (S(u,1)\cap D(x,n-1)) \triangleleft \min( S(v,1)\cap D(x,n-1))\Rightarrow u\triangleleft v\;.\qedhere
\]
\end{proof}

Given an isomorphism of graphs, $f\colon A\to B$, and an order relation $\leq_A$ on $A$ (${\leq_A}\subset A\times A$), the corresponding push-forward order relation on $B$ is $(f\times f)(\le_A)\subset B\times B$, which is simply denoted by $f(\leq_A)$.

Recall that $C_{n,n-1}(x)$ is a connected subgraph of $(X_{n-1},E_{n-1})$ by \Cref{l: cnn-1 connected}.
Consider the $n$-equivalences $h_{n,x}$, for $n\in \N$ and $x\in \xn$, given by \Cref{p: hnx}.

\begin{prop}\label{p: defn dfs}
For any  $n\in \N$ and $x\in \xn$, there is a BFS-ordering $\trianglelefteq_{n,x}$ on the pointed connected graph $(C_{n,n-1}(x),x)$ such that  $\trianglelefteq_{n,x}=h_{n,\rep_n(x)}(\trianglelefteq_{n,\rep_n(x)})$.
\end{prop}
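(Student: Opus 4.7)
The proof is a direct induction on $n$ whose content is almost entirely a transport-of-structure argument, so the plan is to split the definition according to whether $x$ is a ``representative'' or not, and then invoke the fact that BFS-orderings are preserved by pointed graph isomorphisms.

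First I would observe the following transport lemma (which I would prove in one or two lines before the main construction): if $f\colon (A,a)\to (B,b)$ is an isomorphism of pointed connected graphs with finite vertex degrees, and $\trianglelefteq$ is a BFS-ordering of $(A,a)$, then $f(\trianglelefteq)$ is a BFS-ordering of $(B,b)$. The reason is that $f$ is an isometry for the natural $\N$-valued path metrics, $f(S_A(u,1))=S_B(f(u),1)$ for every $u\in A$, and consequently $f\circ\Pa_A=\Pa_B\circ f$ on $A\setminus\{a\}$; thus the two defining properties \ref{i: dfs distance} and \ref{i: dfs children} of \Cref{d. dfs} are transported unchanged from $\trianglelefteq$ to $f(\trianglelefteq)$.

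Next, for each $n\in\N$ and each $x\in X_n^\Phi$, I would apply \Cref{l: dfs extension} to the pointed connected graph $(C_{n,n-1}(x),x)$ (connectedness given by \Cref{l: cnn-1 connected} and finiteness of degrees inherited from $(X_{n-1},E_{n-1})$) to pick any BFS-ordering $\trianglelefteq_{n,x}$. By \Cref{p: hnx}~\ref{i: hnx phi}, $h_{n,x}=\id$ on $\mathcal C_{n,-1}(x)$, and since $\rep_n(x)=x$, the compatibility condition in the statement holds trivially for such $x$.

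Then, for $x\in X_n\setminus X_n^\Phi$, put $y=\rep_n(x)\in X_n^\Phi$ and simply define
\[
\trianglelefteq_{n,x} \;=\; h_{n,x}(\trianglelefteq_{n,y})\;.
\]
By \Cref{d. equivalence}~\ref{i: equivalence v}, $h_{n,x}$ restricts to a graph isomorphism (with respect to the restriction of $E_{n-1}$) of pointed connected graphs $(C_{n,n-1}(y),y)\to(C_{n,n-1}(x),x)$, so the transport lemma above guarantees that $\trianglelefteq_{n,x}$ is a BFS-ordering of $(C_{n,n-1}(x),x)$. The compatibility $\trianglelefteq_{n,x}=h_{n,x}(\trianglelefteq_{n,\rep_n(x)})$ is immediate from the definition (using that $h_{n,y}=\id$ for $y\in X_n^\Phi$, so $\trianglelefteq_{n,y}=h_{n,y}(\trianglelefteq_{n,y})$).

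There is essentially no obstacle in this argument; the only point requiring a brief verification is the transport lemma for BFS-orderings, which is a routine check that distances and parent maps are preserved by pointed graph isomorphisms. All non-trivial work has already been done in \Cref{p: hnx} (which furnishes the $n$-equivalences $h_{n,x}$ compatible with the earlier structure) and in \Cref{d. equivalence}~\ref{i: equivalence v} (which ensures that these equivalences respect the cluster decomposition), so the present proposition just packages those in the form needed for the forthcoming coloring construction.
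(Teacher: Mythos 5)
Your proof is correct and follows the same construction as the paper: pick arbitrary BFS-orderings on the clusters of representatives via \Cref{l: dfs extension}, then transport them along the $n$-equivalences $h_{n,x}$, which restrict to pointed graph isomorphisms of clusters by \Cref{d. equivalence}. Your use of $h_{n,x}$ (rather than the $h_{n,\rep_n(x)}$ appearing in the statement, which is the identity on $\mathcal{C}_{n,-1}(\rep_n(x))$ by \Cref{p: hnx}~\ref{i: hnx phi} and so type-mismatches) is the intended reading, and the explicit transport lemma you state—that a pointed graph isomorphism carries BFS-orderings to BFS-orderings—is the implicit verification the paper omits.
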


\begin{proof}
Take any BFS-ordering $\trianglelefteq_{n,x}$ on $(C_{n,n-1}(x),x)$ for $x\in X_n^{\Phi}$ (\Cref{l: dfs extension}). Then define $\trianglelefteq_{n,x}=h_{n,\rep_n(x)}(\trianglelefteq_{n,\rep_n(x)})$ for $x\in X_n \setminus X_n^{\Phi}$.
\end{proof}

From now on, for every $n\in\N$ and $x\in X_n$, the notation $\Pa_{n,x}$ and $\Ch_{n,x}$ is used for the parent map and children sets on the pointed connected graph $(\cnyn(x),x)$, with the BFS-ordering $\trianglelefteq_{n,x}$ given by \Cref{p: defn dfs}.

\begin{lem}\label{l: children}
Let $n\in \N$ and $x\in \xn$. 
The following properties hold for every $u\in C_{n,n-1}(x)$: 
\begin{enumerate}[(a)]

\item \label{i: children distance} If $u\neq x$, then  $d_{n-1}(x,\Pa_{n,x}(u))=d_{n-1}(x,u)-1$.

\item \label{i: children equality} We have 
\[
\bigcupdot_{v\in \cnyn(x)} \Ch_{n,x}(v) = C_{n,n-1}(x)\setminus \{x\}\;.
\]

\item \label{i: children cardinality} If $u\neq x$, then $|\Ch_{n,x}(u)|\leq \Delta_{n-1}-1$.

\end{enumerate}
\end{lem}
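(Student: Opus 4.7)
The plan is to unpack the BFS-ordering definition, with one mild preliminary point. \Cref{d. dfs}~(i) uses the intrinsic graph distance on the subgraph $(C_{n,n-1}(x),E_{n-1}|_{C_{n,n-1}(x)})$, whereas the statement of the lemma uses $d_{n-1}$. I would first invoke \Cref{l: cnn-1 connected}, which shows that every $d_{n-1}$-geodesic from $x$ to $u\in C_{n,n-1}(x)$ stays inside the cluster, so the two distances coincide on pairs $(x,u)$. From now on I may freely interchange them when discussing the BFS condition.

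For part~(a), fix $u\ne x$. The penultimate vertex of an in-cluster geodesic from $x$ to $u$ produces a neighbour of $u$ at distance $d_{n-1}(x,u)-1$, while every neighbour of $u$ in $C_{n,n-1}(x)$ sits at distance $d_{n-1}(x,u)-1$, $d_{n-1}(x,u)$ or $d_{n-1}(x,u)+1$. By \Cref{d. dfs}~(i), $\trianglelefteq_{n,x}$ is monotone in the distance from $x$, so the $\trianglelefteq_{n,x}$-minimum $\Pa_{n,x}(u)=\min S(u,1)$ must lie at distance $d_{n-1}(x,u)-1$.

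Part~(b) is purely formal: by \Cref{d. parent}, $\Pa_{n,x}\colon C_{n,n-1}(x)\setminus\{x\}\to C_{n,n-1}(x)$ is well-defined (the cluster is connected and contains $x$), and $\Ch_{n,x}(v)=\Pa_{n,x}^{-1}(v)$; the fibres of a map always partition its domain, giving the displayed disjoint union.

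Part~(c) combines the first two. Since $\Ch_{n,x}(u)\subset S(u,1)$, it suffices to exhibit one element of $S(u,1)$ that is not a child of $u$; then $|\Ch_{n,x}(u)|\le|S(u,1)|-1\le\deg_{X_{n-1}}(u)-1\le\Delta_{n-1}-1$, the last inequality coming from \Cref{p: xn}~\ref{i: xn deltan deltan-1} (together with~\eqref{defn s0} and~\eqref{constants -1} in the case $n=0$). The required element is $\Pa_{n,x}(u)$ itself: if $\Pa_{n,x}(u)=x$, then $\Pa_{n,x}(u)$ has no parent and so is not a child of anything, while if $\Pa_{n,x}(u)\ne x$, then applying part~(a) once more shows $\Pa_{n,x}(\Pa_{n,x}(u))$ lies at distance $d_{n-1}(x,u)-2$ from $x$, hence cannot equal $u$. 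The only mild bookkeeping obstacle is keeping this last step tidy; everything else is a direct reading of the definitions.
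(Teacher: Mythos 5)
Your proof is correct and follows essentially the same route as the paper, which simply notes that (a) follows from \Cref{d. parent} and \Cref{d. dfs}~(i), (c) from (a) and \Cref{d. parent}, and (b) is obvious. Your explicit use of \Cref{l: cnn-1 connected} to identify the intrinsic cluster distance with $d_{n-1}$ on pairs $(x,u)$ makes precise a point the paper leaves implicit, but it is the same argument.
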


\begin{proof}
Property~\ref{i: children distance} is an easy consequence of \Cref{d. parent,d. dfs}~\ref{i: dfs distance}.
Property~\ref{i: children cardinality} follows from~\ref{i: children distance} and \Cref{d. parent}, whereas~\ref{i: children equality} is obvious.
\end{proof}

\subsection{Adapted colorings for $n=0$}

In the outline of the proof of \Cref{t: finitary} given in \Cref{s. idea}, it was said that we needed to construct many colorings $\psi^{i}_{n,x}$ on the clusters $C_{n,n-1}(x)$ that break the symmetries of the cluster. These are the building blocks that will be used to  construct the colorings of the statement of \Cref{t: finitary}.

\begin{defn}\label{d. adapted 0}
For $x\in X_0$, a coloring $\psi\colon \czeroone(x)\to [\Delta]$ is said to be \emph{adapted} if it satisfies the following two conditions:
\begin{enumerate}[(i)]
\item \label{i: adapted 0 center} There is a  geodesic segment in $(X_{-1},E_{-1})$ of the form $\tau=(x=\tau_0,\dots,\tau_5)$  such that
\[
\psi^{-1}(0)\cap D_{-1}(x,7) = 
\begin{cases}
\{\tau_0,\tau_1,\tau_2,\tau_5\} & \text{if $x\in X_0^-$} \\
\{\tau_0,\tau_1,\tau_2,\tau_4,\tau_5\} &\text{if $x\in X_0^+$}\;.
\end{cases}
\]
\item \label{i: adapted 0 children} For all $u\in \czeroone(x)$, the coloring $\psi$ is injective on $\Ch_{0,x}(u)$.
\setcounter{nameOfYourChoice}{\value{enumi}}
\end{enumerate}
It is said that  $\psi$ is \emph{strongly adapted} if it is adapted and moreover the following property holds:
 \begin{enumerate}[(i)]
 \setcounter{enumi}{\value{nameOfYourChoice}}
 \item \label{i: adapted strongly} We have $\psi^{-1}(0)\setminus D_{-1}(x,7) =\emptyset$.
\end{enumerate}
\end{defn}

\begin{lem}\label{l: existence phix}
For every $x\in X_0^\pm$, there is a strongly adapted coloring $\psi_x\colon \czeroone(x)\to [\Delta] $.
\end{lem}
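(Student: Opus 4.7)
The plan is to construct $\psi_x$ by first fixing a geodesic that will carry the color-$0$ pattern, and then coloring the remaining vertices of $C_{0,-1}(x)$ level-by-level along the BFS-tree determined by $\trianglelefteq_{0,x}$ so that each children set is colored injectively.

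First, I would produce the geodesic $\tau=(x=\tau_0,\tau_1,\ldots,\tau_5)$ in $(X_{-1},E_{-1})$. Since $X=X_{-1}$ is infinite and connected, \Cref{c: |D(x,r)| ge r+1} and~\eqref{exists z} give a length-$5$ geodesic starting at $x$. Moreover, by \Cref{l: cnn-1 contained} and~\eqref{r_0 > 2^11} (so $r_0^\pm\ge r_0>5$), we have $D_{-1}(x,5)\subset C_{0,-1}(x)$, hence $\tau$ lies entirely in $C_{0,-1}(x)$. Set
\[
S=\begin{cases}\{0,1,2,5\}&\text{if }x\in X_0^-,\\ \{0,1,2,4,5\}&\text{if }x\in X_0^+,\end{cases}
\]
and declare $\psi(\tau_i)=0$ for $i\in S$.

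Next, I would extend $\psi$ to all of $C_{0,-1}(x)$ by processing children sets in BFS order. Set $\psi(x)=0$ (which is consistent since $x$ is in no children set). For each $u\in C_{0,-1}(x)$ (in order $\trianglelefteq_{0,x}$), color $\Ch_{0,x}(u)$ injectively from $[\Delta]$ as follows: any vertex already labeled $0$ by the previous step keeps that label; all other vertices of $\Ch_{0,x}(u)$ receive distinct colors from $\{1,\ldots,\Delta-1\}$. The crucial observation, which makes this feasible, is that at most one $\tau_i$ lies in any $\Ch_{0,x}(u)$: for $\tau_i,\tau_j\in\Ch_{0,x}(u)$ we would need $i=d_{-1}(x,\tau_i)=d_{-1}(x,u)+1=d_{-1}(x,\tau_j)=j$, using \Cref{l: children}~\ref{i: children distance}. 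Combined with \Cref{l: children}~\ref{i: children cardinality}, which gives $|\Ch_{0,x}(u)|\le \Delta-1$ for $u\neq x$, this means after reserving color $0$ for the possible $\tau_i\in\Ch_{0,x}(u)$ there are at least $\Delta-1\ge|\Ch_{0,x}(u)|-1$ colors available in $\{1,\ldots,\Delta-1\}$ for the remaining children. For $u=x$ we have $|\Ch_{0,x}(x)|\le\Delta$ and exactly one pre-colored child $\tau_1$, so again $\Delta-1$ colors suffice for the rest.

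Finally, I would verify the three defining conditions of \Cref{d. adapted 0}. Condition~\ref{i: adapted 0 center} holds because the only zeros in $D_{-1}(x,7)$ produced by the construction are those prescribed at $\tau_0,\tau_1,\tau_2,\tau_5$ (and $\tau_4$ if $x\in X_0^+$); condition~\ref{i: adapted 0 children} holds by construction of the inductive step; the strong condition~\ref{i: adapted strongly} holds because color $0$ was only ever assigned to vertices on $\tau$, which all lie in $D_{-1}(x,5)\subset D_{-1}(x,7)$. The main point requiring any real checking is the combinatorial bookkeeping in the previous paragraph—namely, that forcing the prescribed zeros on $\tau$ never exhausts the color palette of any single $\Ch_{0,x}(u)$—which, as explained, reduces to the distance separation of the $\tau_i$ together with Lemma~\ref{l: children}~\ref{i: children cardinality}.
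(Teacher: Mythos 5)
Your proposal is correct and follows essentially the same route as the paper's proof: pick a length-$5$ geodesic from $x$ inside $C_{0,-1}(x)$ (using \Cref{l: cnn-1 contained} and $r_0^\pm>2^{11}$), assign color $0$ to the prescribed pattern, and then color each children set $\Ch_{0,x}(u)$ injectively with the remaining colors via \Cref{l: children}. Your explicit check that at most one $\tau_i$ can lie in a given children set is only a spelled-out version of what the paper uses implicitly, so there is no substantive difference.
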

\begin{proof}
First, choose  a geodesic  segment in $(X_{-1},E_{-1})$ of the form $\tau=(x=\tau_0,\dots,\tau_5)$, which is contained in $C_{0,-1}(x)$ because $D_{-1}(x,r_0^\pm)\subset C_{0,-1}(x)$ (\Cref{l: cnn-1 contained}), and $r_0^\pm>2^{11}$ by~\eqref{r_0 > 2^11} and~\eqref{defn rnpm}. 

Color with the color $0$ the set $T_0^-=\{\tau_0,\tau_1,\tau_2,\tau_5\}$ if $x\in X_0^-$, or the set
$T_0^+=\{\tau_0,\tau_1,\tau_2,\tau_4,\tau_5\}$ if $x\in X_0^+$.
In both cases $\psi_x(x)=0$. 
The sets $\Ch_{0,x}(u)$, for $u\in \czeroone(x)$, form a partition of $\czeroone(x)\setminus \{x\}$ by \Cref{l: children}~\ref{i: children equality}.
Moreover $|\Ch_{0,x}(u)\setminus T_0^\pm|\leq \Delta -1$ by \Cref{l: children}~\ref{i: children cardinality}.
So, for each $u\in \czeroone(x)$, we can color the points in $\Ch_{0,x}(u)\setminus T_0^\pm$ with different colors from $\{1,\dots,\Delta-1\}$. 
This procedure defines a coloring $\psi_x\colon \czeroone(x)\to [\Delta]$ satisfying all conditions of \Cref{d. adapted 0}.
\end{proof}

For a colored graph $(X,\phi)$ and a graph isomorphism $h:X\to Y$, the notation $h(\phi)$ is used for the corresponding pushforward coloring $(h^{-1})^*\phi$  of $Y$.

\begin{prop}\label{p: phi0x0}
There is a family of strongly adapted colorings $\psi_{0,x}^0\colon \czeroone(x)\to [\Delta]$, for $x\in X_0$, satisfying $\psi_{0,x}^0=h_{0,x}(\psi_{0,\rep_0(x)}^0)$.
\end{prop}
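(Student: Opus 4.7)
The plan is to define the family by first choosing colorings on the representatives $X_0^\Phi$ and then pushing them forward via the maps $h_{0,x}$. Concretely, for every $x\in X_0^\Phi$, invoke \Cref{l: existence phix} to pick a strongly adapted coloring $\psi_{0,x}^0\colon\czeroone(x)\to[\Delta]$. For $x\in X_0\setminus X_0^\Phi$, set $\psi_{0,x}^0=h_{0,x}(\psi_{0,\rep_0(x)}^0)$. Since $h_{0,\rep_0(x)}$ is the identity on $\overline C_{0,-1}(\rep_0(x))$ by \Cref{p: hnx zero}~\ref{i: hnx zero phi} whenever $\rep_0(x)\in X_0^\Phi$ (and $\rep_0(\rep_0(x))=\rep_0(x)$), the relation $\psi_{0,x}^0=h_{0,x}(\psi_{0,\rep_0(x)}^0)$ holds tautologically also for $x\in X_0^\Phi$.

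The only thing that remains is to verify that this pushforward is still strongly adapted. By definition, $h_{0,x}\colon\rep_0(x)\to x$ is a $0$-equivalence, which by \Cref{d. equivalence zero} is a pointed graph isomorphism from $(\overline C_{0,-1}(\rep_0(x)),\rep_0(x))$ to $(\overline C_{0,-1}(x),x)$ sending $C_{0,-1}(\rep_0(x))$ onto $C_{0,-1}(x)$. In particular, it restricts to a graph isomorphism $C_{0,-1}(\rep_0(x))\to C_{0,-1}(x)$ preserving the basepoints and sending $X_0^\pm$-points to $X_0^\pm$-points (so the two cases in \Cref{d. adapted 0}~\ref{i: adapted 0 center} correspond). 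Moreover, by \Cref{p: defn dfs}, the BFS-ordering on $C_{0,-1}(x)$ was chosen precisely as the pushforward $h_{0,x}(\trianglelefteq_{0,\rep_0(x)})$, so $h_{0,x}$ also carries parent maps to parent maps and children sets to children sets.

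From this, each condition of \Cref{d. adapted 0} is preserved:
\begin{itemize}
\item Condition~\ref{i: adapted 0 center}: if $\tau=(\rep_0(x)=\tau_0,\dots,\tau_5)$ is the geodesic segment realizing the condition for $\psi_{0,\rep_0(x)}^0$, then $h_{0,x}(\tau)$ is a geodesic segment in $(X_{-1},E_{-1})$ starting at $x$ (since $h_{0,x}$ is a graph isomorphism on the disk $D_{-1}(\rep_0(x),7)$, which is contained in $\overline C_{0,-1}(\rep_0(x))$ because $r_0^\pm>2^{11}$ by \eqref{r_0 > 2^11} and \eqref{defn rnpm}). The preimage of $0$ under $\psi_{0,x}^0$ in $D_{-1}(x,7)$ is precisely $h_{0,x}$ applied to the corresponding set.
\item Condition~\ref{i: adapted 0 children}: since $h_{0,x}(\Ch_{0,\rep_0(x)}(u))=\Ch_{0,x}(h_{0,x}(u))$, injectivity of the coloring on children sets is preserved.
\item Condition~\ref{i: adapted strongly}: the zero-set of $\psi_{0,x}^0$ is the $h_{0,x}$-image of the zero-set of $\psi_{0,\rep_0(x)}^0$, which lies in $D_{-1}(\rep_0(x),7)$, so its image lies in $D_{-1}(x,7)$.
\end{itemize}

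I expect no real obstacle: the construction is a direct bookkeeping argument, and all the compatibility needed (the fact that $h_{0,x}$ is a graph isomorphism preserving the cluster decomposition, the partition into $X_0^\pm$, and the BFS-ordering) has already been established in \Cref{d. equivalence zero}, \Cref{p: hnx zero}, and \Cref{p: defn dfs}. The mildest subtlety is checking the tautological case $x\in X_0^\Phi$, but this is taken care of by \Cref{p: hnx zero}~\ref{i: hnx zero phi}.
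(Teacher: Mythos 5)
Your proof is correct and follows exactly the paper's approach: pick strongly adapted colorings on the representatives $X_0^\Phi$ via \Cref{l: existence phix}, push forward along the $0$-equivalences $h_{0,x}$ for the remaining points, and verify that the conditions of \Cref{d. adapted 0} are preserved (with condition~\ref{i: adapted 0 children} following from \Cref{p: defn dfs}). You have merely spelled out the verifications that the paper dismisses as trivial.
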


\begin{proof}
If $x\in X_0^{\Phi}$, take any strongly adapted coloring (\Cref{l: existence phix}).
If $x\in X_0\setminus X_0^{\Phi}$, let $\psi_{0,x}^0= h_{0,x}(\psi_{0,\rep_0(x)}^0)$.
It is trivial to check that $h_{0,x}(\psi_{0,\rep_0(x)}^0)$ satisfies the properties~\ref{i: adapted 0 center} and~\ref{i: adapted strongly} of \Cref{d. adapted 0}, whereas its property~\ref{i: adapted 0 children} follows from \Cref{p: defn dfs}.
\end{proof}

\begin{rem}
	Note that the domain of $h_{0,x}$ is $\overline C_{0,-1}(x)$, so we are actually considering its restriction to $C_{0,-1}(x)$ in \Cref{p: phi0x0}. We will continue to make this assumption implicitly for the maps $h_{0,y}$.
\end{rem}

\begin{prop}\label{p: phi_{0,x}^i}
There is a family of colorings, $\psi_{0,x}^i\colon \czeroone(x)\to [\Delta]$, for $x\in X_0$ and $i\in H_{0,x}$,  satisfying the following properties: 
\begin{enumerate}[(i)]

\item \label{i: phi_0,x^0} The coloring $\psi_{0,x}^0$ is strongly adapted.

\item \label{i: phi_0 x^i h} We have $\psi_{0,x}^i=h_{0,x}(\psi_{0,\rep_0(x)}^i)$.

\item \label{i: phi_0 x^i adapted} For $i\in H_{0,x}$, the coloring $\psi_{0,x}^i$ is adapted.

\item \label{i: phi0xi i j} For $x\in X_0$ and $i,j\in H_{0,x}$,  let $A=C_{0,-1}(x)$ {\rm(}respectively, $A=D_{-1}(x,r_n^\pm)$\/{\rm)}, and let $f \colon (A,x,\psi_{0,x}^i)\to (A,x,\psi_{0,x}^j)$ be a  color-preserving  restriction of a $0$-equivalence   {\rm(}respectively, $0$-weak equivalence\/{\rm)}.  
Then $f$ is the identity map on $A$, and $i=j$.

\end{enumerate}
\end{prop}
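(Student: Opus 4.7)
The plan is to build the family by modifying the base coloring $\psi_{0,x}^0$ of \Cref{p: phi0x0} on a carefully chosen set of ``marker'' vertices inside $D_{-1}(x,r_0^\pm)\subset\czeroone(x)$, obtaining one coloring per subset of markers, and then propagating to all of $X_0$ via $h_{0,x}$.

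Fix a representative $x\in X_0^\Phi$ and choose $Z\subset D_{-1}(x,r_0^\pm)\setminus D_{-1}(x,8)$ to be $3$-separated in $(X_{-1},d_{-1})$. A greedy extraction removes at most $\Delta^3$ candidates per selected point (by \Cref{c: |D(x r)| le ...}) and discards the bounded region $D_{-1}(x,8)$ of size at most $\Delta^{11}$, so $|Z|\ge\lfloor(|D_{-1}(x,r_0^\pm)|-\Delta^{11}-1)/\Delta^{3}\rfloor=\log_2|H_{0,x}|$. Identify $H_{0,x}$ with the powerset $2^Z$ via any bijection sending $0$ to $\emptyset$, and for $i\in H_{0,x}$ corresponding to $S_i\subset Z$ set $\psi_{0,x}^i(u)=0$ if $u\in S_i$ and $\psi_{0,x}^i(u)=\psi_{0,x}^0(u)$ otherwise. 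For an arbitrary $x\in X_0$, put $\psi_{0,x}^i:=h_{0,x}(\psi_{0,\rep_0(x)}^i)$, yielding (ii); then (i) follows because $h_{0,x}$ is a $0$-equivalence by \Cref{p: hnx zero}, so it transports the distinguished pattern $T_0^\pm$ unchanged. For (iii), the modifications occur strictly outside $D_{-1}(x,8)$ while $T_0^\pm\subset D_{-1}(x,7)$, so condition~(i) of \Cref{d. adapted 0} is preserved, and the $3$-separation of $Z$ ensures that distinct markers (and any marker versus a $T_0^\pm$-vertex) have distinct BFS-parents, preserving condition~(ii) of \Cref{d. adapted 0}.

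For (iv), consider a color-preserving pointed isomorphism $f\colon(A,x,\psi_{0,x}^i)\to(A,x,\psi_{0,x}^j)$ as in the statement. I will show by induction on $n$ that $f$ is the identity on $\{u\in A:d_{-1}(x,u)\le n\}$. The base case is $f(x)=x$. For the step, adjacency-preservation together with $f|_{D_{-1}(x,n)\cap A}=\id$ imply $S_{-1}(v,1)\cap A=S_{-1}(f(v),1)\cap A$ for every $v\in A$ at $d_{-1}$-distance $n+1$ from $x$; hence $v$ and $f(v)$ share the same BFS-parent $u:=\Pa_{0,x}(v)\in S_{-1}(x,n)$ (using \Cref{l: children}~\ref{i: children distance}), and both lie in $\Ch_{0,x}(u)$. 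Condition~(ii) of \Cref{d. adapted 0} applied to $\psi_{0,x}^j$ gives injectivity of the coloring on $\Ch_{0,x}(u)$, and since $\psi_{0,x}^i(v)=\psi_{0,x}^j(f(v))$ we conclude $f(v)=v$. Once $f=\id_A$, pointwise equality of the two colorings forces $S_i=S_j$, and hence $i=j$.

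The main obstacle is the calibration of $Z$: one must verify that the greedy $3$-separated extraction, after excising a neighborhood of the distinguished pattern, leaves at least $\log_2|H_{0,x}|$ vertices, and that the resulting marker set simultaneously prevents sibling collisions so that adaptedness survives every modification. The constants $\Delta^{11}$ and $\Delta^3$ appearing in $\eta_0$ are precisely the slack available, and matching them against \Cref{c: |D(x r)| le ...} for disks of radii $8$ and $3$ is the technical core; once this budget is secured, propagation via $h_{0,x}$, preservation of adaptedness, and the BFS-driven rigidity argument are all formal.
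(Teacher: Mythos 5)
Your construction is essentially the paper's: a $3$-separated marker set in an outer corona of $D_{-1}(x,r_0^\pm)$, an injection of $H_{0,x}$ into its power set using the $\Delta^{11}/\Delta^3$ budget in $\eta_0$, recoloring markers with $0$, and transporting by $h_{0,x}$; items (i)--(iii) are fine (your remark that a marker's siblings keep pairwise distinct, nonzero colors is exactly what strong adaptedness of $\psi^0_{0,x}$ gives, since $(\psi^0_{0,x})^{-1}(0)\subset D_{-1}(x,7)$ while markers and their siblings lie outside $D_{-1}(x,8)$).

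The gap is in (iv), and it sits precisely at the marker vertices, which is the crux of the statement. In the induction step you conclude $f(v)=v$ from ``injectivity of $\psi_{0,x}^j$ on $\Ch_{0,x}(u)$ and $\psi_{0,x}^i(v)=\psi_{0,x}^j(f(v))$''. Injectivity of $\psi_{0,x}^j$ only lets you cancel when you also know $\psi_{0,x}^i(v)=\psi_{0,x}^j(v)$, and this fails exactly when $v\in S_i\triangle S_j$, i.e.\ when $v$ is marked for one index but not the other --- the very vertices that distinguish $i$ from $j$. To close the step you must argue separately for the (at most one) marker candidate $w\in Z\cap\Ch_{0,x}(u)$: since strong adaptedness forces $\psi^0_{0,x}\neq0$ on $\Ch_{0,x}(u)$ when $u$ is outside $D_{-1}(x,8)$, the only possible $0$-colored point of $\Ch_{0,x}(u)$ under either coloring is $w$ itself; the paper does this by first showing $f$ is the identity on $\Ch_{0,x}(u)\setminus\{w\}$ (where the two colorings agree and are injective), then getting $f(w)=w$ from $f(\Ch_{0,x}(u))=\Ch_{0,x}(u)$, and carrying the extra inductive invariant $\mathcal N^i_{0,x}\cap\Ch_{0,x}(u)=\mathcal N^j_{0,x}\cap\Ch_{0,x}(u)$ along the BFS order. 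Your final line ``once $f=\id_A$, pointwise equality forces $S_i=S_j$'' is fine, but without the marker case the induction never reaches it. A smaller imprecision: the claimed equality $S_{-1}(v,1)\cap A=S_{-1}(f(v),1)\cap A$ does not follow (only the neighbours at distance $\le n$ are forced to be common); the same-parent conclusion is still correct, but it should be derived via the BFS property that $\Pa_{0,x}$ is the $\trianglelefteq_{0,x}$-minimum of the sphere of radius one, all of whose distance-$n$ elements are already fixed by $f$, and in the weak-equivalence case one must also note that parents of points of $D_{-1}(x,r_0^\pm)$ stay in that disk.
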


\begin{proof}
First, for $i=0$, we take the strongly adapted colorings $\psi_{0,x}^0$ constructed in \Cref{p: phi0x0}. So~\ref{i: phi_0,x^0} is satisfied.

For every $x\in X_0^{\pm,\Phi}$, choose a maximal 3-separated subset $N_{0,x}$ of $C_{-1}(x, 10  , r_0^\pm )$, together with an enumeration of its power set, 
\[
\mathcal{P}( N_{0,x} )= \{\, \mathcal{N}_{0,x}^0 = \emptyset, \ \mathcal{N}_{0,x}^1, \ \ldots  \,\}\;.
\] 
We have $|D_{-1}(x,10)|\leq \Delta^{11}$  by \Cref{c: |D(x r)| le ...}. 
Thus $|C_{-1}(x,10,r_0^\pm)|\geq |D_{-1}(x,r_0^\pm)| - \Delta^{11}$ (recall that $r_0^\pm>2^{11}$).
By \Cref{l: maximal}, $N_{0,x}$ is $2$-relatively dense in $C_{-1}(x,10,r_0^\pm)$. 
So 
\begin{equation}\label{card n0x}
|N_{0,x}| \geq \big\lfloor\big(|D_{-1}(x,r_n^\pm)|-\Delta^{11}\big)/\Delta^3\big\rfloor \ge \big\lfloor\big(|D_{-1}(x,r_n^\pm)|-\Delta^{11}-1\big)/\Delta^3\big\rfloor
\end{equation}
by \Cref{l: cardinality separated}.
Therefore 
\[
|\mathcal{P}( N_{0,x} )| \ge \exp_2 \big(\big\lfloor\big(|D_{-1}(x,r_n^\pm)|-\Delta^{11}-1\big)/\Delta^3\big\rfloor\big)=\eta_0(|D_{-1}(x,r_n^\pm)|)\;. 
\]
Thus an injective map $H_{0,x}\to\mathcal P(N_{0,x})$  is well defined by $i\mapsto\mathcal N_{0,x}^i$.

If $x\notin X_0^\Phi$, let $N_{0,x}=h_{0,x}(N_{0,\rep_0(x)})$ and  $\mathcal{N}^i_{0,x}=h_{0,x}(\mathcal{N}^i_{0,\rep_0(x)})$, so that $N_{0,x}$ satisfies~\eqref{card n0x}.
Then define 
\[
\psi^i_{0,x}(u)= 
\begin{cases}
\psi^0_{0,x}(u) &\text{if $u\notin \mathcal{N}_{0,x}^i$}\\ 
0  &\text{if $u\in \mathcal{N}_{0,x}^i$}\;.
\end{cases}
\]
Note that this definition agrees with the previous one in the case $i=0$. 
Property~\ref{i: phi_0 x^i h} follows immediately from \Cref{p: phi0x0} and the fact that  $\mathcal{N}^i_{0,x}=h_{0,x}(\mathcal{N}^i_{0,\rep_0(x)})$. 

To prove~\ref{i: phi_0 x^i adapted}, note that $\psi_{0,x}^i=\psi_{0,x}^0$ on $D_{-1}(x,10)$ by construction.
 So \Cref{d. adapted 0}~\ref{i: adapted 0 center} is trivially satisfied by $\psi_{0,x}^i$.
 For every $u\in \czeroone(x)$, we have $\Ch_{0,x}(u)\subset D_{-1}(u,1)$, which yields $d(v,w)\leq 2$ for all $v,w\in \Ch_{0,x}(u)$.
Hence $|N_{0,x}\cap\Ch_{0,x}(u)|\le1$ because $N_{0,x}$ is $3$-separated, and therefore $|\mathcal{N}_{0,x}^i\cap \Ch_{0,x}(u)|\le1$.
 The coloring $\phi_{0,x}^0$ assigns different colors to all  points in $\Ch_{0,x}(u)$ (\Cref{d. adapted 0}~\ref{i: adapted 0 children}.
If $u\in D_{-1}(x,9)$, then $\Ch_{0,x}(u)\subset D_{-1}(x,10)$, and therefore $\psi_{0,x}^i$ also assigns different colors to all points in $\Ch_{0,x}(u)$ since $\psi_{0,x}^i=\psi_{0,x}^0$ on $D_{-1}(x,10)$.
If $u\in C_{0,-1}(x)\setminus D_{-1}(x,9)$, then $\psi_{0,x}^0$ assigns different colors to all  points in $\Ch_{0,x}(u)$, all of them different from $0$,  and it follows from \Cref{d. adapted 0} and \Cref{p: phi_{0,x}^i}~\ref{i: phi_0,x^0} that $\psi_{0,x}^i$  assigns different colors to those points too. 
Thus \Cref{d. adapted 0}~\ref{i: adapted 0 children} is satisfied by $\psi_{0,x}^i$, and the coloring $\psi_{0,x}^i$ is adapted.

To prove~\ref{i: phi0xi i j}, suppose first that $A=C_{0,-1}(x)$ and $f$ is a $0$-equivalence. For all $u\in C_{0,-1}(x)$, we are going to  show that $f$ is the identity map on $\Ch_{0,x}(u)$, and that $\mathcal{N}^i_{0,x}\cap \Ch_{0,x}(u) = \mathcal{N}^j_{0,x}\cap\Ch_{0,x}(u)$, using induction on $u$ with $\trianglelefteq_{0,x}$. 
This will complete the proof because it follows that $f$ is the identity map and $\mathcal{N}^i_{0,x} = \mathcal{N}^j_{0,x}$, yielding $i=j$.

First, we have $f(x)=x$ by \Cref{d. adapted 0}~\ref{i: adapted 0 center}, since $x$ is the unique point having the correct coloring pattern  on some geodesic segment of the form $\tau=(x=\tau_0,\dots, \tau_5)$. 
Also, we have
\[
\mathcal{N}^i_{0,x}\cap \Ch_{0,x}(x) = \mathcal{N}^j_{0,x}\cap\Ch_{0,x}(x)=\emptyset
\]
since $N_{0,x}\cap D(x, 10)=\emptyset$.

Suppose now that, for some $u\in C_{0,-1}(x)$ with $d_{-1}(u,x)>0$, $f$ is the identity map on $\Ch_{0,x}(v)$ and $\mathcal{N}^i_{0,x}\cap \Ch_{0,x}(v) = \mathcal{N}^j_{0,x}\cap\Ch_{0,x}(v)$ for all $v\vartriangleleft_{0,x}u$.
In particular,  $f$ is the identity map on $\Ch_{0,x}(\Pa_{0,x}(u))$, and therefore $f(u)=u$.
Furthermore this implies $f(\Ch_{0,x}(u))=\Ch_{0,x}(u)$ by~\eqref{defn ch}.
By definition, for $l=i,j$, we have $\psi^l_{0,x}=\psi^0_{0,x}$ on $\Ch_{0,x}(u) \setminus N_{0,x}$, and $\psi^l_{0,x}(u)=0$ if $u\in \mathcal{N}^l_{0,x}$.
Recall that  $N_{0,x}\cap \Ch_{0,x}(u)$ has at most one point, which is denoted by $w$ if it exists.
In this case,  by~\ref{i: phi_0 x^i adapted} and \Cref{d. adapted 0}~\ref{i: adapted 0 children}, $\psi_{0,x}^0$ is injective on $\Ch_{0,x}(u)\setminus \{w\}$.
Thus $\psi_{0,x}^i$ and $\psi_{0,x}^j$ agree and are injective on $\Ch_{0,x}(u)\setminus \{w\}$, and therefore $f$ is the identity on $\Ch_{0,x}(u)\setminus \{w\}$.
But this yields $f(w)=w$. Thus, in any case, $f$ is the identity map on $\Ch_{0,x}(u)$ and  $\Ch_{0,x}(u) \cap \mathcal{N}^i_{0,x}=\Ch_{0,x}(u) \cap \mathcal{N}^j_{0,x}$.

The proof of~\ref{i: phi0xi i j} when $A=D_{-1}(x,r_0^\pm)$ and $f$ is a $0$-weak equivalence is similar. 
\end{proof}

\begin{cor}\label{c. id zero}
Let $x,y\in X_0$, $i\in H_{0,x}$ and $j\in H_{0,y}$,  let $A=C_{0,-1}(x)$ {\rm(}respectively, $A=D_{-1}(x,r_0^\pm)${\rm)}, and let $f \colon (A,x,\psi_{0,x}^i)\to (f(A),y,\psi_{0,y}^j)$  be a color-preserving restriction of a $0$-equivalence {\rm(}respectively, $0$-weak equivalence\/{\rm)} $x\to y$.
Then  $i=j$ and $f=h_{0,y} h_{0,x}^{-1}$ on $A$.
\end{cor}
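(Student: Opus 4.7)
The plan is to reduce both cases to the automorphism case already handled in Proposition~\ref{p: phi_{0,x}^i}\ref{i: phi0xi i j} by conjugating $f$ with the representative-reducing maps $h_{0,x}$ and $h_{0,y}$.

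First I would establish that $\rep_0(x)=\rep_0(y)$. In the $0$-equivalence case this is immediate: the hypothesis that $f$ is a $0$-equivalence $x\to y$ gives $x\sim_0 y$, so $\rep_0(x)=\rep_0(y)$ by definition of the representative map. In the $0$-weak equivalence case the argument is more delicate: using \Cref{l: weak}, each $h_{0,\cdot}$ restricts to a $0$-weak equivalence from its representative, and combining these with $f$ produces a color-preserving $0$-weak equivalence $g\colon\rep_0(x)\to\rep_0(y)$ between $\psi_{0,\rep_0(x)}^i$ and $\psi_{0,\rep_0(y)}^j$. The rigid color-$0$ geodesic ``marker'' pattern near the center (Definition~\ref{d. adapted 0}\ref{i: adapted 0 center}), which any color-preserving map between adapted colorings must match, together with the uniqueness clause of \Cref{l: x0phi}, then forces $\rep_0(x)=\rep_0(y)$.

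Setting $z:=\rep_0(x)=\rep_0(y)$ and $g:=h_{0,y}^{-1}\circ f\circ h_{0,x}$, Lemma~\ref{l: equivalence composition} (respectively, Lemma~\ref{l: weak equivalence composition} combined with \Cref{l: weak}) shows that $g$ is a $0$-equivalence (respectively, $0$-weak equivalence) from $z$ to $z$. Using Proposition~\ref{p: phi_{0,x}^i}\ref{i: phi_0 x^i h}, which states $\psi_{0,x}^i=h_{0,x}(\psi_{0,z}^i)$ and $\psi_{0,y}^j=h_{0,y}(\psi_{0,z}^j)$, the color-preserving property of $f$ transfers to $g$, yielding a color-preserving map $(A',z,\psi_{0,z}^i)\to(A',z,\psi_{0,z}^j)$, where $A'=h_{0,x}^{-1}(A)$ is $C_{0,-1}(z)$ or $D_{-1}(z,r_0^\pm)$ accordingly.

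Applying Proposition~\ref{p: phi_{0,x}^i}\ref{i: phi0xi i j} to $g$ now gives $g=\mathrm{id}_{A'}$ and $i=j$, and rearranging produces $f=h_{0,y}\circ h_{0,x}^{-1}$ on $A$. The main obstacle is the weak case, where one must carefully justify both the well-definedness of the composition on the appropriate disk and the equality $\rep_0(x)=\rep_0(y)$; the former relies on the fact that the $h_{0,\cdot}$-maps constructed in Proposition~\ref{p: hnx zero} are built from identities on $X_0^\Phi$ and restrictions of the pointed isometries $\mathfrak h_{m,\cdot}$ elsewhere, and the latter on the rigidity of the adapted color pattern at the center.
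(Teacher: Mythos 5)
Your core reduction is exactly the paper's: conjugate $f$ by $h_{0,x}$ and $h_{0,y}$, use \Cref{p: phi_{0,x}^i}~\ref{i: phi_0 x^i h} to identify the pulled-back colorings with $\psi_{0,z}^i$ and $\psi_{0,z}^j$ for $z=\rep_0(x)=\rep_0(y)$, and apply \Cref{p: phi_{0,x}^i}~\ref{i: phi0xi i j} to get $i=j$ and the identity, hence $f=h_{0,y}h_{0,x}^{-1}$. For the $0$-equivalence case ($A=C_{0,-1}(x)$) this is complete and coincides with the proof in the paper, which then dismisses the disk case as ``similar''.

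The gap is in your weak case. From the map you assemble, a color-preserving $0$-weak equivalence $g\colon \rep_0(x)\to\rep_0(y)$, you only get that the two representatives are $\widehat{\sim}_0$-related, i.e.\ $\widehat\Phi_0(\rep_0(x))=\widehat\Phi_0(\rep_0(y))$. The uniqueness coming from \Cref{l: x0phi} is injectivity of $\Phi_0$ on $X_0^{\Phi}$, so to invoke it you need $\Phi_0(\rep_0(x))=\Phi_0(\rep_0(y))$, i.e.\ a genuine $0$-equivalence between the representatives: a pointed isomorphism of the sets $\overline C_{0,-1}$ carrying $C_{0,-1}(\rep_0(x))$ onto $C_{0,-1}(\rep_0(y))$. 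Neither the weak equivalence $g$ (an isomorphism of $r_0^\pm$-disks only) nor the matching of the central color-$0$ geodesic pattern of \Cref{d. adapted 0}~\ref{i: adapted 0 center} supplies this; that marker only pins down the center and the $\pm$ type, and says nothing about the cluster structure that separates $\sim_0$ from $\widehat{\sim}_0$. Distinct elements of $X_0^{\Phi}$ can perfectly well be $\widehat{\sim}_0$-equivalent --- this is exactly why the paper introduces the smaller set $X_0^{\widehat\Phi}\subset X_0^{\Phi}$ and the maps $\hat h_{0,x}$ separately (\Cref{l: weak x0phi}, \Cref{p: weak hnx zero}), and why the weak conclusion of the general-$n$ analogue \Cref{c. equivalence} is phrased with the maps $\hat h_{n,\cdot}$. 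So you have correctly isolated the delicate point hidden behind the paper's ``similar'', but your proposed justification of $\rep_0(x)=\rep_0(y)$ conflates the two equivalence relations and does not go through as stated. A smaller quibble: the well-definedness of $h_{0,y}^{-1}fh_{0,x}$ on the disk needs only $f(D_{-1}(x,r_0^\pm))=D_{-1}(y,r_0^\pm)\subset C_{0,-1}(y)$, which is \Cref{l: cnn-1 contained}; it does not rely on the $h_{0,\cdot}$ being built from the isometries $\mathfrak h_{m,\cdot}$, since in \Cref{p: hnx zero} some of them are arbitrary $0$-equivalences.
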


\begin{proof}
Suppose that $A=C_{0,-1}(x)$.
Since there is a $0$-equivalence $x\to y$, we have  $\Phi_0(x)=\Phi_0(y)$ and $\rep_0(x)=\rep_0(y)=:z$.
So $h_{0,x}^*\psi_{0,x}^l= \psi_{0,z}^l$ for $l=i,j$ by \Cref{p: phi_{0,x}^i}~\ref{i: phi_0 x^i h}. Thus 
\[
h_{0,y}^{-1} f h_{0,x}\colon (C_{0,-1}(z),z,\psi_{0,z}^i) \to (C_{0,-1}(z),z,\psi_{0,z}^j)
\]
is a color-preserving $0$-equivalence.
Then the result follows from \Cref{p: phi_{0,x}^i}~\ref{i: phi0xi i j}.

The case where $A=D_{-1}(x,r_0^\pm)$ is similar.
\end{proof}

\subsection{Adapted colorings for $n>0$}

\begin{defn}\label{d. phi_{n,x}^0 i}
Let $x\in \xn$. A coloring $\psi\colon \cnyn(x)\to \mathcal{I}_{n-1}$  is said to be \emph{adapted} if the following conditions are satisfied:
\begin{enumerate}[(i)]

\item \label{i: phi_{n,x}^0 i zero} We have $\psi^{-1}(0)= \xxyn\cap \cnyn(x)$.

\item \label{i: phi_{n,x}^0 i one} We have
\[
\psi^{-1}(1)=
\begin{cases}
\{x\} & \text{if $x\in \xn^- \setminus \xxyn$} \\ 
\emptyset & \text{otherwise}\;.
\end{cases}
\]

\item \label{i: phi_{n,x}^0 i two} We have
\[
\psi^{-1}(2)=
\begin{cases}
\{x\}  & \text{if $x\in \xn^+ \setminus \xxyn$} \\ 
\emptyset & \text{otherwise}\;.
\end{cases}
\]

\item\label{i: phi_{n,x}^0 i three xxn} If $x\in \xxyn\cap X_{n}^+$, then $\psi^{-1}(3)=\{y\}$ for some $y\in S_{n-1}(x,1)$, otherwise $\psi^{-1}(3)=\emptyset$.

\item\label{i: phi_{n,x}^0 i four xxn} If $x\in \xxyn\cap X_{n}^-$, then $\psi^{-1}(4)=\{y\}$ for some $y\in S_{n-1}(x,1)$, otherwise $\psi^{-1}(4)=\emptyset$.\setcounter{nameOfYourChoice2}{\value{enumi}}

\setcounter{nameOfYourChoice2}{\value{enumi}}
\end{enumerate}
The coloring $\psi$ is \emph{strongly adapted} if it is adapted and, additionally, it satisfies the following condition:
\begin{enumerate}[(i)]
\setcounter{enumi}{\value{nameOfYourChoice2}}
\item\label{i: phi_{n,x}^0 i four} $\psi^{-1}(5)=\emptyset$.
\end{enumerate}
\end{defn}

Recall that the sets $C_{n,n-1}(x)$, for $x\in X_n$, form a partition of $X_{n-1}$ by definition.

\begin{lem}\label{l: condition center n}
Consider a family of adapted colorings, $\psi_x\colon C_{n,n-1}(x)\to \mathcal{I}_{n-1}$, for $ x\in X_n$, whose combination is denoted by $\psi$.
For every $u\in X_{n-1}$, we have $u\in X_n$ if and only if, either $\psi(u)\in\{1,2\}$, or $\psi(u)=0$ and there is some $v\in S_{n-1}(u,1)$ such that $\psi(v)\in\{3,4\}$.
\end{lem}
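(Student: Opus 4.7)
The plan is to split the biconditional into the two implications, both following essentially by unraveling Definition 7.15 and applying one key metric estimate.

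\textbf{Forward direction} ($u \in X_n$): Since the clusters $C_{n,n-1}(x)$ partition $X_{n-1}$ with centers $X_n$, the point $u$ is itself the center of the cluster containing it, so $\psi(u) = \psi_u(u)$. If $u \in X_n^\pm \setminus \mathfrak{X}_{n-1}$, Definition 7.15~(ii)--(iii) give $\psi(u)\in\{1,2\}$. If instead $u \in \mathfrak{X}_{n-1} \cap X_n^\pm$, then (i) gives $\psi(u)=0$, and (iv)--(v) produce some $v\in S_{n-1}(u,1)$ contained in the domain $C_{n,n-1}(u)$ of $\psi_u$ with $\psi(v)=\psi_u(v)\in\{3,4\}$.

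\textbf{Converse} ($u \in X_{n-1} \setminus X_n$): Let $x \in X_n$ be the unique center with $u \in C_{n,n-1}(x)$; then $x \neq u$. Since (ii)--(iii) put colors $1$ and $2$ only at $x$, we have $\psi(u)\notin\{1,2\}$. Thus it suffices to show that if $\psi(u)=0$, no neighbor of $u$ in $(X_{n-1},E_{n-1})$ has color $3$ or $4$. By (i), $\psi(u)=0$ forces $u \in \mathfrak{X}_{n-1}$. Suppose toward a contradiction that some $v \in S_{n-1}(u,1)$ satisfies $\psi(v)\in\{3,4\}$, and let $y \in X_n$ be the center of the cluster $C_{n,n-1}(y)$ containing $v$. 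Then $\psi(v)=\psi_y(v)\in\{3,4\}$, so by (iv)--(v) we must have $y \in \mathfrak{X}_{n-1}$ and $v \in S_{n-1}(y,1)$.

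Now $u, y \in \mathfrak{X}_{n-1}$ and $u \neq y$ (since $y \in X_n$ while $u \notin X_n$), and the triangle inequality gives $d_{n-1}(u,y)\le d_{n-1}(u,v)+d_{n-1}(v,y)\le 2$. Proposition~\ref{p: xn}~\ref{i: xn dn dn-1} then yields $d_{-1}(u,y)\le L_{n-1}d_{n-1}(u,y)\le 2L_{n-1}$. On the other hand, by Proposition~\ref{p: xxn}~\ref{i: xxn subset}, $\mathfrak{X}_{n-1}$ is $\mathfrak{s}_{n-1}$-separated in $d_{-1}$, so $d_{-1}(u,y)\ge \mathfrak{s}_{n-1}$. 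The choice of $\mathfrak{s}_{n-1}$ in \eqref{2rl + sn < sl} and the lower bound \eqref{rn} on $\mathfrak{r}_{n-1}$ ensure $\mathfrak{s}_{n-1}> L_{n-2}\cdot 2\mathfrak{r}_{n-1}>2L_{n-1}$, giving the desired contradiction.

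The entire argument is essentially a bookkeeping check of Definition~\ref{d. phi_{n,x}^0 i}, with the only substantive input being the observation that the colors $3$ and $4$ serve as ``pointers'' from a center in $\mathfrak{X}_{n-1}\cap X_n$ to one of its $d_{n-1}$-neighbors; the mild obstacle is keeping track of which color comes from which cluster, which is resolved by the fact that the separation of $\mathfrak{X}_{n-1}$ is so large that two distinct points of $\mathfrak{X}_{n-1}$ cannot lie within $d_{n-1}$-distance $2$ of each other.
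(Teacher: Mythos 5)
Your proof is correct, and it supplies an argument that the paper actually omits: \Cref{l: condition center n} is stated there without proof, as an immediate unravelling of \Cref{d. phi_{n,x}^0 i}. Your forward direction is exactly that unravelling. For the converse, the one genuinely non-trivial point is the one you isolate: a point $u\in\mathfrak X_{n-1}\setminus X_n$ with $\psi(u)=0$ could in principle sit next to the distinguished $3/4$-colored neighbour belonging to some other center $y\in\mathfrak X_{n-1}\cap X_n$, and this must be excluded using the $\mathfrak s_{n-1}$-separation of $\mathfrak X_{n-1}$ (\Cref{p: xxn}~\ref{i: xxn subset}) together with $d_{-1}\le L_{n-1}d_{n-1}$ (\Cref{p: xn}~\ref{i: xn dn dn-1}). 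This is precisely the kind of estimate the authors record immediately after the lemma to prove \eqref{cnyn xxyn}, and their bound $\mathfrak s_{n-1}>3L_{n-1}\Gamma_n^+\ge 3L_{n-1}$ (from \eqref{2rl + sn < sl}, \eqref{no bm multi} and \eqref{gamma r}) gives the needed $\mathfrak s_{n-1}>2L_{n-1}$ more directly than your chain $\mathfrak s_{n-1}>2L_{n-2}\mathfrak r_{n-1}>2L_{n-1}$: since $L_{n-1}=L_{n-2}l_{n-1}$, the second inequality additionally requires $\mathfrak r_{n-1}>l_{n-1}=2R_{n-1}^+ +1$, which is true (e.g.\ by \eqref{rn} and \eqref{gamma r bm}) but should be stated. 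With that small justification added, your argument is complete and is, in substance, the verification the authors left to the reader.
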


By \Cref{p: xn}~\ref{i: xn dn dn-1}, and \Cref{l: overlinecnx rnpm,l: cn-1 gamma}, we have $d_{-1}(u,v)\leq 2L_{n-1}R_n^+$ for any $u,v\in \cnyn(x)$.
On the other hand, if $u,v\in \xxyn$, then $d_{-1}(u,v)\geq \mathfrak{s}_{n-1}$ by \Cref{p: xxn}~\ref{i: xxn subset}.
Since $\mathfrak{s}_{n-1}>3L_{n-1}\Gamma_n^+\ge3L_{n-1}R_n^+$ by~\eqref{2rl + sn < sl},~\eqref{no bm multi} and~\eqref{gamma r}, it follows that
\begin{equation}\label{cnyn xxyn}
|\cnyn(x)\cap \xxyn|\leq 1\;.
\end{equation} 

\begin{lem}\label{l: exists adapted n}
For every $x\in \xn$, there is a strongly adapted coloring $\psi_x\colon C_{n,n-1}(x)\to \mathcal{I}_{n-1}$.
\end{lem}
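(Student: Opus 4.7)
The plan is to build $\psi_x$ explicitly by first setting $\psi_x\equiv 6$ on all of $C_{n,n-1}(x)$ and then modifying the value at a few distinguished points. Admissibility of the color $6$ everywhere reduces to checking that $6\in I_{n-1,u}$ for each $u\in C_{n,n-1}(x)$: since $X_{n-2}$ is infinite and connected, hence unbounded, \Cref{c: |D(x,r)| ge r+1} yields $|D_{n-2}(u,r_{n-1}^\pm s_{n-1})|\ge r_{n-1}^\pm s_{n-1}+1\ge 2$, so $|I_{n-1,u}|\ge 7$, and in particular every color in $\{0,1,\dots,6\}$ belongs to $I_{n-1,u}$.

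Let $\Xi=\mathfrak{X}_{n-1}\cap C_{n,n-1}(x)$, which by~\eqref{cnyn xxyn} has at most one element. The construction splits into two cases. If $x\notin\mathfrak{X}_{n-1}$, then $\psi_x$ will assign color $1$ to $x$ when $x\in X_n^-$ or color $2$ when $x\in X_n^+$, color $0$ to the point of $\Xi$ when $\Xi\ne\emptyset$ (a singleton $z\ne x$), and color $6$ to every other point. If $x\in\mathfrak{X}_{n-1}$, then $\Xi=\{x\}$; put $\psi_x(x)=0$ and pick any $y\in S_{n-1}(x,1)$, setting $\psi_x(y)=3$ when $x\in X_n^+$ or $\psi_x(y)=4$ when $x\in X_n^-$. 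Such a $y$ exists because $X_{n-1}$ is infinite and connected, and it lies in $D_{n-1}(x,r_n^\pm)\subset C_{n,n-1}(x)$ by \Cref{l: cnn-1 contained}.

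From here, the five conditions~\ref{i: phi_{n,x}^0 i zero}--\ref{i: phi_{n,x}^0 i four xxn} of \Cref{d. phi_{n,x}^0 i} are immediate, since the preimages of $0,1,2,3,4$ are tautologically equal to the prescribed sets, and the strongly adapted condition $\psi_x^{-1}(5)=\emptyset$ holds trivially because color $5$ is never used. The only step needing a numeric check is that the selected $y$ lies outside $\mathfrak{X}_{n-1}$ (so that condition~\ref{i: phi_{n,x}^0 i zero} is not violated): from \Cref{p: xn}~\ref{i: xn dn dn-1} one gets $d_{-1}(x,y)\le L_{n-1}$, and the inequality $\mathfrak{s}_{n-1}>3L_{n-1}\Gamma_n^+$ from~\eqref{2rl + sn < sl}, combined with the $\mathfrak{s}_{n-1}$-separation of $\mathfrak{X}_{n-1}$ in \Cref{p: xxn}~\ref{i: xxn subset}, forces $y\notin\mathfrak{X}_{n-1}$. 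No substantial obstacle arises: the proof is a short case analysis together with this quantitative bookkeeping check.
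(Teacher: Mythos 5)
Your proof is correct and essentially identical to the paper's: both start from a default color in $I_{n-1,u}\setminus[6]$ (you fix it to $6$, after verifying $|I_{n-1,u}|\ge 7$) and overwrite the colors $0$--$4$ at the distinguished points dictated by \Cref{d. phi_{n,x}^0 i}. Your explicit verification that the chosen $y\in S_{n-1}(x,1)$ lies outside $\mathfrak{X}_{n-1}$, and your assignment of color $3$ for $x\in X_n^+$ and $4$ for $x\in X_n^-$, correctly match \Cref{d. phi_{n,x}^0 i}~\ref{i: phi_{n,x}^0 i three xxn}--\ref{i: phi_{n,x}^0 i four xxn} and tidy up an apparent sign slip in the paper's case split.
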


\begin{proof}
First, note that $[7]\subset I_{n-1,u}$ for all $u\in C_{n,n-1}(x) $ by~\eqref{def inx}.
Define $\psi_x(u)=0$ for every $u\in \cnyn(x)\cap \xxyn$.
In the case where $x\in  \xxyn$, choose some $y\in S_{n-1}(x,1)$ and define 
\[
\psi_x(y)=
\begin{cases}
3 &\text{if $x\in \xxn^-$} \\ 
4 &\text{if $x\notin \xxn^+$}\;.
\end{cases}
\] 
If $x\notin \xxyn$,  set 
\[
\psi_x(x)=
\begin{cases}
1 \quad &\text{if $x\in \xxn^-$} \\ 
2 &\text{if $x\notin \xxn^+$}\;.
\end{cases}
\]

Let $A$ be the set of points in $\cnyn(x)$ that have been already colored.
For $u\in C_{n,n-1}(x)\setminus A$, let $\phi_x(u)$ be any color in $I_{n-1,u}\setminus [6]$.
\end{proof}

\begin{prop}\label{p: phinx0}
There is a family of strongly adapted colorings, $\psi_{n,x}^0\colon \cnyn(x)\to \mathcal{I}_{n-1}$, for $x\in \xn$, satisfying $\psi_{n,x}^0=h_{n,x}(\psi_{n,\rep_n(x)}^0)$.
\end{prop}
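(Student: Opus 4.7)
The plan is to mimic the proof of \Cref{p: phi0x0} essentially verbatim, replacing the building-block lemma with its $n>0$ analogue. Concretely, I will define the family $\psi_{n,x}^0$ by setting $\psi_{n,x}^0$ to be any strongly adapted coloring of $C_{n,n-1}(x)$ for $x\in X_n^\Phi$ (whose existence is guaranteed by \Cref{l: exists adapted n}), and then setting $\psi_{n,x}^0:=h_{n,x}(\psi_{n,\rep_n(x)}^0)$ for $x\in X_n\setminus X_n^\Phi$. The compatibility identity in the statement is then immediate for $x\notin X_n^\Phi$ by construction, and trivial for $x\in X_n^\Phi$ since in that case $\rep_n(x)=x$ and $h_{n,x}=\id$ on $\mathcal C_{n,-1}(x)$ by \Cref{p: hnx}~\ref{i: hnx phi}.

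The only work is therefore to verify that, for $x\in X_n\setminus X_n^\Phi$, the pushforward $\psi_{n,x}^0$ is strongly adapted whenever $\psi_{n,\rep_n(x)}^0$ is. For this I invoke that, by \Cref{p: hnx}, the map $h_{n,x}\colon\rep_n(x)\to x$ is an $n$-equivalence, and then read off the clauses of \Cref{d. phi_{n,x}^0 i} one by one using \Cref{d. equivalence}. Clause~\ref{i: phi_{n,x}^0 i zero} follows from \Cref{d. equivalence}~\ref{i: equivalence v},\ref{i: equivalence xxm}, which give $h_{n,x}(\xxyn\cap C_{n,n-1}(\rep_n(x)))=\xxyn\cap C_{n,n-1}(x)$. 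Clauses~\ref{i: phi_{n,x}^0 i one}--\ref{i: phi_{n,x}^0 i four xxn} concern values at the center $x$ and at a single neighbour, so I need two facts: (a) $\rep_n(x)\in X_n^\pm$ iff $x\in X_n^\pm$, which holds by construction of $\rep_n$ from the equivalence $\sim_n^\pm$ on $X_n^\pm$; and (b) $\rep_n(x)\in\xxyn$ iff $x\in\xxyn$, which follows by evaluating the inclusion in \Cref{d. equivalence}~\ref{i: equivalence xxm} at the base point. The singleton condition in clauses~\ref{i: phi_{n,x}^0 i three xxn}--\ref{i: phi_{n,x}^0 i four xxn} (that the preimage lies in $S_{n-1}(x,1)$) is preserved because $h_{n,x}$ is a graph isomorphism $\mathcal C_{n,-1}(\rep_n(x))\to\mathcal C_{n,-1}(x)$ by \Cref{d. equivalence} and hence preserves $d_{n-1}$-spheres around the base point. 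Finally, clause~\ref{i: phi_{n,x}^0 i four} ($\psi^{-1}(5)=\emptyset$) is just a set-theoretic statement preserved by any pushforward.

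No step is genuinely hard; the entire proof is a bookkeeping exercise once the correct $n$-equivalence $h_{n,x}$ is in hand. If there is any subtlety, it is the implicit check that the pushforward coloring takes values in $\mathcal I_{n-1}$ with $(\psi_{n,x}^0)(u)\in I_{n-1,u}$ for each $u\in C_{n,n-1}(x)$; this however follows because $h_{n,x}$ restricts on clusters to an isomorphism that by \Cref{d. equivalence}~\ref{i: equivalence cnm} preserves the relevant $d_{n-1}$-disks used in the definition~\eqref{def inx} of $I_{n-1,u}$, so $I_{n-1,u}=I_{n-1,h_{n,x}^{-1}(u)}$.
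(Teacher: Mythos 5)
Your proposal is exactly the paper's argument: the published proof of \Cref{p: phinx0} just says it follows from \Cref{l: exists adapted n} in the same way \Cref{p: phi0x0} follows from \Cref{l: existence phix}, namely choose strongly adapted colorings on the representatives $X_n^{\Phi}$ and transport them by the $n$-equivalences $h_{n,x}$ of \Cref{p: hnx}; your clause-by-clause check of \Cref{d. phi_{n,x}^0 i} under the pushforward (via \Cref{d. equivalence}~\ref{i: equivalence v},\ref{i: equivalence cnm},\ref{i: equivalence xxm}, the fact that $\rep_n$ preserves $X_n^\pm$, and preservation of adjacency to the base point) is precisely the routine verification the paper leaves implicit.

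The only step that does not hold as you justify it is the claimed equality $I_{n-1,u}=I_{n-1,h_{n,x}^{-1}(u)}$. By \eqref{def inx}, $I_{n-1,u}$ is governed by $|D_{n-2}(u,r_{n-1}^\pm s_{n-1})|$, a disk in $(X_{n-2},d_{n-2})$ of radius $r_{n-1}^\pm s_{n-1}$ (for $u\in X_{n-1}^+$ this is $r_{n-1}s_{n-1}^2$), and such disks are in general not contained in $\mathcal C_{n,-1}(\rep_n(x))$, the domain of $h_{n,x}$: an arbitrary $n$-equivalence only controls the sets $\mathcal C_{n,n-1}$ and, via \Cref{d. equivalence}~\ref{i: equivalence cmm-1}, lower-level clusters, whose $d_{n-2}$-extent is far smaller than $r_{n-1}s_{n-1}^2$. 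So \Cref{d. equivalence}~\ref{i: equivalence cnm} gives no control of those cardinalities, and the equality of $I$-sets is unsupported (the analogous statement is proved in \Cref{p: chin} only for the maps $\mathfrak h_{m,z}$, which are short-scale isometries on much larger domains). This does not affect the proposition: strong adaptedness only prescribes the values $0,\dots,5$, and in \Cref{l: exists adapted n} the remaining points may be given any color of $I_{n-1,u}\setminus[6]$, for instance always the color $6$; since $[7]\subset I_{n-1,u'}$ for every $u'$, with such a choice on the representatives the pushforward automatically takes admissible values, and nothing else in the statement or its later use requires your equality of $I$-sets.
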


\begin{proof}
This follows from \Cref{l: exists adapted n} like \Cref{p: phi0x0}.
\end{proof}

\begin{prop}\label{p: phi_n x^i}
There is  a family of colorings, $\psi_{n,x}^i\colon \cnyn(x)\to \mathcal{I}_{n-1}$, for $x\in \xn$ and $i\in H_{n,x}$, satisfying the following properties:
\begin{enumerate}[(i)]

\item \label{i: phi_n x^0} The coloring $\psi_{n,x}^0$ is strongly adapted.

\item\label{i: phinx0 h} We have $\psi_{n,x}^i=h_{n,x}(\psi_{n,\rep_n(x)}^i)$. 

\item \label{i: phi_{n,x}^i adapted} Each coloring $\psi_{n,x}^i$ is adapted.

\item \label{i: phi_n x^i different} There are sets $\mathcal{N}_{n,x}^i\subset C_{n-1}(x,10,r_n^\pm-1)$, for $x\in X_n$ and $i\in H_{n,x}$,
satisfying:
\begin{enumerate}[(a)]
\item \label{i: phi_n x^i hat h} $\mathcal{N}_{n,x}^i=\hat h_{n,x}(\mathcal{N}_{n,\widehat \rep_n (x)}^i)$;
\item $(\psi_{n-1,x}^i)^{-1}(4)=\mathcal{N}_{n,x}^i$; and
\item $\mathcal{N}_{n,x}^i \neq \mathcal{N}_{n,x}^j$ if $i\neq j$.
\end{enumerate}
\end{enumerate}
\end{prop}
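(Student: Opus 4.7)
The plan is to mimic the proof of \Cref{p: phi_{0,x}^i}, with the key change that the ``free'' color used to encode the choice of subset must now be color $5$ (the unique color whose absence distinguishes a strongly adapted coloring from an adapted one in \Cref{d. phi_{n,x}^0 i}), rather than color~$0$ as in the $n=0$ case. For $i=0$, take the strongly adapted colorings $\psi_{n,x}^0$ of \Cref{p: phinx0}, which immediately yield~\ref{i: phi_n x^0} and the $i=0$ instance of~\ref{i: phinx0 h}. For each $x\in X_n^{\pm,\widehat\Phi}$, choose a maximal $3$-separated subset $N_{n,x}$ of the corona $C_{n-1}(x,10,r_n^\pm-1)$ in the graph $(X_{n-1},E_{n-1})$, with $N_{n,x}$ chosen to avoid $\mathfrak X_{n-1}$; this can be arranged because $\mathfrak X_{n-1}$ is $\mathfrak s_{n-1}$-separated (\Cref{p: xxn}~\ref{i: xxn subset}), so removing the $1$-neighborhoods of $\mathfrak X_{n-1}\cap D_{n-1}(x,r_n^\pm)$ changes the estimate only by a negligible factor. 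Enumerate $\mathcal P(N_{n,x})=\{\mathcal N_{n,x}^0=\emptyset,\mathcal N_{n,x}^1,\dots\}$ in the order of $\N$, and for $x\notin X_n^{\widehat\Phi}$ put $N_{n,x}=\hat h_{n,x}(N_{n,\widehat\rep_n(x)})$ and $\mathcal N_{n,x}^i=\hat h_{n,x}(\mathcal N_{n,\widehat\rep_n(x)}^i)$, which gives~\ref{i: phi_n x^i hat h}.

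To verify that this indexing is well-defined, combine \Cref{l: maximal}, \Cref{l: cardinality separated} and \Cref{c: |D(x r)| le ...} applied to $(X_{n-1},d_{n-1})$, whose degree is bounded by $\Delta_{n-1}$ via \Cref{p: xn}~\ref{i: xn deltan deltan-1}. Using $|D_{n-1}(x,10)|\le\Delta_{n-1}^{11}$ yields
\[
|N_{n,x}|\ge\bigl\lfloor\bigl(|D_{n-1}(x,r_n^\pm)|-\Delta_{n-1}^{11}-1\bigr)/\Delta_{n-1}^3\bigr\rfloor.
\]
The choice of $r_{n-1}$ and the estimates of \Cref{s. constants} force $\Delta_{n-1}^3\le\Delta_{n-2}^{r_{n-1}^2 s_{n-1}}$, giving
\[
|\mathcal P(N_{n,x})|\ge\eta_n\bigl(|D_{n-1}(x,r_n^\pm)|\bigr),
\]
so the map $H_{n,x}\ni i\mapsto\mathcal N_{n,x}^i$ is well-defined and injective, yielding property~(iv)(c).

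Now, for $x\in X_n^\Phi$, define
\[
\psi_{n,x}^i(u)=\begin{cases}5&\text{if }u\in\mathcal N_{n,x}^i,\\ \psi_{n,x}^0(u)&\text{otherwise},\end{cases}
\]
and for $x\notin X_n^\Phi$ set $\psi_{n,x}^i=h_{n,x}(\psi_{n,\rep_n(x)}^i)$. Property~\ref{i: phinx0 h} follows by construction, while (iv)(b) (interpreted as $(\psi_{n,x}^i)^{-1}(5)=\mathcal N_{n,x}^i$) is immediate because $\psi_{n,x}^0$ is strongly adapted, so $(\psi_{n,x}^0)^{-1}(5)=\emptyset$. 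For~\ref{i: phi_{n,x}^i adapted}, note that $\mathcal N_{n,x}^i\subset C_{n-1}(x,10,r_n^\pm-1)\smallsetminus\mathfrak X_{n-1}$ by construction, so conditions \ref{i: phi_{n,x}^0 i zero}--\ref{i: phi_{n,x}^0 i four xxn} of \Cref{d. phi_{n,x}^0 i} are untouched; only condition \ref{i: phi_{n,x}^0 i four} (strong adaptedness) is broken.

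The main obstacle is the coherence between two pushforward operations: $\mathcal N_{n,x}^i$ is transported via the $n$-weak equivalence $\hat h_{n,x}$, while $\psi_{n,x}^i$ is transported via the $n$-equivalence $h_{n,x}$. Compatibility for $x\notin X_n^\Phi$ hinges on the identity $\hat h_{n,x}=h_{n,x}\,\hat h_{n,\rep_n(x)}$ from \Cref{p: weak hnx}~\ref{i: weak hnx hh}, which yields
\[
h_{n,x}(\mathcal N_{n,\rep_n(x)}^i)=h_{n,x}\bigl(\hat h_{n,\rep_n(x)}(\mathcal N_{n,\widehat\rep_n(x)}^i)\bigr)=\hat h_{n,x}(\mathcal N_{n,\widehat\rep_n(x)}^i)=\mathcal N_{n,x}^i,
\]
so the defining relation $\psi_{n,x}^i=h_{n,x}(\psi_{n,\rep_n(x)}^i)$ consistently produces $\psi_{n,x}^i(u)=5$ precisely on $\mathcal N_{n,x}^i$. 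This proposition was in fact the raison d'\^etre for introducing the weak equivalences and the factorisation \Cref{p: weak hnx}~\ref{i: weak hnx hh} in the first place; once that identity is in hand, the rest is bookkeeping with the cardinality estimates of \Cref{s. constants}.
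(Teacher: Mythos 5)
Your proof follows the same overall strategy as the paper (mark a separated subset $N_{n,x}$ of the corona, enumerate $\mathcal P(N_{n,x})$ to index $H_{n,x}$, transport via $\hat h_{n,x}$), and you do a nicer job than the paper of spelling out the coherence between $h_{n,x}$ and $\hat h_{n,x}$ via \Cref{p: weak hnx}~\ref{i: weak hnx hh}. However, you deviate from the paper on two points, one of them a genuine gap.

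First, the choice of color. You mark $\mathcal N_{n,x}^i$ with color $5$, on the grounds that $5$ is the free color whose absence defines strong adaptedness in \Cref{d. phi_{n,x}^0 i}~\ref{i: phi_{n,x}^0 i four}. The paper instead uses color $4$, both in its proof and in the statement of item (iv)(b). Your reading is in fact the more internally consistent one: marking a nonempty $\mathcal N_{n,x}^i$ with color $4$ would contradict \Cref{d. phi_{n,x}^0 i}~\ref{i: phi_{n,x}^0 i four xxn}, which forces $\psi^{-1}(4)$ to be either a singleton in $S_{n-1}(x,1)$ or empty, and hence would contradict item (iii) of the proposition. So this deviation is defensible, but you should flag that you are not proving the statement as literally written, and that color~$4$ would have to be replaced by color~$5$ throughout (in particular in the appeal to these sets inside the proof of \Cref{l: h preserves}~\ref{i: h preserves psi}).

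Second, and more seriously, the separation radius. The paper takes $N_{n,x}$ to be $r_{n-1}^2 s_{n-1}$-separated \emph{with respect to $d_{n-2}$}, which, via \Cref{l: cardinality separated}, produces the lower bound
$|N_{n,x}|\ge\lfloor(|D_{n-1}(x,r_n^\pm)|-(\deg X_{n-1})^{11}-1)/(\deg X_{n-2})^{r_{n-1}^2 s_{n-1}}\rfloor$,
whose divisor matches the definition~\eqref{etan defn} of $\eta_n$ \emph{exactly}. You instead take a $3$-separated set in $d_{n-1}$, obtaining a bound with divisor $(\deg X_{n-1})^3$. To connect this to $\eta_n$ you invoke the inequality $\Delta_{n-1}^3\le\Delta_{n-2}^{r_{n-1}^2 s_{n-1}}$, but this is the wrong quantity: $\eta_n$ has $(\deg X_{n-2})^{r_{n-1}^2 s_{n-1}}$ in the denominator, and $\deg X_{n-2}$ may be far smaller than the a~priori bound $\Delta_{n-2}$, so $\Delta_{n-2}^{r_{n-1}^2 s_{n-1}}$ may be much larger than the true divisor. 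What you would actually need is $(\deg X_{n-1})^3\le(\deg X_{n-2})^{r_{n-1}^2 s_{n-1}}$, which does hold via $\deg X_{n-1}\le4(\deg X_{n-2}-1)^{2R_{n-1}^+}$ and the constant inequalities, but you neither state it correctly nor verify it. The paper's tailored separation radius avoids this comparison entirely and is not an arbitrary choice; it is the reason $\eta_n$ is defined the way it is. Your route is salvageable, but as written the cardinality step has a gap.
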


\begin{proof}
First, for $i=0$, we take the strongly adapted colorings $\phi_{0,x}^0$ constructed in \Cref{p: phi0x0}, so that~\ref{i: phi_n x^0} is satisfied.

For every $x\in \xn^{\pm,\widehat \Phi}$, let $N_{n,x}$ be a maximal subset of $C_{n-1}(x,10,r_n^\pm)\setminus \mathfrak{X}_{n-1}$ that is $r_{n-1}^2s_{n-1}$-separated with respect to $d_{n-2}$. Choose an enumeration of the  power set $\mathcal{P}(N_{n,x})$,
\[
\mathcal{P}( N_{n,x})= \{\, \mathcal{N}_{n,x}^0:=\emptyset ,\, \mathcal{N}_{n,x}^1, \, \ldots  \,\}\;.
\] 
We have $|D_{n-1}(x,10)|\leq (\deg X_{n-1})^{11}$  and $|C_{n,n-1}(x)\cap \xxyn|\leq 1$ by \Cref{c: |D(x r)| le ...} and~\eqref{cnyn xxyn}.
Therefore 
\[
|C_{n-1}(x,10,r_n^\pm)\setminus \mathfrak{X}_{n-1}| \geq  |D_{n-1}(x,r_n^\pm)|-(\deg X_{n-1})^{11}-1\;.
\]
By \Cref{l: maximal}, $N_{n,x}$ is $(r_{n-1}^2s_{n-1}-1)$-relatively dense in $|C_{n-1}(x,10,r_n^\pm)|$ with respect to $d_{n-2}$, so 
\begin{equation}\label{cardinality nnx}
|N_{n,x}| \geq \Big\lfloor\big(|D_{n-1}(x,r_n^\pm)|-(\deg X_{n-1})^{11} -1\big)/(\deg X_{n-2})^{r_{n-1}^2s_{n-1}}\Big\rfloor
\end{equation}
by \Cref{l: cardinality separated}. Therefore, by~\eqref{etan defn},
 \[
 |\mathcal P(N_{n,x})| 
\geq \exp_2 \Big(\Big\lfloor\big(|D_{n-1}(x,r_n^\pm)|-(\deg X_{n-1})^{11} -1\big)/(\deg X_{n-2})^{r_{n-1}^2s_{n-1}}\Big\rfloor\Big)
=\eta_n(|D_{n-1}(x,r_n^\pm)|)\;.
 \]
Thus an injective map $H_{n,x}\to\mathcal P(N_{n,x})$  is well defined by $i\mapsto\mathcal N_{n,x}^i$.
  
If $x\notin X_0^{\widehat \Phi}$, let $N_{n,x}=\hat h_{n,x}(N_{n,\rep_n(x)})$ and $\mathcal{N}^i_{n,x}=\hat h_{n,x}(\mathcal{N}^i_{n,\rep_n(x)})$, so that $N_{n,x}$ satisfies~\eqref{cardinality nnx}.
Then define 
\[
\psi^i_{n,x}(u)= 
\begin{cases}
\psi^0_{n,x}(u) &\text{if $u\notin \mathcal{N}_{n,x}^i$} \\ 
4 &\text{if $u\in \mathcal{N}_{n,x}^i$}\;.
\end{cases}
\]
With this definition,~\ref{i: phi_n x^0} is obvious because $\mathcal{N}_{n,x}^0=\emptyset$.
Property~\ref{i: phinx0 h} follows immediately from \Cref{p: phinx0} and the fact that $\mathcal{N}^i_{0,x}=h_{0,x}(\mathcal{N}^i_{0,\rep_0(x)})$ if $x\notin X_0^\Phi$. 
Finally,~\ref{i: phi_n x^i different} follows since $\mathcal{N}^i_{n,x}\neq \mathcal{N}^j_{n,x}$ for $i\neq j$.
\end{proof}

\begin{rem}\label{r. notation}
In \Cref{s. chin}, it was said that  $I_{n,x}^2$ is considered as an initial segment of $H_{n,x}$ for every $x\in X_n$.
Let $\iota_{n,x}$ denote the inclusion $I_{n,x}^2\hookrightarrow H_{n,x}$.
From now on, the notation $\psi_{n,x}^{i,j}$ will refer to the coloring $\psi_{n,x}^{\iota_{n,x}(i,j)}$.
\end{rem}

\subsection{Colorings $\phi_n^N$}

In this subsection we  define the colorings $\phi_n^N$, which will induce the colorings $\phi^N$ in the statement of \Cref{t: finitary}.
First we define the notion of a rigid coloring, which is 
 obtained by combining different colorings $\psi_{0,x}^i$ over clusters $C_{0,-1}(x)$.

\begin{defn}
Let $n \in \N$ and $x\in X_n$.
A coloring $\phi\colon C_{n,-1}(x)\to [\Delta]$ is called \emph{rigid} if, for all $u\in C_{n,0}(x)$, there is some $i\in H_{n,x}$ such that the restriction of $\phi $ to $C_{0,-1}(u)$ equals $\psi_{0,x}^i$.
\end{defn}

\begin{lem}\label{l: distances +}
For all $x_1,x_2\in X_n^+$, if $d_n(x_1,x_2)\leq 2$, then $d_{n-1}(x_1,x_2)< r_n^+s_n$.
\end{lem}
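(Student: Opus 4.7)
The plan is to translate adjacency in $(X_n, E_n)$ into a bound on $d_{n-1}$ via \Cref{l: overlinecnx rnpm}, apply the triangle inequality along a path of length at most two in $(X_n, E_n)$, and then verify that the resulting linear-in-$r_n$ upper bound is dominated by $r_n^+ s_n = r_n s_n^2$ thanks to the size of $s_n$.

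First I would dispose of the trivial case $d_n(x_1, x_2) = 0$. Otherwise pick an $E_n$-path $(x_1, x_3, x_2)$, allowing $x_3 \in \{x_1, x_2\}$ so as to cover uniformly the cases $d_n(x_1,x_2) \in \{1,2\}$. For any edge $y E_n z$ of $(X_n,E_n)$, the defining condition~\eqref{xeny} combined with \Cref{l: overlinecnx rnpm} gives
\[
d_{n-1}(y,z) \leq R_n^+ + 1 + R_n^{\pm_z},
\]
where $\pm_z$ is the sign of $z$. A one-line comparison of $R_n^+ = r_n(2s_n+3)$ and $R_n^- = 4r_n - 1$ from~\eqref{defn functions 0}, \eqref{defn functions n} and~\eqref{no bm multi} shows $R_n^+ \geq R_n^-$ (since $s_n \geq 1$), so the uniform bound $d_{n-1}(y,z) \leq 2R_n^+ + 1$ holds regardless of the sign of $z$. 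Applying this on each of the two edges $x_1 E_n x_3$ and $x_3 E_n x_2$ and using the triangle inequality in $(X_{n-1}, d_{n-1})$ yields
\[
d_{n-1}(x_1, x_2) \leq 4R_n^+ + 2.
\]

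Finally, to conclude $d_{n-1}(x_1, x_2) < r_n^+ s_n = r_n s_n^2$, I would substitute $R_n^+ = r_n(2s_n+3)$ and reduce to the arithmetic inequality $r_n(s_n^2 - 8 s_n - 12) > 2$. This holds by a wide margin because $s_n \geq 27$ by~\eqref{defn s0} and~\eqref{defn sn}, so $s_n^2 - 8s_n - 12 \geq 27^2 - 8\cdot 27 - 12 > 0$. The only real bookkeeping issue is that the intermediate vertex $x_3$ may belong to $X_n^-$ rather than $X_n^+$, which is precisely why the comparison $R_n^+ \geq R_n^-$ is invoked; beyond that the argument is a direct unfolding of definitions.
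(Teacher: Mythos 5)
Your argument is correct and follows essentially the same route as the paper: unfold the definition of $E_n$ via~\eqref{xeny} and \Cref{l: overlinecnx rnpm} to bound each edge by $2R_n^+ + 1$ (the paper likewise absorbs the worst case through an intermediate $x_3\in X_n$ of either sign), obtain $d_{n-1}(x_1,x_2)\leq 4R_n^+ + 2$ by the triangle inequality, and conclude using the size of $s_n$. The paper's only cosmetic difference is that it finishes with $4r_n(2s_n+3)+2\leq 20r_ns_n< r_ns_n^2$ using $s_n>20$, while you verify the equivalent inequality $r_n(s_n^2-8s_n-12)>2$ from $s_n\geq 27$.
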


\begin{proof}
By the definition of $E_n$, there are some  $x_3\in X_n$,  $u_1\in \overline C_{n,n-1}(x_1)$, $u_2\in \overline C_{n,n-1}(x_2)$ and $u_3,u'_3\in \overline C_{n,n-1}(x_3)$ such that $u_1 E_{n-1} u_3$ and $u'_3 E_{n-1} u_2$.
By \Cref{l: overlinecnx rnpm}, the triangle inequality,~\eqref{defn functions n} and~\eqref{no bm multi}, we get 
\[
d_{n-1}(x_1,x_2)\leq 4R_n^+ + 2=4(r_n(2s_n+3))+2\leq 20r_ns_n < r_ns_n^2\;,
\]
since $s_n>20$ by~\eqref{defn s0} and~\eqref{defn sn}.
\end{proof}

\begin{lem}\label{l: distances -}
For all $x_1,x_2,x_3\in X_n^-$, if $x_1E_nx_2E_nx_3$, then $d_{n-1}(x_1,x_3)< r_n^- s_n$.
\end{lem}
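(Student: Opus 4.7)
The plan is to mimic the argument in \Cref{l: distances +} but work carefully through the two-edge path. The key ingredients already in place are: the definition of $E_n$ via \eqref{xeny}, the containment $\overline C_{n,n-1}(x)\subset D_{n-1}(x,R_n^-)$ for $x\in X_n^-$ coming from \Cref{l: overlinecnx rnpm}, the explicit value $R_n^-=4r_n-1$ from~\eqref{defn functions n} and~\eqref{no bm multi}, and the lower bound $s_n\ge 27$ on $s_n$ provided by~\eqref{defn s0} and~\eqref{defn sn}.

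First, I would unwind the two edge relations. Since $x_1E_nx_2$, the definition~\eqref{xeny} yields $u_1\in\overline C_{n,n-1}(x_1)$ and $u_2\in\overline C_{n,n-1}(x_2)$ with $d_{n-1}(u_1,u_2)\le 1$. Similarly, $x_2E_nx_3$ yields $u_2'\in\overline C_{n,n-1}(x_2)$ and $u_3\in\overline C_{n,n-1}(x_3)$ with $d_{n-1}(u_2',u_3)\le 1$. The critical observation (which distinguishes this from the trivial two-point case) is that $u_2$ and $u_2'$ may differ, so one must pay the extra cost of crossing $\overline C_{n,n-1}(x_2)$ once. Because $x_2\in X_n^-$, \Cref{l: overlinecnx rnpm} gives $d_{n-1}(u_2,u_2')\le 2R_n^-$, and the same lemma gives $d_{n-1}(x_i,u_i)\le R_n^-$ for $i=1,3$ since $x_1,x_3\in X_n^-$.

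Second, I would assemble these bounds via the triangle inequality along the chain $x_1, u_1, u_2, u_2', u_3, x_3$:
\[
d_{n-1}(x_1,x_3)\le R_n^-+1+2R_n^-+1+R_n^- = 4R_n^-+2 = 16r_n-2.
\]
Finally, since $s_n\ge 27 > 16$ by~\eqref{defn s0} and~\eqref{defn sn}, we conclude $d_{n-1}(x_1,x_3)\le 16r_n-2<r_ns_n=r_n^-s_n$, as required.

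There is no substantive obstacle here; the whole content is the bookkeeping with $R_n^-$ and the verification that the numerical constants work out. The only mild subtlety worth flagging is that one must not conflate $u_2$ with $u_2'$: these are two \emph{a priori} distinct points of the same cluster $\overline C_{n,n-1}(x_2)$, which is why the factor $4R_n^-$ (rather than $2R_n^-$) appears, but this is still comfortably absorbed by the $s_n\ge 27$ margin built into the constants of \Cref{s. constants}.
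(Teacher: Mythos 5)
Your proof is correct and takes essentially the same route as the paper: unwind the two edges of $E_n$ into points $u_1,u_2,u_2',u_3$ in the respective sets $\overline C_{n,n-1}(x_i)$, bound each cluster by $D_{n-1}(x_i,R_n^-)$ via \Cref{l: overlinecnx rnpm}, chain the triangle inequality to obtain $d_{n-1}(x_1,x_3)\le 4R_n^-+2$, and conclude using $R_n^-=4r_n-1$ and $s_n\ge 27$. (The paper's displayed computation $4R_n^-+2=4(4r_n+2)+2$ contains a small typo for $4(4r_n-1)+2$; your $16r_n-2$ is the intended value, and the margin in $s_n$ absorbs the discrepancy either way.)
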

\begin{proof}
By the definition of $E_n$, there are some  $u_1\in \overline C_{n,n-1}(x_1)$, $u_2,u'_2\in \overline C_{n,n-1}(x_2)$ and $u_3\in \overline C_{n,n-1}(x_3)$ such that $u_1 E_{n-1} u_2$ and $u_2' E_{n-1} u_3$.
By \Cref{l: overlinecnx rnpm}, the triangle inequality,~\eqref{defn functions n}, and~\eqref{no bm multi}, we get 
\[
d_{n-1}(x_1,x_2)\leq 4R_n^- + 2=4(4r_n+2)+2\leq 26r_n<r_ns_n\;,
\]
since $s_n>26$ by~\eqref{defn s0} and~\eqref{defn sn}.
\end{proof}

\begin{prop}\label{p: equivalence identity}
For $n\in \N$ and $x\in X_n^\pm$, let
\[
A=C_{n,-1}(x)\quad \Big(\text{respectively,}\ A=\bigcup_{a\in D_{n-1}(x,r_n^\pm-1)} C_{n-1,-1}(a)\Big)\;, 
\]
let $\zeta:A\to [\Delta]$ be a rigid coloring {\rm(}respectively, the restriction of a rigid coloring{\rm)}, and let $f\colon x\to x$ be an $n$-equivalence {\rm(}respectively, an $n$-weak equivalence\/{\rm)} preserving $\zeta$. Then $f$ is the identity map on $A$.
\end{prop}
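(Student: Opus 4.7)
The proof proceeds by induction on $n$. For $n=0$, the conclusion is immediate from \Cref{c. id zero} applied with $y=x$ and $j=i$: any color-preserving $0$-equivalence (resp.\ $0$-weak equivalence) of $(A,x,\psi_{0,x}^i)$ to itself equals $h_{0,x}h_{0,x}^{-1}=\id_A$.

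For the inductive step, the plan is to first show that $f$ fixes every point of $C_{n,n-1}(x)$ (resp.\ of $D_{n-1}(x,r_n^\pm-1)$), and then invoke \Cref{d. equivalence}\ref{i: equivalence cmm-1} (resp.\ \Cref{d. weak equivalence}\ref{i: weak equivalence cmm-1}) to conclude. Indeed, once $f(u)=u$ for such a $u$, the restriction $f|_{\mathcal C_{n-1,-1}(u)}=h_{n-1,u}h_{n-1,u}^{-1}$ is the identity, and since $A=\bigcup_u C_{n-1,-1}(u)\subset\bigcup_u\mathcal C_{n-1,-1}(u)$, the conclusion follows at once.

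The first stage I intend to carry out by transporting the BFS argument from the proof of \Cref{p: phi_{0,x}^i}\ref{i: phi0xi i j} to level $n$, using the ordering $\trianglelefteq_{n,x}$ of \Cref{p: defn dfs} (resp.\ any BFS-ordering of $D_{n-1}(x,r_n^\pm)$ rooted at $x$). Starting from $f(x)=x$, suppose inductively that $f$ is the identity on $\Ch_{n,x}(v)$ for every $v\vartriangleleft_{n,x}u$. Since $u\in\Ch_{n,x}(\Pa_{n,x}(u))$ with $\Pa_{n,x}(u)\vartriangleleft_{n,x}u$ (by \Cref{l: children}\ref{i: children distance} and \Cref{d. dfs}\ref{i: dfs distance}), this yields $f(u)=u$ and $f(\Ch_{n,x}(u))=\Ch_{n,x}(u)$. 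It then suffices to show that $f$ fixes each child $v\in\Ch_{n,x}(u)$. By \Cref{d. equivalence}\ref{i: equivalence cnm}, $f$ preserves $\chi_{n-1}$ and the partition $X_{n-1}^-\cupdot X_{n-1}^+$. For two $+$-type children $v_1,v_2$, the bound $d_{n-1}(v_1,v_2)\le2$ combined with \Cref{p: xn}\ref{i: xn dn dn-1} gives $d_{n-2}(v_1,v_2)\le 2l_{n-1}$, which is below $r_{n-1}^+s_{n-1}=r_{n-1}s_{n-1}^2$ for the chosen $s_{n-1}\ge27$, so \Cref{p: chin}\ref{i: chin dn-1} forces $\chi_{n-1}(v_1)\ne\chi_{n-1}(v_2)$ when $v_1\ne v_2$, and $f$ fixes each $+$-type child.

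The main obstacle is the case of two $-$-type children $v_1,v_2\in X_{n-1}^-$: here the separation $d_{n-2}(v_1,v_2)\ge 2r_{n-1}^++1=2r_{n-1}s_{n-1}+1$ from \Cref{p: xn}\ref{i: xn separated net} exceeds $r_{n-1}^-s_{n-1}=r_{n-1}s_{n-1}$, so $\chi_{n-1}$ alone cannot separate them, and the rigid coloring $\zeta$ must enter the argument. If $f(v_1)=v_2$, then \Cref{d. equivalence}\ref{i: equivalence cmm-1} makes $f|_{\mathcal C_{n-1,-1}(v_1)}=h_{n-1,v_2}h_{n-1,v_1}^{-1}$ an $(n-1)$-equivalence $v_1\to v_2$ transporting $\zeta|_{C_{n-1,-1}(v_1)}$ onto $\zeta|_{C_{n-1,-1}(v_2)}$. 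To close this case, I plan to strengthen the inductive statement to cover heteromorphic equivalences: every $(n-1)$-equivalence (resp.\ weak equivalence) $y_1\to y_2$ preserving corresponding rigid colorings must have $y_1=y_2$ and equal the identity. Applied to the above restriction, this gives $v_1=v_2$ and completes Stage~1; the weak-equivalence variant of the whole argument proceeds analogously.
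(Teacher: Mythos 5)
Your skeleton matches the paper's argument: induction on $n$, the inner induction along $\trianglelefteq_{n,x}$, the reduction of your Stage~2 to \Cref{d. equivalence}~\ref{i: equivalence cmm-1} and \Cref{d. weak equivalence}~\ref{i: weak equivalence cmm-1}, and the treatment of $+$-type points via sign preservation, $\chi_{n-1}$ and \Cref{p: chin}~\ref{i: chin dn-1}. You have also diagnosed the crux correctly: distinct points of $X_{n-1}^-$ are $(2r_{n-1}^++1)$-separated in $d_{n-2}$, beyond the range $r_{n-1}^-s_{n-1}$ where $\chi_{n-1}$ separates. But your way of closing that case is a genuine gap: the strengthened inductive statement, that every $(n-1)$-(weak) equivalence $y_1\to y_2$ carrying a rigid coloring to a rigid coloring forces $y_1=y_2$, is false, so no induction can establish it. Whenever $y_1\neq y_2$ are $\sim_{n-1}$-equivalent (such pairs exist, since $\Phi_{n-1}$ has finite range while $X_{n-1}$ is infinite), the map $h_{n-1,y_2}h_{n-1,y_1}^{-1}$ is an $(n-1)$-equivalence between distinct points which pushes rigid colorings to rigid colorings, precisely because the building blocks are transported by these maps (\Cref{p: phi_{0,x}^i}~\ref{i: phi_0 x^i h}, \Cref{p: phi_n x^i}~\ref{i: phinx0 h}). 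The correct heteromorphic statement is \Cref{c. equivalence}: such an $f$ equals $h_{n-1,y_2}h_{n-1,y_1}^{-1}$, with no conclusion that $y_1=y_2$; indeed the repetitivity part of the theorem depends on the rigid coloring $\phi_{-1}^N$ being preserved by the maps $\mathfrak h_{n,x}$ between distinct points (cf.\ \Cref{p: definitive rep}). So your Stage~1 collapses exactly in the $-$/$-$ subcase.

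The paper resolves that subcase geometrically rather than by strengthening the induction, and the extra leverage comes from an adjacent, already rigidified $+$-subcluster, not from the two $-$-subclusters themselves. If the (already fixed) parent also lies in $X_{n-1}^-$, then $u,\Pa_{n,x}(u),f(u)$ is an $E_{n-1}$-path inside $X_{n-1}^-$, so \Cref{l: distances -} gives $d_{n-2}(u,f(u))<r_{n-1}^-s_{n-1}$ and $\chi_{n-1}$ applies after all. In the remaining case $u,f(u)\in X_{n-1}^-$ with $\Pa_{n,x}(u)\in X_{n-1}^+$, one chooses $u'\in X_{n-1}^+\cap D_{n-1}(\Pa_{n,x}(u),1)$ whose subcluster contains a point $v'$ that is $E_{n-2}$-adjacent to some $v\in\overline C_{n-1,n-2}(u)$; the $+$-argument fixes $u'$, hence $f$ is the identity on $C_{n-1,-1}(u')$ (by the induction hypothesis on $n$, or by your Stage~2 observation), hence $f(v')=v'$, which yields $d_{n-2}(v,f(v))\le 2$ and then $d_{n-2}(u,f(u))\le 2R_{n-1}^-+2<r_{n-1}^-s_{n-1}$, so $\chi_{n-1}$ forces $f(u)=u$; one must also check that $u'$ stays in the domain in the weak case, using $d_{n-1}(x,u')\le d_{n-1}(x,u)$. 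Some argument of this type, importing rigidity from a nearby $+$-cluster to bound $d_{n-2}(u,f(u))$ within the range of \Cref{p: chin}~\ref{i: chin dn-1}, is what your proposal is missing.
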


\begin{proof}
We proceed by induction on $n\in \N$.
If $n=0$, then the result follows from \Cref{p: phi_{0,x}^i}~\ref{i: phi0xi i j}.
Therefore suppose that $n>0$ and the result is true for $0\leq m <n$.
By hypothesis, $f$ is an $n$-(weak) equivalence and $f(x)=x$.
Thus $f(C_{n-1,n-2}(x))=C_{n-1,n-2}(x)$ and $f:x\to x$ is an $(n-1)$-equivalence by Definitions~\ref{d. equivalence}~\ref{i: equivalence cmm-1} and~\ref{d. weak equivalence}~\ref{i: weak equivalence cmm-1}. 
Hence $f$ is the identity on $C_{n-1,n-2}(x)$ by the induction hypothesis.

Let us prove that $f$ is the identity on $C_{n-1,n-2}(u)$ by induction on $u\in A$, using $\trianglelefteq_{n,x}$.
The case $u=x$ was proved in the previous paragraph. Thus let $u\neq x$ and suppose that the result has been proved for $v\triangleleft_{n,x}u$.
By the induction hypothesis and \Cref{d. parent}, we have $f(\Pa_{n,x}(u))=\Pa_{n,x}(u)$ and $u\,E_{n-1}\,\Pa_{n,x}(u)$. 
Therefore $f(u)\,E_{n-1}\,f(\Pa_{n,x}(u))$  by Definitions~\ref{d. equivalence}~\ref{i: equivalence cnm} and~\ref{d. weak equivalence}~\ref{i: weak equivalence cnm}, and we get $f(u)\,E_{n-1}\,\Pa_{n,x}(u)$.
We consider the following cases.

If $u,f(u)\in X_{n-1}^+$, then $d_{n-2}(u,f(u))<r_{n-1}^+s_{n-1}$ by \Cref{l: distances +}. 

If $u,\Pa_{n,x}(u)\in X_{n-1}^-$, then $f(u)\in X_{n-1}^-$ by Definitions~\ref{d. equivalence}~\ref{i: equivalence cnm} and~\ref{d. weak equivalence}~\ref{i: weak equivalence cnm}, and we obtain $d_{n-2}(u,f(u))<r_{n-1}^-s_{n-1}$  by \Cref{l: distances -}.
By Definitions~\ref{d. equivalence}~\ref{i: equivalence cnm} and~\ref{d. weak equivalence}~\ref{i: weak equivalence cnm}, we have $\chi_{n-1}(u)=\chi_{n-1}(f(u))$.
Thus \Cref{p: chin}~\ref{i: chin dn-1} yields $f(u)=u$ in these two cases.

Finally, suppose that $u,f(u)\in X_{n-1}^-$ and $\Pa_{n,x}(u)\in X_{n-1}^+$.
By the definition of $E_{n-1}$, there is some $u'\in X_{n-1}^+\cap D_{n-1}(\Pa_{n,x}(u),1)$ such that there are $v\in \overline C_{n-1,n-2}(u)$ and $v'\in C_{n-1,n-2}(u')$ with $vE_{n-2}v'$.
Note that this implies $d_{n-1}(x,u')\leq d_{n-1}(x,u)$. If $f$ is an $n$-equivalence, then this implies $u'\in \mathcal{C}_{n,n-1}(x)$, whereas if $f$ is an $n$-weak equivalence, we obtain $u'\in D_{n-1}(x,r_n^\pm-1)$.
In any case, using \Cref{d. equivalence,d. weak equivalence} we get that $f$ restricts to an $(n-1)$-equivalence $u'\to f(u')$.
Since $u'\,E_{n-1}\,\Pa_{n,x}(u)$  and $f(\Pa_{n,x}(u))=\Pa_{n,x}(u)$, we obtain $d_{n-2}(u,f(u))<r_{n-1}^+s_{n-1}$, and the same argument of the previous paragraph gives us $f(u')=u'$. 
Then the induction hypothesis (on $n$) yields $f(v')=v'$.
Therefore $d_{n-2}(v,f(v))\leq 2$, and we obtain $d_{n-2}(u,f(u))\leq 2R_n^-+2$.
Then $f(u)=u$ as before, and we get that $f$ is the identity on $C_{n-1,-1}(u)$ by the induction hypothesis.
\end{proof}

\begin{cor}\label{c. equivalence}
For $n\in \N$ and $x,y\in X_n^\pm$, let
	\[
	A=C_{n,-1}(x)\quad \Big(\text{respectively,}\ A=\bigcup_{a\in D_{n-1}(x,r_n^\pm-1)} C_{n-1,-1}(a)\Big)\;, 
	\]
	let $f\colon x\to x$ be an $n$-equivalence {\rm(}respectively, an $n$-weak equivalence\/{\rm)}, and let $\zeta:A\to [\Delta]$ and $\hat \zeta\colon f(A)\to [\Delta]$  be rigid colorings {\rm(}respectively, restrictions of  rigid colorings{\rm)}. If  $f^*\hat\zeta=\zeta$, then $f=h_{n,y} h_{n,x}^{-1}$  {\rm(}respectively, $f=\hat h_{n,y} \hat h_{n,x}^{-1}$\/{\rm)} on $A$.
\end{cor}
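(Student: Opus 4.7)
Since $f$ is an $n$-equivalence from $x$ to $y$, we have $x\sim_n^\pm y$, so $z:=\rep_n(x)=\rep_n(y)$. By \Cref{p: hnx}, both $h_{n,x}\colon z\to x$ and $h_{n,y}\colon z\to y$ are $n$-equivalences, so \Cref{l: equivalence composition} implies that $g:=h_{n,y}^{-1}\circ f\circ h_{n,x}\colon z\to z$ is an $n$-equivalence. The desired equality $f=h_{n,y}\,h_{n,x}^{-1}$ on $A=C_{n,-1}(x)=h_{n,x}(C_{n,-1}(z))$ is equivalent to $g=\id$ on $C_{n,-1}(z)$.

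Consider the pullback colorings $\xi:=h_{n,y}^*\hat\zeta$ and $\zeta':=h_{n,x}^*\zeta$ on $C_{n,-1}(z)$. Iterating \Cref{d. equivalence}~\ref{i: equivalence cmm-1}, one sees that for every $u\in C_{n,0}(z)$ the restriction of $h_{n,x}$ to $C_{0,-1}(u)$ equals $h_{0,h_{n,x}(u)}\,h_{0,u}^{-1}$. Combined with \Cref{p: phi_{0,x}^i}~\ref{i: phi_0 x^i h} and the fact that $\rep_0(u)=\rep_0(h_{n,x}(u))$ (since the iterated restriction is a $0$-equivalence), each local piece $\psi_{0,h_{n,x}(u)}^{i_{h_{n,x}(u)}}$ of $\zeta$ pulls back to $\psi_{0,u}^{i_{h_{n,x}(u)}}$, so $\zeta'$ is again rigid; similarly for $\xi$. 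A direct computation using $f^*\hat\zeta=\zeta$ now yields $g^*\xi=h_{n,x}^*f^*(h_{n,y}^{-1})^*h_{n,y}^*\hat\zeta=h_{n,x}^*\zeta=\zeta'$.

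It therefore suffices to establish the following mild strengthening of \Cref{p: equivalence identity}: if $g\colon z\to z$ is an $n$-equivalence and $g^*\zeta_1=\zeta_2$ for rigid colorings $\zeta_1,\zeta_2$ on $C_{n,-1}(z)$, then $g=\id$ on $C_{n,-1}(z)$. This follows verbatim from the proof of \Cref{p: equivalence identity}, with \Cref{c. id zero} used in place of \Cref{p: phi_{0,x}^i}~\ref{i: phi0xi i j} at the base case; \Cref{c. id zero} already handles $\psi_{0,u}^i\to\psi_{0,u}^j$ with possibly $i\neq j$ and forces identity (together with $i=j$). The inductive step is untouched, as the BFS-ordering argument and the $\chi_{n-1}$-preservation built into $n$-equivalences do not depend on the coloring being preserved on the nose. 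Applying this to our $g$ yields $g=\id$ on $C_{n,-1}(z)$, whence $f=h_{n,y}\,h_{n,x}^{-1}$ on $A$.

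The case where $f$ is an $n$-weak equivalence and $A=\bigcup_{a\in D_{n-1}(x,r_n^\pm-1)}C_{n-1,-1}(a)$ is completely analogous, with $h_{n,\cdot}$ replaced by $\hat h_{n,\cdot}$ from \Cref{p: weak hnx}, Lemmas~\ref{l: weak}--\ref{l: weak equivalence composition} providing closure under composition, and the respective ``weak'' halves of \Cref{p: equivalence identity} and \Cref{c. id zero} providing the base case. The main technical obstacle is the verification that the proof of \Cref{p: equivalence identity} survives the substitution $g^*\zeta=\zeta\rightsquigarrow g^*\zeta_1=\zeta_2$; once this observation is made, the corollary follows by conjugating with the canonical (weak) representatives $h_{n,\cdot}$ or $\hat h_{n,\cdot}$.
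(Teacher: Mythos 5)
Your proof is correct and takes the conjugation-by-$h_{n,\cdot}$ (respectively $\hat h_{n,\cdot}$) approach that the paper presumably intends for this unproved corollary, directly modelled on the proof of \Cref{c. id zero}. The substantive observation you add — and it is needed — is that the conjugated map $g=h_{n,y}^{-1}\,f\,h_{n,x}\colon z\to z$ satisfies $g^*\xi=\zeta'$ for two \emph{a priori different} rigid colorings on $C_{n,-1}(z)$, so that \Cref{p: equivalence identity} does not apply literally; one needs the mild strengthening asserting that an $n$-(weak) equivalence $z\to z$ intertwining two rigid colorings is the identity. As you say, this strengthening has the same inductive proof: the base case is exactly \Cref{p: phi_{0,x}^i}~\ref{i: phi0xi i j} (equivalently \Cref{c. id zero} with $x=y$), which already allows $\psi_{0,z}^i\to\psi_{0,z}^j$ with $i\ne j$, and the inductive step only uses preservation of $\chi_{n-1}$ and \Cref{d. equivalence}~\ref{i: equivalence cmm-1}, which are intrinsic to being an $n$-equivalence and do not see the ambient rigid coloring beyond passing its restrictions to the smaller clusters. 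Your check that $h_{n,x}^*\zeta$ and $h_{n,y}^*\hat\zeta$ are again rigid (via iterated \Cref{d. equivalence}~\ref{i: equivalence cmm-1} and \Cref{p: phi_{0,x}^i}~\ref{i: phi_0 x^i h}) is likewise a necessary detail and is correct. One tiny nit: the base case of your ``strengthening'' is cleanest from \Cref{p: phi_{0,x}^i}~\ref{i: phi0xi i j} (same basepoint), reserving \Cref{c. id zero} for the final conjugation step; you cite both, which is harmless.
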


\begin{defn}\label{d. psin}
For $N\in \N$, let $\phi^N_n\colon X_n\to \mathcal{I}_n^2$ be defined by reverse induction on $n=-1,\ldots, N$ as follows: 
\begin{itemize}

\item For $n=N$, let $\phi_{N}^N=(\chi_N,0)$.

\item For $0\leq n<N$, define $\phi_n^N$ so that, for every $x\in X_{n+1}$, 
\begin{equation}\label{psin} 
\phi^N_{n}|_{C_{n+1,n}(x)}=\Big(\psi_{n+1,x}^{\phi^N_{n+1}(x)}, \chi_{n}(x)\Big)\;.
\end{equation}

\item Finally, define $\phi_{-1}^N$ so that, for every $x\in X_0$,
\begin{equation}\label{psicero} 
\phi^N_{-1}|_{C_{0,-1}(x)}=\psi_{0,x}^{\phi^N_{0}(x)}\;.
\end{equation}
\end{itemize}
\end{defn}

\begin{rem}\label{r. psi adapted}
It follows from \Cref{p: chin}~\ref{i: chin dn-1}  that $\phi_n^N(x)\neq \phi_n^N(y)$  for $x,y\in X_n^\pm$ if $0<d_{n-1}(x,y)<r_n^\pm s_n$.
\end{rem}

\begin{rem}\label{r. psi xxn}
 By Definitions~\ref{p: chin}~\ref{i: chin zero} and~\ref{d. phi_{n,x}^0 i}~\ref{i: phi_{n,x}^0 i zero}, for all $0\leq m \leq N$ and $x\in X_m$, the value $\phi_m^N(x)$ determines whether $x\in\xxm$.
\end{rem}

We now prove the crucial lemma from which we will derive \Cref{t: finitary}. In order to do this, we will show  that a pointed, colored, ``local"  graph isomorphism gets more rigid on smaller domains,  meaning that it preserves more of the structure that we have defined in the course of this section.

Let $W_0=10$ and $W_i=2$ for $i>0$, and let $\Upsilon_n$ be recursively defined by
\begin{equation}\label{defn upsilon}
\Upsilon_{-1}=0 \;, \quad \Upsilon_n=\Upsilon_{n-1} +L_{n-1}(W_n+3R_n^+ + 1)+ 2\Gamma_n^+ + nL_n\;.
\end{equation}

\begin{lem}\label{l: h preserves}
Fix $0\leq n \leq N$ and $R>\Upsilon_n$. 
Let $A\subset X$ and $x\in \aset$ be such that $D_{-1}(x,R)\subset \aset$, and let $f\colon (\aset,x,\phi_{-1}^N)\to (f(\aset),f(x),\phi_{-1}^N)$ be a pointed colored graph isomorphism with respect to the restriction of $E_{-1}$. 
Then the following properties hold for  $0\leq m\leq n$ and $0\leq l \leq n+1$:
\begin{enumerate}[(a)]

\item \label{i: h preserves induction} The restriction 
\[
f\colon \left(X_{l-1}\cap D_{-1}(x,R-\Upsilon_{l-1}),x,\phi_{l-1}^N\right)\to \left(X_{l-1}\cap D_{-1}(f(x),R-\Upsilon_{l-1}),f(x), \phi_{l-1}^N\right)
\]
is a  pointed colored graph isomorphism with respect to $E_{l-1}$.

\item \label{i: h preserves xn} For any $z\in X_{m-1}\cap D_{-1}(x,R-\Upsilon_{m-1}-L_{m-1}W_m)$, we have $z\in \xm^\pm$ if and only if $f(z)\in \xm^\pm$.

\item \label{i: h preserves weak equivalence}  For all $z\in \xm\cap D_{-1}(x,R-\Upsilon_{m-1} -L_{m-1}(W_m+r_m^+))$,  the restriction of $f$ is an $m$-weak equivalence.

\item \label{i: h preserves psi} For any $z\in \xm\cap D_{-1}(x,R-\Upsilon_{m-1} -L_{m-1}(W_m+r_m^+))$,  we have $\phi_{m}^N(z)=\phi_{m}^N(f(z))$.

\item \label{i: h preserves xxm} For any $z\in \xm\cap D_{-1}(x,R-\Upsilon_{m-1} -L_{m-1}(W_m+r_m^+ + 1))$,  we have $z\in \xxm$ if and only if $f(z)\in \xxm$.

\item \label{i: h preserves cm<} For all $ z\in X_{m-1}\cap D_{-1}(x,R-\Upsilon_{m-1} - L_{m-1}(W_m+2R_m^+))$, we have  $z\in Z_{m-1}^\pm$ if and only if $f(z)\in Z_{m-1}^\pm$.

\item \label{i: h preserves overline} For any $z\in \xm\cap  D_{-1}(x,R-\Upsilon_{m-1} - L_{m-1}(W_m+3R_m^+))$,  we have  $f(\olcmym(z))=\overline{C}_{m,m-1}(f(z))$.

\item \label{i: h preserves cluster}  For any $z\in \xm\cap  D_{-1}(x,R-\Upsilon_{m-1} - L_{m-1}(W_m+3R_m^+)-L_m)$,  we have  $f(C_{m,m-1}(z))=C_{m,m-1}(f(z))$.

\item \label{i: h preserves em}  For all $z,z'\in \xm\cap D_{-1}(x,R-\Upsilon_{m-1} -L_{m-1}(W_m+3R_m^+ +1)-\Gamma_m^+)$,  we have $zE_{m}z'$ if and only if $f(z)E_{m}f(z')$.

\item \label{i: h preserves equivalence}  For all $z\in \xm\cap D_{-1}(x,R-\Upsilon_{m-1} -L_{m-1}(W_m+3R_m^+ +1)-2\Gamma_m^+- mL_m)$,  the restriction of $f$ to $\mathcal{C}_{m,-1}$ is an $m$-equivalence $z\to f(z)$.

\end{enumerate}
\end{lem}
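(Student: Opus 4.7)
The plan is to proceed by induction on $m$ from $0$ to $n$, proving items \ref{i: h preserves xn} through \ref{i: h preserves equivalence} at level $m$ in the listed order, and then using \ref{i: h preserves em} together with \ref{i: h preserves psi} at level $m$ to upgrade \ref{i: h preserves induction} from level $l=m$ to level $l=m+1$, which closes the induction. The base case \ref{i: h preserves induction} at $l=0$ is just the hypothesis, since $(X_{-1},E_{-1})=(X,E)$ and $\phi_{-1}^N$ is given. The underlying principle is that every structure (membership in $X_m^\pm$, $\xxm$, $Z_{m-1}^\pm$, clusters, edges in $E_m$, equivalences, etc.) can be read off locally from $\phi_{m-1}^N$ via the definitions in Sections \ref{s. xn}--\ref{s. colorings}; a color-preserving local isomorphism must therefore preserve each such structure, at the price of a controlled shrinkage of the domain of validity.

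First I would obtain \ref{i: h preserves xn} from \ref{i: h preserves induction} at $l=m$ combined with the adapted-coloring characterization: for $m=0$, membership and sign are detected by Definition \ref{d. adapted 0}\ref{i: adapted 0 center} inside $D_{-1}(z,7)$; for $m\ge1$, they are detected by \Cref{l: condition center n} (values $1,2,3,4$ of the first component of $\phi_{m-1}^N$ inside a one-step neighborhood). Item \ref{i: h preserves weak equivalence} then follows by verifying each clause of \Cref{d. weak equivalence}: the edge/color/graph preservation comes from \ref{i: h preserves induction} at level $m$, preservation of $X_{m-1}^\pm$ uses \ref{i: h preserves xn} (induction hypothesis at $m-1$ if $m\ge1$, or nothing if $m=0$), preservation of $\xxym$ uses \Cref{p: chin}\ref{i: chin zero} applied to $\chi_{m-1}$ (a component of $\phi_{m-1}^N$), and the sub-cluster identification as $(m-1)$-equivalences is the induction hypothesis \ref{i: h preserves equivalence} at $m-1$. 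Item \ref{i: h preserves psi} follows from \Cref{c. equivalence}: once $f$ is known to be an $m$-weak equivalence preserving the rigid coloring $\phi_{-1}^N$ on the cluster around $z$, the value $\phi_m^N(z)$ is recovered from the injectivity of $i\mapsto\mathcal N_{m,z}^i$ in \Cref{p: phi_n x^i}\ref{i: phi_n x^i different}, while $\chi_m(z)$ is just the second component of $\phi_m^N$ read off from $\phi_{m-1}^N$.

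Item \ref{i: h preserves xxm} is then immediate from \Cref{p: chin}\ref{i: chin zero} since $\xxm=\chi_m^{-1}(0)$. For \ref{i: h preserves cm<}, the defining condition \eqref{def zn+} for $Z_{m-1}^\pm$ is local in $D_{m-1}(z,R_m^+)$ by \Cref{l: mathcalcn local}, and \ref{i: h preserves xn} at $m$ handles the preservation of $X_m^\pm$ on this neighborhood. For \ref{i: h preserves overline}, the definition \eqref{defn overlinecnn-1x} of $\overline C_{m,m-1}$ is similarly local of radius $R_m^+$. For \ref{i: h preserves cluster}, formula \eqref{cnyn overlinecn} expresses $C_{m,m-1}(z)$ as $\overline C_{m,m-1}(z)$ minus the overlap with smaller centers, and the relevant order $\leq_m$ is determined locally by $\chi_m$ via \Cref{p: chin}\ref{i: chin dn-1}; this forces the extra $L_m$-shrinkage. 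Item \ref{i: h preserves em} follows from the local characterization \eqref{xeny} applied to the preserved $\overline C_{m,m-1}$ at both endpoints, needing at most one additional edge-step. Finally, item \ref{i: h preserves equivalence} is obtained by verifying \Cref{d. equivalence} clause by clause, collecting the previously established items over the $n$-step neighborhood appearing in $\mathcal C_{m,-1}(z)$, which contributes the $nL_m$ term in \eqref{defn upsilon}.

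The main obstacle is bookkeeping: the budget $\Upsilon_m - \Upsilon_{m-1} = L_{m-1}(W_m+3R_m^+ +1)+2\Gamma_m^+ + mL_m$ must suffice at every intermediate step, and one must verify that each cascade of local-to-global arguments fits inside the prescribed disk. The delicate point is the transition from $m$-weak equivalence (which only controls the single cluster around $z$) to an $m$-equivalence (which controls the $n$-neighborhood $\mathcal C_{m,-1}(z)$); this requires walking through the $D_n(z,n)$ ball one step at a time while invoking \ref{i: h preserves em} to propagate preservation, and then invoking \Cref{c. equivalence} at every visited center to force the restriction to equal $h_{m,f(v)}h_{m,v}^{-1}$. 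A secondary subtlety lies in distinguishing the $X_m^+$ versus $X_m^-$ branches, since the relative densities $R_m^\pm$ and cluster radii $r_m^\pm$ differ; one needs to carry the superscript through all arguments and check that the worst case ($+$) always fits within the budget.
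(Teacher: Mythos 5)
Your proposal follows the paper's proof essentially step for step: the same double induction (deriving items \ref{i: h preserves xn}--\ref{i: h preserves equivalence} at level $m$ from \ref{i: h preserves induction} at $l=m$, then using \ref{i: h preserves psi} and \ref{i: h preserves em} at level $m$ to upgrade \ref{i: h preserves induction} to $l=m+1$), the same local characterizations (\cref{d. adapted 0} and \cref{l: condition center n} for \ref{i: h preserves xn}, \cref{l: mathcalcn local} for \ref{i: h preserves cm<}, \eqref{defn overlinecnn-1x} for \ref{i: h preserves overline}, \eqref{cnyn overlinecn} with \cref{p: chin} for \ref{i: h preserves cluster}, \eqref{xeny} for \ref{i: h preserves em}), and the same use of \cref{c. equivalence} together with the sets $\mathcal N_{m,x}^i$ to recover $\phi_m^N$ and to pin down restrictions as $h$- (or $\hat h$-) compositions. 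Apart from minor slips of no consequence (e.g.\ writing $nL_m$ where the budget term is $mL_m$, and attributing $\xxym$-detection to $\chi_{m-1}$ rather than to \cref{r. psi xxn}/the paper's route via \ref{i: h preserves xxm} at $m-1$), this is the paper's argument.
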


\begin{proof}
We proceed by induction on $m$ and $l$. 
For $l=0$,~\ref{i: h preserves induction} is true by hypothesis. 
When $l>0$,~\ref{i: h preserves induction} follows from~\eqref{defn upsilon} and the induction hypothesis  for $m=l-1$ with~\ref{i: h preserves psi} and~\ref{i: h preserves em}.
For $m=0,\ldots,n$, we are going to derive~\ref{i: h preserves xn}--\ref{i: h preserves equivalence} from~\ref{i: h preserves induction}, completing the proof of the lemma.

Let us prove~\ref{i: h preserves xn}. 
The coloring $\phi_{m-1}^N$ is adapted by \Cref{r. psi adapted}.
For every $z\in X_{m-1}$, we have $z\in \xm^\pm$ if and only if the colored set $(D_{m-1}(z,W_m/2),\phi_{m-1}^N)$ has one of the patterns described in \Cref{d. adapted 0}~\ref{i: adapted 0 center} and \Cref{l: condition center n}. 
By \Cref{p: xn}~\ref{i: xn dn dn-1} and the triangle inequality, we get 
\[
D_{m-1}(z, W_m)\subset D_{-1}(z,L_{m-1}W_m )\subset D_{-1}(x, R-\Upsilon_m).
\] 
Therefore the restriction  $f\colon D_{m-1}(z, W_m/2) \to D_{m-1}(f(z), W_m/2)$ is an isometry by \Cref{c: preserves metric}. 
The induction hypothesis with~\ref{i: h preserves induction} implies that  the set $D_{m-1}(z,W_m/2)$ has one of the patterns of \Cref{d. adapted 0}~\ref{i: adapted 0 center} and \Cref{l: condition center n} if and only if $D_{m-1}(f(z),W_m/2)$ does.
Then~\ref{i: h preserves xn} follows from~\ref{i: h preserves induction}.

To prove~\ref{i: h preserves weak equivalence}, let $z\in X_m^\pm$.
If $m=0$,~\ref{i: h preserves weak equivalence} is obvious. Thus suppose $m>0$.
We have $f(z)\in X_m^\pm$ by~\ref{i: h preserves xn}.
By \Cref{p: xn}~\ref{i: xn dn dn-1}, 
\[
D_{m-1}(z,r_m^\pm)\subset D_{-1}(z, L_{m-1}r_m^+ )   \subset D_{-1}(x,R-\Upsilon_{m-1}-L_{m-1}W_m)\;.
\]
Now, in \Cref{d. weak equivalence}, properties~\ref{i: weak equivalence ball} and~\ref{i: weak equivalence cnm} follow from~\ref{i: h preserves induction},
the property~\ref{i: weak equivalence xxm} holds by the induction hypothesis with~\ref{i: h preserves xxm}, and the property~\ref{i: weak equivalence cmm-1} follows from \Cref{c. equivalence} and the induction hypothesis with~\ref{i: h preserves equivalence}.

Let us prove~\ref{i: h preserves psi}.
By \Cref{d. psin}, the restriction of $\phi_{m-1}^N$ to $C_{m,m-1}(z)$ equals $(\psi_{m-1,z}^i,\chi_{m-1}(z))$ for some $i\in H_{m,z}$.
Then $\phi_m^N(z)=\phi_m^N(f(z))$ if and only if the restrictions of $\phi_{m-1}^N$ to $C_{m,m-1}(z)$ and $C_{m,m-1}(f(z))$ are equal to $(\psi_{m-1,z}^i,\chi_{m-1}(z))$ and $(\psi_{m-1,f(z)}^i,\chi_{m-1}(f(z)))$, respectively.
Furthermore $i$ is determined by $(\phi_{m-1}^N)^{-1}(4)\cap D_{m-1}(z,r_m^\pm -1)=\mathcal{N}_{m,x}^i$ if $m>0$, or by $(\phi_{-1}^N)^{-1}(0)\cap C_{-1}(z,10,r_0^\pm -1)=\mathcal{N}_{0,x}^i$ if $m=0$.
By~\ref{i: h preserves induction}, 
\[
f((\phi_{m-1}^N)^{-1}(4)\cap D_{m-1}(z,r_m^\pm -1))= (\phi_{m-1}^N)^{-1}(4)\cap D_{m-1}(f(z),r_m^\pm -1)
\]
if $m>0$, and
\[
f((\phi_{-1}^N)^{-1}(0)\cap C_{-1}(z,10,r_0^\pm -1)) = (\phi_{-1}^N)^{-1}(0)\cap C_{-1}(f(z),10, r_0^\pm -1)
\]
if $m=0$. Since $\chi_{m-1}(z)=\chi_{m-1}(f(z))$ by~\ref{i: h preserves weak equivalence} and \Cref{d. weak equivalence}~\ref{i: weak equivalence cnm}, property~\ref{i: h preserves psi} follows from \Cref{p: phi_n x^i}~\ref{i: phi_n x^i hat h}.

Property~\ref{i: h preserves xxm} follows from~\ref{i: h preserves psi} and \Cref{r. psi xxn}.

Let us prove~\ref{i: h preserves cm<}.
Let $z\in D_{m-1}(x,R-\Upsilon_{m-1} - L_{m-1}(W_m+2R_m^+))$.
By~\ref{i: h preserves induction}, \Cref{p: xn}~\ref{i: xn dn dn-1} and \Cref{c: graph partial}, the restriction of $f$ to $D_{m-1}(x,R-\Upsilon_{m-1}-L_{m-1}(W_m+R_m^+))$ preserves $X_n^\pm$ and is an $R_m^+$-short scale isometry with respect to $E_{m-1}$.
Then  $z$ satisfies~\eqref{zpln xn} if and only if $f(z)$ does, and~\ref{i: h preserves cm<} follows.

To prove~\ref{i: h preserves overline}, let $z\in \xm\cap  D_{m-1}(x,R-\Upsilon_{m-1} - L_{m-1}(W_m+3R_m^+))$.
By \Cref{l: overlinecnx rnpm}, we have $\overline{C}_{m,m-1}(z)\subset D_{m-1}(z,R_n^+)$.
Using \Cref{p: xn}~\ref{i: xn dn dn-1} and the triangle inequality, we get 
\[
\overline{C}_{m,m-1}(z)\subset D_{m-1}(x,R-\Upsilon_{m-1} - L_{m-1}(W_m+2R_m^+))\;.
\]
Therefore, for all $u\in \overline{C}_{m,m-1}(z)$, we have $u\in Z_{m-1}^\pm$ if and only if $f(u)\in Z_{m-1}^\pm$ by~\ref{i: h preserves cm<}.
Let $y\in X_m$ such that $d_{m-1}(u,X_m)= d_{m-1}(u,y)$.
Thus $d_{m-1}(u,y)\leq R_m^+$ by \Cref{p: xn}~\ref{i: xn separated net}, yielding $d_{-1}(u,y)\leq L_{m-1}R_m^+$ by \Cref{p: xn}~\ref{i: xn dn dn-1}.
By~\ref{i: h preserves induction},~\ref{i: h preserves xn} and \Cref{c: graph partial}, we get $f(y)\in X_m^\pm$ if and only if $y\in X_m^\pm$ and $d_{m-1}(u,y)= d_{m-1}(f(u),f(y))$.
Then~\ref{i: h preserves overline} follows by~\eqref{defn overlinecnn-1x}.

Let us prove~\ref{i: h preserves cluster}.
By \Cref{p: xn}~\ref{i: xn dn dn-1} and the triangle inequality, we get
\[
D_{m}(z,1)\subset D_{-1}(z,L_m)\subset D_{-1}(x,R-\Upsilon_{m-1} - L_{m-1}(W_m+3R_m^+)))\;.
\]
Therefore $f(\overline C_{n,n-1}(u))= \overline C_{n,n-1}(f(u))$ for all $u\in D_{m}(z,1)$ by~\ref{i: h preserves overline}.
Moreover $\phi^N_m(u)= \phi^N_m(f(u))$ for all $u\in D_{m}(z,1)$ by~\ref{i: h preserves psi}.
In particular, this yields $\chi_m(u)= \chi_m(f(u))$.
Then the result follows from \Cref{p: chin}~\ref{i: chin dn-1} and~\eqref{cnyn overlinecn}.

Property~\ref{i: h preserves em} follows easily from~\ref{i: h preserves overline}, \Cref{c: graph partial} and the definition of $E_m$.

Let us prove~\ref{i: h preserves equivalence}. By~\eqref{rmn hmy} and the triangle inequatity we have 
\[\mathcal C_{n,-1}(v)\subset D_{-1}(x,\Gamma^+_n+ nL_n)\subset D_{-1}(x,R-\Upsilon_{m-1} -L_{m-1}(W_m+3R_m^+ +1)-\Gamma_m^+)\;.\]
\Cref{d. equivalence}~\ref{i: equivalence ball} and~\ref{i: equivalence v} follow from~\ref{i: h preserves induction},~\ref{i: equivalence cnm} from~\ref{i: h preserves induction} and~\ref{i: h preserves xn},~\ref{i: equivalence xxm} from~\ref{i: h preserves xxm}, and~\ref{i: equivalence cmm-1} from \Cref{c. equivalence} and the induction hypothesis.
\end{proof}

We are now in position to complete the proof of \Cref{t: finitary}. Consider the increasing sequence of positive integers $\epsilon_n$ of the statement of \Cref{t: finitary}, used in \Cref{s. constants}.
Let $\delta_n=4\Gamma_n^+ + \Upsilon_n+ 2L_n $.

\begin{prop}\label{p: definitive}
For $0\leq n \leq N$ and $u\in X$, and using restrictions of $E_{-1}$, let 
\[
f\colon \left(D_{-1}\left(u,\delta_n \right),u,\phi_{-1}^N\right) \to \left(D_{-1}\left(f(u),\delta_n\right), f(u),\phi_{-1}^N\right)
\]
be a color-preserving pointed graph isomorphism. Then, either $f(u)=u$, or $d_{-1}(u,f(u))>\epsilon_n$.
\end{prop}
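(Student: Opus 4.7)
The plan is to contrapose: assume $d_{-1}(u, f(u)) \leq \epsilon_n$ and derive $f(u) = u$. Throughout, I apply \Cref{l: h preserves} with $R = \delta_n$, $A = D_{-1}(u,\delta_n)$ and basepoint $u$; the inequality $\delta_n > \Upsilon_n$ is immediate from~\eqref{defn upsilon}, so all of its clauses are at my disposal.

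First, choose an anchor. The clusters $\{C_{n,-1}(x) : x \in X_n\}$ partition $X_{-1}$, so there is a unique $x_n \in X_n$ with $u \in C_{n,-1}(x_n)$; let $\pm$ denote the sign with $x_n \in X_n^\pm$. By \Cref{l: cn-1 gamma}, $d_{-1}(u,x_n) \leq \Gamma_n^\pm$, and a direct calculation using~\eqref{defn upsilon} shows that $\Gamma_n^+$ is strictly smaller than each of the shrunken radii appearing in \Cref{l: h preserves}~(b),~(d),~(j) at $m = n$. Hence $f(x_n) \in X_n^\pm$ for the same sign, $\phi_n^N(x_n) = \phi_n^N(f(x_n))$, and $f|_{\mathcal C_{n,-1}(x_n)}$ is an $n$-equivalence $x_n \to f(x_n)$. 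Iterating \Cref{d. equivalence}~\ref{i: equivalence v} and~\ref{i: equivalence cmm-1} together with~\eqref{cluster intermediate} shows that this equivalence maps $C_{n,-1}(x_n)$ onto $C_{n,-1}(f(x_n))$, so that $f(u) \in C_{n,-1}(f(x_n))$ and $d_{-1}(f(u),f(x_n)) \leq \Gamma_n^\pm$ by \Cref{l: cn-1 gamma} again.

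Next, show $f(x_n) = x_n$. The triangle inequality combined with \Cref{p: xn}~\ref{i: xn dn dn-1} yields
\[
d_{n-1}(x_n,f(x_n)) \leq d_{-1}(x_n,f(x_n)) \leq 2\Gamma_n^\pm + \epsilon_n\;.
\]
The proof of \Cref{l: rnpm gamma} (via the strict inequalities $10 a_n > 2 R_n^-(a_n)$ and $10 a_n s_n > 2 R_n^+(a_n)$) actually delivers the strict bounds $r_n s_n > 2\Gamma_n^- + \epsilon_n$ and $r_n s_n^2 > 2\Gamma_n^+ + \epsilon_n$, i.e.\ $r_n^\pm s_n > 2\Gamma_n^\pm + \epsilon_n$ in either sign. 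Hence $d_{n-1}(x_n,f(x_n)) < r_n^\pm s_n$, and since $\phi_n^N(x_n) = \phi_n^N(f(x_n))$, the rigidity recorded in \Cref{r. psi adapted} (built on \Cref{p: chin}~\ref{i: chin dn-1}) forces $f(x_n) = x_n$. With $f(x_n) = x_n$, the $n$-equivalence $f|_{\mathcal C_{n,-1}(x_n)}$ sends $x_n$ to itself and preserves $\phi_{-1}^N$. By~\eqref{psin}--\eqref{psicero} the restriction $\phi_{-1}^N|_{C_{n,-1}(x_n)}$ is a rigid coloring in the sense defined in \Cref{s. colorings}, so \Cref{p: equivalence identity} (applied to an $n$-equivalence rather than a weak one) forces $f$ to be the identity on $C_{n,-1}(x_n)$. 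Since $u \in C_{n,-1}(x_n)$, we conclude $f(u) = u$.

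The main obstacle is the interplay in the second step between $r_n^\pm$, $\Gamma_n^\pm$, $\epsilon_n$ and $s_n$: without the two separate inequalities of \Cref{l: rnpm gamma} the argument would break for anchors in $X_n^-$, whose penumbra constant $\Gamma_n^-$ must still be beaten by $r_n^- s_n = r_n s_n$ after doubling. This is exactly the reason the bipartite splitting $X_n = X_n^- \cupdot X_n^+$ of \Cref{p: xn}, with its accompanying fine-tuned growth of $r_n$ and $s_n$ fixed in \Cref{s. constants}, has been necessary all along.
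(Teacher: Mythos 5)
Your proposal is correct and is essentially the paper's own proof written in contrapositive form: the same anchor $x\in X_n^\pm$ with $u\in C_{n,-1}(x)$, the same combination of \Cref{l: cn-1 gamma}, \Cref{l: h preserves}, \Cref{p: chin}~\ref{i: chin dn-1} (via \Cref{r. psi adapted}), \Cref{l: rnpm gamma} and \Cref{p: equivalence identity}, with the dichotomy of the statement corresponding to the paper's case split on whether $f(x)=x$. The only cosmetic difference is your appeal to strict inequalities extracted from the proof of \Cref{l: rnpm gamma}, which is harmless but unnecessary, since the non-strict bound $d_{n-1}(x_n,f(x_n))\le 2\Gamma_n^\pm+\epsilon_n\le r_n^\pm s_n$ already forces $f(x_n)=x_n$ when combined with \Cref{p: chin}~\ref{i: chin dn-1}.
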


\begin{proof}
Let $x\in X_n^\pm$ such that $u\in C_{n,-1}(x)$.
We have $d_{-1}(u,x)\leq \Gamma_n^+$ by \Cref{l: cn-1 gamma}, and $D_{-1}(x,3\Gamma_n^+ + \Upsilon_n+ 2L_n) \subset \dom f$ by the triangle inequality.
By \Cref{l: h preserves}~\ref{i: h preserves xn},\ref{i: h preserves psi}, we obtain  $f(x)\in X_n^\pm$ and $\phi_n^N(x)=\phi_n^N(f(x))$.
In particular, $\chi_n(x)=\chi_n(f(x))$.
Therefore, either $f(x)=x$, or $d_{n-1}(x,f(x))\geq r_n^\pm s_n$ by \Cref{p: chin}~\ref{i: chin dn-1}.

If $f(x)=x$, then $f(u)=u$ by \Cref{p: equivalence identity} and the result follows.
So suppose $d_{n-1}(x,f(x))\geq 2r_n^\pm s_n$.
By \Cref{l: cn-1 gamma}, $d_{-1}(u,x)=d_{-1}(f(u),f(x))\leq \Gamma_n^{\pm}$.
Then, by the triangle inequality, $d(u,f(u))\geq r_n^{\pm}s_n - 2 \Gamma_n^{\pm}$. Applying now \Cref{l: rnpm gamma}, we get $d(u,f(u))\geq \epsilon_n$.
\end{proof}

This completes the proof of \Cref{t: finitary}~\ref{i: finitary limit aperiodic} by  taking $\phi^N=\phi_{-1}^N$.

\begin{prop}\label{p: definitive rep}
For $-1\leq m < n\le N$, $x\in \xxn$, and $u\in C_{n,m}(p)$, we have $\phi_{m}^N(u)=\phi_{m}^N(\mathfrak{h}_{n,x}(u))$.
\end{prop}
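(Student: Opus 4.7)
The plan is to proceed by reverse induction on $m$ from $m=n-1$ down to $m=-1$, unfolding the recursive definition~\eqref{psin} at each level and tracking how $\mathfrak{h}_{n,x}$ interacts with the cluster structure and the coloring machinery of \Cref{s. colorings}. It will be convenient to adjoin the boundary case $m=n$ to the induction, interpreted as $u=p\in C_{n,n}(p):=\{p\}$; this amounts to the auxiliary claim that $\phi_n^N(p)=\phi_n^N(x)$ for every $x\in\mathfrak{X}_n$ with $n\le N$, which I would handle by its own reverse induction on $n$ starting from $n=N$ (where $\phi_N^N\equiv(\chi_N,0)$ together with \Cref{p: chin}~\ref{i: chin zero} gives the result immediately).

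A preliminary observation: since $\mathfrak{h}_{n,x}$ is an $n$-equivalence $p\to x$ by \Cref{l: rmn hmy}~\ref{i: rmn hmy iso}, one has $p\sim_n x$, so $\rep_n(x)=p$ (as $p$ is the least element of $X_n$ for $\le_n$ by \Cref{l: xnphi}~\ref{i: xnphi dn}), and \Cref{p: hnx}~\ref{i: hnx xxn} then gives $h_{n,x}=\mathfrak{h}_{n,x}$ on $\mathcal{C}_{n,-1}(p)$. For the inductive step at level $m<n$, fix $u\in C_{n,m}(p)$ and let $z\in X_{m+1}$ be the unique cluster center with $u\in C_{m+1,m}(z)$; then $z\in C_{n,m+1}(p)$. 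By \Cref{p: xn}~\ref{i: xn rn-1} and \Cref{l: h preserves Cn} one has $\mathfrak{h}_{n,x}(z)\in X_{m+1}$ and $\mathfrak{h}_{n,x}(u)\in C_{m+1,m}(\mathfrak{h}_{n,x}(z))$, and~\eqref{psin} turns the desired equality into
\begin{align*}
\bigl(\psi_{m+1,z}^{\phi_{m+1}^N(z)}(u),\chi_m(z)\bigr)=\bigl(\psi_{m+1,\mathfrak{h}_{n,x}(z)}^{\phi_{m+1}^N(\mathfrak{h}_{n,x}(z))}(\mathfrak{h}_{n,x}(u)),\chi_m(\mathfrak{h}_{n,x}(z))\bigr).
\end{align*}
The second coordinates coincide by \Cref{p: chin}~\ref{i: chin h}, and the inductive hypothesis at level $m+1$ (the main statement if $m+1<n$, the auxiliary claim if $m+1=n$) yields $i:=\phi_{m+1}^N(z)=\phi_{m+1}^N(\mathfrak{h}_{n,x}(z))$.

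For the first coordinates I chain three facts. First, \Cref{l: rmn hmy}~\ref{i: rmn hmy iso} shows that $\mathfrak{h}_{n,x}$ restricts to an $(m+1)$-equivalence $z\to\mathfrak{h}_{n,x}(z)$, so these two points share a common representative $w:=\rep_{m+1}(z)=\rep_{m+1}(\mathfrak{h}_{n,x}(z))$. Second, \Cref{p: hnx}~\ref{i: hnx hh} yields the commutation $h_{m+1,\mathfrak{h}_{n,x}(z)}=\mathfrak{h}_{n,x}\circ h_{m+1,z}$. Third, \Cref{p: phi_n x^i}~\ref{i: phinx0 h} expresses both $\psi_{m+1,z}^i$ and $\psi_{m+1,\mathfrak{h}_{n,x}(z)}^i$ as $h$-pushforwards of the common model $\psi_{m+1,w}^i$. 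Composing,
\begin{equation*}
\psi_{m+1,\mathfrak{h}_{n,x}(z)}^i(\mathfrak{h}_{n,x}(u))=\psi_{m+1,w}^i\bigl(h_{m+1,z}^{-1}(u)\bigr)=\psi_{m+1,z}^i(u),
\end{equation*}
which closes the step. The terminal case $m=-1$ runs identically, replacing~\eqref{psin} by~\eqref{psicero}; there is no $\chi_{-1}$ factor to track.

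The main obstacle is the bookkeeping of nested domain constraints. Each of \Cref{l: h preserves Cn}, \Cref{p: chin}~\ref{i: chin h}, \Cref{l: rmn hmy}, and \Cref{p: hnx}~\ref{i: hnx hh} is valid only on a disk of the form $D_{-1}(p,\mathfrak{r}_n-\text{buffer})$, and descending from $u$ to its cluster center $z$ costs up to $L_mR_{m+1}^+$ in $d_{-1}$. Since $C_{n,m}(p)\subset D_{-1}(p,\Gamma_n^+)$ by \Cref{l: cn-1 gamma}, I would verify, using the growth inequalities~\eqref{rn}--\eqref{tn} and the definitions of $\overline K_n$, $K_n$, and $\Gamma_n^\pm$ from \Cref{s. constants}, that the cumulative buffer across the levels between $m$ and $n$ stays well inside $\mathfrak{r}_n$, so that the induction goes through uniformly.
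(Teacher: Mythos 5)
Your main argument is essentially the paper's own proof: the same reverse induction on $m$, unfolding \eqref{psin} at each level, identifying the image cluster via \Cref{p: xn}~\ref{i: xn rn-1} and \Cref{l: h preserves Cn}, matching the second coordinates with \Cref{p: chin}~\ref{i: chin h}, and matching the first coordinates through the chain: common representative (\Cref{l: rmn hmy}~\ref{i: rmn hmy iso}), commutation $h_{m+1,\mathfrak h_{n,x}(z)}=\mathfrak h_{n,x}\,h_{m+1,z}$ (\Cref{p: hnx}~\ref{i: hnx hh}), and push-forward (\Cref{p: phi_n x^i}~\ref{i: phinx0 h}). The paper compresses the same chain into a citation of \Cref{c. equivalence}, so this part of your proposal is sound, and the domain bookkeeping you postpone is indeed settled by \Cref{l: cn-1 gamma} together with \eqref{rn}.

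The one step that would fail as described is your treatment of the boundary input $\phi_n^N(p)=\phi_n^N(x)$ for $x\in\mathfrak X_n$. A separate reverse induction on $n$ does not close: for $n<N$ the value $\phi_n^N(x)$ involves the superscript $\phi_{n+1}^N(w)$, where $w\in X_{n+1}$ is the level-$(n+1)$ cluster center of $x$; since $\mathfrak h_{n,x}$ carries no structure at level $n+1$ (the pair $(n,x)$ does not lie in $\mathfrak P_n$), $w$ need not belong to $\mathfrak X_{n+1}$ and is not related to $p$ by any of the maps at hand, so the inductive hypothesis at level $n+1$ cannot be brought to bear on $w$. Fortunately no induction is needed: every coloring $\psi_{n+1,w}^i$ occurring in \Cref{d. psin} is adapted by \Cref{p: phi_n x^i}~\ref{i: phi_{n,x}^i adapted}, hence takes the value $0$ exactly on $\mathfrak X_n\cap C_{n+1,n}(w)$ by \Cref{d. phi_{n,x}^0 i}~\ref{i: phi_{n,x}^0 i zero}, while $\chi_n$ vanishes exactly on $\mathfrak X_n$ by \Cref{p: chin}~\ref{i: chin zero}; therefore $\phi_n^N$ takes one and the same value at every point of $\mathfrak X_n$, in particular at $p$ and at $x$ (and for $n=N$ this is exactly your base case). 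With this replacement your proof is complete and coincides with the paper's.
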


\begin{proof}
We proceed by inverse induction on $m$.
For $m=N$, we have $\phi_{N}^N=(\chi_{N},0)$.
So $\phi_{N}^N(u)=\phi_{N}^N(\mathfrak{h}_{n,x}(u))$ by \Cref{p: chin}~\ref{i: chin h}.

Suppose that, for $0\leq m <N-1$, the result is true for $m+1$.
Let  $u\in C_{n,m}(p)$ and $z\in C_{n,m+1}(p)$ such that $u\in C_{m+1,m}(z)$.
By the induction hypothesis, $\phi_{m+1}^N(z)= \phi_{m+1}^N(\mathfrak{h}_{n,x}(z))$.
By the definition of $\phi_{m+1}^N$, \Cref{l: rmn hmy zero,l: rmn hmy}, and \Cref{c. equivalence}, this means that the restrictions of $\phi_{m+1}^N$ to $C_{m+1,m}(z)$ and $C_{m+1,m}(\mathfrak{h}_{n,x}(z))$ equal $\psi_{m,x}^{i,j}$ and $\psi_{m,\mathfrak{h}_{n,x}(z)}^{i,j}$ for some $(i,j)\in I_{m,x}^2\subset H_{m,x}$ (see \Cref{r. notation}).
But  $\psi_{m,\mathfrak{h}_{n,x}(z)}^{i,j}=\mathfrak{h}_{n,x}(\psi_{m,x}^{i,j})$ by \Cref{p: phi_n x^i}~\ref{i: phinx0 h}.
\end{proof}

\Cref{p: definitive rep,p: xn}~\ref{i: xn subset}, together with \Cref{c. cn-1 contained}, yield   $\mathfrak{X}_n\subset \widehat \Omega_n$ for $n\leq N$ by taking $\phi^N=\phi_{-1}^N$, with the set $\widehat \Omega_n$ defined in \Cref{t: finitary}~\ref{i: finitary repetitive}. Then \Cref{t: finitary}~\ref{i: finitary repetitive} follows from \Cref{p: mathfrak X_n is a net,p: xn}~\ref{i: xn subset} taking $\alpha_n=2\ssn +\ttn + 3\omega_n$.

\bibliographystyle{amsplain}



\providecommand{\bysame}{\leavevmode\hbox to3em{\hrulefill}\thinspace}
\providecommand{\MR}{\relax\ifhmode\unskip\space\fi MR }
\providecommand{\MRhref}[2]{%
  \href{http://www.ams.org/mathscinet-getitem?mr=#1}{#2}
}
\providecommand{\href}[2]{#2}

\end{document}